\title[Models of Jacobians]{Models of Jacobians of curves}
\author{David}{Holmes}{}{Leiden}
\author{Sam}{Molcho}{}{Z\"urich}
\author{Giulio}{Orecchia}{}{Geneva}
\author{Thibault}{Poiret}{}{Cambridge}
\providecommand{\leftsquigarrow}{%
  \mathrel{\mathpalette\reflect@squig\relax}%
}
\newcommand{\reflect@squig}[2]{%
  \reflectbox{$\m@th#1\rightsquigarrow$}%
}
\newcommand{\abs}[1]{\lvert#1\rvert}
\let\oref\ref
\numberwithin{equation}{subsection}
\def\sheafhom{\mathcal{H}om}
\def\sheafisom{\mathcal{I}som}
\def\sheafaut{\mathcal{A}ut}
\newcommand{\lra}{\longrightarrow}
\newcommand{\hra}{\hookrightarrow}
\newcommand{\sub}{\subseteq}
\newcommand{\iso}{\stackrel{\sim}{\lra}}
\newcommand{\cat}[1]{\bd{#1}}
\newcommand{\Sch}[1]{\cat{Sch}/#1}
\newcommand{\LSch}[1]{\cat{LSch}/#1}
\newcommand{\Schet}[1]{(\cat{Sch}/#1)_\et}
\newcommand{\LSchet}[1]{(\cat{LSch}/#1)_\et}
\newcommand{\shSchet}[1]{\on{Sh}(\cat{Sch}/#1)_\et}
\newcommand{\shLSchet}[1]{\on{Sh}(\cat{LSch}/#1)_\et}
\newcommand{\abSchet}[1]{\on{Ab}(\cat{Sch}/#1)_\et}
\newcommand{\abLSchet}[1]{\on{Ab}(\cat{LSch}/#1)_\et}
\newcommand{\sh}[1]{\on{Sh}(#1)}
\newcommand{\ab}[1]{\on{Ab}(#1)}
\renewcommand{\on}[1]{\operatorname{#1}}
\newcommand{\bb}[1]{{\mathbb{#1}}}
\newcommand{\ca}[1]{{\mathcal{#1}}}
\newcommand{\bd}[1]{{\mathbf{#1}}}
\newcommand{\ul}[1]{{\underline{#1}}}
\DeclareMathOperator*{\colim}{co{\lim}}
\DeclareMathOperator{\Ima}{Im}
\def\:{\colon}
\def\.{,\dots,}
\def\into{\hookrightarrow}
\newcommand{\oM}{{\overline{M}}}
\newcommand{\oN}{{\overline{N}}}
\def\ZZ{\mathbb Z}
\def\RR{\mathbb R}
\def\NN{\mathbb N}
\def\QQ{\mathbb Q}
\def\et{\mathrm{\acute{e}t}}
\def\Tropic{\operatorname{TroPic}}
\def\Logpic{\operatorname{LogPic}}
\def\sLPic{\operatorname{sLPic}}
\def\sTPic{\operatorname{sTPic}}
\def\sPic{\operatorname{sPic}}
\def\strLogPic{\operatorname{sLPic}}
\def\strTroPic{\operatorname{sTPic}}
\def\Pic{\operatorname{Pic}}
\def\Irr{\operatorname{Irr}}
\def\o#1{\overline{#1}}
\def\gp{\textrm{gp}}
\def\c#1{\mathcal{#1}}
\newcommand{\Sh}{\operatorname{Sh}}
\newcommand{\PSh}{\operatorname{PSh}}
\newcommand{\Hom}{\operatorname{Hom}}
\newcommand{\Spec}{\operatorname{Spec}}
\newcommand{\Proj}{\operatorname{Proj}}
\theoremstyle{definition}
\newtheorem{definition}{Definition}[section]
\newtheorem{property}[definition]{Property}
\newtheorem{fact}[definition]{Fact}
\theoremstyle{plain}
\newtheorem{conjecture}[definition]{Conjecture}
\newtheorem{proposition}[definition]{Proposition}
\newtheorem{lemma}[definition]{Lemma}
\newtheorem{theorem}[definition]{Theorem}
\newtheorem{corollary}[definition]{Corollary}
\theoremstyle{remark}
\newtheorem{remark}[definition]{Remark}
\newtheorem{example}[definition]{Example}
\renewcommand{\phi}{\varphi} 
\newcommand{\Dcomment}[1]{{\color{blue}D: #1}}
\newcommand{\Tcomment}[1]{{\color{red}T: #1}}
\newcommand{\Gcomment}[1]{{\color{teal}G: #1}}
\newcommand{\Scomment}[1]{{\color{purple}S: #1}}
\newcommand{\ZE}{\ca Z^\ca E}
\newcommand{\ZV}{\ca Z^\ca V}
\begin{document}

\begin{abstract}
We show that the Jacobians of prestable curves over toroidal varieties always admit N\'eron models. These models are rarely quasi-compact or separated, but we also give a complete classification of quasi-compact separated group-models of such Jacobians. In particular we show the existence of a maximal quasi-compact separated group model, which we call the saturated model, and has the extension property for all torsion sections. The N\'eron model and the saturated model coincide over a Dedekind base, so the saturated model gives an alternative generalisation of the classical notion of N\'eron models to higher-dimensional bases; in the general case we give necessary and sufficient conditions for the N\'eron model and saturated model to coincide. The key result, from which most others descend, is that the logarithmic Jacobian of \cite{Molcho2018The-logarithmic} is a log N\'eron model of the Jacobian.
\end{abstract}


\section{Introduction}\label{sec:intro}

\subsection{N\'eron models}
Let $X \rightarrow S$ be a prestable curve\footnote{This means that $X/S$ is proper, flat, finitely presented, and the geometric fibers are reduced and connected of pure dimension 1 and with at worst ordinary double point singularities; for example, a stable curve. } over a scheme $S$, smooth over a schematically dense open subscheme $U \subset S$. The Jacobian $\Pic^0_{X_U}$ is then an abelian scheme over $U$ which (in general) admits no extension to an abelian scheme over all of $S$. N\'eron suggested that one should look instead for a \emph{N\'eron model} of $\Pic^0_{X_U}$; a smooth algebraic space $N$ over $S$, such that $N \times_S U = \Pic^0_{X_U}$, satisfying the \emph{N\'eron mapping property:}

\emph{for any smooth map $T \rightarrow S$ of schemes, the natural restriction map $N(T) \to \Pic^0(T\times_S U)$ is a bijection.\footnote{The classical definition over Dedekind schemes and in \cite{Holmes2014Neron-models-an} requires N\'eron models to be separated. When the base is a Dedekind scheme and the generic fiber is a group scheme this is automatic by \cite[Theorem 7.1.1]{Bosch1990Neron-models}}.}

N\'eron models are unique when they exist, and inherit a group structure extending that of the Jacobian. When $S$ is Dedekind the existence of a N\'eron model was proven by N\'eron and Raynaud \cite{Neron1964Modeles-minimau}, \cite{Raynaud1966Modeles-de-Nero}; in this case, a N\'eron model is automatically separated and quasi-compact.

When the base $S$ is higher dimensional, a separated, quasi-compact N\'eron model rarely exists. In \cite{Holmes2014Neron-models-an}, Holmes showed that over a regular base $S$, existence of such a model implies a delicate relation between the smoothing parameters of the nodes of $X$, which he called alignment. He also proved that alignment is sufficient if the total space $X$ is also regular. In \cite{Orecchia2018A-criterion-for}, Orecchia refines this into a necessary and sufficient condition when $X$ is smooth over the complement of a normal crossings divisor $D \to S$, but not necessarily regular. In \cite{Poiret}, Poiret constructs the N\'eron model assuming any smooth $S$-scheme is locally factorial (e.g. $S$ is regular), and shows that a more restrictive version of alignment, \emph{strict alignment}, is necessary and sufficient for it to be quasi-compact and separated.

The main result of this paper is that without the extra conditions of separatedness and quasi-compactness, a N\'eron model of the Jacobian exists whenever the base is a toroidal variety (or more generally a log regular scheme, see \ref{sec:log_regular}). 

\begin{theorem}[\ref{coro:NMP_strict_log}]\label{thm:intro_mainresult}
Let $S$ be a toroidal variety with $U\subset S$ the open complement of the boundary divisor, and let $X/S$ be prestable curve, smooth over $U$. Then a N\'eron model $N/S$ for $\Pic^0_{X_U}$ exists and is quasi-separated. 
\end{theorem}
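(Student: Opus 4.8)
The plan is to construct $N$ by \emph{strictifying} the logarithmic Jacobian and to deduce all of its properties from the key result recalled above. By that result, $\Logpic_{X/S}$ is representable by a log smooth group algebraic space $\mathcal J\to S$ which is a log Néron model of $\Pic^0_{X_U}$: for every log smooth $T\to S$ the natural map $\mathcal J(T)\to\Pic^0(X_{U_T})$ is a bijection, where $U_T\subseteq\underline T$ is the open locus on which the log structure of $T$ is trivial. By Olsson's theorem, log smoothness of $\mathcal J/S$ is the same as smoothness of the induced morphism of algebraic spaces $\underline{\mathcal J}\to\mathcal{L}og_{S}$. I therefore set
\[
  N\;:=\;\underline{\mathcal J}\times_{\mathcal{L}og_{S}}\underline S,
\]
the fibre product along the tautological section $s_S\colon\underline S\to\mathcal{L}og_{S}$ classifying $M_S$. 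Then $N$ is an algebraic space, it is a group algebraic space since $\mathcal J$ is a group over $S$, and $N\to\underline S$ is smooth by base change.

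Next I would identify the functor of points of $N$. For $T\in\Schet{\underline S}$ write $T^\dagger$ for $\underline T$ equipped with the pullback log structure; then $T^\dagger\to S$ is strict, and Olsson's equivalence between log schemes over $S$ and schemes over $\mathcal{L}og_{S}$ identifies $N(T)$ with $\mathcal J(T^\dagger)$. Over $U$ the log structure is trivial, so $T^\dagger=T$ for every $T\to U$ and hence $N\times_S U=\Pic^0_{X_U}$ (for trivial log structure the log Néron property reads $\mathcal J(T)=\Pic^0(X_T)$). For the Néron mapping property, let $T\to\underline S$ be smooth; then $T^\dagger\to S$ is log smooth, its triviality locus is $U_{T^\dagger}=T\times_{\underline S}U$, and the log Néron property of $\mathcal J$ gives
\[
  N(T)\;=\;\mathcal J(T^\dagger)\;=\;\Pic^0\!\big(X_{T\times_S U}\big)\;=\;\Pic^0_{X_U}(T\times_S U),
\]
which is exactly what is required. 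So $N$ is a Néron model of $\Pic^0_{X_U}$.

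It remains to show $N$ is quasi-separated, i.e.\ that $\Delta_{N/\underline S}$ is quasi-compact. Since $N$ is a base change of $\underline{\mathcal J}$ along $s_S$, it suffices that $\underline{\mathcal J}\to\mathcal{L}og_{S}$ be quasi-separated, equivalently that $\mathcal J\to S$ be quasi-separated as a morphism of log algebraic spaces; as $\mathcal J$ is a group, this in turn reduces to quasi-compactness of the unit section $S\to\mathcal J$. Working étale-locally on $S$, where the dual graphs of the fibres of $X/S$ are locally constant, $\mathcal J$ is built by gluing $\ZZ^r$-many translates of a single finite-type-over-$S$ piece (controlled by the tropical Jacobian $\Tropic_{X/S}$), and the point is that log regularity of $S$ forces the relevant Kato fan to be finitely generated, so that only boundedly many of these translates meet a given quasi-compact part of $S$ and the overlaps are quasi-compact. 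I expect this last step to be the main obstacle: the material before it is essentially a formal transport of the log Néron property through Olsson's dictionary, whereas controlling the diagonal of the genuinely non-quasi-compact and non-separated object $\Logpic_{X/S}$ requires a careful local description of how its lattice of components varies over $S$ and why log regularity keeps this variation quasi-compact. One should also verify the compatibility of the group structures and the good behaviour of the strictification at the points where $\mathcal{L}og_{S}$ itself fails to be quasi-separated over $S$.
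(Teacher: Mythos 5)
The core of your construction rests on a false premise: $\Logpic_{X/S}$ is \emph{not} representable by an algebraic space (or stack) with a log structure, so the object $\underline{\mathcal J}$ and hence the fibre product $\underline{\mathcal J}\times_{\mathcal{L}og_S}\underline S$ do not exist. The paper makes this point explicitly, and \ref{ex:tate_curve} gives a concrete obstruction already for the Tate curve: two distinct proper log smooth models both map to $\Logpic^0$, which would force the closed fibre of the underlying space to be a point, contradicting log smoothness. The log Jacobian is only ``log algebraic'' in the weak sense of admitting a log \'etale cover by a log scheme, and that is not enough to run Olsson's dictionary and pull back along the tautological section $\underline S\to\mathcal{L}og_S$. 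Consequently the conclusions you draw by ``base change'' --- that $N$ is an algebraic space, that it is smooth, and that quasi-separatedness would follow from quasi-separatedness of $\underline{\mathcal J}\to\mathcal{L}og_S$ --- all collapse.

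What survives is the functor of points you write down: $T\mapsto \mathcal J(T^\dagger)$, with $T^\dagger$ the pullback log structure, is exactly the paper's $\sLPic^0_{X/S}=\frak s_*\Logpic^0_{X/S}$, and your derivation of the N\'eron mapping property from the log N\'eron mapping property is correct and is precisely \ref{remark:NMP_strict} combined with \ref{thm:Neron}. (Note, though, that the log N\'eron property is a theorem of this paper, proved via log regularity, Kato's description $M_S = j_*\ca O_U^*\times_{j_*\ca O_U}\ca O_S$, and a semistable-reduction argument; you may quote it as the announced key input, but it is not imported from \cite{Molcho2018The-logarithmic}.) The genuinely missing content is the representability, smoothness and quasi-separatedness of this functor, which is \ref{thm:sPic_representable}. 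The paper proves it by an entirely different route: the exact sequence $0\to\Pic^0_{X/S}\to\sLPic^0_{X/S}\to\sTPic^0_{X/S}\to 0$ exhibits $\sLPic^0$ as a $\Pic^0$-torsor over $\sTPic^0$, reducing everything to showing that the strict tropical Jacobian is a quasi-separated \'etale algebraic space; this in turn is done by verifying local constructibility (Artin's criterion, \ref{cor:loc_constructible_implies_representable_big}) using the analysis of $\Tropic^0$ on nuclear neighbourhoods (\ref{lemma:atomic_bm}). Your closing paragraph gestures at this kind of analysis but, as you acknowledge, does not carry it out; without it the existence of $N$ as a quasi-separated smooth algebraic space is not established.
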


The N\'eron model admits a modular interpretation coming from logarithmic geometry: if one endows $X$ and $S$ with suitable logarithmic structures, one obtains the notion of \textit{logarithmic line bundle} on $X$ in the sense of \cite{Molcho2018The-logarithmic}. The N\'eron model is then the algebraic space $\sLPic^0_{X/S}$ (on the category of schemes) representing the functor of log line bundles of degree zero; we call it \textit{strict logarithmic Jacobian}. We postpone further discussion of the log geometric side of the story to \ref{subs:into_log} of this introduction. If $X$ is regular then a more concrete description of the N\'eron model can be given as a quotient of the relative Picard functor of $X/S$, generalising the approach of Raynaud when $\dim S = 1$; see \ref{sec:etale_closure}.

\subsection{Separated, quasi-compact models of the Jacobian}
We have seen that N\'eron models always exist, but they are rarely separated or quasi-compact. A \emph{model} of $\Pic^0_{X_U/U}$ is an algebraic space $G/S$ which restricts to $\Pic^0_{X_U/U}$ over $U$. Our next result gives a complete classification of separated, quasi-compact group models of the Jacobian $\Pic^0_{X_U/U}$ in terms of subgroups of the tropical Jacobian. In the setting of \ref{thm:intro_mainresult} we define the \emph{strict tropical Jacobian} $\sTPic^0_{X/S}$ to be the quotient of the N\'eron model by its fiberwise-connected component of identity $\Pic^0_{X_U/U}$. The quotient $\sTPic^0_{X/S}$ is an \'etale group algebraic space over $S$, and is trivial over $U$. If $S$ is local Dedekind then the group $\sTPic^0_{X/S}(S)$ is exactly the classical component group of the N\'eron model. 

Suppose we are given $\Psi/S$ an \'etale group space, and $\Psi\to \sTPic^0_{X/S}$ a group homomorphism. Then one obtains by fiber product a smooth group space $\c{G}(\Psi):=\Psi\times_{\sTPic^0_{X/S}}\sLPic^0_{X/S}$. If $\Psi_U=0$, then the restriction $\c{G}(\Psi)_U$ is identified with the Jacobian $\Pic^0_{X_U/U}$. 



\begin{theorem}[\ref{prop:separated_model}, \ref{coro:equivalence_sep_qf}]\label{thm:intro_bijection} The map $\Psi \mapsto \c{G}(\Psi)$ induces a bijection of partially ordered sets from the set of quasi-finite open subgroups of $\sTPic^0_{X/S}$ to the set of smooth, separated, quasi-compact $S$-group models of the Jacobian $\Pic^0_{X_U/U}$.
\end{theorem}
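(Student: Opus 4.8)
The plan is to establish the three facts that together yield the claimed bijection of posets: that $\Psi\mapsto\c{G}(\Psi)$ takes values in smooth, separated, quasi-compact $S$-group models of $\Pic^0_{X_U/U}$; that it is injective, with explicit order-preserving inverse $G\mapsto G/\Pic^0_{X/S}$; and that it is surjective. For the first point, given a quasi-finite open subgroup $\Psi\subseteq\sTPic^0_{X/S}$, the fibre product $\c{G}(\Psi)=\Psi\times_{\sTPic^0_{X/S}}\sLPic^0_{X/S}$ is a group space and a model of the Jacobian since $\Psi_U=0$; the projection $\c{G}(\Psi)\to\Psi$ is, by base change, a torsor under the semiabelian (hence separated and smooth) group $\Pic^0_{X/S}$, so it only remains to know that $\Psi\to S$ is separated, étale and of finite type, the first being read off the structure of $\sTPic^0_{X/S}$ established earlier and the rest being clear; quasi-compactness of $\c{G}(\Psi)$ then follows from that of $S$.

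Injectivity I would prove by exhibiting the inverse: $\c{G}(\Psi)$ carries the distinguished open subgroup $\ker\bigl(\c{G}(\Psi)\to\Psi\bigr)=S\times_{\sTPic^0_{X/S}}\sLPic^0_{X/S}=\Pic^0_{X/S}$, namely the preimage of the identity section of $\Psi$, which is the fibrewise identity component of $\sLPic^0_{X/S}$; and since $\sLPic^0_{X/S}\to\sTPic^0_{X/S}$ is a $\Pic^0_{X/S}$-torsor, $\c{G}(\Psi)/\Pic^0_{X/S}=\Psi$. Hence $G\mapsto G/\Pic^0_{X/S}$ is a left inverse, and this also shows the correspondence reflects the partial order.

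Surjectivity is the substance. Given a smooth, separated, quasi-compact $S$-group model $G$ of $\Pic^0_{X_U/U}$, I would first use the Néron mapping property of $\sLPic^0_{X/S}$ (\ref{coro:NMP_strict_log}): applied to the smooth test scheme $G$ it extends the tautological isomorphism $G_U\iso(\sLPic^0_{X/S})_U$ to a unique $S$-morphism $\iota\colon G\to\sLPic^0_{X/S}$, and applied to $G\times_S G$ it shows $\iota$ is a homomorphism. The crux is that $\iota$ is an open immersion. First, $\iota$ is a monomorphism: $\ker\iota$ is an $S$-group algebraic space of finite type, and for any smooth $T\to S$ the open $T_U=T\times_S U$ is schematically dense in $T$ (since $S$ is a reduced irreducible variety and $T$ is $S$-flat), so a $T$-point of $\ker\iota$ agrees with the identity over $T_U$ and hence, $G$ being separated, equals the identity; as an algebraic space over $S$ is determined by its restriction to the étale (a fortiori smooth) site, $\ker\iota=S$. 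Consequently, for each $s\in S$ the homomorphism $\iota_s$ has trivial kernel, so it is a closed immersion onto a closed subgroup of $(\sLPic^0_{X/S})_s$ of pure dimension $\dim G_s=\dim(\sLPic^0_{X/S})_s$ (both being smooth over the connected $S$ with generic fibre $\Pic^0_{X_U/U}$); inside the smooth, hence locally irreducible, group scheme $(\sLPic^0_{X/S})_s$ such a subgroup is open, so $\iota_s$ is an open immersion. By the fibrewise criterion for flatness $\iota$ is flat, and $\Omega_{G/\sLPic^0_{X/S}}$, vanishing on every fibre, vanishes; a flat unramified morphism is étale, and an étale monomorphism is an open immersion.

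Once $G\hookrightarrow\sLPic^0_{X/S}$ is an open subgroup it contains the identity component $(\sLPic^0_{X/S})_s^{\circ}$ in each fibre, hence contains $\Pic^0_{X/S}$, so that $\Psi:=G/\Pic^0_{X/S}$ makes sense and embeds as an open subgroup of $\sLPic^0_{X/S}/\Pic^0_{X/S}=\sTPic^0_{X/S}$; it is quasi-finite over $S$, being étale (smooth with discrete fibres), of finite type (as $G$ is), and with finite fibres (each $G_s$ has finitely many components). Finally $\c{G}(\Psi)=\Psi\times_{\sTPic^0_{X/S}}\sLPic^0_{X/S}$ is the preimage of $\Psi$ under $\sLPic^0_{X/S}\to\sTPic^0_{X/S}$, which is exactly $G$, so $\c{G}(\Psi)\cong G$. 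The main obstacle I anticipate is precisely the proof that $\iota$ is an open immersion — the triviality of $\ker\iota$, where separatedness of $G$ and the reducedness and irreducibility of $S$ enter, together with the fibrewise flatness-and-unramifiedness verification; a secondary point to pin down is that quasi-finite open subgroups of $\sTPic^0_{X/S}$ are separated over $S$, which relies on the earlier structural analysis of $\sTPic^0_{X/S}$.
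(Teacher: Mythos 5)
There is a genuine gap, and it sits exactly where you label the issue ``secondary'': the claim that a quasi-finite open subgroup $\Psi\subseteq\sTPic^0_{X/S}$ is separated over $S$ (equivalently, that $\c{G}(\Psi)$ is separated). This is not something that can be ``read off the structure of $\sTPic^0_{X/S}$'': the only structural facts available at that point are that $\sTPic^0_{X/S}$ is a quasi-separated \'etale group space, trivial over $U$, and that its torsion part is quasi-finite and open --- none of which rules out a non-closed unit section (an \'etale quasi-finite group space trivial over a dense open can perfectly well be non-separated; $\sTPic^0_{X/S}$ itself usually is). In the paper this implication is the content of the hard direction of \ref{prop:separated_model}: one checks the valuative criterion over a strictly henselian DVR $V$ by comparing the pullback log structure $d^*M_S$ with its maximal extension $M_V$, uses properness of $\Logpic^0_{X/S}$ (\ref{property:LogPic_proper}) to force the image of a doubled section to vanish in $\Tropic^0(\mathfrak X^V)$, and then invokes \ref{coro:finite_tropjac_injective_maps}: a \emph{finite} subgroup of $\Tropic^0(\mathfrak X_s)$ injects into $\Tropic^0(\mathfrak X_\phi)$ for every monoid map $\phi$ not contracting edges, because the kernel of $\Tropic^0(\mathfrak X_s)\to\Tropic^0(\mathfrak X_\phi)$ is free (\ref{lemma:map_tropic}). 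Quasi-finiteness of $\Psi$ enters precisely there, and nowhere in your sketch is this mechanism present. Without it the ``well-definedness'' half of the bijection is unproven.

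The rest of your argument is sound and in places takes a genuinely different route from the paper. For surjectivity you produce $\iota\colon G\to\sLPic^0_{X/S}$ from the N\'eron mapping property, kill $\ker\iota$ via schematic density of $T_U$ and separatedness of $G$, and then upgrade the monomorphism to an open immersion fibrewise (equal dimensions, fibrewise flatness, vanishing of $\Omega_{G/\sLPic^0}$). The paper instead identifies $G^0$ with $\Pic^0_{X/S}$ by a codimension-one argument (restriction to DVRs, \cite[7.4.3]{Bosch1990Neron-models} and \cite[XI, 1.15]{Raynaud1970Faisceaux-ample}) and deduces quasi-finiteness of $\pi_0(G)$ from the finite-type N\'eron model over a DVR through each point; your fibrewise-dimension argument is a reasonable, arguably more elementary, substitute, provided you justify that a monomorphism from a finite-type group scheme over a field into the locally-finite-type fibre $(\sLPic^0_{X/S})_s$ is a closed immersion. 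But the proposal cannot stand until the separatedness of $\c{G}(\Psi)$ is actually proved; as it stands you have assumed the one direction of \ref{thm:alignment_conditions}-type content that requires the log-geometric input.
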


As an application, we consider the case $S=\overline{\ca{M}}_{g,n}$ the moduli stack of stable curves, and we take $X/S$ the universal curve. One shows that the strict tropical Jacobian is torsion-free in this case; an immediate consequence of \ref{thm:intro_bijection} is:
\begin{corollary}[\ref{lemma:model_universal_jacobian}]
The universal Jacobian $\Pic^0_{X/\ca M_{g,n}}$ admits a unique smooth, separated group model over $\overline{\ca M}_{g,n}$, namely the generalized Jacobian $\Pic^0_{ X/\overline{\ca M}_{g,n}}$ parametrizing line bundles of multidegree $(0,0,\ldots,0)$.
\end{corollary}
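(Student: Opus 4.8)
The existence half is immediate and carries no content: the multidegree-$(0,\dots,0)$ part $\Pic^0_{X/\overline{\ca M}_{g,n}}$ of the relative Picard functor is a smooth finite-type group scheme with semiabelian fibres, it is separated, and, being finite type over the proper stack $\overline{\ca M}_{g,n}$, quasi-compact; its restriction over $\ca M_{g,n}$ is the universal Jacobian. So the task is the uniqueness statement. The plan is to apply \ref{thm:intro_bijection} (valid here since $\overline{\ca M}_{g,n}$ with its boundary log structure is log regular), which identifies the smooth separated quasi-compact group models of $\Pic^0_{X_U/U}$ with the quasi-finite open subgroups $\Psi$ of $\sTPic^0:=\sTPic^0_{X/\overline{\ca M}_{g,n}}$, the model attached to $\Psi=0$ being $\c G(0)=\ker(\sLPic^0\to\sTPic^0)=\Pic^0_{X/\overline{\ca M}_{g,n}}$. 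Thus it suffices to prove that $0$ is the only quasi-finite open subgroup of $\sTPic^0$, and then to remove the quasi-compactness hypothesis.

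The heart of the matter, and the step I expect to be the main obstacle, is that $\sTPic^0$ is \emph{torsion-free} over $\overline{\ca M}_{g,n}$. Since $\sTPic^0$ is étale over the base this can be checked fibrewise, and at a geometric point $s$ with dual graph $\Gamma$ the fibre is the tropical Jacobian of $\Gamma$ with edge lengths in the sharp monoid $\oM_{S,s}$; concretely it is the cokernel of the ``intersection'' homomorphism $H_1(\Gamma,\ZZ)\to\Hom_\ZZ\bigl(H_1(\Gamma,\ZZ),\oM_{S,s}^{\gp}\bigr)$, $\gamma\mapsto\bigl[\gamma'\mapsto\sum_e\ell_e\,\gamma_e\gamma'_e\bigr]$. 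The crucial input is that at a point of $\overline{\ca M}_{g,n}$ on the stratum of dual graph $\Gamma$ the characteristic monoid is freely generated by the smoothing parameters of the nodes, $\oM_{S,s}\cong\NN^{E(\Gamma)}$ with $\ell_e$ the $e$-th generator. Under the resulting identification $\Hom_\ZZ(H_1(\Gamma,\ZZ),\ZZ^{E(\Gamma)})\cong\bigoplus_{e\in E(\Gamma)}H_1(\Gamma,\ZZ)^\vee$ the map becomes $\gamma\mapsto\bigl(e\mapsto(\gamma'\mapsto\gamma_e\gamma'_e)\bigr)$, and a short computation — using that $H_1(\Gamma,\ZZ)=\ker(\partial\colon\ZZ^{E(\Gamma)}\to\ZZ^{V(\Gamma)})$ and that cycles are supported on the non-bridge edges, so that $n\mid\gamma_e$ for all $e$ forces $\gamma/n\in H_1(\Gamma,\ZZ)$ — shows that the image is saturated. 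Hence the cokernel is torsion-free. (This is precisely what fails over a one-dimensional base, where all the $\ell_e$ are identified with a single uniformizer; then the cokernel becomes the discriminant group of the weighted intersection form on $H_1(\Gamma,\ZZ)$, which is finite and typically nontrivial.)

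Granting torsion-freeness, the classification is immediate: if $\Psi\subseteq\sTPic^0$ is a quasi-finite open subgroup then each fibre $\Psi_s$ is a finite subgroup of the torsion-free group $(\sTPic^0)_s$, hence trivial; so $\Psi$ and the zero section $0_S$ have the same underlying points, and being open subschemes of $\sTPic^0$ they coincide. This already gives that $\Pic^0_{X/\overline{\ca M}_{g,n}}$ is the unique smooth separated quasi-compact group model. To upgrade to arbitrary smooth separated group models $G$: by the N\'eron mapping property of $N=\sLPic^0$ there is a unique $S$-homomorphism $G\to N$ restricting to the identity over $U$, and standard N\'eron-model arguments (cf.\ \ref{coro:equivalence_sep_qf}), using smoothness and separatedness of $G$, show this is an open immersion; since the fibrewise identity component is the smallest open subgroup of each fibre, $G$ contains $N^0=\Pic^0_{X/\overline{\ca M}_{g,n}}$, so $\Psi':=G/N^0$ is an open subgroup of $\sTPic^0$, separated over $S$ and trivial over the dense open $U$. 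Then $0_S\subseteq\Psi'$ is open (identity section of an étale group space) and closed ($\Psi'$ separated) and contains the dense open $\Psi'|_U=0_U$, whence $\Psi'=0_S$ and $G=\Pic^0_{X/\overline{\ca M}_{g,n}}$.

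To summarise the difficulty: everything after the torsion-freeness of $\sTPic^0$ is formal, so the real work is pinning down the combinatorial description of the tropical Jacobian in the logarithmic framework and then exploiting that $\overline{\ca M}_{g,n}$ is as non-degenerate as possible along its boundary — each node carries its own independent smoothing parameter, so $\oM_{S,s}=\NN^{E(\Gamma)}$ and no collapsing of edge lengths occurs — which is exactly what keeps the relevant lattice quotient saturated.
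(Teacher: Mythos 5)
Your proof is correct and follows the paper's route: the paper's entire argument for \ref{lemma:model_universal_jacobian} is that each edge of the tropicalization at a geometric point of $\overline{\ca M}_{g,n}$ is labelled by a distinct basis element of the free monoid $\oM_{S,s}\cong\NN^{E}$, hence $\Tropic^0(\mathfrak X_s)$ is torsion-free, after which \ref{coro:equivalence_sep_qf} together with \ref{prop:separated_model} (separated $\Rightarrow$ quasi-compact in this log regular setting) finishes exactly as you describe. Your saturation computation usefully fills in the torsion-freeness assertion the paper leaves implicit; the only fussy point is that the step ``$n\mid\gamma_e$ for all $e$'' should be run prime by prime, since taking $\gamma'=\gamma$ only gives $\gamma_e^2\equiv 0\ (\mathrm{mod}\ n)$, which yields $p\mid\gamma_e$ directly when $n=p$ is prime, and saturation is a prime-by-prime condition.
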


In particular this shows the non-existence of a smooth separated group scheme (or space) $\ca G$ over $\overline{\ca M}_{g,n}$ whose every fiber $\ca G_s$ is isomorphic to the special fiber of the N\'eron model of the Jacobian of a regular 1-parameter smoothing of $X_s$\footnote{This means a flat morphism of schemes $Y \to T$ where $T$ is a trait and $Y/T$ has smooth generic fiber; together with an isomorphism between $X_s/s$ and the special fiber of $Y/T$.}, as such a space would be a model of $\Pic^0_{X/\ca M_{g,n}}$. A question about the existence of such ``universal N\'eron models of the Jacobian" over a compactification of $\ca M_g$ was asked by Chiodo in the introduction of \cite{Chiodo2015Neron-models-of}. Caporaso investigates in \cite{Caporaso2008Neron-models-an} the analogous problem where, instead of Jacobians, one tries to fit into a universal family the special fibers of the N\'eron models of the $\Pic^d_{Y^s/T^s}$, where $Y^s/T^s$ is a regular $1$-parameter smoothing of $X_s$.

%
\subsection{The saturated model}

We define the \emph{saturated model} of the Jacobian $\Pic^0_{X_U/U}$ to be the smooth separated quasi-compact group model which is maximal for the relation of inclusion. 
\begin{corollary}[\ref{coro:maximal_model}]
The saturated model exists, and can be constructed as $\c{G}(\Psi)$ for $\Psi$ the torsion subgroup of $\sTPic^0_{X/S}$. 
\end{corollary}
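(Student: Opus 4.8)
The plan is to deduce the existence and characterization of the saturated model directly from the bijection of partially ordered sets established in \ref{thm:intro_bijection}. By that theorem, the smooth, separated, quasi-compact $S$-group models of $\Pic^0_{X_U/U}$ are in order-preserving bijection with the quasi-finite open subgroups of the strict tropical Jacobian $\sTPic^0_{X/S}$. Hence a saturated model — a maximal element of the right-hand poset — exists if and only if the poset of quasi-finite open subgroups of $\sTPic^0_{X/S}$ has a maximal element, and it will be the image $\c{G}(\Psi)$ of that maximal subgroup. So the whole question reduces to identifying the largest quasi-finite open subgroup of $\sTPic^0_{X/S}$.

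First I would recall that $\sTPic^0_{X/S}$ is an \'etale group algebraic space over $S$, trivial over $U$; in particular its fibres over points of $S$ are countable (indeed finitely generated abelian) groups, and it is a filtered union of its open quasi-finite subgroups. The key claim is then that the torsion subgroup $\Psi := (\sTPic^0_{X/S})_{\tor}$ is (a) open, (b) quasi-finite over $S$, and (c) contains every quasi-finite open subgroup. For (c): if $\Psi' \subset \sTPic^0_{X/S}$ is quasi-finite, then each of its geometric fibres is a finite abelian group, hence torsion, so $\Psi' \subset \Psi$ on fibres and therefore (being open) $\Psi' \subset \Psi$ as subgroups. Point (c) is the easy half and immediately shows that $\Psi$, \emph{if} it is itself a quasi-finite open subgroup, must be the unique maximal one.

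The substance is therefore (a) and (b): one must show the torsion subgroup of $\sTPic^0_{X/S}$ is open and quasi-finite over $S$. This is where I expect the main obstacle to lie. Openness should follow from the local structure theory of the strict tropical Jacobian developed earlier: \'etale-locally on $S$ the group $\sTPic^0_{X/S}$ is described combinatorially in terms of the dual graphs and the log structure (via the tropical Jacobian / $\sTPic$ construction referenced around \ref{thm:intro_bijection}), and in such a local model the torsion subgroup is cut out by finitely many conditions, hence open and of finite type; one then checks these local descriptions glue. Quasi-finiteness over $S$ amounts to showing that the torsion part of each fibre $(\sTPic^0_{X/S})_s$ is finite — equivalently, that the finitely generated abelian group $(\sTPic^0_{X/S})_s$ has finite torsion, which is automatic — together with quasi-compactness of $\Psi$ over $S$, which again reduces to the local combinatorial model. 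Once (a)–(c) are in hand, $\Psi$ is a quasi-finite open subgroup by (a)+(b), it is maximal by (c), and applying the bijection of \ref{thm:intro_bijection} gives that $\c{G}(\Psi) = \Psi \times_{\sTPic^0_{X/S}} \sLPic^0_{X/S}$ is the saturated model, completing the proof.
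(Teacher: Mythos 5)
Your proposal is correct and follows essentially the same route as the paper: the corollary is deduced from the order-preserving bijection of \ref{coro:equivalence_sep_qf} once one knows that $\sTPic^{tor}_{X/S}$ is the maximal quasi-finite open subgroup of $\sTPic^0_{X/S}$. The openness and quasi-finiteness you flag as the main obstacle are exactly the content of \ref{lemma:sTPic_qf}, already proved in \ref{sec:saturation} (openness there is obtained a bit more directly, from the fact that the diagonal of the \'etale space $\sTPic^0_{X/S}$ is an open immersion, so each $n$-torsion subsheaf is open; quasi-compactness uses the stratification by $\o M_S$ as you suggest).
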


When $\dim S = 1$ the N\'eron model and the saturated model coincide, so that the saturated model may be viewed as an alternative generalisation of the N\'eron model to base schemes of higher dimension. The saturated model has the advantage of being separated and quasi-compact, but has a weaker extension property: 
\begin{corollary}
Let $x\colon U\to  \Pic^0_{X_U/U}$ be a torsion section. Then $x$ extends uniquely to a section of the saturated model. 
\end{corollary}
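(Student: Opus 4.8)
The plan is to produce the extension in two moves: first extend $x$ to a section of the full N\'eron model $N=\sLPic^0_{X/S}$, then check that this section is automatically a section of the saturated model because it is torsion.

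For the first move I would apply the N\'eron mapping property of \ref{thm:intro_mainresult} to the smooth morphism $\mathrm{id}\colon S\to S$. This gives a bijection $N(S)\xrightarrow{\sim}\Pic^0_{X_U/U}(U)$, realised by restriction along $U\hookrightarrow S$; let $\tilde x\in N(S)$ be the unique section with $\tilde x|_U=x$. Next I would show $\tilde x$ is torsion: choosing $n\geq 1$ with $n\cdot x=0$, the sections $n\cdot\tilde x$ and $0$ of $N$ restrict to the same section over $U$, so $n\cdot\tilde x=0$ by the injectivity that is part of the N\'eron mapping property. Thus $\tilde x$ is $n$-torsion.

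For the second move I would unwind the description of the saturated model from \ref{coro:maximal_model}, namely $\c{G}(\Psi)=\Psi\times_{\sTPic^0_{X/S}}\sLPic^0_{X/S}$ with $\Psi\subseteq\sTPic^0_{X/S}$ the torsion subgroup; since $\Psi\hookrightarrow\sTPic^0_{X/S}$ is a monomorphism, so is its base change $\c{G}(\Psi)\to N$. The composite $S\xrightarrow{\tilde x}N\to\sTPic^0_{X/S}$ is a group homomorphism evaluated on an $n$-torsion section, hence is $n$-torsion, hence factors through the $n$-torsion subgroup $\sTPic^0_{X/S}[n]\subseteq\Psi$. By the universal property of the fibre product, $\tilde x$ then factors through $\c{G}(\Psi)$, and restricting over $U$ recovers $x$; this gives the extension. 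For uniqueness, any section of $\c{G}(\Psi)$ extending $x$ maps under the monomorphism $\c{G}(\Psi)\to N$ to a section of $N$ extending $x$, of which there is exactly one by the N\'eron mapping property; alternatively, uniqueness follows at once from separatedness of $\c{G}(\Psi)/S$ and schematic density of $U$ in the reduced scheme $S$.

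I do not anticipate a serious obstacle: given the existence of the N\'eron model with its full mapping property and the explicit description of the saturated model, the statement is formal. The one step worth handling with care is the claim that $\tilde x$ is torsion over all of $S$ — equivalently, the uniqueness clause of the N\'eron mapping property — since it is precisely this that forces the image of $\tilde x$ in $\sTPic^0_{X/S}$ to land in the torsion subgroup $\Psi$ rather than in some larger quasi-finite open subgroup.
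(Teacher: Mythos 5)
Your proposal is correct and follows the same route as the paper's own proof (\ref{corollary:torsion_NMP_sat_logpic^0}): extend $x$ uniquely to a section of $\sLPic^0_{X/S}$ via the N\'eron mapping property, observe that the extension is torsion by the uniqueness clause, hence its image in $\sTPic^0_{X/S}$ lands in the torsion subgroup and the section factors through $\sPic^{sat}_{X/S}$. You merely spell out the torsion step (comparing $n\cdot\tilde x$ with $0$ over $U$) that the paper leaves implicit.
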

%

The saturated model admits a modular interpretation: it represents the ``saturated Jacobian'' functor $\sPic^{sat}$ of log line bundles of degree zero on $X$ which become line bundles after taking a suitable integer power. 


\subsection{The saturated model and the N\'eron model}
The papers \cite{Holmes2014Neron-models-an}, \cite{Orecchia2018A-criterion-for}, \cite{Poiret} present criteria for the existence of quasi-compact and separated models of $\Pic^0_{X_U/U}$ when the base is regular. When put together, our main results yield the following criterion, valid even over some non-regular bases:
\begin{theorem}[\ref{thm:alignment_conditions}]
Let $S$ be a toroidal variety with $U\subset S$ the open complement of the boundary divisor, and let $X/S$ be prestable curve, smooth over $U$ (or more generally, a log smooth curve over a log regular base).Then the following are equivalent:
\begin{enumerate}
\item The strict tropical Jacobian $\sTPic^0_{X/S}$ is quasi-finite over $S$;
\item The N\'eron model of $\Pic^0_{X_U}$ is separated over $S$;
\item The N\'eron model of $\Pic^0_{X_U}$ is quasi-compact and separated over $S$; 
\item The saturated model and the N\'eron model of $\Pic^0_{X_U}$ are equal. 
\end{enumerate} 
\end{theorem}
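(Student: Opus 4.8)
The plan is to prove the four equivalences by combining the classification of \ref{thm:intro_bijection} with the basic finiteness properties of the \'etale group space $\sTPic^0_{X/S}$. The starting observation is that, by \ref{coro:maximal_model}, the saturated model is $\c{G}(\Psi_{\tor})$ for $\Psi_{\tor}$ the torsion subgroup of $\sTPic^0_{X/S}$, while the N\'eron model is $\sLPic^0_{X/S}=\c{G}(\sTPic^0_{X/S})$ itself; thus $(4)$ holds precisely when the inclusion $\Psi_{\tor}\hookrightarrow \sTPic^0_{X/S}$ is an isomorphism. Since $\sTPic^0_{X/S}$ is \'etale over $S$, it is torsion (i.e.\ equal to $\Psi_{\tor}$) if and only if all of its fibres are finite groups, which for an \'etale group space is exactly the condition of being quasi-finite over $S$. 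This already gives $(1)\Leftrightarrow(4)$.

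For the remaining implications I would argue as follows. First, $(3)\Rightarrow(2)$ is trivial. For $(2)\Rightarrow(1)$: the N\'eron model $N$ is separated over $S$ if and only if its fibrewise-connected-component-of-identity quotient $\sTPic^0_{X/S}=N/\Pic^0_{X_U/U}$ is separated over $S$ (using that $\Pic^0_{X_U/U}$ is already separated, being an abelian scheme over $U$ — here one needs the standard fact that for a smooth group space $N$ with open subgroup scheme $N^0$ which is separated, $N$ is separated iff $N/N^0$ is separated, since the diagonal of $N$ is the preimage of the diagonal of $N/N^0$). But a separated \'etale group space over $S$ has closed, hence finite, fibres over every point of $S$ (each fibre is a separated \'etale group scheme over a field, which is a disjoint union of copies of the residue field, and separatedness of the identity section in the fibre over a point forces discreteness to be finite only after... ) — more carefully, separatedness of $\sTPic^0_{X/S}$ over $S$ forces the identity section to be a closed immersion, and then a specialization argument (the generic fibre over $U$ being trivial, no non-trivial component can specialize to the identity) shows the fibres are finite; this is exactly quasi-finiteness. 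Finally $(1)\Rightarrow(3)$: if $\sTPic^0_{X/S}$ is quasi-finite \'etale over $S$, it is in particular a quasi-finite open subgroup of itself, so by \ref{thm:intro_bijection} the model $\c{G}(\sTPic^0_{X/S})=N$ is smooth, separated and quasi-compact.

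The main obstacle I expect is the precise form of the implication $(2)\Rightarrow(1)$, namely controlling separatedness of the \'etale group space $\sTPic^0_{X/S}$ over a higher-dimensional base. Over a field the equivalence ``separated $\Leftrightarrow$ quasi-finite'' for \'etale group spaces whose generic fibre is trivial is elementary, but globally one must rule out the phenomenon of infinitely many components accumulating without any one of them meeting the identity section; the key input is that $\sTPic^0_{X/S}$ is trivial over the dense open $U$ together with the structure theory (from the body of the paper) describing its fibres via the tropical/combinatorial data of $X/S$, which ensures that components which are "separated from the identity" at the level of the diagonal are genuinely finite over each point. Once this local-to-global statement is in place, everything else is formal manipulation of the bijection in \ref{thm:intro_bijection} and the description of the saturated model; I would organize the write-up as $(4)\Leftrightarrow(1)$, then $(1)\Rightarrow(3)\Rightarrow(2)\Rightarrow(1)$, citing \ref{thm:intro_bijection} and \ref{coro:maximal_model} for the non-trivial steps.
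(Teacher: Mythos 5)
Your reduction of $(2)\Rightarrow(1)$ rests on a false claim. You assert that for a smooth group space $N$ with separated open subgroup $N^0$, ``$N$ is separated iff $N/N^0$ is separated, since the diagonal of $N$ is the preimage of the diagonal of $N/N^0$.'' The diagonal of $N$ is \emph{not} the preimage of the diagonal of $N/N^0$: writing $q\colon N\to Q=N/N^0$, one has $(q\times q)^{-1}(\Delta_Q)\cong N\times_S N^0$ via $(x,y)\mapsto (x,x^{-1}y)$, inside which $\Delta_N$ corresponds to $N\times_S\{e\}$. This gives only the implication ($N^0$ separated and $Q$ separated) $\Rightarrow$ ($N$ separated); the converse fails. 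A concrete counterexample sits inside the theorem's own setting: take $S$ a trait with divisorial log structure and $X/S$ a nodal curve with nontrivial component group $\Phi$. Then $N=\sLPic^0_{X/S}$ is the classical N\'eron model, which is separated, while $\sTPic^0_{X/S}=N/\Pic^0_{X/S}$ has trivial generic fibre and special fibre $\Phi\neq 0$, with every point of the special fibre a specialization of the unique generic point; its unit section is therefore not closed and it is \emph{never} separated when $\Phi\neq 0$. So the intermediate statement ``$N$ separated $\Rightarrow \sTPic^0$ separated'' that your whole argument for $(2)\Rightarrow(1)$ hinges on is false, and the subsequent ``separated \'etale group space has finite fibres'' step has nothing to apply to. (You flagged this implication as the expected obstacle, but the proposed repair still goes through separatedness of $\sTPic^0$, which is the wrong invariant.)

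The paper's proof of this implication (the $2)\Rightarrow 1)$ part of \ref{prop:separated_model}) is genuinely different and is the real content of the theorem. Separatedness of $N$ first forces the kernel of $\Psi\to\sTPic^0_{X/S}$ to vanish (it is \'etale, separated, and trivial over the dense open $U$). For quasi-finiteness one cannot argue fibrewise with separatedness; instead, given a geometric point $t$ over $s\in S$, one uses \cite[7.1.9]{Grothendieck1961EGAII} to produce a trait $Z\to S$ through $s$ whose generic point lands in $U$, pulls $\c{G}$ back to $Z^{sh}$, and invokes \cite[Prop.~7.4.3]{Bosch1990Neron-models} to realize $\omega^*\c{G}$ as an open subgroup of the finite-type N\'eron model over $Z^{sh}$; this bounds $\c{G}_t$, hence $\Psi_t$, and then one concludes that $\Psi$ factors through the quasi-finite $\sTPic^{tor}_{X/S}$ of \ref{lemma:sTPic_qf}. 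Your remaining steps are essentially sound and match the paper: $(1)\Rightarrow(3)$ via \ref{prop:separated_model}(1)$\Rightarrow$(2)$\Leftrightarrow$(3), $(3)\Rightarrow(2)$ trivially, and $(1)\Leftrightarrow(4)$ via \ref{coro:maximal_model} --- though there you should cite \ref{lemma:sTPic_qf} to pass from ``all fibres finite'' to ``quasi-finite,'' since finite fibres alone do not give the quasi-compactness built into quasi-finiteness.
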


\subsection{Log geometric interpretation}\label{subs:into_log}

Although many of our results concern classical algebraic geometry, they become more natural in the context of logarithmic geometry. To explain the connection, suppose that $X \rightarrow S$ is a family of logarithmic curves (see \ref{def: logcurve}). In \cite{Molcho2018The-logarithmic}, following ideas of Illusie and Kato, the authors constructed the analogue of the Picard scheme in the category of logarithmic schemes, the logarithmic Picard group $\Logpic_{X/S}$. This is the sheaf of isomorphism classes of the stack which parametrizes the logarithmic line bundles alluded to above, that is, certain\footnote{The torsors must satisfy a condition called bounded monodromy -- see section \ref{sec:big_tropical} for details. } torsors under the associated group $M_X^\gp$ of the log structure. 

The logarithmic Picard group is a group, is log smooth and proper over $S$, and on the locus $U$ of $S$ where the log structure is trivial it coincides with the ordinary Picard group $\Pic_{X_U}$. Furthermore, logarithmic line bundles have a natural notion of degree, extending the notion of degree for ordinary line bundles, and $\Logpic_{X/S}$ splits into connected components according to degree. Thus, the \textit{logarithmic Jacobian} $\Logpic^0_{X/S}$ provides a ``best possible" extension of the Jacobian $\Pic^0_{X_U/U}$. The caveat is that the logarithmic Jacobian is a sheaf on the category of log schemes, not schemes, and it is in general not algebraic -- i.e., it is not representable by an algebraic space with a log structure. In fact, it is ``log algebraic", that is, it satisfies the analogous properties that algebraic spaces enjoy, but only in the category of log schemes; for instance, it has a logarithmically \'etale cover by a log scheme. See \ref{ex:tate_curve} for the case of the Tate curve.

Nevertheless, properness of $\Logpic^0_{X/S}$ suggests that it is close to a N\'eron model for $\Pic^0_{X_U/U}$. For example, in the simplest case when $S$ is a trait\footnote{The spectrum of a discrete valuation ring. }, the valuative criterion tells us that every line bundle $L$ on $X_U$ extends uniquely to a log line bundle on $X$. In fact, this ``limit bundle'' is simply the pushforward $j_*L^\times$ of the $\ca O^\times$-torsor associated to $L$ along the inclusion $j\colon X_U\into X$. Remarkably, the description of the limit goes through whenever $S$ is a log regular scheme, showing that the logarithmic Jacobian satisfies the logarithmic version of the N\'eron mapping property:  

\begin{theorem}[\ref{thm:Neron}] \label{theo:intro_NMP_logpic}\label{thm:intro_logNMP}
Let $S$ be a log regular scheme. Then $\Logpic^0_{X/S}$ satisfies the N\'eron mapping property for log smooth morphisms.
\end{theorem}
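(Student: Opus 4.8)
The plan is to reduce, by base change, to a statement over a single log regular base, and then to prove the bijection there by writing the extension of a line bundle explicitly as a pushforward and establishing a couple of purity statements for log regular schemes. Formation of $\Logpic^0$ commutes with arbitrary base change \cite{Molcho2018The-logarithmic}, and a scheme log smooth over a log regular scheme is again log regular; so for a log smooth $T\to S$ the map whose bijectivity we want is the restriction map $\Logpic^0_{X_T/T}(T)\to\Pic^0_{X_{U_T}/U_T}(U_T)$ attached to the log smooth curve $X_T$ over the log regular scheme $T$, where $U_T\subseteq T$ is the dense trivial locus. It therefore suffices to prove: for every log regular scheme $S$ with dense trivial locus $j\colon U\hookrightarrow S$, the restriction map $\Logpic^0_{X/S}(S)\to\Pic^0_{X_U/U}(U)$ is a bijection. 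Since $\Logpic^0_{X/S}$ and $\Pic^0_{X_U/U}$ are strict \'etale sheaves (the latter being representable), bijectivity may be checked strict-\'etale locally, so we may take $S=\Spec R$ with $R$ strictly henselian local; everything is then governed by the single sharp fs monoid $\oM:=\oM_{S,s}$ at the closed point. Note that $X\to S$ being log smooth forces $X$ to be log regular, in particular normal, and that $X_U:=X\times_S U$ is a dense open of $X$. Injectivity is now formal: $\Logpic^0_{X/S}$ is separated (indeed proper) over $S$ by \cite{Molcho2018The-logarithmic}, so two degree-zero log line bundles on $X$ agreeing on the schematically dense open $X_U$ are isomorphic, the locus of agreement being a closed subscheme of the reduced scheme $S$ containing the dense open $U$.

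For surjectivity, given $L\in\Pic^0_{X_U/U}(U)$ — represented after a harmless localisation by a line bundle on $X_U$ — set $\overline L:=j_*L$, where $j\colon X_U\hookrightarrow X$ and $L$ is viewed as an $\cO_{X_U}^*=M^{\gp}_{X_U}$-torsor. Two purity statements for the log regular scheme $X$ enter, both reduced to an elementary computation on an \'etale toric chart using normality of $X$: first $R^1j_*\cO_{X_U}^*=0$, so that $\overline L=j_*L$ is already a torsor under $j_*\cO_{X_U}^*$; and second $M^{\gp}_X\iso j_*\cO_{X_U}^*$ (a function invertible on a punctured neighbourhood of a boundary point is a monomial section of $M^{\gp}_X$ times a unit), which upgrades $\overline L$ to an $M^{\gp}_X$-torsor, i.e.\ a logarithmic line bundle on $X$. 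It then remains to verify that $\overline L$ has bounded monodromy — the subtle log-geometric condition cutting out $\Logpic$ inside the stack of all $M^{\gp}_X$-torsors — so that it is a point of $\Logpic_{X/S}$, and that it has degree zero; the degree claim is immediate because the degree function is locally constant on $\Logpic_{X/S}$ and equals $\deg L=0$ over the dense open $U$. Finally $j^*\overline L=L$ since $j$ is an open immersion, so $\overline L$ is the required section of $\Logpic^0_{X/S}$ over $S$ restricting to $L$, and naturality of the construction in $S$ — hence its compatibility with the reductions above — is routine.

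I expect the main obstacle to be the local analysis of $j_*$ near the boundary, and in particular the bounded monodromy of $\overline L=j_*L$. The monodromy of $\overline L$ around the boundary strata is recorded by its tropicalisation, a piecewise-linear gluing datum on the dual graph of $X/S$ with values in $\oM^{\gp}$, and bounded monodromy is the requirement that this datum be ``bounded'' in the appropriate sense. Heuristically it holds because $L$ lives on the \emph{scheme} $X_U$ and carries no unbounded logarithmic data, so its tropicalisation is controlled by the finitely many intersection numbers of a Weil-divisorial extension of $L$ with the boundary components of $X$; turning this into a proof — verifying it at each point of $S$, which by Kato's structure theory for log regular rings reduces to the toric local models — is where the real work lies. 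Once this is in place the remaining verifications (that $\overline L$ is a log line bundle, of degree zero) are comparatively soft, and injectivity is immediate from separatedness.
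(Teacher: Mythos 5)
Your setup and reductions are sound, and you have correctly identified both the candidate extension ($\overline L=j_*L$, with $M^{\gp}_{X}\cong j_*\cO^{\times}_{X_U}$ by Kato's theorem, which is \ref{lemma:niziol}) and the crux of the matter. But the proposal stops exactly where the proof has to start: you write that verifying bounded monodromy of $j_*L$ ``is where the real work lies'' and offer only a heuristic. That verification is not a routine local computation, and the paper deliberately does \emph{not} attempt it. Instead, for essential surjectivity it represents $L$ by a Cartier divisor $D$, invokes the semistable reduction theorem of \cite{adiprasito2018semistable} to produce a cover $T'\to T$ (a log blowup composed with a root stack, an isomorphism over $V$) and a log modification $X'\to X_{T'}$ with $X'$ \emph{regular}; there the schematic closure $\overline D$ is again Cartier, so $\ca O(\overline D)$ is an honest line bundle extending $L$ --- and line bundles have bounded monodromy for free. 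One then transports this back using the invariance of $\Logpic$ under log modifications (\ref{property:logpic_modifications}) and descends along $T'\to T$ using that $\mathbf{LogPic}$ is a stack for the topology generated by log \'etale covers, log modifications and root stacks (\ref{property:log_pic_is_a_log_stack}). Your heuristic (``intersection numbers of a Weil-divisorial extension with the boundary'') is in fact gesturing at this mechanism, but on a non-regular $X$ the Weil closure of $D$ need not be Cartier and its multidegrees on fibre components are not defined, so the direct computation you defer is genuinely the hard part, not a verification.

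Two smaller points. The purity claim $R^1j_*\cO^{\times}_{X_U}=0$ for log regular $X$ (needed to know that $j_*L$ is a torsor rather than a pseudo-torsor) is asserted as ``an elementary computation on an \'etale toric chart'' but not proved; it amounts to saying that the class group of a strictly henselian log regular local ring is generated by boundary divisors, which is a theorem rather than a formality, and the paper's route avoids needing it. For injectivity, ``the locus of agreement is a closed subscheme'' presupposes that the diagonal of $\Logpic^0_{X/S}$ is representable by closed immersions, which is not among the properties you may quote for this non-representable sheaf; the paper instead proves full faithfulness directly by showing that $j_*$ is a quasi-inverse to $j^*$ on $M^{\gp}_{X_T}$-torsors, using only $M^{\gp}_{X_T}\iso j_*\cO^{\times}_{X_V}$. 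That argument you could adopt verbatim.
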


In the case where $S$ is Dedekind this answers positively for Jacobians of curves a question of Eriksson, Halle, and Nicaise in \cite{Eriksson2015Logarithmic-interpretation}, who asked for the existence of a log N\'eron model. 

To connect \ref{thm:intro_logNMP} with classical algebraic geometry, we have to bring the problem back from the category of log schemes to the category of schemes. There is a standard procedure to do so: the category $\cat{Sch}/S$ embeds into $\cat{LSch}/S$ by giving $T \to S$ its pullback (``strict") log structure, and we may thus restrict the functor $\Logpic^0_{X/S}$ to $\cat{Sch}/S$. We denote the resulting functor, the ``strict" log Jacobian, by $\sLPic^0_{X/S}$. It is an immediate consequence that the strict logarithmic Jacobian $\sLPic^0_{X/S}$ satisfies the classical N\'eron mapping property. The functors $\Logpic^0_{X/S}$ and $\sLPic^0_{X/S}$ are very different in nature: good properties of $\Logpic^0_{X/S}$ such as properness, or even quasi-compactness, are generally lost in passing to $\sLPic^0_{X/S}$. This is however compensated by the following positive result:

\begin{theorem}[\ref{thm:sPic_representable}]
Let $X/S$ be a vertical log curve. The functor $\sLPic^0_{X/S}$ is representable by quasi-separated, smooth algebraic space over $\ul S$. 

If $S$ is log regular (e.g. a toroidal variety with divisorial log structure), then $\sLPic^0_{X/S}$ is the N\'eron model of $\Pic^0_{X_U/U}$. 
\end{theorem}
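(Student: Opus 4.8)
The argument splits naturally into two parts corresponding to the two assertions: (i) representability of $\sLPic^0_{X/S}$ by a quasi-separated smooth algebraic space over $\ul S$ for any vertical log curve $X/S$, and (ii) identification with the N\'eron model when $S$ is log regular. For part (i), the plan is to work \'etale-locally on $\ul S$. Since $\Logpic_{X/S}$ is log algebraic, it has a logarithmically \'etale cover by a log scheme; after passing to a chart, this cover is strict over $\ul S$ in a neighbourhood, so one expects $\sLPic^0_{X/S}$ to acquire a smooth cover by an algebraic space. More concretely, I would first analyze the tropical part: the strict tropical Jacobian $\sTPic^0_{X/S}$ (the quotient of $\sLPic^0$ by the fibrewise identity component $\Pic^0$) should be shown to be an \'etale algebraic space by an explicit combinatorial description in terms of the dual graphs of the fibres and the monoid $\oM_S$ — this is essentially a sheafification of a functor built from finitely generated abelian groups and is visibly \'etale and locally of finite presentation. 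Then $\sLPic^0_{X/S}$ sits in an extension $0 \to \Pic^0_{X/S} \to \sLPic^0_{X/S} \to \sTPic^0_{X/S} \to 0$ with $\Pic^0_{X/S}$ (the multidegree-zero relative Picard space) smooth and quasi-separated; representability of an extension of an \'etale algebraic space by a smooth quasi-separated one is formal, and smoothness and quasi-separatedness are inherited. The main subtlety I anticipate is controlling the bounded monodromy condition as one varies in families — checking that it is an open/closed condition compatible with the \'etale descent and doesn't destroy algebraicity of the strict functor — together with verifying that $\sLPic^0_{X/S}$ really is the kernel of the degree map (that degree is locally constant on the strict functor).

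For part (ii), assume $S$ log regular, so by \ref{sec:log_regular} $S$ is (\'etale-locally) a toroidal variety with $U \subset S$ the complement of the boundary divisor and the log structure is divisorial. By Theorem \ref{theo:intro_NMP_logpic}, $\Logpic^0_{X/S}$ satisfies the N\'eron mapping property for log smooth morphisms; restricting to strict morphisms $T \to S$ from schemes (which are in particular log smooth when $T$ is, say, smooth over $S$ with pullback log structure), one gets that $\sLPic^0_{X/S}(T) = \Logpic^0_{X/S}(T) = \Pic^0_{X_U}(T \times_S U)$ for $T/S$ smooth — precisely the classical N\'eron mapping property. Combined with part (i), which gives that $\sLPic^0_{X/S}$ is a smooth algebraic space over $\ul S$ restricting to $\Pic^0_{X_U/U}$ over $U$ (the latter because the log structure is trivial on $U$, so $\sLPic^0 = \Logpic^0 = \Pic^0$ there), this is exactly the definition of the N\'eron model. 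One should also note quasi-separatedness from part (i) to match any separatedness bookkeeping, though as remarked in the introduction separatedness is not required over higher-dimensional bases.

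The step I expect to be the genuine obstacle is the representability in part (i) — specifically, producing an honest smooth algebraic-space cover of $\sLPic^0_{X/S}$ over $\ul S$ from the log-algebraicity of $\Logpic_{X/S}$, since passing from the log category to schemes via the strict log structure is exactly where quasi-compactness and properness are lost, so one cannot simply transport the log-algebraic presentation. The resolution should come from a careful local study: choose a global chart for $\oM_S$ \'etale-locally on $S$, write down the combinatorial model for $\sTPic^0_{X/S}$ explicitly (cokernel of an edge-length matrix valued in $\oM_S^{\gp}$, sheafified), check it is \'etale and locally finitely presented, and then glue with the Picard space. The rest — smoothness, compatibility with base change, the identification over $U$ — is then routine given the earlier results.
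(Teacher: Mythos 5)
Your plan follows the paper's proof essentially verbatim: the paper likewise reduces part (i) to representability of the strict tropical Jacobian via the exact sequence $0 \to \Pic^0_{X/S} \to \sLPic^0_{X/S} \to \sTPic^0_{X/S} \to 0$ (realising $\sLPic^0$ as a $\Pic^0$-torsor over the \'etale space $\sTPic^0$), and part (ii) is exactly \ref{thm:Neron} combined with the observation that strict morphisms from smooth $\ul S$-schemes are log smooth. The step you flag as the genuine obstacle --- \'etaleness of the tropical part in the presence of bounded monodromy --- is precisely where the paper concentrates its effort: it proves local constructibility of $\sTPic^0_{X/S}$ (hence representability by \ref{cor:loc_constructible_implies_representable_big}) using nuclear neighbourhoods and \ref{lemma:atomic_bm}, where the bounded monodromy condition is exactly what guarantees that homomorphisms $H_1(\mathfrak X_s)\to\oM^{gp}_s$ extend uniquely to generizations, so the naive ``visibly \'etale'' claim for the unrestricted Hom-sheaf would in fact fail.
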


The strict tropical Jacobian $\sTPic^0_{X/S}$, defined above as a quotient, also has a natural log geometric interpretation. The log Picard group $\Logpic_{X/S}$ has a ``tropicalization" $\Tropic_{X/S}$, an essentially combinatorial object which determines the features of $\Logpic_{X/S}$ which are not present in the Jacobian $\Pic^0_{X/S}$ of $X/S$. Restricting the tropical Jacobian $\Tropic^0_{X/S}$ -- that is, the degree $0$ part of $\Tropic_{X/S}$ -- to schemes by giving a scheme its pullback log structure, as before, produces $\sTPic^0_{X/S}$. The tropical Jacobian plays an important role in the theory of compactifications of the universal Jacobian. It was essentially shown in \cite{Kajiwara1993Logarithmic-c,IIKajiwara2008Logarithmic-a} that subdivisions of $\Tropic^0_{X/S}$ correspond to toroidal compactifications of $\Pic^0_{X/S}$. \Cref{thm:intro_bijection} provides a complementary view of the role of $\Tropic_{X/S}$: the quasi-finite open subgroups of its strict locus determine the quasi-compact, smooth, separated group models of $\Pic^0_{X/S}$.

\section{Background}\label{sec:background}

Here we collect for the convenience of the reader the necessary facts that we will use, especially from the paper \cite{Molcho2018The-logarithmic}. 

\subsection{Log schemes}
All our log schemes are fine and saturated. For a log scheme $S$ we denote by $\ul S$ the underlying scheme. We denote by $\LSch{S}$ the category of log schemes over $S$, and by $\LSchet{S}$ the (big) strict \'etale site over $S$; the small strict \'etale site is denoted $S_{\et}$. 

We write $M_S$ for the log structure of a log scheme $S$, $\o{M}_S$ for its characteristic monoid $M_S/\c{O}_S^*$ (these are sheaves on $S_{\et}$). For a map of log schemes $f:X \rightarrow S$ we denote by $\oM_{X/S}$ the relative characteristic monoid $M_X/ f^*M_S = \o{M}_X/f^{-1}\o{M}_S$.

Let $S$ be a log scheme; we define the logarithmic and tropical multiplicative groups on $S$ to be the sheaves of abelian groups on $\LSchet{S}$ given by
\begin{align*}\mathbb G_{m,S}^{log}\colon (\ul T,M_T) \to M^{\gp}_T(\ul T)\\
 \mathbb G_{m,S}^{trop}\colon (\ul T,M_T) \to \oM^{\gp}_T(\ul T). 
\end{align*} 
\subsection{Log curves}

\begin{definition}
\label{def: logcurve}
A log curve $X\to S$ is a proper, vertical, integral, log smooth morphism of log schemes with connected and reduced geometric fibers of pure dimension 1.
\end{definition}
The underlying morphism of schemes $\ul X\to \ul S$ is a prestable curve as in \cite[\href{https://stacks.math.columbia.edu/tag/0E6T}{Tag 0E6T}]{stacks-project}. Our definition is the same as that of \cite{Kato2000Log-smooth-defo} except that we have added the assumption that the morphism be vertical; this means that the characteristic sheaf $\o{M}_{X/S}$ is a sheaf of groups, or equivalently that it is supported exactly on the non-smooth locus $X^{nsm}$ of $X$ over $S$.

\subsection{Sites, constructibility and representability}\label{subsec:constructibility}
We leave for a moment the category of log schemes. For a scheme $X$, we write $X_{\et}$ for the small \'etale site and $\Schet{X}$ for the big \'etale site.

There is a morphisms of sites $\mathfrak i\colon \Schet{X} \to X_{\et}$ given by the inclusion of categories $X_{\et} \to \Schet{X}$. We have functors between the categories of sheaves
  \begin{align}\label{eq:i_*}
 \frak i_*\colon &   \shSchet{X} \to \Sh(X_\et); \frak i_*\ca F(U/X) = \ca F(U) \;\;\; \text{ and}\\
 \frak i^*\colon & \Sh(X_\et) \to  \shSchet{X}
 \end{align}
  defined by setting $\frak i^*\ca F(j\colon T \to X)$ to be $j^*\ca F(T)$.
 
 \begin{definition}
A sheaf $\ca F \in \Sh({\cat{Sch}/X)_{\et}}$ is \emph{locally constructible} if the natural map $\frak i^*\frak i_*\ca F \to \ca F$ is an isomorphism.
  \end{definition}
  
Representability reduces to local constructibility via the well-known following lemma:

\begin{lemma}[\cite{Artin1973theoremes} VII, 1.8]\label{cor:loc_constructible_implies_representable_big}
Let $S$ be a Noetherian scheme. A sheaf $\ca F$ on $(\cat{Sch}/S)_{\et}$ is locally constructible if and only if it is representable by a (quasi-separated) \'etale algebraic space over $S$. 
\end{lemma}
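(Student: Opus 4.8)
The plan is to reduce to a statement about sheaves on the \emph{small} \'etale site $S_\et$, where the classical theory applies. The key observation is that local constructibility, by definition, means precisely that $\ca F$ lies in the essential image of $\frak i^*$, i.e.\ that $\ca F \cong \frak i^*\frak i_*\ca F$. So the first step is to understand what kind of object $\frak i_*\ca F$ is on $S_\et$. Here I would invoke the classical result (SGA~4 or the stacks project) that a sheaf of sets on the small \'etale site of a Noetherian scheme is \emph{constructible} (in the sense of being a filtered colimit of \dots or rather: locally constant with finite stalks on a stratification) if and only if it is representable by an \'etale algebraic space of finite presentation; more precisely one wants the version without finiteness, matching the ``(quasi-separated) \'etale algebraic space'' in the statement, which is the content of \cite[VII, 1.8]{Artin1973theoremes}. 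So really the lemma as stated is essentially a direct citation: the work is in matching the big-site notion of local constructibility with the small-site notion used in the cited reference.

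Concretely, I would argue as follows. First I would record that $\frak i^*$ and $\frak i_*$ restrict to an equivalence between $\Sh(S_\et)$ and the full subcategory of locally constructible sheaves in $\shSchet{S}$: one direction is the definition, and for the other direction one checks $\frak i_*\frak i^* \cong \id$ on $\Sh(S_\et)$, which follows from the explicit colimit description of $\frak i^*$ together with the fact that $S_\et$ has a final object in each slice (the identity factorisation $U \to U \to S$), so the colimit computing $(\frak i^*\ca G)(U)$ for $U \to S$ \'etale is just $\ca G(U)$. Second, I would note that if $Y \to S$ is an \'etale algebraic space, then the representable sheaf $h_Y$ on $\Schet{S}$ is locally constructible: indeed $\frak i_* h_Y = h_Y|_{S_\et}$ and $\frak i^*(h_Y|_{S_\et}) \to h_Y$ is an isomorphism because any $T \to S$ factors \'etale-locally through $Y$ after pulling back, or more cleanly because $Y \to S$ \'etale means $h_Y$ is already a sheaf on $S_\et$ pulled back to the big site. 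Conversely, given $\ca F$ locally constructible, $\frak i_* \ca F \in \Sh(S_\et)$, and by the cited \cite[VII, 1.8]{Artin1973theoremes} applied on $S_\et$ (using $S$ Noetherian) it is represented by a quasi-separated \'etale algebraic space $Y/S$; then $\ca F \cong \frak i^* \frak i_* \ca F \cong \frak i^* h_Y = h_Y$, so $\ca F$ is representable by $Y$.

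The main obstacle is purely bookkeeping: ensuring that the notion of ``locally constructible'' defined here via the big site agrees with whatever hypothesis \cite[VII, 1.8]{Artin1973theoremes} actually uses on the small site --- in \emph{loc.\ cit.}\ the statement is phrased for sheaves on $S_\et$ and the relevant finiteness condition is the existence of an \'etale stratification trivialising $\ca F$ with finite (or at least ``ind-finite'') stalks, which for Noetherian $S$ is exactly ``ind-(locally constant constructible)''. Since the lemma's phrasing already hedges with ``(quasi-separated)'', I would not belabour the finiteness of stalks and would simply cite the reference for the small-site equivalence, treating the big-site reformulation as the content of the definition together with the adjunction $(\frak i^*, \frak i_*)$ restricting to an equivalence on the relevant subcategory. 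One should take minor care that $\frak i^*$ preserves the sheaf property and that $\frak i_*$ of a representable sheaf is what one expects (immediate from \eqref{eq:i_*}), but these are formal.
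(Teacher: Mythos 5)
The paper gives no proof of this lemma beyond the citation to \cite[VII, 1.8]{Artin1973theoremes}, supplemented by \ref{remark:loc_sep_qsep}, which notes that an \'etale algebraic space is automatically locally separated and, over a locally Noetherian base, quasi-separated. Your reduction to the small \'etale site via the $(\frak i^*,\frak i_*)$ adjunction --- showing $\frak i_*\frak i^*\cong \id$ on $\Sh(S_\et)$, checking that representable sheaves of \'etale spaces are locally constructible, and then citing the small-site equivalence --- is a correct unpacking of exactly what that citation asserts, so your proposal and the paper take essentially the same route.
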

\begin{remark}\label{remark:loc_sep_qsep}
The original statement of \ref{cor:loc_constructible_implies_representable_big} in \cite{Artin1973theoremes} states the equivalence for \'etale locally separated algebraic spaces; however an \'etale algebraic space is automatically locally separated as its diagonal is an open immersion (\cite[\href{https://stacks.math.columbia.edu/tag/05W1}{Tag 05W1}]{stacks-project}). Moreover, an \'etale algebraic space over a locally noetherian base is automatically quasi-separated, as it is easily seen by combining \cite[\href{https://stacks.math.columbia.edu/tag/03KG}{Tag 03KG}]{stacks-project} and \cite[\href{https://stacks.math.columbia.edu/tag/01OX}{Tag 01OX}]{stacks-project}.
\end{remark}
\subsection{Functors between the categories of log schemes and schemes}\label{subs:functors}
Fix a log scheme $S=(\ul S,M_S)$ and consider the functors
\begin{align*}
\frak f\colon \cat{LSch}/S  \to \cat{Sch}/{\ul S}  ; & \;\;\; T  \mapsto \underline T \\
\frak s\colon \cat{Sch}/{\ul S}  \to \cat{LSch}/S ; &\;\;\;  (g\colon T \to \ul S)  \mapsto ( T,g^*M_S)
\end{align*}
The first forgets the log structure; the second endows an $\ul S$-scheme with the strict (pullback) log structure from $S$.

The functor $\frak f$ is the left adjoint of $\frak s$: for $X\in \cat{LSch}/S$ and $Y\in\cat{Sch}/{\ul S}$, we have
$$\Hom(\underline X,Y)=\Hom(X,(Y,g^*M_S))$$

We write $\shLSchet{S}$ (resp. $\shSchet{\ul S}$) for the category of sheaves on the strict \'etale site on $\cat{LSch}/S$ (resp. the \'etale site on $\cat{Sch}/{\ul S}$). The functors $\frak f$ and $\frak s$ give rise to pushforward functors $\frak f_*$ and $\frak s_*$ on the categories of sheaves:
\begin{align*}
\shSchet{\ul S} & \stackrel{\frak f_*}{\longrightarrow} \shLSchet{S} \\
\shLSchet{S} & \stackrel{\frak s_*}{\longrightarrow} \shSchet{\ul S}. 
 \end{align*}
The functor $\frak s$ takes \'etale coverings to strict \'etale coverings, and commutes with fibered products. Therefore, $\frak s_*$ admits a left adjoint $\frak s^*\colon \shSchet{\ul S} \to \shLSchet{S}$ by \cite[\href{https://stacks.math.columbia.edu/tag/00WX}{Tag 00WX}]{stacks-project}. 

Since the counit $\frak f\circ \frak s\to 1_{\cat{Sch}}$ is an isomorphism, $\frak s_*\frak f_*$ is isomorphic to the identity. One then obtains by adjunction a morphism of functors $\frak s^*\to \frak f_*$.
\begin{lemma}\label{lemma:s^*f_*}
The map of functors $\frak s^*\to \frak f_*$ is an isomorphism. In particular, $\frak f_*$ is exact.
\end{lemma}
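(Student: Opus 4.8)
The plan is to exhibit an explicit description of both functors and check that the natural comparison map is an isomorphism section-valued-wise, i.e. after evaluating on an arbitrary strict log scheme $T/S$. Since $\mathfrak{s}^*$ is the sheafification of the presheaf left adjoint, and both functors land in $\shLSchet{S}$, it suffices to compare presheaves before sheafification and then sheafify; so I would first identify the presheaf $T \mapsto \operatorname{colim}_{\mathfrak{s}(T)\to \mathfrak{s}(Y)} \mathcal F(Y)$ computing $\mathfrak{s}^{\mathrm{pre}}\mathcal F$, where the colimit runs over the comma category of strict maps from $\mathfrak{s}(T)$ to objects coming from $\cat{Sch}/\ul S$.

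First I would observe that, because $\mathfrak{s}$ is fully faithful with essential image exactly the strict log schemes over $S$, and because every object $T \in \cat{LSch}/S$ maps strictly to $\mathfrak{s}(\ul T)$ (the counit being an isomorphism, $T = \mathfrak{s}(\ul T)$ when $T$ is already strict), the comma category $(\mathfrak{s}(T) \downarrow \mathfrak{s})$ has a simple cofinal description: a map $\mathfrak{s}(T) \to \mathfrak{s}(Y)$ is the same as a map $\ul T \to Y$ in $\cat{Sch}/\ul S$ by adjunction, since $\operatorname{Hom}_{\cat{LSch}/S}(\mathfrak{s}(T), \mathfrak{s}(Y)) = \operatorname{Hom}_{\cat{Sch}/\ul S}(\ul{\mathfrak{s}(T)}, Y) = \operatorname{Hom}_{\cat{Sch}/\ul S}(\ul T, Y)$ using that $\mathfrak f \circ \mathfrak s \cong \operatorname{id}$. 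Hence the indexing category for the colimit defining $(\mathfrak s^* \mathcal F)(T)$ (before sheafification) is canonically $(\ul T \downarrow \cat{Sch}/\ul S)$, which has an initial object $\operatorname{id}_{\ul T}$, so the colimit is just $\mathcal F(\ul T)$. On the other hand $(\mathfrak f_* \mathcal F)(T) = \mathcal F(\mathfrak f(T)) = \mathcal F(\ul T)$ by definition. Thus the two presheaves agree, and one checks the comparison map $\mathfrak s^* \to \mathfrak f_*$ is exactly this identification; since $\mathfrak f_* \mathcal F$ is already a sheaf on $\LSchet{S}$ (the strict étale topology on $\cat{LSch}/S$ is induced from the étale topology on $\cat{Sch}/\ul S$ via $\mathfrak f$, so pullback of a sheaf along $\mathfrak f$ is a sheaf), sheafifying the presheaf-level isomorphism gives the claimed isomorphism $\mathfrak s^* \xrightarrow{\sim} \mathfrak f_*$.

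For the final clause, exactness of $\mathfrak f_*$ then follows formally: $\mathfrak f_* \cong \mathfrak s^*$ is a left adjoint, hence right exact; and $\mathfrak f_*$ is a pushforward along a morphism of sites (equivalently, precomposition with the continuous functor $\mathfrak f$ on sheaves of sets followed by the sheaf property), which is left exact because $\mathfrak f$ sends strict étale covers to étale covers and finite limits are computed objectwise — concretely, $(\mathfrak f_* \mathcal F)(T) = \mathcal F(\ul T)$ commutes with finite limits in $\mathcal F$. Being both left and right exact, $\mathfrak f_*$ is exact.

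The main obstacle I anticipate is the careful bookkeeping of which topology and which site each functor is defined on, and in particular verifying that $\mathfrak f_* \mathcal F$ genuinely satisfies the sheaf condition for the strict étale topology on $\cat{LSch}/S$ — this rests on the fact that strict étale covers of a log scheme $T$ are precisely pullbacks of étale covers of $\ul T$, so that the sheaf axiom for $\mathfrak f_* \mathcal F$ at $T$ is literally the sheaf axiom for $\mathcal F$ at $\ul T$. Once that is in place, the identification of the colimit with an initial-object colimit is routine, and the rest is formal adjunction nonsense. I do not expect the representability lemma \ref{cor:loc_constructible_implies_representable_big} or anything about log regularity to enter here; this is purely a statement about the two sites and the adjoint pair $(\mathfrak f, \mathfrak s)$.
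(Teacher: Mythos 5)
Your argument is correct and is essentially the paper's proof: both identify the presheaf computing $\frak s^*\ca F$ at a log scheme $T$ as a colimit over factorizations $T \to \frak s V \to S$, use the adjunction $\frak f \dashv \frak s$ (i.e.\ $\Hom_{\cat{LSch}/S}(T,\frak s V)=\Hom_{\cat{Sch}/\ul S}(\ul T,V)$) to see that this index system has an initial element given by the unit $T \to \frak s\frak f T$, and conclude that the colimit is $\ca F(\ul T)=\frak f_*\ca F(T)$. You additionally spell out two points the paper leaves implicit, namely that $\frak f_*\ca F$ is already a sheaf (so no further sheafification is needed) and that exactness follows by combining left exactness of evaluation with right exactness of the left adjoint $\frak s^*$; these are correct, and the only blemishes are notational (you write $\frak s(T)$ for a log scheme $T$ where you mean the comma category $(T \downarrow \frak s)$, and ``arbitrary strict log scheme'' where the comparison must be made on all objects of $\cat{LSch}/S$, as your later argument in fact does).
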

\begin{proof}
Let $\c{F}$ be a presheaf on $\cat{Sch}/{\ul S}$, and $\c{G}$ a presheaf on $\cat{\LSch{S}}$. For any log scheme $X \to S$ with underlying scheme $\ul{X}$ we have a canonical factorization $X \to \mathfrak{s}\ul{X} \to S$. Thus, for any map of presheaves $\mathfrak{f}_*\c{F} \to \c{G}$ and $X \in \cat{\LSch{S}}$, the map $\mathfrak{f}_*\c{F}(X) := \c{F}(\ul{X}) \to \c{G}(X)$ factors as $\c{F}(\ul{X}) \to \c{G}(\mathfrak{s}\ul{X}):= \mathfrak{s}_*\c{G}(X) \to \c{G}(X)$, i.e. $\mathfrak{f}_*$ is left adjoint to $\mathfrak{s}_*$. 
\end{proof}
\begin{remark}\label{remark:exactness}\leavevmode
We write $\on{Ab}(-)$ for the category of sheaves of abelian groups on a site. Both $\frak s$ and $\frak f$ are continuous and cocontinuous functors (see \cite[\href{https://stacks.math.columbia.edu/tag/00XJ}{Tag 00XJ}]{stacks-project} and \cite[\href{https://stacks.math.columbia.edu/tag/00WV}{Tag 00WV}]{stacks-project}); it follows that the pushforwards
\begin{align*}\frak s_*\colon \abLSchet{S}\to \abSchet{\ul S}& \text{ and}\\
\frak f_*\colon \abSchet{\ul S}\to \abLSchet{S}&
\end{align*}
 are exact, by \cite[\href{https://stacks.math.columbia.edu/tag/04BD}{Tag 04BD}]{stacks-project}. 
\end{remark}

The tropical multiplicative group satisfies the following useful property (not shared by its logarithmic counterpart $\mathbb G_{m,S}^{log}$):
\begin{lemma}\label{lemma:strictG_m}
Consider the sheaf $\mathbb G_{m,S}^{trop}$ on $\LSchet{S}$. We have a canonical isomorphism
$\frak s_*\mathbb G_{m,S}^{trop}=\frak i^*\oM^{\gp}_{\ul S}$
of sheaves on $\Schet{\ul S}$, where $\frak i^*$ is the functor from \ref{eq:i_*}. In particular $\frak s_*\mathbb G_{m,S}^{trop}$ is locally constructible.
\end{lemma}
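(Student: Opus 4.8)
The plan is to unwind both sides of the claimed identity by direct computation of the sheaf $\frak s_*\mathbb G_{m,S}^{trop}$ on $\Schet{\ul S}$ and to match it with $\frak i^*\oM^{gp}_{\ul S}$, using the adjunction $(\frak s, \frak s_*)$ only insofar as $\frak s_*$ is pushforward along a continuous functor. First I would recall that for an $\ul S$-scheme $g\colon T\to \ul S$ we have by definition $\frak s_*\mathbb G_{m,S}^{trop}(T) = \mathbb G_{m,S}^{trop}(\frak s T) = \oM^{gp}_{\frak s T}(T)$, where $\frak s T = (T, g^*M_S)$ carries the strict (pullback) log structure. Thus the characteristic monoid of $\frak s T$ is $\oM_{\frak s T} = g^{-1}\oM_{\ul S}$ (the étale-sheaf pullback of $\oM_{\ul S}$), so $\frak s_*\mathbb G_{m,S}^{trop}(T) = (g^{-1}\oM^{gp}_{\ul S})(T)$. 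On the other side, by the description of $\frak i^*$ in \ref{eq:i_*}, $\frak i^*\oM^{gp}_{\ul S}$ is the sheafification of the presheaf sending $T/\ul S$ to $\colim \oM^{gp}_{\ul S}(V)$ over factorizations $T\to V\to \ul S$ with $V\to \ul S$ étale; this colimit is exactly the stalk-wise / small-étale pullback $g^{-1}\oM^{gp}_{\ul S}$ evaluated on $T$, after sheafification. So the two sheaves literally agree by construction.

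The key steps, in order: (i) identify $\oM_{\frak s T}$ with $g^{-1}\oM_{\ul S}$ for the strict log structure — this is immediate from the definition of pullback log structure, since $g^*M_S$ has characteristic monoid the image of $g^{-1}\oM_{\ul S}$, and strictness gives equality rather than just a map; (ii) pass to associated groups, noting that $(-)^{gp}$ commutes with the pullback $g^{-1}$ since it is a filtered colimit of stalks and $(-)^{gp}$ is exact on stalks; (iii) observe that $T\mapsto (g^{-1}\oM^{gp}_{\ul S})(T)$, as a sheaf on $\Schet{\ul S}$, is by definition $\frak i^*\oM^{gp}_{\ul S}$, because $\frak i^*$ of a small-étale sheaf is precisely its restriction-pullback to the big étale site, which is computed by the colimit formula in \ref{eq:i_*}; (iv) check that the two identifications are compatible with restriction maps, so that the isomorphism is canonical (functorial in $T$), and hence an isomorphism of sheaves, not just a bijection on sections. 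The ``in particular'' clause is then immediate: $\frak i^*\oM^{gp}_{\ul S}$ is of the form $\frak i^*(\text{small-étale sheaf})$, hence locally constructible by the very definition of local constructibility (the counit $\frak i^*\frak i_*\frak i^*\mathcal G\to \frak i^*\mathcal G$ is an isomorphism, since $\frak i_*\frak i^* = \mathrm{id}$ on $\Sh(\ul S_{\et})$, as $\ul S_\et\to \Schet{\ul S}$ is fully faithful).

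The only genuinely delicate point is step (i)–(ii): being careful that $\frak s_*$ really does compute $\mathbb G_{m,S}^{trop}$ on the strict log scheme $\frak s T$ (this is just the definition of pushforward along the continuous functor $\frak s$, but one must check $\frak s$ sends strict étale covers to strict étale covers, which is clear), and that the characteristic monoid of a strict pullback log structure is the honest étale pullback $g^{-1}\oM_{\ul S}$ — here one uses that the log structure is fine saturated so that ``characteristic monoid of $g^*M_S$'' behaves well under pullback, but for the strict structure there is nothing to prove. I expect the main obstacle, such as it is, to be purely bookkeeping: making the colimit description of $\frak i^*$ from \ref{eq:i_*} line up precisely with $g^{-1}$ of an étale sheaf, and confirming the comparison map one writes down is the canonical one (i.e. identifying it with the adjunction unit/counit rather than an ad hoc map). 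No serious difficulty is anticipated beyond keeping the site-theoretic bookkeeping straight.
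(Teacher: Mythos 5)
Your proposal is correct and follows essentially the same route as the paper, whose entire proof is the chain of equalities $\frak s_*\mathbb G_{m,S}^{trop}(T)=\mathbb G_{m,S}^{trop}(\frak sT)=f^{-1}\oM^{gp}_S(T)=\colim_{T\to V\to \ul S}\oM^{gp}_S(V)=\frak i^*\oM^{gp}_S(T)$. Your additional care about sheafification, compatibility with restriction maps, and the justification of the ``in particular'' clause via $\frak i_*\frak i^*=\mathrm{id}$ are all sound elaborations of steps the paper leaves implicit.
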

\begin{proof}
Let $g\colon T\to \ul S$ be a morphism of schemes. Then
$$\frak s_*\mathbb G_{m,S}^{trop}(T)=\mathbb G_{m,S}^{trop}(\frak sT)=g^{-1}\oM^{\gp}_S(T)=\frak i^*\oM^{\gp}_S(T).\qedhere$$
\end{proof}

\begin{corollary}
The sheaves $\frak s_*\mathbb G_{m,S}^{trop}$ and $\frak s_*\mathbb G_{m,S}^{log}$ are representable by quasi-separated group algebraic spaces, respectively \'etale and smooth.
\end{corollary}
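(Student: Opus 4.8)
The plan is to treat the two sheaves in turn, obtaining the logarithmic one as an extension built from the tropical one.

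For $\frak s_*\mathbb G_{m,S}^{trop}$ there is little to do beyond assembling what is already available. By \ref{lemma:strictG_m} the sheaf $\frak s_*\mathbb G_{m,S}^{trop}$ is identified with $\frak i^*\oM^{gp}_{\ul S}$, and in particular is locally constructible; so \ref{cor:loc_constructible_implies_representable_big} (for $\ul S$ locally Noetherian, which we may assume, representability being local on $\ul S$) shows it is representable by a quasi-separated \'etale algebraic space over $\ul S$. Since $\mathbb G_{m,S}^{trop}$ is a sheaf of abelian groups and $\frak s_*$ is a pushforward, the representing object $G^{trop}$ carries a compatible commutative group structure.

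For $\frak s_*\mathbb G_{m,S}^{log}$ I would start from the short exact sequence of abelian sheaves on $\LSchet S$
\[ 0 \to \mathbb G_{m} \to \mathbb G_{m,S}^{log} \to \mathbb G_{m,S}^{trop} \to 0, \]
where $\mathbb G_m$ is the sheaf $T\mapsto\Gamma(\ul T,\cO_{\ul T}^*)$; this is induced by the characteristic sequence $0\to\cO_{\ul T}^*\to M_T^{gp}\to\oM_T^{gp}\to 0$ on underlying schemes, and exactness on the right holds because $M_T^{gp}\to\oM_T^{gp}$ is an epimorphism of \'etale sheaves, so a section of $\mathbb G_{m,S}^{trop}$ lifts to $\mathbb G_{m,S}^{log}$ after a strict \'etale cover. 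Applying the exact functor $\frak s_*$ (\ref{remark:exactness}) and using $\frak s_*\mathbb G_m=\mathbb G_{m,\ul S}$ — valid because $\mathbb G_m=\frak f_*\mathbb G_{m,\ul S}$ on $\LSchet S$ and $\frak s_*\frak f_*\cong\id$ — yields a short exact sequence of abelian sheaves on $\Schet{\ul S}$
\[ 0 \to \mathbb G_{m,\ul S} \to \frak s_*\mathbb G_{m,S}^{log} \to G^{trop} \to 0. \]
Hence $\frak s_*\mathbb G_{m,S}^{log}$ is a $\mathbb G_{m,\ul S}$-torsor, for the \'etale topology, over the algebraic space $G^{trop}$. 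A $\mathbb G_m$-torsor over an algebraic space is again an algebraic space — the projection to the base is affine, the torsor being the complement of the zero section in a line bundle — so $\frak s_*\mathbb G_{m,S}^{log}$ is representable by a commutative group algebraic space $G^{log}$, affine over $G^{trop}$.

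Finally I would read off the asserted properties of $G^{log}$. It is smooth over $\ul S$, since $G^{log}\to G^{trop}$ is a torsor under the smooth group $\mathbb G_{m,\ul S}$, hence smooth, while $G^{trop}\to\ul S$ is \'etale. It is quasi-separated over $\ul S$, since $G^{log}\to G^{trop}$ is affine, hence quasi-separated, and $G^{trop}\to\ul S$ is quasi-separated (an \'etale algebraic space over a locally Noetherian base is quasi-separated; cf. \ref{remark:loc_sep_qsep}). I do not expect a real obstacle: the only points deserving a word of care are the right-exactness of the first displayed sequence — a strict-\'etale-local lifting statement — and the representability of $\mathbb G_m$-torsors over algebraic spaces, both standard.
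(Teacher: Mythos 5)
Your proof is correct and follows essentially the same route as the paper: the paper's (very terse) proof likewise handles the tropical sheaf via \ref{lemma:strictG_m} and \ref{cor:loc_constructible_implies_representable_big}, and then deduces the logarithmic case from the exact sequence $0\to \mathbb G_{m,\ul S}\to \frak s_*\mathbb G_{m,S}^{log}\to \frak s_*\mathbb G_{m,S}^{trop}\to 0$, exactly the one you obtain by applying the exact functor $\frak s_*$ to the characteristic sequence. Your write-up merely fills in the details the paper leaves implicit (right-exactness, the torsor/representability argument, smoothness and quasi-separatedness), all correctly.
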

\begin{proof}
The statement for $\frak s_*\mathbb G_{m,S}^{trop}$ follows by \ref{cor:loc_constructible_implies_representable_big}.  Then we conclude by exactness of the sequence
$$0\to \mathbb G_{m,S}\to \frak s_*\mathbb G_{m,S}^{log}\to \frak s_*\mathbb G_{m,S}^{trop}\to 0.\qedhere$$ 
\end{proof}

\section{Tropical notions on the big site}\label{sec:big_tropical}

In this section we develop the necessary tools to introduce the tropical and logarithmic Jacobian of \cite{Molcho2018The-logarithmic}. We carefully define the sheaf of lattices $\mathcal H_{1,X/S}$ of first Betti homologies of the dual graphs of the fibers. Then, we construct the tropical Jacobian as a global quotient of a tropical torus $\sheafhom(\mathcal H_{1,X/S},\mathbb G_{m,S}^{trop})^{\dagger}$ by the sheaf of lattices $\mathcal H_{1,X/S}$; this will facilitate the proof of the representability of the strict tropical Jacobian in \ref{sec:representability}. The original definition of Molcho and Wise is slightly different but we show it to be equivalent to ours.

\subsection{Tropicalization}\label{subs:tropicalization}

A \emph{graph} consists of finite sets $V$ of vertices and $H$ of half-edges, with an `attachment' map $r\colon H \to V$ from the half-edges to the vertices, and an `opposite end' involution $\iota\colon H \to H$ on the half-edges. To be consistent with our convention that log curves are vertical, we require this involution $\iota$ to have no fixed points; because our curves have connected geometric fibers we also require our graphs to be connected. An edge is an unordered pair of half-edges interchanged by the involution, and we denote the set of them by $E$. 

Let $\o M$ be a sharp monoid. A \emph{tropical curve metrized by $\oM$} is a graph $(V,H,r,i)$ together with a function $\ell\colon E \to \o M\setminus\{0\}$. 

Let $S$ be a geometric logarithmic point\footnote{A log scheme whose underlying scheme is the spectrum of an algebraically closed field. }, and let $X/S$ be a log curve. The associated tropical curve (\emph{tropicalisation}) $\mathfrak X$ of $X$ has as underlying graph the usual dual graph of $X$ (with a vertex for each irreducible component and a half-edge for each branch at each singular point). To define the labelling $\ell\colon H \to \o M_S(S)$ we recall from \cite{Kato2000Log-smooth-defo} that the stalk of the log structure at a singular point $x$ has characteristic monoid $\o{M}_{X,x} \cong \o M_S(S) \oplus_{\NN} \NN^2$, where the coproduct is over the diagonal map $\NN \rightarrow \NN^2$, and a map $\NN \rightarrow \o M_S(S)$. If $e$ is the edge corresponding to $x$ then we set $\ell(e)$ to be the image of $1$ in $\o M_S(S)$; this is independent of the choice of presentation.


A map of monoids $\phi\colon \o M\to \o N$ determines a `contracted' tropical curve $\mathfrak X_{\phi}$, whose graph is obtained from the graph underlying $\mathfrak X$ by contracting all edges $e$ whose length $\ell(e)$ maps to $0$ via $\phi$, and with length of the remaining edges induced by $\phi$. Molcho and Wise define a tropical curve over an arbitrary log scheme $S$ as the data of a tropical curve for each geometric point of $S$, together with contraction maps between them compatible with geometric specialisations, but we will not use this notion.

If $S$ is a log scheme (but not necessarily a log point) and $X/S$ a log curve, the edge-labellings of the tropicalization of $X$ at various geometric points of $S$ vary nicely in families, as a consequence of the following proposition:

\begin{proposition}\label{proposition:special_log_structure}
Let $\pi \colon X \to S$ be a log curve and $\alpha \colon X^{nsm} \to S$ the structure morphism of the non-smooth locus of $\pi$. The category of cartesian squares
\begin{center}
\begin{tikzcd}
X \ar[r]\ar[d] & S \ar[d]\\
X' \ar[r] & S'
\end{tikzcd}
\end{center}
where $X' \to S'$ is a log curve and $S \to S'$ induces the identity on underlying schemes (with obvious morphisms) has a terminal object
\begin{center}
\begin{tikzcd}
X \ar[r]\ar[d] & S \ar[d]\\
X^\# \ar[r] & S^\#,
\end{tikzcd}
\end{center}
where the log structure $M_S^\#$ on $S^\#$ satisfies $\o M_S^\#=\alpha_*\NN$ (and $\NN$ denotes the constant sheaf with value $\NN$ on $X^{nsm}$).
\end{proposition}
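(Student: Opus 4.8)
The plan is to construct $S^\#$ and $X^\#$ by hand from the local structure theorem for log curves of \cite{Kato2000Log-smooth-defo}, and then to verify the terminal property by a direct computation with induced log structures. Recall the local picture: over a point $x\in X$ above $s\in S$, either $x$ is a smooth point of $X_s$, near which $\pi$ is strict, or $x$ is a node of $X_s$, near which there is an identification $\oM_X\cong \oM_S\oplus_\NN\NN^2$ (the pushout of $\NN^2$ and $\oM_S$ over the diagonal $\NN\to\NN^2$ and some $\NN\to\oM_S$) valid on an \'etale neighbourhood of $x$; the image of $1$ is a section $\rho_x$ of $\oM_S$, it is nowhere zero (otherwise $\oM_X$ would fail to be sharp), and it is intrinsic, being the sum $e_1+e_2$ of the two branch elements of $\oM_X$ at the node; in particular it is insensitive to the involution swapping branches, and compatible with base change.

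First I would assemble the $\rho_x$ into a single morphism of sheaves $\rho\colon \alpha_*\NN\to\oM_S$ on $S_\et$: a section of $\alpha_*\NN$ over an \'etale $S'\to S$ is a locally constant $\NN$-valued function on $X^{ns}_{S'}$, which is sent to the corresponding locally finite combination $\sum n_x\rho_x$; branch-symmetry of $\rho_x=e_1+e_2$ makes this well defined across the monodromy of branches, and the pushforward $\alpha_*$ is precisely what absorbs the monodromy permuting distinct nodes. Since $\oM_S$ is sharp and the $\rho_x$ are nonzero, $\rho$ has trivial kernel in the sense that $\rho^{-1}(0)=0$; hence $M_S^\#:=M_S\times_{\oM_S}\alpha_*\NN$ is a fine saturated log structure on $\underline S$ with $\oM_S^\#=\alpha_*\NN$, and the first projection $M_S^\#\to M_S$ defines $S\to S^\#$, which is the identity on underlying schemes. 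Next I would build $X^\#/S^\#$ in the same spirit: glue the standard node models (the pushout of $\NN^2$ with $\alpha_*\NN$ over the coordinate $\NN$ of the relevant node) to $\pi^{-1}\alpha_*\NN$ on the smooth locus to obtain a sheaf $\oM_{X^\#}$ equipped with a map $\oM_{X^\#}\to\oM_X$ of trivial kernel, set $M_{X^\#}:=M_X\times_{\oM_X}\oM_{X^\#}$, and note $\underline{X^\#}=\underline X$. That $X^\#\to S^\#$ is a log curve is checked locally — strict smooth at smooth points, the standard log smooth model at nodes — while properness, integrality, verticality and the fibral conditions are inherited from the prestable curve $\underline X/\underline S$; and the square is cartesian because $M_X\cong M_{X^\#}\oplus_{\pi^*M_S^\#}\pi^*M_S$, which one verifies on characteristic monoids stalkwise ($\alpha_*\NN\oplus_{\alpha_*\NN}\oM_S=\oM_S$ at smooth points, $(\alpha_*\NN\oplus_\NN\NN^2)\oplus_{\alpha_*\NN}\oM_S=\oM_S\oplus_\NN\NN^2$ at nodes) and which is immediate on the $\cO^*$-part.

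For terminality, consider a cartesian square with log curve $X'\to S'$, with $\underline{S'}=\underline S$, $\underline{X'}=\underline X$ and a given $S\to S'$. A morphism of log structures $M_{S'}\to M_S$ over $\cO_S$ forces $\oM_{S'}\to\oM_S$ to have trivial kernel, and then the canonical morphism $M_{S'}\to M_S\times_{\oM_S}\oM_{S'}$ is an isomorphism (using that $M_S$ is integral and $M_{S'}\to\oM_{S'}$ is surjective); similarly $M_{X'}\cong M_X\times_{\oM_X}\oM_{X'}$. Applying the structure theorem to $X'/S'$ produces, exactly as above, a trivial-kernel morphism $\rho'\colon\alpha_*\NN\to\oM_{S'}$ recording the smoothing parameters of $X'/S'$, and since $X/S$ is the pullback of $X'/S'$ these smoothing parameters map to those of $X/S$, so that $\oM_{S'}\to\oM_S$ carries $\rho'$ to $\rho$. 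Hence $\psi:=\mathrm{id}_{M_S}\times\rho'\colon M_S^\#=M_S\times_{\oM_S}\alpha_*\NN\to M_S\times_{\oM_S}\oM_{S'}=M_{S'}$ is a morphism of log structures over $M_S$, giving the required $S'\to S^\#$, and it lifts to $X^\#\to X'$ by the parallel construction on $X$. For uniqueness, any morphism to $S^\#$ in the category must respect the log curve structures, so on characteristic monoids it must carry the tautological generator $e_x$ of $\alpha_*\NN$ — which is the smoothing parameter of $X^\#/S^\#$ at $x$ — to the smoothing parameter of $X'/S'$ at $x$; this pins down $\alpha_*\NN\to\oM_{S'}$, and since both log structures are induced from $M_S$ it pins down the morphism of log structures, and then the lift to $X^\#\to X'$ is forced.

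The step I expect to be the real obstacle is the first one: showing that the pointwise data furnished by Kato's theorem — the local models and the sections $\rho_x$ — genuinely glue to a morphism $\alpha_*\NN\to\oM_S$ of sheaves on $S$, that is, that the local models can be chosen compatibly along $X^{ns}$ and that all remaining ambiguity is exactly the branch- and node-monodromy accounted for by passing to $e_1+e_2$ and to $\alpha_*\NN$. Once this bookkeeping is done, the construction of $X^\#$, the verification that $X^\#\to S^\#$ is a log curve, the cartesianness, and the universal property are all formal manipulations with induced log structures and the isomorphism $M'\cong M_S\times_{\oM_S}\oM'$.
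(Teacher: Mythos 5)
Your construction is correct, but it takes a different route from the paper: the paper's entire proof is a one-line citation to Olsson's theorem on universal log structures for special morphisms (\cite[Theorem 2.7]{Olsson2003Universal}), together with the observation that a log curve $X/S$ is special in Olsson's sense exactly when $\oM_S=\alpha_*\NN$. What you have written is, in effect, the proof of Olsson's theorem specialised to relative dimension one: you rebuild the universal log structure by hand from Kato's local charts at the nodes, assemble the smoothing parameters into $\rho\colon\alpha_*\NN\to\oM_S$, and take $M_S^\#=M_S\times_{\oM_S}\alpha_*\NN$. The citation buys brevity and offloads the one genuinely fiddly point -- that the stalkwise data of Kato's theorem glues -- onto Olsson; your version buys self-containedness and makes explicit that $\oM_S^\#\to\oM_S$ sends the generator of a component of $X^{ns}$ to its edge-label, which the paper only records as a remark after the proposition. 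The gluing step you flag as the obstacle does go through, for the reason you expect: Kato's chart is valid on an \'etale neighbourhood of the node in $X$, not merely at the stalk, so $\rho_x$ is the germ of a section of $\oM_S$ near $s$ whose stalk at a nearby point is the smoothing parameter of the corresponding node of that fibre (or $0$ where the node has been smoothed); combined with the branch-symmetry $\rho_x=e_1+e_2$ and the passage to $\alpha_*$, this is exactly the compatibility needed.

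Two small points to tighten. First, a direction slip: the map you need on total spaces is $X'\to X^\#$ (i.e.\ $M_{X^\#}\to M_{X'}$), not $X^\#\to X'$. Second, in the uniqueness step, note that a trivial-kernel morphism of sharp monoids need not be injective (e.g.\ the sum map $\NN^2\to\NN$), so the identity $(\oM_{S'}\to\oM_S)\circ\rho'=\rho$ alone does not pin down $\alpha_*\NN\to\oM_{S'}$; what pins it down is, as you say, compatibility with the map on the curves: at a node $x$ the branch elements $e_1^\#,e_2^\#\in\oM_{X^\#,x}$ must go to the \emph{unique} preimages of $e_1,e_2\in\oM_{X,x}$ in $\oM_{X',x}=\oM_{S',s}\oplus_\NN\NN^2$ (unique precisely because $\oM_{S'}\to\oM_S$ has trivial kernel), forcing the tautological generator $e_1^\#+e_2^\#$ to go to the smoothing parameter of $X'/S'$ at $x$. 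With that spelled out, the argument is complete.
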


\begin{proof}
This is \cite[Theorem 2.7]{Olsson2003Universal}, combined with the observation that $X/S$ is special in the sense of \cite[Definition 2.6]{Olsson2003Universal} if and only if $\oM_S=\alpha_*\NN$. 
\end{proof}

\subsection{Subdivisions}

Given a tropical curve $\mathfrak X=(V,H,r,i,\ell)$ with edges marked by a monoid $\oM$, and an edge $e=\{h_1,h_2\}$, one can define a new tropical curve $\mathfrak X'=(V',H',r',i',\ell')$ as follows: $V'$ is obtained from $V$ by adjoining a new vertex $v$, $H'$ is obtained by adding two half-edges $\gamma_1,\gamma_2$ with $r(\gamma_1)=r(\gamma_2)=v$. Then we set $i'(h_1)=\gamma_1$ and $i'(h_2)=\gamma_2$. Finally, we choose lengths $\ell'(\gamma_1)$ and $\ell'(\gamma_2)$ so that their sum is $\ell(h_1)$. On the remaining edges and vertices $r',i',\ell'$ are set to agree with $r,i,\ell$.
\begin{definition}
A tropical curve $\mathfrak X'$ constructed from $\frak X$ as in the paragraph above is called a \textit{basic subdivision} of $\mathfrak X$. A \textit{subdivision} of $\mathfrak X$ is a tropical curve obtained by composing a finite number of basic subdivisions. 
\end{definition}

We state a few simple facts regarding the behaviour of subdivisions with respect to contractions.

\begin{fact}
Let $\mathfrak X$ be a tropical curve metrized by a monoid $\oM$, $\phi\colon M\to N$ be a map of monoids, and $\mathfrak X^{\phi}$ be the contraction of $\mathfrak X$ induced by $\phi$ as in \ref{subs:tropicalization}. Let $\mathfrak Y$ be a subdivision of $\mathfrak X$. Then the contraction $\mathfrak Y^{\phi}$ is a subdivision of $\mathfrak X^{\phi}$.
\end{fact}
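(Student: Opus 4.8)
The plan is to reduce to a single basic subdivision and then run a short case analysis on the value of $\phi$ on the length of the subdivided edge. First I would record two elementary observations: a composition of subdivisions is again a subdivision (immediate from the definition, allowing the empty composition, so that in particular every tropical curve is a subdivision of itself), and contraction by $\phi$ is compatible with this layering. Concretely, if $\mathfrak Y$ is obtained from $\mathfrak X$ by a chain of basic subdivisions $\mathfrak X = \mathfrak Z_0, \mathfrak Z_1, \dots, \mathfrak Z_n = \mathfrak Y$, it suffices to show that each $\mathfrak Z_k^\phi$ is a subdivision of $\mathfrak Z_{k-1}^\phi$; composing these and using transitivity then gives that $\mathfrak Y^\phi$ is a subdivision of $\mathfrak X^\phi$. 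So I may assume $\mathfrak Y$ is a basic subdivision of $\mathfrak X$: it inserts a two-valent vertex $v$ in the interior of an edge $e$, replacing $e$ by edges $e_1, e_2$ of lengths $l_1, l_2$ with $l_1 + l_2 = \ell(e)$.

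Next I would split into cases according to which of $\phi(l_1), \phi(l_2)$ vanish. Since the target monoid of $\phi$ is sharp (it metrizes the tropical curve $\mathfrak X^\phi$), the identity $\phi(l_1) + \phi(l_2) = \phi(\ell(e))$ shows that $\phi(\ell(e)) = 0$ if and only if $\phi(l_1) = \phi(l_2) = 0$. If $\phi(l_1) \neq 0 \neq \phi(l_2)$, then $\phi(\ell(e)) \neq 0$ as well (again by sharpness), nothing among $e_1, e_2, v$ is contracted, and $\mathfrak Y^\phi$ is precisely the basic subdivision of $\mathfrak X^\phi$ along the edge $e$ --- which there carries length $\phi(l_1) + \phi(l_2)$ --- into pieces of length $\phi(l_1)$ and $\phi(l_2)$. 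If exactly one length, say $l_1$, satisfies $\phi(l_1) = 0$, then forming $\mathfrak Y^\phi$ contracts $e_1$, merging $v$ with the far endpoint of $e_1$, and leaves $e_2$ running between the two original endpoints of $e$ with length $\phi(l_2) = \phi(\ell(e))$; hence $\mathfrak Y^\phi$ is canonically isomorphic to $\mathfrak X^\phi$. Finally, if $\phi(l_1) = \phi(l_2) = 0$ (equivalently $\phi(\ell(e)) = 0$), then $\mathfrak Y^\phi$ contracts both $e_1$ and $e_2$, merging the two endpoints of $e$ together with $v$, which is exactly the effect of contracting $e$ in $\mathfrak X^\phi$; again $\mathfrak Y^\phi \cong \mathfrak X^\phi$. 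In all three cases $\mathfrak Y^\phi$ is a subdivision of $\mathfrak X^\phi$, possibly the trivial one. The degenerate subcase where $e$ is a loop is handled identically, with ``merge the two endpoints of $e$'' read as ``delete the loop $e$''.

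I do not expect a genuine obstacle. The only points that need care are the half-edge bookkeeping in the middle case --- verifying that contracting one of the two inserted edges literally recovers $e$ with its length $\phi(\ell(e))$ in $\mathfrak X^\phi$ --- and the appeal to sharpness of the target monoid, together with the standing convention that ``subdivision'' is understood up to isomorphism of tropical curves and that the empty subdivision counts.
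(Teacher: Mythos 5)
Your argument is correct and complete. The paper states this as a Fact with no proof supplied, and your reduction to a single basic subdivision followed by the case analysis on which of $\phi(l_1),\phi(l_2)$ vanish --- resting on sharpness of the target monoid and on the convention that the empty composition of basic subdivisions counts as a subdivision --- is exactly the routine verification the authors leave to the reader.
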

\begin{fact}
Let $S$ be a log scheme with log structure $M_S$ and $X/S$ a log curve. Let $(\bar \eta)\to (\bar s)$ be an \'etale specialization of geometric points, with induced morphism of characteristic monoids $\phi\colon\oM_{\bar s}\to \oM_{\bar \eta}$ (see \cite{cavalieri2017moduli}, Appendix A, for the notion of \'etale specialization). Let $\mathfrak X_{\bar s},\mathfrak X_{\bar \eta}$ be the tropicalizations of $X_{\bar s},X_{\bar \eta}$. Then $\mathfrak X_{\bar\eta}$ is the contraction $\mathfrak X^{\phi}_{\bar s}$ of $\mathfrak X_{\bar s}$ induced by $\phi$. 
\end{fact}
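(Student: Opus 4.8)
The plan is to reduce to a local computation and then build an isomorphism of tropical curves edge by edge and vertex by vertex. Since the tropicalisations, the contraction operation, and the map $\phi$ are all compatible with \'etale localisation on $S$, I would first assume $\ul S$ strictly henselian local with closed point $\bar s$ and $\bar\eta$ a geometric point of $\ul S$, so that $\phi\colon\oM_{\bar s}\to\oM_{\bar\eta}$ is simply the map on stalks of the characteristic sheaf. Both $\mathfrak X_{\bar\eta}$ and $\mathfrak X_{\bar s}^{\phi}$ are then graphs equipped with a fixed-point-free involution and an $\oM_{\bar\eta}\setminus\{0\}$-valued length function, and the goal is a canonical isomorphism identifying all of this data.

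First I would treat the edges and their lengths, the part genuinely special to the logarithmic setting. For a node $x$ of $X_{\bar s}$, Kato's local structure theorem for log curves \cite{Kato2000Log-smooth-defo} gives an \'etale neighbourhood on which $\oM_{X_{\bar s},x}\cong\oM_{\bar s}\oplus_{\NN}\NN^2$ (pushout along the diagonal $\NN\to\NN^2$ and the map $\NN\to\oM_{\bar s}$ classifying $\ell(e_x)$), and on which $\ul X$ is $\ul S[u,v]/(uv-f)$ with $f$ the image of a local lift of $\ell(e_x)$ under $M_S\to\cO_S$. From this I would deduce: (i) the node $x$ survives in $X_{\bar\eta}$ precisely when $f$ is not a unit at $\bar\eta$, i.e. precisely when $\phi(\ell(e_x))\neq 0$ (using that a lift of $\ell(e_x)$ is a unit of $M_{S,\bar\eta}$ iff its image $f$ is a unit of $\cO_{S,\bar\eta}$); and (ii) when it survives, generising the chart identifies the stalk of $\oM_{X_{\bar\eta}}$ at the resulting node with $\oM_{\bar\eta}\oplus_{\NN}\NN^2$, where $\NN\to\oM_{\bar\eta}$ is the composite $\NN\xrightarrow{\ell(e_x)}\oM_{\bar s}\xrightarrow{\phi}\oM_{\bar\eta}$, so that the new edge-length is $\phi(\ell(e_x))$. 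Thus the edges of $\mathfrak X_{\bar\eta}$ are canonically the edges $e$ of $\mathfrak X_{\bar s}$ with $\phi(\ell(e))\neq 0$, carrying the lengths $\phi(\ell(e))$, which is exactly the edge set and length function of $\mathfrak X_{\bar s}^{\phi}$.

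It then remains to match the vertices and the attaching maps of half-edges. Here I would invoke the classical description of degenerations of prestable curves, which is precisely what the notion of \'etale specialisation encodes (see \cite{cavalieri2017moduli}, Appendix A): the dual graph of the geometric fibre $X_{\bar\eta}$ is obtained from that of $X_{\bar s}$ by contracting exactly the edges whose node gets smoothed over $\bar\eta$, with the induced attaching data on the surviving half-edges. Combined with step one --- which identifies the smoothed nodes as the edges $e$ with $\phi(\ell(e))=0$ --- this contraction is by definition $\mathfrak X_{\bar s}^{\phi}$, and one obtains the asserted isomorphism $\mathfrak X_{\bar\eta}\cong\mathfrak X_{\bar s}^{\phi}$, compatibly with lengths. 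I expect the only real friction, once the argument is arranged this way, to be the monoid bookkeeping in step one: checking that the localisation of $\oM_{\bar s}\oplus_{\NN}\NN^2$ along the persisting node returns $\oM_{\bar\eta}\oplus_{\NN}\NN^2$, which amounts to noting that the localising face meets the image of $\NN^2$ only in $0$ (because $u$ and $v$ remain non-units there) and that $\ell(e_x)$ does not lie in the kernel face of $\phi$. Everything else is either a direct application of Kato's local model or standard curve-degeneration theory.
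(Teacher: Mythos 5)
The paper does not actually prove this statement: it is recorded as a \emph{Fact} with no accompanying argument (the surrounding facts in that subsection are likewise stated without proof, being regarded as standard consequences of Kato's local theory and of \cite{Molcho2018The-logarithmic}). So there is no proof in the paper to compare against; judged on its own, your argument is correct and is the proof one would expect. The reduction to $\ul S$ strictly henselian at $\bar s$ is exactly what the notion of \'etale specialization permits; Kato's local model $\ul S[u,v]/(uv-f)$ with $\oM_{X,x}\cong\oM_{\bar s}\oplus_{\NN}\NN^2$ correctly identifies the surviving nodes as those with $\phi(\ell(e))\neq 0$ (since $X^{ns}$ meets the chart in $V(u,v)\cong V(f)$, and $f$ is a unit at $\bar\eta$ iff the lift of $\ell(e)$ is a unit of $M_{S,\bar\eta}$, by the defining property of a log structure); and the localization check you flag does go through, because $u$ and $v$ vanish at the node over $\bar\eta$, so the face of units in $\oM_{\bar s}\oplus_{\NN}\NN^2$ is exactly the face $\ker\phi$ of $\oM_{\bar s}$, giving $\oM_{\bar\eta}\oplus_{\NN}\NN^2$ and the new length $\phi(\ell(e))$. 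The vertex and half-edge bookkeeping is, as you say, the classical statement that the dual graph of a generic fibre of a prestable curve over a henselian base is the contraction of the dual graph of the closed fibre along the smoothed nodes, which is legitimately citable. No gaps.
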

\begin{fact}\label{fact:modification_specialization}
Let $X \to S$ be a log curve, and $Y \to X$ a logarithmic modification such that $Y\to S$ is a log curve. For every log geometric point $t\to S$, the tropicalization of $Y_t$ is a subdivision of the tropicalization of $X_t$. Moreover, for an \'etale specialization $(\bar \eta)\to (\bar s)$ as above, the subdivision of $\mathfrak X_{\bar \eta}$ is $\mathfrak Y^{\phi}$, where $\mathfrak Y$ is the subdivision of $\mathfrak X_{\bar s}$ over $s$.
\end{fact}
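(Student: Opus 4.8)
The plan is to reduce both assertions to the explicit local description of a logarithmic modification of a log curve in a neighbourhood of the non-smooth locus, and then to read them off using the compatibility of tropicalisation with contractions and with étale specialisations recorded in the preceding Facts. Since both statements are étale-local on $S$, I would first assume that $S$ is strictly henselian local with closed point $\bar s$, so that the étale specialisation $(\bar\eta)\to(\bar s)$ is simply the datum of a point $\bar\eta$ of $S$ specialising to $\bar s$, with associated map of characteristic monoids $\phi\colon\oM_{\bar s}\to\oM_{\bar\eta}$.

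Next I would invoke the local structure of a logarithmic modification $Y\to X$ for which $Y/S$ is again a log curve (following \cite{Molcho2018The-logarithmic}): it is an isomorphism over $X\setminus X^{ns}$, and étale-locally around a connected component $D$ of $X^{ns}$ — where $\oM_X=\oM_S\oplus_{\NN}\NN^2$ and, by \ref{proposition:special_log_structure} and the remark following it, the canonical generator of $\oM_S^\#$ attached to $D$ maps to the edge-labelling $\ell_D\in\oM_S$ of the corresponding edge — the modification is cut out by a decomposition $\ell_D=m_{D,1}+\dots+m_{D,k_D}$ into nonzero sections of $\oM_S$, the geometric effect being to replace the family of nodes $D$ by a chain of $k_D-1$ families of rational curves, the $k_D$ resulting families of nodes carrying the labellings $m_{D,1},\dots,m_{D,k_D}$. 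Granting this, the first assertion is immediate: pulling back to a log geometric point $t\to S$ and running over the finitely many components $D$ of $X^{ns}$, the graph of $\mathfrak Y_t$ is obtained from that of $\mathfrak X_t$ by splitting each edge $e\leftrightarrow D$ (of length $\ell_D|_t$) into a chain of edges of lengths $m_{D,1}|_t,\dots,m_{D,k_D}|_t$ with sum $\ell_D|_t$, i.e. by a composition of basic subdivisions, so $\mathfrak Y_t$ is a subdivision of $\mathfrak X_t$.

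For the second assertion I would compare the two restrictions of the single modification $Y\to X$. The subdivision $\mathfrak Y:=\mathfrak Y_{\bar s}$ of $\mathfrak X_{\bar s}$ is described edgewise by $\ell_D|_{\bar s}=\sum_i m_{D,i}|_{\bar s}$; by functoriality of the étale specialisation (\cite{cavalieri2017moduli}, Appendix A) the composite $\oM_S(S)\to\oM_{\bar s}\xrightarrow{\phi}\oM_{\bar\eta}$ is the restriction $\oM_S(S)\to\oM_{\bar\eta}$, so the modification $Y_{\bar\eta}\to X_{\bar\eta}$ near $D$ is governed by $\phi(\ell_D|_{\bar s})=\sum_i\phi(m_{D,i}|_{\bar s})$ in $\oM_{\bar\eta}$. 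If $\phi(\ell_D|_{\bar s})=0$ then $e$ is contracted in $\mathfrak X_{\bar\eta}=\mathfrak X_{\bar s}^{\phi}$ and, $\oM_{\bar\eta}$ being sharp, each $\phi(m_{D,i}|_{\bar s})$ vanishes, so the chain over $D$ disappears from $\mathfrak Y_{\bar\eta}$, exactly as it is collapsed in $\mathfrak Y^{\phi}$. If $\phi(\ell_D|_{\bar s})\neq 0$ then $e$ survives as an edge $\bar e$ of $\mathfrak X_{\bar\eta}$ and $Y_{\bar\eta}\to X_{\bar\eta}$ subdivides $\bar e$ into the nonzero terms among $\phi(m_{D,1}|_{\bar s}),\dots,\phi(m_{D,k_D}|_{\bar s})$, which is precisely the $\phi$-contraction of the chain of $\mathfrak Y$ over $e$. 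Assembling over all $D$, and using the earlier Fact that the contraction of a subdivision is a subdivision of the contraction so that $\mathfrak Y_{\bar\eta}$ and $\mathfrak Y^{\phi}$ are both subdivisions of $\mathfrak X_{\bar\eta}=\mathfrak X_{\bar s}^{\phi}$, one obtains $\mathfrak Y_{\bar\eta}=\mathfrak Y^{\phi}$.

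The step I expect to be the main obstacle is the local-structure input: justifying that near $X^{ns}$ a logarithmic modification of a log curve is exactly the datum of decompositions $\ell_D=\sum_i m_{D,i}$ of the edge-labellings (and an isomorphism elsewhere), and then the bookkeeping that the restriction of such a decomposition to $\bar\eta$ is obtained from its restriction to $\bar s$ by applying $\phi$ and discarding zero terms — i.e. that it matches the combinatorial contraction operation on subdivisions. Once these are in place, both assertions follow formally from the definitions and from the compatibilities between contractions, subdivisions and étale specialisations already established.
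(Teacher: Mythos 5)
The paper offers no argument for this statement at all: it is one of three bare ``Facts'' recorded before \ref{lemma:subdivision_and_modification}, and the only justification given in that block is the citation of \cite{Molcho2018The-logarithmic}, 2.4.4 for the subsequent lemma. So there is no proof in the paper to compare yours against; what can be assessed is whether your reconstruction is sound, and it essentially is. Your one genuine input --- that a log modification $Y\to X$ with $Y/S$ again a log curve is an isomorphism away from $X^{ns}$ and is given near a component $D$ of $X^{ns}$ by a decomposition $\ell_D=\sum_i m_{D,i}$ of the edge label, producing a chain of nodes of lengths $m_{D,i}$ --- is exactly the content of the subdivision/modification correspondence in \cite{Molcho2018The-logarithmic}, Section 2.4, which is what the authors are implicitly invoking; you correctly identify this as the step requiring justification rather than proving it. Your case analysis in the second assertion is also genuinely needed: $\phi(m_{D,i}|_{\bar s})=0$ with $\phi(\ell_D|_{\bar s})\neq 0$ does occur (e.g.\ an edge of length $u+v$ over $\oM_{S,\bar s}=\NN^2$ subdivided into $u$ and $v$, specialised to a point where $v$ becomes invertible), and your treatment of it is right. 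One simplification worth noting: once the first assertion is established, the second follows almost formally by applying the preceding Fact (compatibility of tropicalisation with \'etale specialisation) to the log curve $Y/S$ itself, giving $\mathfrak Y_{\bar\eta}=\mathfrak Y_{\bar s}^{\phi}$, and then checking that the contraction $\mathfrak Y_{\bar s}\to\mathfrak Y_{\bar\eta}$ is compatible with the subdivision maps to $\mathfrak X_{\bar s}\to\mathfrak X_{\bar\eta}$; this avoids redoing the edgewise bookkeeping with the $m_{D,i}$ that occupies your third paragraph.
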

\begin{lemma}[{\cite[2.4.3]{Molcho2018The-logarithmic}}]\label{lemma:subdivision_and_modification}
Let $\bar s\to S$ be a geometric point and $\mathfrak Y$ a subdivision of $\mathfrak X_{\bar s}$. 
Then there exists an \'etale neighbourhood $V$ of $\bar s$, a log curve $Y/V$ and a logarithmic modification $Y\to X\times_SV$ inducing the subdivision $\mathfrak Y\to \mathfrak X_{\bar s}$.
\end{lemma}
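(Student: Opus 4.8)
The plan is to reduce the statement to a purely local, combinatorial problem near the geometric point $\bar s$, and then to invoke the correspondence between subdivisions of tropical curves and logarithmic modifications of log curves, which is by now standard (it goes back to Kato and is developed in \cite{IIKajiwara2008Logarithmic-a}, \cite{Molcho2018The-logarithmic}). First I would pass to an \'etale neighbourhood $V$ of $\bar s$ over which the log structure of $S$ admits a global chart by a sharp fs monoid $P$ (with $P = \o M_{S,\bar s}$), so that the tropicalization $\mathfrak X_{\bar s}$ is literally a graph with edge lengths valued in $P\setminus\{0\}$; shrinking further I may assume every edge of $\mathfrak X_{\bar s}$ is an edge of $\mathfrak X_{\bar t}$ for every geometric $t\in V$ (i.e. that $V$ meets only the deepest stratum, or at least that no edge of $\mathfrak X_{\bar s}$ contracts). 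The subdivision $\mathfrak Y$ of $\mathfrak X_{\bar s}$ is then a finite combinatorial datum: a finite sequence of basic subdivisions, each one cutting an edge $e$ of length $\ell(e) = p \in P$ into two edges of lengths $p_1, p_2$ with $p_1 + p_2 = p$ in $P$.

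Next I would construct the modification one basic subdivision at a time; by induction it suffices to treat a single basic subdivision. Locally around the node corresponding to the edge $e$, the curve $X\times_S V$ has the standard local model $\Spec \cO_V[x,y]/(xy - t)$ with $t$ a function on $V$ whose image in $\o M_{S,\bar s}$ is $p$, and this local picture is controlled log-torically by the monoid $P\oplus_{\NN}\NN^2$ of \ref{subs:tropicalization}, i.e.\ it is the log curve associated to the ``vertical'' fan map whose fibre fan is the edge $e$. A splitting $p = p_1 + p_2$ in $P$ determines a ray through the edge $e$ of this fibre fan, hence a subdivision of the associated log-toric morphism; the corresponding toric modification of the local model is the blow-up $Y^{\loc}\to X^{\loc}\times_S V$ that inserts the new vertex $v$ and the two new half-edges $\gamma_1,\gamma_2$, and it is again a log curve over $V$ by construction (the new log structure is the fs pullback of the subdivided fan). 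I would then glue this local modification to the identity away from the node --- this is possible because a logarithmic modification is an isomorphism over the open locus where it does not subdivide --- to obtain a log curve $Y/V$ together with a log modification $Y\to X\times_S V$; by \ref{fact:modification_specialization} its tropicalization at $\bar s$ is exactly the basic subdivision we prescribed, and compatibly with specialization.

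The main obstacle is the passage from a formal/local splitting of the monoid element $p$ to an actual global construction over an \'etale neighbourhood. Concretely: a splitting $p = p_1 + p_2$ in $P$ is combinatorial, but realizing it geometrically requires that $t$ (the smoothing parameter of the node) factor as $t = t_1 t_2$ with $t_i$ functions on $V$ having image $p_i$ in $\o M_{S,\bar s}$; this may only be possible after an \'etale (or even just Zariski) localization, which is exactly why the statement only claims existence of an \'etale neighbourhood $V$ rather than working over all of $S$. One has to be a little careful that the subdivided fan is still saturated (so that $Y$ is fs) and that the resulting morphism is still vertical and integral, i.e.\ still a log curve in the sense of the paper's definition --- but these are exactly the conditions built into the notion of subdivision of a vertical log curve, so they hold by construction. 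Once the single-basic-subdivision case is done over a suitable $V$, iterating over the finitely many basic subdivisions making up $\mathfrak Y$ (shrinking $V$ finitely many times) completes the proof, and the compatibility with \'etale specializations follows from \ref{fact:modification_specialization} applied at each step.
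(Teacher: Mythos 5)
The paper does not give an argument here: it simply cites \cite{Molcho2018The-logarithmic}, 2.4.4, and your proof is essentially a correct reconstruction of the standard construction underlying that citation (choose a chart, realize each basic subdivision as a log blow-up of the standard local model $xy=t$ at the node via a splitting $p=p_1+p_2$ lifted to the chart, glue to the identity away from the node, iterate). One small inaccuracy: you cannot shrink $V$ so that ``no edge of $\mathfrak X_{\bar s}$ contracts'' --- any \'etale neighbourhood of $\bar s$ contains generizations where edges do contract --- but this assumption is not actually needed, since the log blow-up is defined by a coherent ideal over all of $V$ and simply becomes an isomorphism near the nodes that disappear, consistently with \ref{fact:modification_specialization}. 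Likewise the ``obstacle'' of factoring $t=t_1t_2$ is resolved for free once a chart $P\to M_S(V)$ exists, since $\alpha(m_1)\alpha(m_2)=\alpha(m_1+m_2)$; no further localization is required beyond the existence of the chart.
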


\subsection{The tropical Jacobian over a point}

Let $\frak X$ be a tropical curve over a monoid $\o M$. After choosing an orientation on the edges we have a boundary map $\bb Z^E \to \bb Z^V$, whose kernel is the first homology group $H_1(\frak X, \bb Z)$. Molcho and Wise define an intersection pairing
\begin{equation}
H_1(\frak X, \bb Z) \times H_1(\frak X, \bb Z) \to \o M^{\gp}, 
\end{equation}
and then define the Jacobian of $\frak X$ to be 
\begin{equation*}
\on{Hom}(H_1(\frak X, \bb Z), \o M^{\gp})^\dagger/H_1(\frak X, \bb Z), 
\end{equation*}
where the symbol $\dagger$ denotes the subgroup of elements of \emph{bounded monodromy}, see \cite[definition 3.5.5]{Molcho2018The-logarithmic}, or \ref{section:BM}.

For a log curve $X/S$, Molcho-Wise then define the tropical Jacobian to be the the data of the Jacobians of the tropicalisations of $X_s$ for every geometric point of $s$, together with their \'etale specialisation maps (see \cite[appendix A]{cavalieri2017moduli}). However, for our purposes it is important to upgrade the groups $\bb Z^E$ and $\bb Z^V$, and thereby $H_1(\frak X, \bb Z)$ and the tropical Jacobian, to sheaves of abelian groups on the big \'etale site of $S$. The definitions become slightly intricate, because a choice of orientation on the tropicalisation does not exist in families, and because locally constructible \emph{sheaves} of abelian groups are not isomorphic to their duals.

\subsection{The sheaf $\ZE$}
Let $\ul S$ be a scheme and $\pi\colon \ul X \to \ul S$ a prestable curve. Here we define a sheaf $\ZE$ on $\Schet{\ul S}$ which on each geometric point is isomorphic to the sheaf $\bb Z^E$ from above. Our sheaf is not of the form $\sheafhom(\ca E, \bb Z)$ for some sheaf $\ca E$ on $\Schet{\ul S}$, though it is locally of that form. Recall that $\frak i\colon \Schet{\ul X} \to {\ul X}_\et$ is the morphism of sites induced by the inclusion of categories ${\ul X}_\et \to \Schet{\ul X}$.
\begin{definition}
Equipping $X/S$ with the minimal log structure, we define the sheaves
\begin{equation}
\bb G_{m, X/S}^{trop} =  \frak i^*\oM^{\gp}_{X/S}.  \;\;\; \text{ and }\;\;\; \ZE = \pi_*\bb G_{m, X/S}^{trop}. 
\end{equation}
\end{definition}


\subsubsection{Branches, half-edges and orientations}\label{subs:branches_H}
\begin{definition}
We define $\c{E}$ (the \emph{sheaf of edges}) to be the sheaf on $\Schet{\ul S}$ represented by the non-smooth locus $\alpha\colon X^{nsm}\to \ul S$. We will use the notations $\c{E}$ and $X^{nsm}$ interchangeably, depending on context.
\end{definition}

 If we denote by $\{\star\}$ the final object in the category of sheaves of sets on $\Schet{X^{nsm}}$, then $\c{E}$ is $\alpha_!\{\star\}$.  Then \begin{equation}\label{Z^E=pushforward}\ZZ^{\c{E}}\coloneqq \sheafhom_{\ul S}(\c{E},\ZZ)=\sheafhom_{\ul S}(\alpha_!\{\star\},\ZZ)=\alpha_*\sheafhom_{X^{nsm}}(\{\star\},\alpha^*\ZZ)=\alpha_*\bb Z_{X^{nsm}}
\end{equation} but in general this is \emph{not} isomorphic to $\ZE$ due to the non-existence of global choices of orientations on the tropicalisations of the fibers; we expand on this in what follows. 

\begin{definition}\label{subs:branches}
The base change $X':=X\times_SX^{nsm}\to X^{nsm}$ along the finite unramified morphism $X^{nsm}\to S$ admits a natural section $a\colon X^{nsm}\to X\times_SX^{nsm}$ (a closed immersion). Let $Y$ denote the blowup of $X'$ along $a$, and define the \emph{scheme of branches}
\begin{equation}
X^{br}\coloneqq X^{nsm} \times_{X'} Y. 
\end{equation}
\end{definition}

 \begin{remark}\label{remark:split_branches_etale_locally}
 \begin{enumerate}
\item The projection $X^{br} \to X^{nsm}$ is finite \'etale of degree $2$; in fact, a $\ZZ/2\ZZ$-torsor;
\item The projection $X^{br}\to S$ is finite unramified; in particular, after replacing $S$ by an \'etale cover, both $X^{nsm}\to S$ and $X^{br}\to S$ are disjoint unions of closed immersions, so that locally on $S$ we have $X^{br}\cong X^{nsm}\sqcup X^{nsm}$ as schemes over $X$. 
\end{enumerate}
 \end{remark}

\begin{definition}
We say that $X\to S$ has \textit{split branches} if 
\begin{itemize}
\item
$X^{nsm}\to S$ is a disjoint union of closed immersions, and 
\item
$X^{br} \cong X^{nsm}\sqcup X^{nsm}$ over $X$. 
\end{itemize}
\end{definition}
\begin{remark}
A choice of section of the torsor $X^{br}\to X^{nsm}$ is equivalent to a choice of compatible orientations of the tropicalizations of the fibers of $X\to S$. 
\end{remark}

\begin{example}
Let $k$ be a field and $X$ be the prestable irreducible curve $\Proj k[x,y,z]/(y^2z-x^3-x^2z)$, with the involution $\sigma\colon X\to X$ given by $y\mapsto -y$. Consider the prestable curve $Y:= X\times_kX\to X$. Let $P$ be a $\bb Z/2\bb Z$-torsor on $X$, and consider the twisted form of $Y$ given by $Y^P:= Y\otimes_{\bb Z/2\bb Z}P \to X$, where $\bb Z/2\bb Z$ acts on $Y/X$ by $\sigma$. Then the scheme of branches of $Y^P/X$ is canonically isomorphic to the torsor $P\to X$. In particular, for $P$ whose class in $H^1(X_{\et},\bb Z/2\bb Z)=\bb Z/2\ZZ$ is non-zero, the curve $Y^P$ does not have split branches.
\end{example}

\begin{definition}
We define the \emph{sheaf of half-edges} $\ca H\in\Sh(\Schet{X^{nsm}})$ to be $X^{br}$. We will use the notations $\ca H$ and $X^{br}$ interchangeably. We will use the same notation for the pushforward (left adjoint to pullback) to $\Sh(\Schet{S})$.
\end{definition}

The sheaf $\ca H$ comes with a natural involution $\iota$ over $X^{nsm}$ and hence over $S$. Composition induces an involution $\iota$ on $\ZZ^{\c{H}}\coloneqq \sheafhom_{S}(\ca {H},\ZZ)$, and we find that 
\begin{lemma}\label{eq:inv_coinv}
\begin{equation}
\bb Z^\ca E = (\bb Z^\ca H)^\iota \;\;\; \text{ and }\;\;\; \ZE = (\bb Z^\ca H)^{-\iota}
\end{equation}
(respectively the invariants for $\iota$ and for $-\iota$). 
\end{lemma}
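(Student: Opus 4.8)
The plan is to rewrite all three sheaves $\bb Z^{\ca E}$, $\bb Z^{\ca H}$ and $\ZE$ as pushforwards of elementary sheaves along the finite maps $p\colon\ca H=X^{br}\to X^{ns}=\ca E$ and $\alpha\colon X^{ns}\to\ul S$, and then to deduce the two identities from the facts that $p$ is a $\ZZ/2\ZZ$-torsor under $\langle\iota\rangle$ and that $\oM^{gp}_{X/S}$ admits a canonical local presentation near a node. First note that $\alpha$ is finite: $X^{ns}$ is closed in $X$, hence proper over $S$, and it is quasi-finite over $S$ since the nodes are isolated in the geometric fibres; likewise $p$ is finite \'etale of degree $2$ by \ref{remark:split_branches_etale_locally}. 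Consequently $\alpha_*$ and $p_*$ are exact, so they commute with formation of invariants and coinvariants.

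By \eqref{Z^E=pushforward} we have $\bb Z^{\ca E}=\alpha_*\ZZ_{X^{ns}}$; running the same computation for the structure map $\alpha p\colon X^{br}\to\ul S$ gives $\bb Z^{\ca H}=\alpha_*p_*\ZZ_{X^{br}}$, with $\iota$ induced by the deck transformation of $p$. Since $X/S$ is vertical, $\oM^{gp}_{X/S}$ vanishes on the smooth locus, so it equals $j_*\cF$ for $j\colon X^{ns}\to X$ the closed immersion and $\cF:=j^*\oM^{gp}_{X/S}=\oM^{gp}_{X/S}|_{X^{ns}}$, and therefore $\ZE=\pi_*\frak i^*\oM^{gp}_{X/S}=\alpha_*\cF$ (the passages between big and small sites and through $\frak i^*$ are routine and omitted). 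Thus all three sheaves are obtained by applying the exact functor $\alpha_*$ to $\ZZ_{X^{ns}}$, to $p_*\ZZ_{X^{br}}$, and to $\cF$ respectively; it suffices to prove, on $X^{ns}_{\et}$, that
\[
(p_*\ZZ_{X^{br}})^{\iota}=\ZZ_{X^{ns}}\qquad\text{and}\qquad(p_*\ZZ_{X^{br}})_{-\iota}=\cF,
\]
since applying $\alpha_*$ then yields the two identities of the lemma (the first equality of the lemma being $\alpha_*$ of the first display, and similarly for the second).

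The first of these is standard: as $p$ is a $\ZZ/2\ZZ$-torsor under $\langle\iota\rangle$, the adjunction unit $\ZZ_{X^{ns}}\to p_*p^{*}\ZZ_{X^{ns}}=p_*\ZZ_{X^{br}}$ identifies the source with the $\iota$-invariants of the target, as one checks \'etale-locally on $X^{ns}$, where $p$ splits and the map becomes the diagonal $\ZZ\hookrightarrow\ZZ^{\oplus2}$. For the second equality I would exhibit a canonical exact sequence on $X^{ns}_{\et}$
\[
0\longrightarrow\ZZ_{X^{ns}}\xrightarrow{\ \Delta\ }p_*\ZZ_{X^{br}}\xrightarrow{\ \beta\ }\cF\longrightarrow 0,
\]
where $\Delta$ is again the unit and $\beta$ is the $\ZZ$-linear extension of the assignment sending the branch $b$ to the class $e_b\in\oM^{gp}_{X/S}$ of a local equation cutting out $b$. \'Etale-locally at a node $x$ this is the sequence $0\to\ZZ\xrightarrow{(1,1)}\ZZ^{\oplus2}\to\ZZ^{\oplus2}/\ZZ(1,1)\to0$ read off the presentation $\oM_{X,x}\cong\oM_S\oplus_{\NN}\NN^2$ recalled in \ref{subs:tropicalization}, the two standard generators of $\NN^2$ being exactly the two branches at $x$ and satisfying $e_{b_1}+e_{b_2}=0$ in $\oM^{gp}_{X/S,x}\cong\ZZ$. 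Finally, the composite of the summation map $p_*\ZZ_{X^{br}}\to\ZZ_{X^{ns}}$ with $\Delta$ equals $\mathrm{id}+\iota$ on $p_*\ZZ_{X^{br}}$, and summation is surjective, so $\im(\mathrm{id}+\iota)=\Delta(\ZZ_{X^{ns}})=\ker\beta$; hence $\beta$ identifies $\cF$ with $\coker(\mathrm{id}+\iota)=(p_*\ZZ_{X^{br}})_{-\iota}$, as desired.

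The only step that is not formal bookkeeping is the global, $\iota$-equivariant construction of $\beta$ — equivalently, the assertion that the branch-indexed local generators $e_b$ of $\oM^{gp}_{X/S}$ at the various nodes glue into a single morphism of sheaves out of $X^{br}$. This is exactly where Kato's structure theory for log curves and the scheme of branches $X^{br}$, together with \ref{proposition:special_log_structure}, do the work: they encode the fact that the two branches at a node and their associated generators of the relative log structure vary nicely in families. Granting this, the remainder of the argument is purely formal.
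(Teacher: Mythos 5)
Your proof is correct and follows essentially the same route as the paper's: both arguments identify $\oM^{gp}_{X/S}$ on the non-smooth locus with the $(-\iota)$-coinvariants of $\ZZ_{X^{br}}$ via the canonical branch generators (which satisfy $e_{b_1}+e_{b_2}=0$ in $\oM^{gp}_{X/S}$) and then push forward to $S$. The paper packages this as a map $\oM^{gp}_{X/S}\to(\beta_*\ZZ_{X^{br}})/\Span{u+\iota(u)}$, checked locally to be an isomorphism of rank-one free modules, which is exactly the inverse of your surjection $\beta$.
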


\begin{proof}
The equality $\bb Z^\ca E = (\bb Z^\ca H)^\iota$ is clear. Writing $\beta\colon X^{br} \to X$, the locally-free rank 1 $\bb Z$-module $\beta^*\oM^{\gp}_{X/S}$ has a \emph{canonical} generating section $m_0$ coming from generating sections of the ideal sheaves of the preimages of the node on the two branches. The equality $\beta = \beta \circ \iota$ induces an isomorphism $\iota^*\beta^*\oM^{\gp}_{X/S} \iso \beta^*\oM^{\gp}_{X/S}$, which takes $\iota^*m_0$ to $-m_0$. 

There is a natural isomorphism
\begin{equation}
\tau\colon \beta^*\oM^{\gp}_{X/S}  \iso \bb Z_{X^{br}} ; \;\; m_0  \mapsto 1.
\end{equation}
The involution $\iota$ induces an automorphism $\iota\colon \beta_*\bb Z_{X^{br}}\iso \beta_*\bb Z_{X^{br}}$. We then define a map
\begin{equation*}
 \oM^{\gp}_{X/S} \to \beta_*\bb Z_{X^{br}}
\end{equation*}
sending $m$ to $\tau(\beta^*m)$. Now $\iota(\tau(\beta^*m)) = - \tau(\beta^*m)$, so this map factors via the inclusion $(\beta_*\bb Z_{X^{br}})^{- \iota} \hookrightarrow \beta_* \bb Z_{X^{br}}$.
Locally on $X^{nsm}$ the induced map 
$
 \oM^{\gp}_{X/S} \to (\beta_*\bb Z_{X^{br}})^{-\iota}
$
 is a map of free rank-1 $\bb Z$-modules, and is easily checked to be an isomorphism, from which the second equality is immediate. 
\end{proof}
\begin{lemma}\label{lem:ZEisomZE}
The sheaves $\ZE$ and $\ZZ^{\c{E}}$  are isomorphic \'etale locally on $S$. 
\end{lemma}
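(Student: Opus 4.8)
The plan is to show that both sheaves are \'etale locally isomorphic to the concrete sheaf $\bb Z^E$ on geometric points by passing to a cover of $S$ where $X/S$ has split branches, and then checking that on such a cover the two sheaves literally coincide. By \ref{remark:split_branches_etale_locally}(2), after replacing $S$ by an \'etale cover we may assume $X^{ns} \to S$ and $X^{br} \to S$ are disjoint unions of closed immersions, and moreover that $X^{br} \cong X^{ns} \sqcup X^{ns}$ over $X$; i.e.\ that $X/S$ has split branches. The point of choosing such a cover is that a splitting of the $\ZZ/2\ZZ$-torsor $X^{br} \to X^{ns}$ is exactly a choice of compatible orientations of the tropicalisations of all the fibres, which is the obstruction mentioned after \ref{Z^E=pushforward} to identifying $\ZE$ with $\ZZ^{\c E}$.

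First I would use \ref{eq:inv_coinv}: we have $\ZZ^{\c E} = (\bb Z^{\c H})^\iota$ and $\ZE = (\bb Z^{\c H})_{-\iota}$, where $\c H = X^{br}$ carries the involution $\iota$ over $X^{ns}$. Over a cover where $X/S$ has split branches, a choice of section $X^{ns} \to X^{br}$ identifies $\c H$ with $\c E \sqcup \c E$, with $\iota$ the involution swapping the two copies. Then $\bb Z^{\c H} \cong \ZZ^{\c E} \oplus \ZZ^{\c E}$, and under this identification $\iota$ acts by $(a,b) \mapsto (b,a)$. Hence the invariants $(\bb Z^{\c H})^\iota$ are the diagonal copy of $\ZZ^{\c E}$, and the coinvariants for $-\iota$, namely the quotient of $\ZZ^{\c E}\oplus\ZZ^{\c E}$ by the submodule $\{(a,b) + (-b,-a)\} = \{(a-b, b-a)\}$ (the antidiagonal), are also canonically isomorphic to $\ZZ^{\c E}$ via $(a,b) \mapsto a+b$ (or via either projection composed with the quotient). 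So both $\ZZ^{\c E}$ and $\ZE$ become identified with $\ZZ^{\c E}$ over this cover, hence with each other.

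The one subtlety to be careful about is that the isomorphism depends on the chosen splitting, so it is only defined \'etale locally on $S$ — but that is exactly the assertion of the lemma, so no gluing is required. I would also note that $\ZZ^{\c E} = \sheafhom_{\ul S}(\c E, \ZZ) = \alpha_* \bb Z_{X^{ns}}$ is visibly locally constructible (being the pushforward along the finite morphism $\alpha$ of the constant sheaf), which together with the isomorphism shows $\ZE$ is as well; this is presumably the real point of the lemma for later use, though it is not part of the stated claim. The main (minor) obstacle is simply bookkeeping the sign in ``$-\iota$'' correctly: one must check that the coinvariants are taken with respect to $u \mapsto u + \iota(u)$ being quotiented out in the $-\iota$ convention, i.e.\ that the relevant submodule is the one generated by $u - (-\iota)(u) \cdot(-1)$-style expressions — concretely the antidiagonal above — and confirm this matches the convention fixed in the proof of \ref{eq:inv_coinv}. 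Once the conventions are pinned down the argument is a direct computation with $\ZZ^{\c E}\oplus\ZZ^{\c E}$.
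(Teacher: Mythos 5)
Your proof is correct and is essentially the paper's own argument (the paper's proof is the one-liner ``\'etale locally on $S$ the curve $X$ has split branches, so we conclude by \ref{eq:inv_coinv}''), with the split-branches computation spelled out. On the sign question you raise: the convention fixed in the proof of \ref{eq:inv_coinv} is to quotient by $\Span{u+\iota(u)}$, which with $\iota$ swapping the two copies of $\ca E$ is the \emph{diagonal} rather than the antidiagonal you wrote down; but since the quotient of $\ZZ^{\ca E}\oplus\ZZ^{\ca E}$ by either submodule is isomorphic to $\ZZ^{\ca E}$, the conclusion is unaffected.
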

\begin{proof}
Etale locally on $S$ the curve $X$ has split branches, so we conclude by \ref{eq:inv_coinv}. 
\end{proof}

\subsubsection{Representability}
\begin{lemma}\label{lemma:Z^E_representable}
The sheaf $\ZZ^{\c{E}}$ on $\Schet{S}$ is representable by a quasi-separated \'etale group algebraic space on $S$.
\end{lemma}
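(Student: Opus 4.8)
The plan is to prove that $\ZZ^{\c E}$ is locally constructible in the sense of \ref{subsec:constructibility} and then to conclude by \ref{cor:loc_constructible_implies_representable_big}; since $\ZZ^{\c E}=\sheafhom_{\ul S}(\c E,\ZZ)$ is naturally a sheaf of abelian groups (via the group structure of $\ZZ$), the representing object is then automatically a group algebraic space. Recall from the computation just above that $\ZZ^{\c E}$ is the big-\'etale-site pushforward $\alpha_*\ZZ_{X^{ns}}$ of the constant sheaf along the finite unramified morphism $\alpha\colon X^{ns}\to\ul S$.

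First I would note that local constructibility can be checked \'etale-locally on $\ul S$: the formation of $\frak i_*$ and of $\frak i^*$ is compatible with restriction along an \'etale map $S'\to\ul S$ (for $\frak i^*$ this is because a factorisation $T\to U\to\ul S$ with $U/\ul S$ \'etale, together with a map $T\to S'$, is refined by replacing $U$ with $U\times_{\ul S}S'$), so the counit $\frak i^*\frak i_*\ZZ^{\c E}\to\ZZ^{\c E}$ is an isomorphism as soon as it is after pullback to each $\Schet{S'}$. By \ref{remark:split_branches_etale_locally} we may therefore assume $X^{ns}=\coprod_{i=1}^n Z_i$ with each $\iota_i\colon Z_i\hookrightarrow\ul S$ a closed immersion, so that $\ZZ^{\c E}=\prod_{i=1}^n(\iota_i)_*\ZZ_{Z_i}$. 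Both $\frak i_*$ (a right adjoint) and $\frak i^*$ (which preserves finite coproducts, and finite coproducts agree with finite products in the abelian category of sheaves) commute with finite products, so we reduce to the case of a single closed immersion $\iota\colon Z\hookrightarrow\ul S$ and the sheaf $\iota_*\ZZ_Z$.

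For this remaining case I would check directly that the counit $\frak i^*\frak i_*(\iota_*\ZZ_Z)\to\iota_*\ZZ_Z$ is an isomorphism. Unwinding the definitions, $\frak i_*(\iota_*\ZZ_Z)$ is the small-site pushforward of $\ZZ_Z$ to $\ul S_\et$, a section of $\iota_*\ZZ_Z$ over $T/\ul S$ is a locally constant $\ZZ$-valued function on $T\times_{\ul S}Z$, and the content is that, \'etale-locally on $T$, any such function is pulled back from $U\times_{\ul S}Z$ for some \'etale $U\to\ul S$ admitting a map $T\to U$ over $\ul S$; this rests on the standard fact that every \'etale $Z$-scheme is, \'etale-locally on it, the restriction to $Z$ of an \'etale $\ul S$-scheme (lift a standard-\'etale presentation over $\ul S$). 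Equivalently and more concretely, $\iota_*\ZZ_Z$ is represented by the $\ul S$-algebraic space $\c G$ obtained by gluing $\ZZ$-many copies of $\ul S$ along the open complement $\ul S\setminus Z$: the structure map $\c G\to\ul S$ is \'etale (locally an isomorphism), $\c G$ is quasi-separated --- its diagonal is an open immersion, and any quasi-compact open of $\c G\times_{\ul S}\c G$ meets only finitely many of the product charts $W_m\times_{\ul S}W_n$, so preimages under the diagonal of quasi-compact opens are quasi-compact --- while $\c G$ is in general non-separated, which is precisely why the conclusion of the lemma asserts only quasi-separatedness.

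I expect the main obstacle to be exactly this last step: identifying which \'etale algebraic space represents $\iota_*\ZZ_Z$ and verifying its quasi-separatedness, equivalently controlling the \'etale-local descent of locally constant functions on $T\times_{\ul S}Z$ to \'etale neighbourhoods of $Z$ in $\ul S$. Granting that, the two reductions above together with \ref{cor:loc_constructible_implies_representable_big} complete the proof.
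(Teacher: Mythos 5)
Your argument is essentially correct, but it takes a genuinely different route from the paper's. After the common reduction (\'etale-locally on $\ul S$, to a single closed immersion $\iota\colon Z\hra \ul S$ with open complement $j\colon \ul U\hra \ul S$), the paper does not verify local constructibility by hand: it uses the exact sequence $0\to j_!\ZZ_{\ul U}\to \ZZ_{\ul S}\to \iota_*\ZZ_Z\to 0$, so that $\iota_*\ZZ_Z$ is the quotient of the representable \'etale sheaf $\ZZ_{\ul S}$ by a representable open subgroup and is therefore representable; quasi-separatedness is then obtained by reducing to $\ul S$ locally Noetherian (legitimate since $\c E$ is finitely presented) and invoking \ref{remark:loc_sep_qsep}. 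Your gluing of $\ZZ$ copies of $\ul S$ along $\ul U$ is exactly the total space of that quotient, and computing its functor of points directly is a perfectly good, arguably more concrete, proof that bypasses \ref{cor:loc_constructible_implies_representable_big} entirely; the paper's version is shorter because it delegates the pointwise verification to the general fact about quotients by open subgroups.

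Two steps need repair. First, your constructibility check only addresses surjectivity of the counit $\frak i^*\frak i_*(\iota_*\ZZ_Z)\to \iota_*\ZZ_Z$; injectivity also has content, though it goes the same way (two sections over some \'etale $U\to\ul S$ agreeing on $T\times_{\ul S}Z$ already agree after replacing $U$ by the open subscheme $(U\setminus U_Z)\cup\{\text{locus of agreement in }U_Z\}$, through which $T\to U$ still factors). Second, the claim that a quasi-compact open of $\c G\times_{\ul S}\c G$ meets only finitely many product charts is false: every chart $W_m\times_{\ul S}W_n$ contains the common open lying over $\ul U$. What is true, and suffices, is that $\Delta^{-1}(W_m\times_{\ul S}W_n)=W_m\cap W_n$ is $\ul S$ if $m=n$ and $\ul U$ otherwise, so quasi-compactness of the diagonal reduces to quasi-compactness of $\ul U$ over affine $\ul S$; this holds because $X^{ns}\hra\ul S$ is a finitely presented closed immersion, a hypothesis you should invoke explicitly (or simply reduce to locally Noetherian $\ul S$ and quote \ref{remark:loc_sep_qsep}, as the paper does).
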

\begin{proof}
Working \'etale locally on $S$ we reduce to the case where $\c{E}\to \ul S$ is a finite disjoint union of closed immersions. We may then assume $i\colon \c{E}\to S$ is a closed immersion  with $\c E$ connected, and with complement $j\colon \ul U\to S$. The constant sheaf $\ZZ_{S}$ is representable by an \'etale algebraic space, and so is its open subgroup $j_!\bb Z_{ U}$. Hence the quotient $\sheafhom_{S}(\ca E,\bb Z)=i_*\bb Z_{\ca E}$ is representable by an \'etale algebraic space. For quasi-separatedness, we may reduce to $S$ locally noetherian because $\ca E$ is finitely presented, and then use \ref{remark:loc_sep_qsep}.
\end{proof}
Combining \ref{lem:ZEisomZE,lemma:Z^E_representable} yields
\begin{lemma}\label{lemma:pi_*representable}
The sheaf $\ZE$ is representable by a quasi-separated \'etale group algebraic space over $S$.
\end{lemma}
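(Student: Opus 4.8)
The plan is to deduce this immediately from the two results it names, so the "proof" is really just an assembly step. First I would invoke \ref{lem:ZEisomZE}, which tells us that $\ZE$ and $\ZZ^{\c{E}}$ become isomorphic after passing to an \'etale cover of $S$; then I would invoke \ref{lemma:Z^E_representable}, which asserts that $\ZZ^{\c{E}}$ is representable by a quasi-separated \'etale group algebraic space over $S$. The remaining point is that representability by a quasi-separated \'etale group algebraic space is a property that descends along \'etale covers: if $\{S_i \to S\}$ is an \'etale cover and $\ZE|_{S_i}$ is so representable for each $i$, then so is $\ZE$. Indeed, $\ZE$ is a sheaf on $\Schet{\ul S}$, it is \'etale-locally isomorphic to $\ZZ^{\c{E}}$, hence \'etale-locally representable by an algebraic space, and algebraic spaces satisfy \'etale descent (one glues the $\ZE|_{S_i}$ along the cocycle data coming from the sheaf structure of $\ZE$); the group structure descends likewise since it is given by maps of sheaves. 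Being \'etale and being quasi-separated are both \'etale-local on the target, so the glued algebraic space inherits these properties.

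Alternatively, and perhaps more cleanly, I would use the local-constructibility criterion directly: by \ref{lem:ZEisomZE}, $\ZE$ is \'etale-locally isomorphic to $\ZZ^{\c{E}}$, which by (the proof of) \ref{lemma:Z^E_representable} is locally constructible; local constructibility is an \'etale-local condition, so $\ZE$ is locally constructible on $\Schet{\ul S}$, and then \ref{cor:loc_constructible_implies_representable_big} gives representability by a quasi-separated \'etale algebraic space over $S$ (here one may reduce to $S$ Noetherian since everything in sight is finitely presented over $S$, the scheme of branches $X^{br}$ being finitely presented over $S$). The group structure on the representing space is induced from the group structure on the sheaf $\ZE$, which is visibly a sheaf of abelian groups.

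There is essentially no obstacle here: the only thing to be mildly careful about is that \ref{cor:loc_constructible_implies_representable_big} is stated over a Noetherian base, so one should either reduce to the Noetherian case by a standard limit argument (using finite presentation of $X/S$, hence of $X^{br}/S$) or simply note that the descent argument of the previous paragraph avoids the Noetherian hypothesis altogether, at the cost of appealing to \'etale descent for algebraic spaces rather than to \ref{cor:loc_constructible_implies_representable_big}. Either way the proof is a single sentence combining \ref{lem:ZEisomZE} and \ref{lemma:Z^E_representable}, exactly as the excerpt's phrase ``Combining \ref{lem:ZEisomZE,lemma:Z^E_representable} yields'' suggests.

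\begin{proof}
By \ref{lem:ZEisomZE} the sheaf $\ZE$ is isomorphic to $\ZZ^{\c{E}}$ \'etale locally on $S$, and by \ref{lemma:Z^E_representable} the latter is representable by a quasi-separated \'etale group algebraic space. Representability by an algebraic space is \'etale local on the base (algebraic spaces satisfy \'etale descent), so $\ZE$ is representable by an algebraic space $N \to S$, obtained by gluing the local representing spaces along the descent data supplied by the sheaf structure of $\ZE$; the group structure on $\ZE$ endows $N$ with a group structure. Finally, being \'etale and being quasi-separated over $S$ may be checked \'etale locally on $S$, so $N$ is a quasi-separated \'etale group algebraic space over $S$.
\end{proof}
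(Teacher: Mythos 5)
Your proposal is correct and matches the paper exactly: the paper's entire "proof" is the phrase "Combining \ref{lem:ZEisomZE,lemma:Z^E_representable} yields," and your argument simply makes explicit the \'etale descent step (or, alternatively, the local-constructibility route) that the authors leave implicit. Nothing to add.
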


\subsection{The sheaf $\ZV$}

For any site $\Sigma$ we denote by $\ZZ[\;\;]\colon \sh{\Sigma}\to \ab{\Sigma}$ the functor taking a sheaf of sets $\ca F$ to the sheafification of the presheaf $\ZZ_{\PSh}[\ca F]$ of free abelian groups with basis $\ca F$. Note that $\ZZ[\;\;]$ is left adjoint to the forgetful functor $\ab{\Sigma}\to \sh{\Sigma}$; indeed, for any sheaves $\ca F$ of sets and $\ca G$ of abelian groups, we have
\begin{align*}
\on{Hom}_{\sh{\Sigma}}(\ca F,\ca G) & = \on{Hom}_{\PSh(\Sigma)}(\ca F,\ca G) \\
& = \on{Hom}_{\on{PAb}(\Sigma)}(\ZZ_{\PSh}[\ca F],\ca G) \\
& = \on{Hom}_{\ab{\Sigma}}(\ZZ[\ca F],\ca G). 
\end{align*}

\begin{definition}
Let $X/S$ be a prestable curve. We define the \emph{sheaf of relative irreducible components} $\Irr_{X/S}:=\pi_0(X^{sm}/S)$ by sending $T \to S$ to the set of $X_T$-isomorphism classes of open immersions $U\subset X_T$ such that
\begin{enumerate}
\item for every geometric point $\bar t$ of $T$, the fiber $U_{\bar t}$ is irreducible.
\item up to isomorphism, $U \to X_T$ is maximal among open immersions with property (i).
\end{enumerate}
\end{definition}

\begin{remark}\label{rem:romagny}
In \cite{Romagny2011Composantes-con}, Romagny studies relative irreducible components in more detail and generality. He shows that $\Irr_{X/S}$ is representable by a finitely presented \'etale algebraic space over $\ul S$. 
\end{remark}
\begin{definition}
We define the sheaf $\ZV \coloneqq \ZZ[\Irr_{X/S}]$ on $\Schet{S}$.
\end{definition}
\begin{remark}\label{remark:ZV_is_relative_Z^V}
The sheaf $\ZV$ is the N\'eron-Severi group of $\ul X/\ul S$. If $S$ is a geometric point then there is a natural isomorphism $\ZV \iso \bb Z^V$, motivating the notation.
\end{remark}
\begin{remark} The sheaf $\ZV$ should not be confused with the sheaf $\ZZ^{\Irr_{X/S}}=\Hom(\Irr_{X/S},\ZZ)$ on $\Schet{S}$. For example, if $S$ is a strictly henselian discrete valuation ring and $X/S$ a prestable curve, smooth over the generic point, and whose special fiber has two irreducible components, then the global sections of $\ZZ[\Irr_{X/S}]$ form a free abelian group of rank $2$, and those of $\ZZ^{\Irr_{X/S}}$ a free abelian group of rank $1$.
\end{remark}

\begin{lemma}
The sheaf $\ZV$ on $\Schet{S}$ is representable by a quasi-separated \'etale group algebraic space over $S$.
\end{lemma}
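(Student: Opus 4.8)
The plan is to reduce the representability of $\ZV=\ZZ[\Irr_{X/S}]$ to that of $\Irr_{X/S}$ itself, which Romagny has already shown to be a finitely presented \'etale algebraic space over $\ul S$. Working \'etale locally on $S$ we may assume that $\Irr_{X/S}$ is a finite disjoint union of open-and-closed subschemes of $S$, each of which maps to $S$ by a locally closed immersion; this uses that $\Irr_{X/S}$ is \'etale, quasi-separated and finitely presented, so \'etale locally it looks like a disjoint union of (quasi-compact) \'etale schemes, and after a further \'etale localisation each connected component is a locally closed immersion into $S$. For such a decomposition, $\ZZ[\Irr_{X/S}]$ is the free abelian sheaf on a finite coproduct of representable sheaves, hence a finite direct sum of the sheaves $\ZZ[j_!\{\star\}]$ where $j\colon W\to S$ runs over the components. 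Since $\ZZ[\;\;]$ is a left adjoint it commutes with colimits, so $\ZZ[\coprod_i W_i] = \bigoplus_i \ZZ[W_i]$, and it suffices to treat a single component $j\colon W\hra S$ a locally closed immersion.

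For a single such $W$, I would identify $\ZZ[W]$ (the free abelian sheaf on the representable sheaf $W$, i.e.\ on $j_!\{\star\}$) with $j_!\ZZ_W$, the extension by zero of the constant sheaf $\ZZ$ on $W$. Indeed both are left adjoints: $\ZZ[j_!(-)]$ and $j_!\ZZ[-]$ agree because $j_!$ commutes with the free-abelian-group functor (both being left adjoints), and $\ZZ[\{\star\}_W]=\ZZ_W$. Now $\ZZ_W$ is represented by the \'etale $W$-algebraic space $\coprod_{n\in\ZZ} W$, and $j_!$ of a sheaf representable by an \'etale algebraic space $E/W$ with $W\hra S$ locally closed is again representable by an \'etale algebraic space over $S$: one can realise it, exactly as in the proof of \ref{lemma:Z^E_representable}, as an open subgroup of $i_*(\text{something})$ when $W$ is closed, and by composing with the open part when $W$ is merely locally closed, using that an open immersion followed by $i_!$ for a closed immersion stays an \'etale algebraic space. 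Concretely, writing $W$ as a closed subscheme $Z$ of an open $V\subseteq S$, we get $j_!\ZZ_W = (\,\text{open immersion}\,)_!\,i_{Z*}\ZZ_Z$ restricted appropriately, and each operation preserves the class of \'etale algebraic spaces over the base by Tag 05W1 / standard gluing, so the direct sum over the finitely many components is again an \'etale algebraic space over $S$. The group structure is the evident one coming from the abelian group structure on each $\ZZ_W$.

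Finally, to descend from the \'etale-local statement to $S$ itself I would invoke \ref{cor:loc_constructible_implies_representable_big}: the sheaf $\ZV$ is locally constructible since \'etale locally it is a finite direct sum of sheaves of the form $j_!\ZZ_W$ with $j$ a locally closed immersion and these are manifestly constructible (indeed $\frak i^*\frak i_*$ of each is the identity), and local constructibility can be checked \'etale locally on $S$; then \ref{cor:loc_constructible_implies_representable_big} gives representability by a quasi-separated \'etale algebraic space over $S$ directly (after reducing to $S$ locally noetherian, harmless since $\Irr_{X/S}$ is finitely presented). The group structure on $\ZV$ makes it a group algebraic space.

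The main obstacle I anticipate is bookkeeping around $j_!$ for a \emph{locally} closed rather than closed immersion, and making the identification $\ZZ[j_!\{\star\}]\cong j_!\ZZ_W$ cleanly at the level of sheaves on the big \'etale site — one must be a little careful that $j_!$ on the big site behaves as expected and commutes with $\ZZ[\;\;]$. This is routine adjointness but easy to fumble; once it is in place, everything else is a direct repetition of the argument already given for $\ZZ^{\c E}$ in \ref{lemma:Z^E_representable}.
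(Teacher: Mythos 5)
There is a genuine gap at the very first step, and it is fatal to the rest of the argument. You claim that, \'etale locally on $S$, the space $\Irr_{X/S}$ becomes a finite disjoint union of (locally closed) subschemes of $S$. This is false, and it is precisely the point where $\ZV$ differs from $\ZZ^{\ca E}$. Take $S$ the spectrum of a strictly henselian DVR and $X/S$ prestable, smooth over the generic point $\eta$, with special fibre a union of two components meeting at a node (the example used in the paper to contrast $\ZZ[\Irr_{X/S}]$ with $\ZZ^{\Irr_{X/S}}$). Then $\Irr_{X/S}$ is two copies of $S$ glued along $\eta$: it is connected, has two points over the closed point and one over $\eta$, hence is not a monomorphism over $S$ and admits no decomposition of the kind you describe; and no further \'etale localisation helps, since $S$ is already strictly henselian. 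The obstruction is that irreducible components merge under generization, so the gluing along generizations is essential data. Consequently the sheaf you actually compute, $\bigoplus_i j_{i!}\ZZ_{W_i}$ with the $W_i$ locally closed in $S$, has vanishing cospecialization maps, whereas in the example above $\ZV$ has stalks $\ZZ^2$ at the closed point and $\ZZ$ at $\eta$ with cospecialization $(a,b)\mapsto a+b$. You have proved representability of a different sheaf. Your analogy with \ref{lemma:Z^E_representable} is exactly where things break: $X^{ns}\to S$ is finite unramified, hence \'etale-locally a disjoint union of closed immersions, but $\Irr_{X/S}\to S$ is merely \'etale, quasi-compact and non-separated.

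The paper's proof avoids any structure theory for $\Irr_{X/S}$ and argues purely by adjunction: after reducing to $S$ locally Noetherian, by \ref{cor:loc_constructible_implies_representable_big} it suffices to show that $\frak i^*\frak i_*\ZZ[\Irr_{X/S}]\to\ZZ[\Irr_{X/S}]$ is an isomorphism. This follows from three facts: $\frak i^*$ commutes with $\ZZ[\;\;]$ (because their right adjoints, $\frak i_*$ and the forgetful functor, visibly commute); the natural map $\ZZ[\frak i_*\Irr_{X/S}]\to\frak i_*\ZZ[\Irr_{X/S}]$ is an isomorphism (both are sheafifications of the same presheaf); and $\Irr_{X/S}$ itself is locally constructible, being representable by an \'etale algebraic space. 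If you want to salvage a more geometric argument, the correct \'etale-local statement is \ref{lemma:ZV_gen_by_local_sections}: $\Irr_{X/S}$ is generated by finitely many global sections, i.e.\ is a quotient of a finite disjoint union of copies of $S$ along open gluing data — but then you must carry that gluing data through the free-abelian-group functor, and the adjunction argument is the cleaner way to do so.
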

\begin{proof}
As $\Irr_{X/S}$ is of finite presentation, we may assume $S$ is locally Noetherian. By \ref{cor:loc_constructible_implies_representable_big} it suffices to prove that $\ZV$ is locally constructible. From \ref{rem:romagny} we know that $\Irr_{X/S}$ is locally constructible, in other words is of the form $\frak i^*J$ for some sheaf $J$ on the small \'etale site of $S$. Then
\begin{equation*}
    \bb Z[\Irr_{X/S}] = \bb Z[\frak i^*J] = \frak i^*\bb Z[J], 
\end{equation*}
where for the last equality we use that the functors $\frak i^*$ and $\ZZ[\;\;]$ commute, which follows in turn from the easy observation that their right adjoints ($\frak i_*$ and the forgetful functor from sheaves of abelian groups to sheaves of sets) commute. 
\end{proof}

%

\begin{lemma}\label{lemma:ZV_gen_by_local_sections}
Let $X \to S$ be a prestable curve. Then, the sheaves $\Irr:=\Irr_{X/S}$ and $\ZV=\ZZ[\Irr]$ on $\Schet{S}$ are generated by global sections \'etale-locally\footnote{For a sheaf $F$ on $S$, this means that every geometric point of $S$ admits an \'etale neighbourhoord $U$ over which the natural map from the constant sheaf $F(U)$ to the restriction $F|U$ is surjective. } on $S$. 
\end{lemma}

\begin{proof}
It suffices to prove $\Irr$ is generated by global sections \'etale-locally on $S$, which follows from \cite[Lemma 18]{Esteves2001Compactifying-t}.
\end{proof}

\subsection{The tropical boundary map}

If $\frak X$ is a tropical curve with a choice of orientation on the edges, there is a \emph{boundary map}
\begin{equation}\label{eq:boundary_map}
\delta\colon \bb Z^E \to \bb Z^V
\end{equation} 
sending an edge $E$ to its endpoint minus its startpoint. We will define an analogous map $\delta\colon \ZE \to \ZV$, independent of choices. 

\begin{remark}
As pointed out in \ref{subs:branches_H}, choices of orientations compatible with specialisation maps cannot in general be made in families, motivating our definition of $\ZE$ as $\pi_*\bb G_{m,X/S}^{trop}$ instead of as $\bb Z^\ca E$. 
\end{remark}

Since both $\ZE$ and $\ZV$ are locally constructible, it suffices to define $\delta$ on the small \'etale site of $S$. In what follows, we work exclusively on small \'etale sites, implicitly applying the functor $\frak i_*$ wherever necessary. Working locally on $S_{\et}$, we may assume that $X/S$ has split branches.


Write $X^{nsm}/S$ as a disjoint union of closed immersions $\bigsqcup_i Z_i \xrightarrow{\alpha_i} S$, and for each $i$ put $Z_i^{br}:=X^{br}\times_{X^{nsm}} Z_i$. Let $\ca H_i$ be $Z_i^{br}$, seen as a sheaf on $(Z_i)_\et$. We have $\ZE=\bigoplus_i \ZE_i$, where $\ZE_i$ is the subgroup of $(-\iota)$-invariants of $(\alpha_i)_*\ZZ^{\ca H_i}$. We will construct natural maps $\ZE_i \to \ZV$, and sum them. Pick some $i$ and put $Z:=Z_i$, $\alpha:=\alpha_i$. Here $\alpha$ is a closed immersion, and we use the notation from \cite[Proposition II.3.14]{Milne1980Etale-cohomolog} for pushing and pulling along $\alpha$ of sheaves on the small \'etale site. Denote by $j$ the open immersion $U:=S\backslash Z \to S$. By \cite[Proposition II.3.14]{Milne1980Etale-cohomolog}, $\alpha_*$ is left adjoint to the functor $\alpha^! \colon \ab{S_\et} \to \ab{Z_\et}$ taking a sheaf to (the pullback of) its subsheaf of sections supported on $Z$. Explicitly, for a sheaf $\ca F$ on $S$, $\alpha^!\ca F$ is the kernel of $\alpha^*\ca F \to \alpha^*j_*j^*\ca F$. We deduce
\begin{equation}\label{eq:adjunction_pullback_sections_with_fixed_support}
    \Hom_{Z_{\et}}((\ZZ^{\ca H_i})^{(-\iota)},\alpha^!\ZV)=\Hom_{S_{\et}}(\ZE_i,\ZV).
\end{equation}
There is a natural map of $S$-algebraic spaces 
\begin{equation}
b\colon X^{br} \to \Irr_{X/S}\into \ZV
\end{equation}
sending a point to the irreducible component of the normalisation of its fiber on which it lies. Writing $\beta\colon Z^{br} \to Z$, the map $b$ induces a global section of $\beta^*\alpha^*\ZV$, i.e. a morphism $\bb Z_{Z^{br}} \to \beta^*\alpha^*\ZV$. We have $\beta_*\bb Z_{Z^{br}}=\bb Z^{\ca H_i}$. Since $\beta$ is a disjoint union of isomorphisms, $\beta_*$ is simultaneously the left and right adjoint of $\beta^*$. By adjunction and after restricting to the $(-\iota)$-invariants, we get
a map $B \in \Hom_{Z_\et} ((\bb Z^{\ca H_i})^{-\iota}, \alpha^*\ZV)$. The image of $B$ consists of sections supported on $Z$, so $B$ factors through $\alpha^!\ZV$, and yields a map $\ZE_i \to \ZV$ by \ref{eq:adjunction_pullback_sections_with_fixed_support}. Summing over $i$, we get the desired map $\delta \colon \ZE \to \ZV$. The fact that $\delta$ does not depend on the chosen expression of $X^{nsm} \to S$ as a disjoint union of closed immersions is clear from the construction.

\begin{definition}
The \emph{tropical boundary map} is the map 
$
\delta\colon  \ZE \to  \ZV
$
constructed above.
\end{definition}

\begin{remark}\label{remark:boundary_map}
If $S$ is a log point we have a canonical isomorphism $\ZZ^{V}=\ZV$ (cf \ref{remark:ZV_is_relative_Z^V}). A choice of orientations of the edges of the tropicalization of $X/S$ provides an isomorphism $\ZZ^{\ca E}=\ZE$, which identifies $\delta$ with the boundary map of \ref{eq:boundary_map}.
\end{remark}

\begin{definition}
We denote by $\c{H}_{1,X/S}$ the kernel of $\delta$. It is a sheaf of abelian groups whose stalks are free and finitely generated. Its value at a geometric point $s$ of $S$ is isomorphic to the first homology group $H_1(\mathfrak X_s,\ZZ)$, cf. \ref{remark:boundary_map}. 
\end{definition}

\begin{remark}
Prompted by a request from an anonymous referee, we give an alternative interpretation of the map $\delta$, via the interpretation of $\ZV$ as the N\'eron-Severi group of $X/S$ (see \ref{remark:ZV_is_relative_Z^V}). For this remark, we work in the sites $\Schet{\ul{X}}$ and $\Schet{\ul{S}}$. Let $X^{\textup{st}}$ be $X$ with the log structure pulled back from $S$. We restrict sheaves naturally defined on $\LSchet{X^{st}}$, $\LSchet{X}$, $\LSchet{S}$ to $\Schet{\ul{X}}$, $\Schet{\ul{S}}$ without additional decorations, to avoid overcrowding the notation. Recalling that $\ZE = \pi_*\bb G^{trop}_{m , X/S}$, the cohomology of the short exact sequence 
\begin{equation}
0 \to \bb G_{m, X^\textup{st}}^{log} \to \bb G_{m, X}^{log} \to \bb G_{m, X/S}^{trop} \to 0
\end{equation}
gives a canonical map 
\begin{equation}
\ZE = \pi_*\bb G^{trop}_{m , X/S} \to R^1\pi_*\bb G_{m, X^{\textup{st}}}^{log}. 
\end{equation}
We also have an exact sequence 
\begin{equation}
0 \to \bb G_{m, X} \to \bb G^{log}_{m, X^{\textup{st}}} \to \pi^{-1}\bb G^{trop}_{m, S} \to 0, 
\end{equation}
which splits after base changing to a strict \'etale cover $S' \to S$. Choosing a splitting gives a map $\bb G^{log}_{m, X_{S'}^{\textup{st}}} \to \bb G_{m, X_{S'}}$, and taking cohomology gives a map 
\begin{equation}
R^1\pi_*\bb G^{log}_{m, X_{S'}^{\textup{st}}} \to R^1\pi_* \bb G_{m, X_{S'}} = \Pic_{X_{S'}/S'}. 
\end{equation}
This map depends on the choice of splitting, but we claim that the composite with the natural map to the N\'eron-Severi group $\Pic_{X_{S'}/S'}/\Pic_{X_{S'}/S'}^0$ does not depend on this choice. In particular, the composite descends to a canonical map
\begin{equation}
R^1\pi_*\bb G^{log}_{m, X^{\textup{st}}} \to \Pic_{X/S}/\Pic_{X/S}^0.
\end{equation}

The claim may be proven assuming $S'=S$. The difference between two choices of splittings is measured by a map 
\begin{equation}
F\colon \pi^{-1}\bb G^{trop}_{m, S} \to \bb G_{m, X}, 
\end{equation}
which by adjunction is the same as a map 
$
G\colon \bb G^{trop}_{m, S} \to \pi_*\bb G_{m, X} = \bb G_{m, S}
$
(the latter equality since $X/S$ is proper with reduced and connected geometric fibers). Then we have a map
\begin{equation}
R^1\pi_*\pi^{-1}G\colon  R^1\pi_*\pi^{-1}\bb G_{m, S}^{trop} \to  R^1\pi_*\pi^{-1}\bb G_{m, S}, 
\end{equation}
and composing with the canonical map $\pi^{-1}\bb G_{m, S} \to \bb G_{m, X}$ yields a map 
\begin{equation}
R^1\pi_*\pi^{-1}\bb G_{m, S}^{trop} \to R^1\pi_*\bb G_{m, X} = \Pic_{X/S}; 
\end{equation}
we must show that this map factors via $\Pic^0_{X/S}$. Writing $\beta\colon \pi^{-1}\bb G_{m,S} \to \bb G_{m,X}$ for the natural inclusion, we find that $F = \beta \circ \pi^{-1} \pi_* F$, 
and so 
\begin{equation}
R^1\pi_*F\colon R^1\pi_*\pi^{-1}\bb G_{m, S}^{trop} \to  R^1\pi_*\bb G_{m, X} = \Pic_{X/S}
\end{equation}
factors as
\begin{equation}
R^1\pi_*\pi^{-1}\bb G_{m, S}^{trop} \to R^1\pi_*\pi^{-1}\bb G_{m, S} \to \Pic_{X/S}, 
\end{equation}
and $R^1\pi_*\pi^{-1}\bb G_{m, S} \to \Pic_{X/S}$ factors through $\Pic^0_{X/S}$. Indeed, the fiber of $T:=R^1\pi_*\pi^{-1}\bb G_{m, S}$ over a geometric point $s \to S$ is the torus part of $\Pic^0_{X_s/s}$.

This shows the existence of a slight refinement of $\delta$; we defined it as a map from $\ZE$ to the N\'eron-Severi group $\ZV = \Pic_{X/S}/\Pic^0_{X/S}$, but it comes naturally from a map to the quotient $\Pic_{X/S}/T$ by the ``torus part of $\Pic_{X/S}$". We do not know if this lift has any application. 
\end{remark}

\subsection{The  monodromy pairing}
If $\frak X$ is a tropical curve over a monoid $\o M$, there is a map
$
E \times E \to \oM^{\gp}
$
sending $(e, f)$ to $\ell(e)$ if $e =  f$ and 0 otherwise. This induces a map $\bb Z^E \times \bb Z^E \to \o M^{\gp}$, which, after choice of an orientation on $E$, restricts to a pairing 
\begin{equation}
H_1(\mathfrak X,\ZZ) \times H_1(\mathfrak X,\ZZ) \to \o M^{\gp}
\end{equation}
called the \emph{monodromy pairing}. 
We will define an analogous map 
\begin{equation}\label{eq:pre_monodromy_pairing} \frak f_*\c{H}_{1,X/S}\times \frak f_*\c{H}_{1,X/S}\to \mathbb G^{trop}_{m,S}
\end{equation} where $\frak f_*$ is the functor $\abLSchet{S} \to \abSchet{\ul S}$ introduced in \ref{subs:functors}.

Let $X/S$ be a log curve. We write $\alpha\colon X^{nsm}\to \ul S$ and $\phi\colon X^{br} \to X^{nsm}$. On $\Schet{X^{nsm}}$, we have a sheaf of abelian groups $\phi_*\bb Z_{X^{br}}=\Hom_{X^{nsm}}(X^{br},\bb Z)$ which is locally free of rank $2$ and endowed with the involution $\iota$. We write $(\phi_*\bb Z_{X^{br}})^{-\iota}$ for the invariants under $-\iota$; this is a locally free rank $1$ sheaf of abelian groups, hence self-dual. Pushing forward the natural pairing $(\phi_*\bb Z_{X^{br}})^{-\iota}\times (\phi_*\bb Z_{X^{br}})^{-\iota} \to \bb Z$ via $\alpha$, we get a pairing
\begin{equation}\label{eq:prov_pairing}
\ZE\times\ZE\to \alpha_*\bb Z.
\end{equation}

Next, we compose \ref{eq:prov_pairing} with the natural map $\alpha_*\bb Z\to \frak i^*\oM^{\gp}_S$ of \ref{proposition:special_log_structure}, to obtain a pairing
\begin{equation}
\label{eqn:pre-monodromy-pairing}
\ZE\times \ZE \to \frak i^*\oM^{\gp}_S=\frak s_*\bb G_{m,S}^{trop},
\end{equation}
where the last equality is \ref{lemma:strictG_m}. 

\begin{definition}
The \textit{monodromy pairing} is the pairing 
$$\frak f_*\c{H}_{1,X/S}\times \frak f_*\c{H}_{1,X/S}\to \mathbb G^{trop}_{m,S}$$
obtained by adjunction from \ref{eqn:pre-monodromy-pairing} and restriction to $\c{H}_{1,X/S}$, and the fact that $\frak s^*=\frak f_*$ (\ref{lemma:s^*f_*})
\end{definition}

\subsection{The condition of bounded monodromy}\label{section:BM}
In order to define a tropical Jacobian for $X/S$, we would like to take the quotient of the tropical torus $\sheafhom(\frak f_*\c{H}_{1,X/S},\mathbb G_{m,S}^{trop})$ by the group of periods $\frak f_*\c{H}_{1,X/S}=\frak s^*\c{H}_{1,X/S}$, where the action is given by the monodromy pairing. However, the sheaf $\sheafhom(\frak f_*\c{H}_{1,X/S},\mathbb G_{m,S}^{trop})$ does not behave as we would like with respect to generization. Indeed, if $s,\eta$ are two geometric points of $S$ such that $\eta$ is a generization of $s$, a homomorphism $H_1(\mathfrak X_s)\to \oM^{\gp}_{S,s}$ does not induce a homomorphsim $H_1(\mathfrak X_{\eta})\to \oM^{\gp}_{S,\eta}$. As a consequence, formal elements of the sheaf do not integrate to elements over complete noetherian algebras. This has the consequence that the sheaf 
$$\frak s_*\sheafhom(\frak s^*\c{H}_{1,X/S},\mathbb G_{m,S}^{trop})=\sheafhom(\c{H}_{1,X/S},\frak s_*\mathbb G_{m,S}^{trop})$$ is not representable by an $\ul S$-algebraic space, since it fails to satisfy one of Artin's axioms (\cite{Artin1969Algebraization-}, Theorem 5.3, 2'). The authors in \cite{Molcho2018The-logarithmic} introduce the condition of \textit{bounded monodromy} which fixes this issue. We recall it now.

\begin{definition}
Let $\o M$ be a sharp fs monoid, and $a,b$ elements of $\o M^{\gp}$. We say $a$ is \emph{bounded by $b$} if there exist integers $n,m$ such that $nb\leq a\leq mb$ (for the partial order induced by $\o M$). The elements bounded by $b$ form a subgroup of $\o M^{\gp}$.
\end{definition}

\begin{definition}\label{def:bm}
Let $\mathfrak X$ be a tropical curve marked by a monoid $\oM$. A homomorphism $\phi\colon H_1(\mathfrak X)\to \oM^{\gp}$ is of \textit{bounded monodromy} if for every $\gamma\in H_1(\mathfrak X)$, $\phi(\gamma)$ is bounded by the length of $\gamma$, i.e. the self-intersection of $\gamma$ under the monodromy pairing.
\end{definition}

\begin{remark}\label{rem:length_additive_if_disjoint_supports}
The length $\ell$ is a map $H_1(\mathfrak{X}) \to \o M^{\gp}$ which factors through $\o M$. If two elements $\gamma,\gamma'$ of $H_1(\mathfrak{X})$ have disjoint support, then we have $\ell(\gamma+\gamma')=\ell(\gamma)+\ell(\gamma')$.
\end{remark}

This notion extends naturally to the relative setting:

\begin{definition}
Let $X \to S$ be a log curve. We define the morphism
\[
\ell \colon \frak f_*\c{H}_{1,X/S} \to \mathbb G_{m,S}^{trop}
\]
as the composition of the monodromy pairing with the diagonal $\frak f_*\c{H}_{1,X/S} \to \frak f_*\c{H}_{1,X/S}\times\frak f_*\c{H}_{1,X/S}.$
\end{definition}

\begin{definition}
Let $X \to S$ be a log curve. We call \emph{bounded monodromy subsheaf}, and write $\sheafhom(\frak f_*\c{H}_{1,X/S},\mathbb G_{m,S}^{trop})^{\dagger}$, for the open subgroup sheaf of $\sheafhom(\frak f_*\c{H}_{1,X/S},\mathbb G_{m,S}^{trop})$ obtained by sheafifying the sub-presheaf of morphisms $\phi\colon \frak f_*\c{H}_{1,X/S}(T) \to \o M_T^{\gp}$ such that for every $\gamma\in\frak f_*\c{H}_{1,X/S}(T)$, $\phi(\gamma)$ is bounded by $\ell(\gamma)$.
\end{definition}

\begin{remark}\
Suppose $S$ is a log point, and fix an orientation of the edges of the tropicalization $\mathfrak{X}$ of $X$ at $s$ (i.e. an isomorphism $\ZE=\ZZ^{\ca E}$). Then, we recover the usual notions of length and bounded monodromy on the tropical curve $\mathfrak{X}$.
\end{remark}

\begin{remark}
It will follow from \ref{section:eq_with_MW} that $\sheafhom(\frak f_*\c{H}_{1,X/S},\mathbb G_{m,S}^{trop})^{\dagger}$ is precisely the open subgroup sheaf of $\sheafhom(\frak f_*\c{H}_{1,X/S},\mathbb G_{m,S}^{trop})$ consisting of morphisms that have bounded monodromy at every strict geometric log point $s$ of $S$.
\end{remark}

One immediately checks that the morphism 
\(
\frak f_*\c{H}_{1,X/S}\to \sheafhom(\frak f_*\c{H}_{1,X/S},\mathbb G_{m,S}^{trop})
\)
induced by the monodromy pairing factors via the bounded monodromy subgroup. 

\subsection{The tropical Jacobian}
\begin{definition}\label{definition:tropical_jacobian}
The tropical Jacobian of $X/S$ is the sheaf on $(\cat{LSch}/S)_{\et}$
$$\Tropic^0_{X/S}=\sheafhom(\frak f_*\c{H}_{1,X/S},\mathbb G_{m,S}^{trop})^{\dagger}/\frak f_*\c{H}_{1,X/S}$$
\end{definition}

For $s$ a geometric point of $S$, a choice of an orientation of the graph $\mathfrak X_s$ induces an isomorphism
$$\Tropic^0_{X/S}(s)\cong \Hom(H_1(\mathfrak X_s),\oM^{\gp}_{S,s})^{\dagger}/H_1(\mathfrak X_s). $$

If $\frak X/\oM$ is a tropical curve, we write $\Tropic^0(\frak X/\oM)$ for the  group $\Hom(H_1(\mathfrak X_s),\oM^{\gp}_{S,s})^{\dagger}/H_1(\mathfrak X_s)$. 

As explained in \cite{Molcho2018The-logarithmic}, for $\eta$ a generization of $s$, any homomorphism of bounded monodromy $H_1(\mathfrak X_s)\to \oM^{\gp}_{S,s}$ induces a unique homomorphism (of bounded monodromy) $H_1(\mathfrak X_{\eta})\to \oM^{\gp}_{S,\eta}$; moreover an orientation of $\mathfrak X_s$ induces a unique orientation of its contraction $\mathfrak X_{\eta}$. There is therefore an induced generization map 
\begin{equation}\label{eqn:generization_map}
\Tropic^0_{X/S}(s)\to \Tropic^0_{X/S}(\eta)
\end{equation}

The original definition of $\Tropic^0_{X/S}$ in \cite{Molcho2018The-logarithmic} is slightly different than the one we gave and relies on the generization maps defined above. The rest of this section is devoted to verifying that the two definitions are actually equivalent. 

\subsection{Equivalence with the definition of Molcho-Wise}\label{section:eq_with_MW}

All our log schemes are fine and saturated, so in particular they have \'etale-local charts by finitely presented monoids. This allows for the following convenient definitions of finiteness conditions for log schemes.

\begin{definition}\label{definition:morphism_of_logsch_of_finite_type}
	We say that a log scheme is \emph{of finite type}, resp. \emph{locally of finite type}, resp. \emph{Noetherian}, resp. \emph{locally Noetherian} if its underlying scheme is.
	
	We say that a morphism of log schemes is \emph{locally of finite type}, resp. \emph{of finite type}, resp. \emph{locally of finite presentation}, resp. \emph{of finite presentation} if the underlying scheme map is.
\end{definition}

\begin{remark}\label{remark:log_curves_are_finitely_presented}
	Any log curve is of finite presentation. In particular, any log curve is the base change of a log curve over a log scheme locally of finite type.
\end{remark}

\begin{definition}\label{def:atomic}
Let $\pi \colon X\to S$ be a log curve. We say that $S$ is \textit{nuclear} (with respect to $\pi$) if:
\begin{itemize}
\item[1)] the stratification of $S$ induced by the log structure $\oM_S$ has only one closed stratum $Z$, $Z$ is connected, and every connected component of every stratum specializes to $Z$;
\item[2)] $\oM_S$ is generated by global sections (in particular there exists a surjection $\ZZ^{J} \to \frak i^*\oM_S^{\gp}$ where $J$ is a set);
\item[3)] $X/S$ has split branches; 
\item[4)] the sheaf $\ZV$ is generated by global sections.
\end{itemize}
We say that $S$ is \emph{pre-nuclear} if it satisfies 2), 3) and 4). We say that $S$ is a \emph{nuclear neighbourhood} of a geometric point $t$ of $S$ if $S$ is nuclear and $t$ maps to the closed stratum.
\end{definition}

\begin{remark}\label{remark:pre-nuclear_preserved_by_basechange}
	Conditions 2), 3) and 4) are stable under strict \'etale base change. In particular so is pre-nuclearity.
\end{remark}

\begin{remark}\label{remark:pre_nuclear_implies_bar_M_S_constant_on_strata}
	Suppose $S$ is locally Noetherian and satisfies condition 2), and let $Z$ be a stratum of $S$ for the stratification induced by $\oM_S$. Then the sheaves $\oM_S$ and $\oM_S^{\gp}$ are \'etale-locally constant on $Z$ by definition of the stratification, and they are generated by global sections, so they are constant on each connected component of $Z$.
\end{remark}

\begin{lemma}\label{lemma:cover_by_nuclear}
If $X/S$ is a log curve, then $S$ admits a strict \'etale cover by pre-nuclear schemes. If in addition $S$ is locally Noetherian, then any geometric point has a nuclear strict \'etale neighbourhood.
\end{lemma}
\begin{proof}
First we show that $S$ has a cover by pre-nuclear schemes. By \ref{remark:pre-nuclear_preserved_by_basechange}, it suffices to show that conditions 2), 3) and 4) individually hold locally on $S$ for the strict \'etale topology. We can assume that $S$ meets condition 3) by \ref{remark:split_branches_etale_locally}; condition 4) by \ref{lemma:ZV_gen_by_local_sections}; and condition 2) by the existence of \'etale local charts for log schemes.

Now, assume $S$ is locally Noetherian and let $t$ be a geometric point of $S$. We will show that $t$ has a nuclear strict \'etale neighbourhood. Shrinking, we may assume that $S$ is pre-nuclear and Noetherian. In particular, the connected components of strata (of the stratification induced by $\o M_S$) form a partition of $S$ into finitely many locally closed subschemes. If the closure $\o Y$ of a piece $Y$ does not meet $t$, take out $\o Y$ from $S$. The resulting $S$ is a nuclear neighbourhood of $t$.
\end{proof}

\begin{lemma}\label{lemma:generization_tropicalization}
Let $X/S$ be a log curve with $S$ locally Noetherian and pre-nuclear. Let $Z$ be a connected component of a stratum of $S$ (for the stratification induced by $\o M_S$). Then for any two geometric points $t,t'$ in $Z$, there is a canonical isomorphism $\frak X(t)=\frak X(t')$ of tropical curves over $\oM_S(Z)$.
\end{lemma}
\begin{proof}
The sheaf $\o M_S$ is constant on $Z$ by \ref{remark:pre_nuclear_implies_bar_M_S_constant_on_strata}. Since $Z$ is locally Noetherian, $t$ and $t'$ can be joined by a sequence of geometric points
$$t_0=t \leftsquigarrow \eta_0 \rightsquigarrow t_1 \leftsquigarrow \eta_2\rightsquigarrow \ldots \rightsquigarrow t_n=t'$$
where the squiggly arrows denote \'etale specialization, and all points land in $Z$. Every specialization $\eta_i\rightsquigarrow t_j$ induces an edge contraction $\mathfrak X_{t_j}\to \mathfrak X_{\eta_i}$, which is an isomorphism of tropical curves over $\o M_S(Z)$. The automorphism of $\mathfrak X_t$ induced by any such \'etale path from $t$ to itself must be trivial: it sends every vertex to itself since $\Irr_{X/S}$ is generated by global sections, and every half-edge to itself since $X/S$ has split branches. Therefore, the isomorphism $\mathfrak X_t \to \mathfrak X_{t'}$ is independent of the choice of \'etale path.
\end{proof}

\begin{corollary}\label{corollary:generization_tropic}
With the hypotheses and notations of \ref{lemma:generization_tropicalization}, the sheaf $\sheafhom(\frak f_*\mathcal H_{1,X_Z/Z},\bb G^{trop}_{m,Z})^\dagger$ is constant. In particular, $\Tropic^0_{X/S}$ is constant on $Z$ as well.
\end{corollary}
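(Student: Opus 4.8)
The plan is to deduce Corollary~\ref{corollary:generization_tropic} directly from Lemma~\ref{lemma:generization_tropicalization} by spreading out the pointwise isomorphisms of tropical curves into an isomorphism of sheaves over a suitable \'etale cover. First I would work \'etale-locally on $Z$: by Lemma~\ref{lemma:cover_by_nuclear} we may replace $S$ by a nuclear log scheme, so that $\o M_S$ is constant on the (now connected) closed stratum $Z$ with value a fixed sharp fs monoid $\o M_S(Z)$, and the sheaves $\ZE$, $\ZV$, hence $\c H_{1,X/S}$, are each generated by global sections over $Z$. In this situation a single geometric point $t$ of $Z$ determines the tropical curve $\mathfrak X_t$ over $\o M_S(Z)$, and Lemma~\ref{lemma:generization_tropicalization} furnishes canonical isomorphisms $\mathfrak X_t \iso \mathfrak X_{t'}$ between the fibres at any two geometric points of $Z$, compatible with the \'etale specialization maps within $Z$.

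Next I would translate this into a statement about the relevant sheaves. The construction of $\delta\colon \ZE\to\ZV$, and hence of $\c H_{1,X/S}=\ker\delta$, is compatible with \'etale specialization by construction; over $Z$ the stalks of $\ZE$, $\ZV$ are, after the choice of orientation available on a nuclear cover, identified with the constant sheaves $\ZZ^{E}$, $\ZZ^{V}$ attached to $\mathfrak X(Z)$, and the specialization maps are the identity because $\mathfrak X$ does not contract any edges as we move within a single stratum (all edge-lengths remain nonzero in $\o M_S(Z)$). Therefore $\c H_{1,X/S}|_Z$ is the constant sheaf $H_1(\mathfrak X(Z),\ZZ)$, and likewise $\frak f_*\c H_{1,X_Z/Z}=\frak s^*\c H_{1,X_Z/Z}$ is the corresponding constant sheaf on $\LSchet{Z}$. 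The monodromy pairing and the length map are then also constant over $Z$, being built functorially out of the edge-lengths in $\o M_S(Z)$ via \ref{eqn:pre-monodromy-pairing}. Consequently $\sheafhom(\frak f_*\c H_{1,X_Z/Z},\bb G^{trop}_{m,Z})$ is constant, and the bounded-monodromy subsheaf, being cut out \'etale-locally by the constant condition ``$\phi(\gamma)$ is bounded by $\on{length}(\gamma)$'' with $\on{length}(\gamma)\in\o M_S(Z)$ fixed, is a constant open subgroup; taking the quotient by the constant lattice $\frak f_*\c H_{1,X_Z/Z}$ shows $\Tropic^0_{X/S}|_Z$ is locally constant.

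The main obstacle I anticipate is the bookkeeping needed to see that the bounded-monodromy subsheaf $\sheafhom(-,\bb G^{trop}_{m,Z})^\dagger$ really is locally constant, rather than merely having locally constant stalks: one must check that over a connected nuclear base with $\o M_Z$ constant, the sheafification defining the $\dagger$-subsheaf does not introduce any monodromy, i.e. that a morphism $\phi$ over $T\to Z$ has bounded monodromy iff it does so at each point, and that boundedness is an \'etale-local and constant-in-$Z$ condition. This is where Remark~\ref{rem:length_additive_if_disjoint_supports} and the explicit description of $\bb G^{trop}_{m,Z}=\frak i^*\oM^{gp}_Z$ on a nuclear base (Lemma~\ref{lemma:strictG_m}, Proposition~\ref{proposition:special_log_structure}) do the work: since $\o M_Z$ is the constant sheaf $\o M_S(Z)$ on $Z$, the condition $n\cdot\on{length}(\gamma)\le \phi(\gamma)\le m\cdot\on{length}(\gamma)$ is detected on global sections, so no twisting can occur. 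Granting this, the quotient by the (now constant) period lattice is formal, and the final sentence about $\Tropic^0_{X/S}$ follows since it is a quotient of locally constant sheaves by a locally constant subsheaf.
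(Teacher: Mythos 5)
Your proposal is correct and follows essentially the same route as the paper: reduce to a connected base on which $\o M_S$ is constant, observe that $\mathcal H_{1,X/S}$ and $\frak s_*\bb G^{trop}_{m,S}$ (hence the untwisted $\sheafhom$) are constant there, and check that the bounded-monodromy condition is detected on global sections because the length map takes values in the fixed monoid $\o M_S(Z)$, so the $\dagger$-subsheaf is the constant subsheaf attached to a fixed subgroup. The extra care you take with nuclear covers and split branches to make $\mathcal H_{1,X/S}$ honestly constant (rather than merely locally constant) is a harmless refinement of what the paper leaves implicit.
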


\begin{proof}
By base-change, we can reduce to the following claim: if $S$ is connected and $\o M_S$ constant, then for any strict geometric point $s \to S$, the restriction map
\[
\sheafhom(\frak f_*\mathcal H_{1,X/S},\bb G^{trop}_{m,S})^\dagger(S) \to \sheafhom(\frak f_*\mathcal H_{1,X/S},\bb G^{trop}_{m,S})^\dagger(s)
\]
is an isomorphism. Under the hypotheses of the claim, the sheaves $\mathcal Z^{\mathcal E}$, $\mathcal Z^{\mathcal V}$ and $\frak s_*\bb G^{trop}_{m,S}$ are constant on $\ul S$, so $\mathcal H_{1,X_S/S}$ is constant as well. In particular we have
\begin{equation}\label{eqn:restriction_monodromy_maps}
\sheafhom(\frak f_*\mathcal H_{1,X/S},\bb G^{trop}_{m,S})(S) = \sheafhom(\frak f_*\mathcal H_{1,X/S},\bb G^{trop}_{m,S})(s).
\end{equation}
Let $\phi$ be a global section of $\sheafhom(\frak f_*\mathcal H_{1,X/S},\bb G^{trop}_{m,S})$. If $\phi$ has bounded monodromy, then its image $\phi_s$ in $\sheafhom(\frak f_*\mathcal H_{1,X/S},\bb G^{trop}_{m,S})(s)$ also does. Conversely, suppose $\phi_s$ has bounded monodromy. Since $\mathfrak f_*\mathcal H_{1,X_S/S}(S)=\mathfrak f_*\mathcal H_{1,X_S/S}(s)\iso H_1(\mathfrak X_s)$, for every $\gamma\in\frak f_*\mathcal H_{1,X/S}(S)$, the image $\phi(\gamma)$ is bounded by the length of $\gamma$ in $\o M_S(S)=\o M_S(s)$. Hence, $\phi$ has bounded monodromy. Thus, the isomorphism \ref{eqn:restriction_monodromy_maps} respects the bounded monodromy subgroups, and the claim follows.
\end{proof}

\begin{lemma}\label{lemma:atomic_bm}
Let $X/S$ be a log curve with $S$ locally Noetherian and nuclear. Let $s$ be a geometric point of the closed stratum of $S$. Then the restriction map
$$\Tropic^0_{X/S}(S)\to \Tropic^0_{X/S}(s)$$
is an isomorphism.
\end{lemma}
 \begin{proof}
 
Since $\c{E}=X^{nsm}\to S$ is a disjoint union of closed immersions, and $s$ lies in the unique closed stratum, the map $\ZZ^{\c{E}}(S)\to \ZZ^{\c{E}}(s)$ is an isomorphism. Because $X/S$ has split branches, we can make a choice of isomorphism $\sigma\colon \ZZ^{\c{E}}\cong \ZE$, so the restriction map $\ZE(S)\to \ZE(s)$ is an isomorphism. Since additionally $\ZV(S)=\ZV(s)$, it follows that the restriction $\c{H}_{1,X/S}(S)=\c{H}_{1,X_s/s}(s)$ is an isomorphism.

Next, we show that the restriction map 
$$\sheafhom(\frak f_*\c{H}_{1,X/S},\mathbb G_{m,S}^{trop})^{\dagger}(S)\to \sheafhom(\frak f_*\c{H}_{1,X_s/s},\mathbb G_{m,s}^{trop})^{\dagger}(s)$$ is an isomorphism. By adjunction, the map above is the map  
$$\Hom(\c{H}_{1,X/S},\frak s_*\mathbb G^{trop}_{m,S})^{\dagger}\to \Hom(\c{H}_{1,X_s/s},\frak s_*\mathbb G_{m,s}^{trop})^{\dagger} $$
of Hom groups between sheaves on $\Schet{\ul S}$.
The two sheaves $\c{H}_{1,X/S}$ and $\frak s_*\mathbb G_{m,S}^{trop}=\frak i^*\o M^{\gp}_S$ are both locally constructible. To give a morphism $\c{H}_{1,X/S} \to  \frak s_*\mathbb G_{m,S}^{trop}$ it suffices therefore to give a morphism $\frak i_*\c{H}_{1,X/S}\to \oM^{\gp}_{S}$ of sheaves on the small \'etale site. We are reduced to proving that
\begin{equation}\label{eqn:restriction_bm}\Hom(\frak i_*\c{H}_{1,X/S},\oM^{\gp}_S)^{\dagger}\to \Hom(H_1(\mathfrak X_s),\oM^{\gp}_s)^{\dagger} \end{equation}
is an isomorphism.

To give a map $\phi\colon \frak i_*\c{H}_{1,X/S}\to \oM^{\gp}_S$, it suffices to give, for every stratum $W\subset S$, a map $\phi_W$ between the restrictions to $W$, subject to the following compatibility condition: for every two strata $W,W'$ such that $W'$ is contained in the closure of $W$, let $U=W\cup W'$ (a locally closed in $S$) and $h\colon W'\into U$, $j\colon W\into U$, respectively a closed and open immersion. Then the diagram
\begin{center}
\begin{tikzcd}
h^*\c{H}_{1,X_U/U} \ar[r, "\phi_{W'}"]\ar[d] & h^*\oM^{\gp}_{U} \ar[d]\\
h^*j_*j^*\c{H}_{1,X_U/U} \ar[r, "h^*j_*\phi_W"] & h^*j_*j^*\oM^{\gp}_{U}
\end{tikzcd}
\end{center}
should commute.

For every connected component $W$ of a stratum, we choose a point $\eta_W$. Remember that we have made a global choice of isomorphism $\ZE=\pi_*\mathbb G_{m,X/S}^{trop}\cong \ZZ^{\c{E}}$ and that $\c{H}_{1,X/S}$ is constant on connected components of strata. Thus $\c{H}_{1,X_W/W}$ is identified with the constant sheaf with value $H_1(\frak X_{\eta_W})$. Similarly, $\oM^{\gp}_W$ is the constant sheaf with value $\oM^{\gp}_{\eta_W}$, this time by condition 2) of \ref{def:atomic}.

Let then $t$ be a point of $S$ and $\eta$ a generization of $t$. We show that for every map of bounded monodromy $\alpha_t\colon H_1(\mathfrak X_t)\to \oM^{\gp}_t$, the solid diagram 

\begin{center}
\begin{tikzcd}
H_1(\mathfrak X_t) \ar[r, "\alpha"]\ar[d] & \oM^{\gp}_t \ar[d] \\
H_1(\mathfrak X_{\eta} )\ar[r, dashed] & \oM^{\gp}_{\eta}
\end{tikzcd}
\end{center}
 admits a unique dashed arrow $\alpha_{\eta}$ making the diagram commute.
The uniqueness is due to the surjectivity of the left vertical map. For the existence part, a dashed arrow exists if $\alpha$ sends the kernel $K$ of the left vertical map into the kernel $K'$ of the right vertical map, i.e. if any cycle $\gamma\in H_1(\mathfrak X_t)$ with length $\ell(\gamma)$ vanishing in $\oM^{\gp}_{\eta}$ maps to zero in $\oM^{\gp}_{\eta}$. This holds by the bounded monodromy condition, which completes the proof of existence. We write $(\eta\rightsquigarrow s)^*\alpha_t$ for the unique map $\alpha_{\eta}$ just constructed.

We omit the easy check that for every diagram of generizations 
\begin{center}
\begin{tikzcd}
\eta \arrow[r, rightsquigarrow] \arrow[d,squiggly] & \eta_1 \arrow[d,squiggly]\\
\eta_2 \arrow[r, squiggly] & t 
\end{tikzcd}
\end{center}
and map of bounded monodromy $\alpha_t\colon H_1(\mathfrak X_t)\to \oM^{\gp}_t$, the maps $(\eta\rightsquigarrow \eta_1)^*(\eta_1\rightsquigarrow t)^*\alpha_t$ and $(\eta\rightsquigarrow \eta_2)^*(\eta_2\rightsquigarrow t)^*\alpha_t$ coincide.

Call $Z$ the closed stratum of $S$. The above discussion, combined with condition 1) of the definition of nuclearity, shows that an arbitrary $\phi\in\Hom(H_1(\mathfrak X_s),\oM^{\gp}_s)^{\dagger}=\Hom(\frak i_*\c{H}_{1,X_Z/Z},\oM^{\gp}_Z)^{\dagger}$ fits in a unique compatible system of bounded monodromy maps $\frak i_*\c{H}_{1,X_W/W}\to\oM^{\gp}_W$ where $W$ ranges through the connected components of strata, i.e. $\phi$ comes uniquely from an element of $\Hom(\frak i_*\c{H}_{1,X/S},\oM^{\gp}_S)^{\dagger}$, which shows that $\sheafhom(\frak f_*\c{H}_{1,X/S},\mathbb G_{m,S}^{trop})^{\dagger}(S)\to \sheafhom(\frak f_*\c{H}_{1,X_s/s},\mathbb G_{m,s}^{trop})^{\dagger}(s)$ is an isomorphism.

Summarizing, we have shown that the composition 
\[
\begin{tikzcd}
    \sheafhom(\frak f_*\c{H}_{1,X/S},\mathbb G_{m,S}^{trop})^{\dagger}(S)/\frak f_*\c{H}_{1,X/S}(S) \ar[r] & \Tropic^0_{X/S}(S) \ar[r] & \Tropic^0_{X/S}(s)
\end{tikzcd}
\]
is an isomorphism. In particular, the restriction map $\Tropic^0_{X/S}(S) \to \Tropic^0_{X/S}(s)$ is surjective. We will show that it is also injective. Let $\lambda$ be an element of its kernel. It suffices to prove that $\lambda$ vanishes in the \'etale stalk $\Tropic^0_{X/S}(\Spec\ca O_{S,\eta}^{et}) = \Tropic^0_{X/S}(\eta)$ at any geometric point $\eta$ of $S$. Let $W$ be the stratum containing $\eta$. Over some generic point of $W$, there is a geometric point $\eta_W$ specializing to both $\eta$ and a point of $Z$. By \ref{corollary:generization_tropic} we know $\Tropic^0_{X/S}$ is constant on strata, so the restriction map
$$
\Tropic^0_{X/S}(S) \to \Tropic^0_{X/S}(\eta_W) = \Tropic^0_{X/S}(\eta)
$$
factors via the `generization map' $\Tropic_{X/S}^0(s) \to \Tropic_{X/S}^0(\eta_W)$ of \ref{eqn:generization_map}. In particular, $\lambda$ maps to $0$ in $\Tropic^0_{X/S}(\eta)$, which concludes the proof.
 \end{proof}

\begin{definition}\label{def:MW_tropic}
Let $X/S$ be a log curve. We define a sheaf of sets $\ca F_{X/S}$ on $\LSchet{S}$ as follows. If $S$ is locally of finite type and $T/S$ is of finite type, then $\c F_{X/S}(T)$ is a system of elements $a\in \Tropic^0(t)$, one for each geometric point $t\to T$, and compatible with the generization maps \ref{eqn:generization_map}. For general $T$, $\c F(T)$ is the sheafification of the presheaf taking $T$ to the colimit of the $\c F_{X/S}(T_0)$, taken over all $T \to T_0 \to S$ with $T_0$ of finite type. For general $S$, by \ref{remark:log_curves_are_finitely_presented} we can pick a cartesian square
\begin{center}
	\begin{tikzcd}
		X \ar[r]\ar[d] & S \ar[d]\\
		X_0 \ar[r] & S_0
	\end{tikzcd}
\end{center}
with $S_0$ locally of finite type and define $\c F_{X/S}$ as the pullback of $\c F_{X_0/S_0}$. This is independent from the choice of cartesian square, and the formation of $\c F_{X/S}$ commutes with base change. When there is no ambiguity, we will write $\c F$ instead of $\c F_{X/S}$.

\end{definition}
This is the way in which the tropical Jacobian is defined in \cite{Molcho2018The-logarithmic}.

\begin{lemma}\label{lemma:tropjac_atomic}
The sheaves $ \Tropic^0_{X/S}$ and $\ca F$ are canonically isomorphic.
\end{lemma}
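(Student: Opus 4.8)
The plan is to construct mutually inverse maps between $\Tropic^0_{X/S}(T)$ and $\ca F(T)$ for every $T \to S$, functorially in $T$, using the family of generization maps \eqref{eqn:generization_map} as the essential bridge.

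First I would build a natural transformation $\Tropic^0_{X/S} \to \ca F$. Given a section $a \in \Tropic^0_{X/S}(T)$ and a geometric point $t \to T$, restriction yields an element $a_t \in \Tropic^0_{X/S}(t)$; one must check that these are compatible with the generization maps of \eqref{eqn:generization_map}, which is immediate from functoriality of restriction along the generization $\eta \rightsquigarrow t$ combined with the fact, recalled just before the statement, that a bounded-monodromy homomorphism $H_1(\mathfrak X_t) \to \oM^{gp}_{S,t}$ induces a \emph{unique} bounded-monodromy homomorphism over $\eta$ and an orientation of $\mathfrak X_t$ induces a unique one on its contraction $\mathfrak X_\eta$. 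So the compatibility is forced, and we get a well-defined element of $\ca F(T)$; naturality in $T$ is clear.

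For the inverse direction, the key tool is \ref{lemma:atomic_bm}: cover $S$ (hence $T$) by nuclear schemes, and on each nuclear $T' \to S$ with closed stratum containing a geometric point $s$, the restriction $\Tropic^0_{X/S}(T') \to \Tropic^0_{X/S}(s)$ is an isomorphism. Given a compatible system $(a_t) \in \ca F(T')$, take the element $a_s \in \Tropic^0_{X/S}(s)$ at the closed-stratum point and use the inverse of \ref{lemma:atomic_bm} to produce a section $\tilde a \in \Tropic^0_{X/S}(T')$. I would then check that $\tilde a$ restricts at every geometric point $t \to T'$ to the prescribed $a_t$: since every stratum of the nuclear scheme specializes to the closed stratum $Z$, any $t$ admits a chain of generizations connecting it through points of $T'$ down to $s$, and compatibility of the system $(a_t)$ with generization maps, together with how the isomorphism of \ref{lemma:atomic_bm} was built (via generization maps on tropicalizations, orientations, and bounded-monodromy homomorphisms), forces $\tilde a|_t = a_t$. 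This shows the two maps are inverse on nuclear schemes; since nuclear schemes form a basis for the strict \'etale topology (\ref{lemma:cover_by_nuclear}) and both $\Tropic^0_{X/S}$ and $\ca F$ are sheaves, the isomorphism glues to all of $\LSch{S}$. Finally one notes the construction did not depend on auxiliary choices (orientations, the isomorphism $\ZE \cong \ZZ^{\ca E}$) because the target $\ca F$ is choice-free and the maps agree on overlaps.

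The main obstacle I expect is the gluing/well-definedness bookkeeping: verifying that the section produced on a nuclear cover via \ref{lemma:atomic_bm} genuinely restricts to $a_t$ at \emph{every} geometric point $t$, not just those in the closed stratum, and that the resulting local sections of $\Tropic^0_{X/S}$ agree on overlaps of two nuclear opens. Both reduce to the uniqueness clauses already established (surjectivity of $H_1(\mathfrak X_t) \to H_1(\mathfrak X_\eta)$ forcing uniqueness of the generized homomorphism, and compatibility of iterated generization maps, the last point of the proof of \ref{lemma:atomic_bm}), so no genuinely new input is needed, but the argument must be assembled carefully.
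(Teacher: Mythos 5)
Your proposal is correct and follows essentially the same route as the paper: define the natural map $\Phi\colon\Tropic^0_{X/S}\to\ca F$ by restriction to geometric points, reduce to nuclear $T$, use \ref{lemma:atomic_bm} at a closed-stratum point, and use chains of \'etale specializations together with local constancy on strata (\ref{corollary:generization_tropic}) to show a compatible system is determined by its value there. The paper packages the last step slightly more economically (it shows $\ca F(T)\to\Tropic^0(x)$ is injective and composes with the isomorphism of \ref{lemma:atomic_bm}, which sidesteps your overlap/gluing bookkeeping), but the mathematical content is identical.
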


\begin{proof}
If $S$ is locally of finite type and $T$ is a $S$-log scheme of finite type, there is a natural map $\Phi(T)\colon\Tropic^0(T)\to \c{F}(T)$ taking $\alpha\in \Tropic^0(T)$ to the system $\{i^*\alpha \in \Tropic^0(t)\}_{i\colon t \to T}$ where $i\colon t\to T$ are all geometric points of $T$. Now, let $T$ be an arbitrary $S$-log scheme. Since $\ca H_{1,X/S}$ is locally free and finite over $\ul S$, we find that locally on $T$, $\Tropic^0_{X/S}(T)$ is the colimit of the $\Tropic^0_{X/S}(T_0)$, taken over all $T \to T_0 \to S$ with $T_0$ of finite type. Combining this with the fact that the formation of $\Tropic^0_{X/S}$ commutes with strict base change, we get a morphism of abelian sheaves $\Phi\colon\Tropic^0_{X/S}\to \c{F}$ for any log curve $X/S$.

We will show that $\Phi$ is an isomorphism. We may do so assuming that $S$ is of finite type. Then, working locally, it suffices to show that $\Phi(T)$ is an isomorphism when $T$ is nuclear.

Let $x$ be a geometric point of $T$ landing in the closed stratum $Z$. Consider the natural map $\psi\colon \c{F}(T) \to \Tropic^0(x)$. The composition $\psi\circ \Phi(T)$ is the restriction map $\Tropic^0(T)\to \Tropic^0(x)$, which is an isomorphism by \ref{lemma:atomic_bm}. It suffices therefore to show that $\psi$ is injective (in order to conclude that it is an isomorphism, and that therefore $\Phi(T)$ is an isomorphism as well).

The argument is essentially the same as the end of the proof of \ref{lemma:atomic_bm}. Let $y \to T$ be any geometric point, $W_0$ the stratum containing $y$, and $W$ the connected component of $W_0$ containing $y$. By condition 1) in the definition of nuclearity there exists an \'etale specialization $\eta\rightsquigarrow \zeta$ with $\eta$ in $W$ and $\xi$ in $Z$. By \ref{corollary:generization_tropic}, we obtain a canonical map 
$$\Tropic^0(x)=\Tropic^0(\zeta)\to \Tropic^0(\eta)=\Tropic^0(y).$$
The compatibility condition forces all elements of the system $\c{F}(T)$ to be determined by the element belonging to $\Tropic^0(x)$. This proves the injectivity.
\end{proof}

\subsection{The logarithmic Jacobian}
Let $S=(\Spec k, M)$ be a logarithmic geometric point, with chart by a monoid $\oM$, and $\pi\colon X\to S$ a log curve. We consider $M^{\gp}_X$-torsors on $X$ for the strict \'etale topology. Given such a torsor $L$, we denote by $\overline L$ the image of its isomorphism class via $H^1(X,M^{\gp}_X)\to H^1(X,\oM^{\gp}_X)$.

There is a natural surjective map
$$\Hom(H_1(\mathfrak X),\oM^{\gp})=H^1(X,\pi^{-1}\oM^{\gp})\to H^1(X,\oM^{\gp}_X).$$
The first equality is because $\oM^{\gp}$ is torsion-free. Surjectivity is due to the fact that $\oM^{\gp}_{X/S}$ is supported on $X^{nsm}$ and therefore $H^1(X,\oM^{\gp}_{X/S})=0$.

We say that $L$ has \textit{bounded monodromy} if some (equivalently, any) preimage of $\overline L$ in $\Hom(H_1(\mathfrak X),\oM^{\gp})$ has bounded monodromy as in \ref{def:bm}.

Let now $\pi\colon X\to S$ be a log curve over a general log base; we say that a $M^{\gp}_X$-torsor $L$ has bounded monodromy, if for every strict geometric point $s\to S$, the restriction $L_s$ has bounded monodromy. Such a torsor is called a \textit{logarithmic line bundle}.

\begin{definition}
The logarithmic Picard stack is the stack $\mathbf{LogPic}_{X/S}$ on $(\cat{LSch}/S)_{\et}$ with sections 
\begin{align*}
\mathbf{LogPic}_{X/S}(T) = \{\mbox{log line bundles on }X_T=X\times_ST\}
\end{align*} 
The logarithmic Picard sheaf $\Logpic_{X/S}\subset R^1\pi_*\bb G_{m,X}^{log}$ is the sheafification of the functor of isomorphism classes of $\mathbf{LogPic}_{X/S}$.
\end{definition}

\begin{remark}
To any line bundle $L$ on a log curve $X\to S$ with associated $\ca O_X^\times$-torsor $L^\times$, one can associate the torsor $L^\times \otimes_{\mathcal O^{\times}_X}M^{\gp}_X$. Since its image in $H^1(X,\oM^{\gp}_X)$ vanishes, this has bounded monodromy and is therefore a log line bundle. There is therefore a natural map $\frak f_*\mathbf{Pic}_{X/S} \to \mathbf{LogPic}_{X/S}$.
\end{remark}

Consider the diagram
\begin{center}
\begin{tikzcd}
\frak f_*\mathbf{Pic}_{X/S} \ar[r]\ar[d] & \mathbf{LogPic}_{X/S} \ar[d, dashed, "\deg"] \\
\ZZ[\Irr_{X/S}] \ar[r, "\Sigma"] & \ZZ
\end{tikzcd}
\end{center}
where $\mathbf{Pic}_{X/S}$ is the Picard stack, the left vertical map is the multidegree map, $\Sigma$ is the sum, and the top horizontal map associates to a $\mathcal O_X^{\times}$-torsor $L$ the log line bundle $L\otimes_{\mathcal O^{\times}_X} M^{\gp}_X$. It is shown in \cite[4.5]{Molcho2018The-logarithmic} that there is a unique degree map making the diagram commute. 
\begin{definition}
We define $\mathbf{LogPic}^0_{X/S}$ to be the substack of $\mathbf{LogPic}_{X/S}$ of logarithmic line bundles of degree zero, and similarly for the sheaf $\Logpic^0_{X/S}\subset \Logpic_{X/S}$, which  is called \textit{logarithmic Jacobian}.
\end{definition}

For the convenience of the reader we recall from \cite{Molcho2018The-logarithmic} a list of properties of the logarithmic Picard and Jacobian that we will use.

\begin{property}\cite[Section 4.14]{Molcho2018The-logarithmic}.\label{property:tropicalization_map}
There is a natural morphism 
 $\Logpic^0_{X/S} \rightarrow \Tropic^0_{X/S}$, the ``tropicalization", which fits into a short exact sequence  
\begin{align}\label{exseq}
0 \rightarrow \frak f_*\Pic^0_{\ul X/\ul S}\to \Logpic^0_{X/S} \rightarrow \Tropic^0_{X/S} \to 0
\end{align}
\noindent where $\Pic^0_{\ul X/\ul S}$ is the generalized Jacobian, i.e. the sheaf of line bundles of degree zero on every irreducible component, which is representable by a semiabelian scheme on $\ul S$. Moreover, $\frak f_*\Pic^0_{\ul X/\ul S}$ is representable by a semiabelian scheme with pullback log structure from $S$.  
\end{property}

\begin{property}\cite[Proposition 4.3.2]{Molcho2018The-logarithmic}.
\label{property: line_bundle_representative}The bounded monodromy condition has the following concrete interpretation: an $M_{X}^\gp$-torsor has bounded monodromy if and only if, \'etale-locally on $S$, there exist log modifications $S'\to S$ and $Y\to X\times_S S'$ such that $Y\to S'$ is a log curve and the induced $M_{Y}^\gp$-torsor can be represented by a line bundle on $Y$. 
\end{property}

\begin{property}\cite[Corollary 4.4.14.1]{Molcho2018The-logarithmic}.\label{property:logpic_modifications}
If $Y \to X$ is a log modification such that $Y/S$ is a log curve, the induced maps $\Logpic^0_{X/S}\to \Logpic^0_{Y/S}$ and $\Tropic^0_{X/S}\to \Tropic^0_{Y/S}$ are isomorphisms. Although the statement for $\Tropic^0$ is not explicitly stated in \cite{Molcho2018The-logarithmic}, this follows from Corollary 4.4.14.1 \textit{loc. cit.} together with the fact that $\Pic^0_{X/S}\to \Pic^0_{Y/S}$ is an isomorphism and the exact sequence \ref{exseq}.
\end{property}

\begin{property}\cite[Section 4.4]{Molcho2018The-logarithmic}.\label{property:log_pic_is_a_log_stack}
The stack $\mathbf{LogPic}_{X/S}$ is a stack in the (strict) \'etale topology. However, if the base $S$ is logarithmically regular, $\mathbf{LogPic}_{X/S}$ is also a stack on the small \emph{log} \'etale site of $S$, and in fact on the site generated by the small log \'etale site and arbitrary root stacks (perhaps of order not prime to the characteristic). 
\end{property}

\begin{property}\cite[Theorem 4.10.1]{Molcho2018The-logarithmic}.\label{property:LogPic_proper}
The stack $\mathbf{LogPic}_{X/S}$ satisfies the valuative criterion for properness for log schemes: it has the unique lifting property with respect to valuation rings $R$ whose log structure is the direct image of a valuative log structure on the fraction field.
\end{property}

\begin{property}\cite[Theorem 4.13.1]{Molcho2018The-logarithmic}.\label{property:logpic_smooth}
The stack $\mathbf{LogPic}_{X/S}$ is logarithmically smooth over $S$, meaning that it is locally of finite presentation and satisfies the infinitesimal lifting criterion for strict square $0$ extensions, as in the original definition of \cite{Kato1989Logarithmic-str}.
\end{property}

In general, $\mathbf{LogPic}_{X/S}$ (resp. $\Logpic_{X/S}$) is not representable by a log algebraic stack (resp. log algebraic space), as the following example shows:

\begin{example}[\cite{IIKajiwara2008Logarithmic-a}, section I]\label{ex:tate_curve}
Let $K$ be a complete discrete valuation field, with ring of integers $\mathcal O_K$, $\pi\in \mathfrak m_K$ a uniformizer, $q=\pi^2$ and $E_q$ the corresponding Tate elliptic curve. Any proper log smooth model $\mathcal E_q$ of $E_q$ over $\mathcal O_K$ gives the same logarithmic Jacobian $\Logpic^0_{\mathcal E_q}$, and comes by \ref{thm:intro_logNMP} with a unique (birational) morphism $\mathcal E_q\to \Logpic^0_{\mathcal E_q}$. The minimal regular model $\mathcal E^{min}_q$ has two irreducible components over the residue field; blowing down any of them to a point gives two log smooth models $\mathcal E^1_q,\mathcal E^2_q$. As both $\mathcal E^1_q$ and $\mathcal E^2_q$ map to $\Logpic^0_{X/S}$, we see that if the latter were a log algebraic space, its closed fiber would have to be a point, contradicting the log smoothness.
\end{example}

\section{Strict log and tropical Jacobian and representability}\label{sec:representability}

\begin{definition}
Let $X\to S$ be a log curve. Recall the functor $\frak s_*$ from \ref{subs:functors}. We define the \textit{strict logarithmic Jacobian} $\sLPic^0_{X/S}$ to be $\frak s_*\Logpic^0_{X/S}$, a sheaf on the big \'etale site $\Schet{\ul S}$. Similarly, we define the \textit{strict tropical Jacobian} $\sTPic^0_{X/S}$ to be $\frak s_*\Tropic^0_{X/S}$
\end{definition}
Although these are sheaves on $\Schet{\ul S}$, they do not depend only on the prestable curve $\ul X/\ul S$ but also on the log structure $M_S$ (see \ref{example:trojac_depends_on_log_structure}). 

\begin{remark}
The article \cite{Molcho2018The-logarithmic} is by no means the first time the term ``logarithmic Jacobian" appears in the literature. To our knowledge the earliest appearances of the term are in \cite{Kato1989Logarithmic-deg,Kajiwara1993Logarithmic-c,Illusie1994Logarithmic-spa}; a notion of logarithmic Jacobian was then studied, among others, by Olsson in \cite{Olsson2004} and Bellardini in \cite{Bellardini2015On-the-Log-Pica}. Their notion is from the beginning a notion for schemes, and thus closer to our strict log Jacobian than to log Jacobian of \cite{Molcho2018The-logarithmic}. However, even so there are differences -- for instance, in Olsson's and Bellardini's work the log structure on $S$ is restricted, and the subset of $M_X^\gp$ torsors they consider is different from the subgroup of all bounded monodromy torsors. In particular, the objects constructed by Bellardini and Olsson will in general not satisfy the N\'eron mapping property over higher-dimensional bases. 
\end{remark}

There is an obvious modular interpretation for $\sLPic^0_{X/S}$, as the sheafification of the functor associating to a map of schemes $a\colon T\to \ul S$ the set $$\{\mbox{log line bundles } L \mbox{ on } X\times_S \frak sT \mbox{ of degree zero }\}/\cong$$

By exactness of the functor $\frak s_*$, the exact sequence \ref{exseq} yields an exact sequence
\begin{equation}\label{exact_sequence_strict}
0 \to \Pic^{0}_{X/S}\to \sLPic^0_{X/S}\to \sTPic^0_{X/S}\to 0
\end{equation}

\begin{lemma}\label{lemma:sTPic_basechange}
Let $X/S$ be a log curve and $f\colon T\to \ul S$ a morphism of schemes. Then 
$$\sTPic^0_{X/S}\times_{\ul S} T=\sTPic^0_{X_{\frak sT}/\frak sT} \;\;\; \text{and} \;\;\; \sLPic^0_{X/S}\times_{\ul S} T=\sLPic^0_{X_{\frak sT}/\frak sT}. $$ 
\end{lemma}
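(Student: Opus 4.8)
The plan is to deduce the statement from two ingredients: a purely formal base-change property of the functor $\frak s_*$, and the base-change compatibility of $\Logpic^0$ and $\Tropic^0$ already within the category of log schemes. For the first ingredient, I would check that for \emph{any} sheaf $\c G$ on $(\cat{LSch}/S)_\et$ and any morphism of schemes $f\colon T\to\ul S$, the restriction of $\frak s_*\c G$ to $\Schet{T}$ is canonically isomorphic to $\frak s_*\bigl(\c G|_{(\cat{LSch}/\frak sT)_\et}\bigr)$, where the right-hand $\frak s_*$ is the functor attached to the log base $\frak sT$. This is seen by evaluating both sides on an arbitrary $h\colon Y\to T$: each gives $\c G\bigl(Y,h^*M_{\frak sT}\bigr)$, the only point being that the pullback log structure on $Y$ obtained from $S$ along $f\circ h$ agrees with the one obtained from $\frak sT$ along $h$, which holds since $M_{\frak sT}=f^*M_S$ by the very definition of $\frak s$.

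Next I would record the base-change identities $\Logpic^0_{X/S}|_{(\cat{LSch}/\frak sT)_\et}=\Logpic^0_{X_{\frak sT}/\frak sT}$ and $\Tropic^0_{X/S}|_{(\cat{LSch}/\frak sT)_\et}=\Tropic^0_{X_{\frak sT}/\frak sT}$. For $\Logpic^0$ this is essentially the definition: $\mathbf{LogPic}_{X/S}$ sends $T'\to S$ to the groupoid of log line bundles on $X\times_S T'$, and restricting to log schemes $T'\to\frak sT$ uses only the identification $X\times_S T'=X_{\frak sT}\times_{\frak sT}T'$ together with the intrinsic nature of the bounded-monodromy condition and of the degree map; one then sheafifies and takes the degree-zero part. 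For $\Tropic^0$ I would verify that every ingredient of $\sheafhom(\frak f_*\c H_{1,X/S},\mathbb G_{m,S}^{trop})^{\dagger}/\frak f_*\c H_{1,X/S}$ restricts correctly along $\frak sT\to S$: the sheaf $\mathbb G_{m,S}^{trop}$ is defined with no reference to $S$ and restricts to $\mathbb G_{m,\frak sT}^{trop}$; the sheaf $\c H_{1,X/S}=\ker(\delta\colon\ZE\to\ZV)$ on $\Schet{\ul S}$ is compatible with base change of prestable curves, since $\ZE$ is (étale-locally it is $\alpha_*\bb Z_{X^{ns}}$ for the finite unramified $\alpha\colon X^{ns}\to\ul S$, cf. \ref{lem:ZEisomZE,lemma:pi_*representable}), $\ZV=\ZZ[\Irr_{X/S}]$ is (by Romagny's base-change result for $\Irr_{X/S}$), and $\delta$ is built from these; the functor $\frak f_*=\frak s^*$ of \ref{lemma:s^*f_*} commutes with restriction because it is a left adjoint of ``base-change'' type on big sites; and the superscript $\dagger$ is cut out by a condition (boundedness of $\phi(\gamma)$ by $\on{length}(\gamma)$) that is both preserved and reflected under restriction, by the compatibility of the monodromy pairing and the length map with base change.

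Finally I would assemble the chain
\[
\sTPic^0_{X/S}\times_{\ul S}T=(\frak s_*\Tropic^0_{X/S})|_{\Schet{T}}=\frak s_*\bigl(\Tropic^0_{X/S}|_{(\cat{LSch}/\frak sT)_\et}\bigr)=\frak s_*\Tropic^0_{X_{\frak sT}/\frak sT}=\sTPic^0_{X_{\frak sT}/\frak sT},
\]
and the identical chain with $\Logpic^0$ in place of $\Tropic^0$ and $\sLPic^0$ in place of $\sTPic^0$, the exact sequence \ref{exact_sequence_strict} (or \ref{exseq}) providing a consistency check. I expect the only genuinely delicate point to be the base-change compatibility of $\frak f_*\c H_{1,X/S}$: one must take care that both $\c H_{1,X/S}$ as a sheaf on the big étale site of $\ul S$ \emph{and} the passage $\frak f_*\colon\abSchet{\ul S}\to\abLSchet{S}$ are stable under restriction along $\frak sT\to S$, using the identification $\frak f_*=\frak s^*$ and the representability statements already established; everything else in the argument is a formal unwinding of definitions.
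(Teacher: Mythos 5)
Your proof is correct and follows essentially the same route as the paper's: the paper likewise reduces to the base-change compatibility of $\Tropic^0$ and $\Logpic^0$ within the log category (which it asserts without further detail) and then commutes $\frak s_*$ past the fibre product, phrased there via the identity $\frak s_*\frak sT=T$ and the fact that $\frak s_*$, being a right adjoint, preserves products. Your slice-restriction computation of $\frak s_*$ and your unwinding of the ingredients of $\Tropic^0$ are simply more explicit renderings of those same two steps.
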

\begin{proof}
We have $\frak sT=\frak f_*T$, so $\frak s_*\frak sT=T$.  The formation of the tropical Jacobian commutes with base change and therefore \begin{equation}\label{eq:tropic_basechange}\Tropic^0_{X/S}\times_S\frak sT=\Tropic^0_{X_{\frak sT}/\frak sT}.
\end{equation} Now the right adjoint functor $\frak s_*$ commutes with products, and applying it to \ref{eq:tropic_basechange} yields the result. The same proof works for $\sLPic^0_{X/S}$.
\end{proof}

Recall that the stack $\textbf{LogPic}^0_{X/S}$ is proper and log smooth over $S$ (\ref{property:LogPic_proper} and \ref{property:logpic_smooth}), but in general not algebraic. The main purpose of this section is to prove the following:
\begin{theorem}\label{thm:sPic_representable}
Let $X/S$ be a log curve. Then 
\begin{enumerate}
\item
$\on{sTPic}^0_{X/S}$ is representable by a quasi-separated \'etale algebraic space over $\ul S$.
\item
$\on{sLPic}^0_{X/S}$ is representable by a quasi-separated smooth algebraic space over $\ul S$. 
\end{enumerate}
\end{theorem}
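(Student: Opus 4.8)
The plan is to reduce everything to the representability of $\sTPic^0_{X/S}$, and then to prove that by verifying local constructibility and invoking \ref{cor:loc_constructible_implies_representable_big}. First I would dispatch part (2) assuming part (1): the exact sequence \ref{exact_sequence_strict}
\[
0 \to \Pic^0_{X/S} \to \sLPic^0_{X/S} \to \sTPic^0_{X/S} \to 0
\]
exhibits $\sLPic^0_{X/S}$ as a torsor under the semiabelian scheme $\Pic^0_{X/S}$ over the \'etale algebraic space $\sTPic^0_{X/S}$; since $\Pic^0_{X/S}$ is smooth and the extension is locally trivial in the \'etale topology, $\sLPic^0_{X/S}$ is representable by a smooth algebraic space over $\ul S$, and quasi-separatedness follows from quasi-separatedness of the base pieces. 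The smoothness also matches the log smoothness of $\mathbf{LogPic}^0_{X/S}$ recorded in \ref{property:logpic_smooth}. So the whole weight of the theorem lies in part (1).

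For part (1), by \ref{lemma:sTPic_basechange} the formation of $\sTPic^0_{X/S}$ commutes with strict base change, so I may work \'etale-locally on $\ul S$ and assume $S$ is noetherian, affine, and moreover nuclear in the sense of \ref{def:atomic} (using \ref{lemma:cover_by_nuclear}); in particular $X/S$ has split branches, $\o M_S^{\gp}$ and $\ZV$ are globally generated, and the log stratification has a single connected closed stratum. Under these hypotheses I would use the presentation of the tropical Jacobian from \ref{sec:big_tropical}, namely
\[
\Tropic^0_{X/S} = \sheafhom\!\big(\frak f_*\c H_{1,X/S},\mathbb G_{m,S}^{trop}\big)^{\dagger}\big/\,\frak f_*\c H_{1,X/S},
\]
and apply $\frak s_*$. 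Since $\frak s_*$ is exact (\ref{remark:exactness}) and $\frak s_*\frak f_* = \mathrm{id}$, one gets
\[
\sTPic^0_{X/S} = \frak s_*\sheafhom\!\big(\frak f_*\c H_{1,X/S},\mathbb G_{m,S}^{trop}\big)^{\dagger}\big/\,\c H_{1,X/S},
\]
and by adjunction ($\frak f_* = \frak s^*$, \ref{lemma:s^*f_*}) together with \ref{lemma:strictG_m} the numerator is $\sheafhom\big(\c H_{1,X/S},\frak i^*\o M_S^{\gp}\big)^{\dagger}$, a subsheaf of a sheafy Hom between two locally constructible sheaves on $\Schet{\ul S}$. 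The strategy is then: (a) show $\c H_{1,X/S}$ is locally constant constructible on each log stratum (this is essentially \ref{corollary:generization_tropic} and the analysis in \ref{lemma:atomic_bm}); (b) show the bounded-monodromy subsheaf is open and that the whole gadget is representable by an \'etale algebraic space by checking $\frak i^*\frak i_*$ is an isomorphism — equivalently, assembling it stratum-by-stratum from the constant pieces $\Hom(H_1(\mathfrak X_{\eta_W}),\o M_{\eta_W}^{\gp})^{\dagger}$ glued along the generization maps, exactly as the compatible-diagram bookkeeping in the proof of \ref{lemma:atomic_bm} does; (c) quotient by the \'etale subgroup $\c H_{1,X/S}$, noting that a quotient of an \'etale algebraic space by an \'etale group action is again \'etale, and recover quasi-separatedness from \ref{remark:loc_sep_qsep} since everything is finitely presented.

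The main obstacle is step (b): controlling the bounded-monodromy condition in families. Over a single log point the subgroup $\Hom(H_1,\o M^{\gp})^{\dagger}$ is manifestly finitely generated, but the subtlety — flagged in \ref{section:BM} — is that a bounded-monodromy homomorphism at a point $s$ extends uniquely to each generization $\eta$ precisely because cycles whose length degenerates to $0$ are forced into the kernel; this is what makes $\frak i^*\frak i_*\sTPic^0 \to \sTPic^0$ an isomorphism rather than merely a monomorphism. So the crux is to check that the stratum-wise constant sheaves $\Hom(H_1(\mathfrak X_{\eta_W}),\o M_{\eta_W}^{\gp})^{\dagger}$ glue, along the generization maps $(\eta\rightsquigarrow t)^*$ constructed in \ref{lemma:atomic_bm}, into a genuine constructible sheaf on $\ul S$ — i.e. that the cocycle condition for a triple of generizations holds and that the resulting sheaf is locally constructible — after which \ref{cor:loc_constructible_implies_representable_big} finishes the argument. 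The rest (base change, exact sequences, passing from $\sTPic^0$ to $\sLPic^0$) is formal.
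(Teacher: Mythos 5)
Your proposal is correct and follows essentially the same route as the paper: part (2) is deduced from part (1) via the exact sequence \ref{exact_sequence_strict} realising $\sLPic^0_{X/S}$ as a $\Pic^0_{X/S}$-torsor over $\sTPic^0_{X/S}$, and part (1) is proved by verifying local constructibility and invoking \ref{cor:loc_constructible_implies_representable_big}, with the cofinality of nuclear neighbourhoods and \ref{lemma:atomic_bm} carrying the bounded-monodromy analysis. The only cosmetic difference is that the paper verifies the stalkwise isomorphism $\frak i^*\frak i_*\ca F(t)\to\ca F(t)$ directly by a double-colimit manipulation over factorizations, rather than reassembling the sheaf from its Hom-dagger presentation stratum by stratum, but the substance is identical.
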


\begin{proof}

Part (ii) of the theorem is immediate from part (i) and the exact sequence  \ref{exact_sequence_strict}, as it realises $\on{sLPic}^0$ as a $\Pic^0$-torsor over $\on{sTPic}^0$, which makes it representable by a smooth separated algebraic space over $\on{sTPic}^0$. Let us prove part (i). Representability by a quasi-separated algebraic space is \'etale local on the target, and $X/S$ is of finite presentation, so we reduce via \ref{lemma:sTPic_basechange} to the case where $S$ is a log scheme of finite presentation (in particular Noetherian).

We reduce by virtue of \ref{cor:loc_constructible_implies_representable_big} to checking that the sheaf $\c{F} = \Tropic^0_{X/S}$ is locally constructible, i.e. that the canonical morphism 
$\alpha\colon \frak i^*\frak i_*\c{F}\to \c{F}$
is an isomorphism. 

It suffices to show that for any morphism $ T\to \ul S$ of schemes, the restriction of $\alpha$ to the small \'etale site over $T$ is an isomorphism. That is, that for any $T\to \ul S$ and any geometric point $t$ of $ T$, the map
$$\colim_{t\to V \to T} \frak i^*\frak i_*\c{F}(V) \to \colim_{t\to V\to T}\c{F}(V)$$
is an isomorphism, where the colimits are over factorizations $t \to V \to T$ with $V\to T$ \'etale. The right hand side is $\c{F}(t)$, by \ref{lemma:atomic_bm} and the fact that factorizations such that $V$ (with pullback log structure from $S$) is a nuclear neighbourhood of $t$ form a cofinal system. The left hand side becomes 
$$\colim_{t\to V\to T}\colim_{V\to W\to S}\c{F}(W)$$
where $V\to T$ and $W\to S$ are \'etale. This can in turn be replaced by the colimit of $\ca F(W)$ over the diagrams of the form 
\begin{center}
\begin{tikzcd}
t \ar[r] &V \ar[r] \ar[d] & T \ar[d] \\
&W \ar[r] & S
\end{tikzcd}
\end{center}
with $V\to T$ and $W\to S$ \'etale, where $t \to T$ and $T\to S$ are fixed and the remainder is allowed to vary. But this is simply the colimit over the factorizations $t\to W \to S$ with $W\to S$ \'etale. Since those factorizations with $W$ a nuclear neighbourhood of $t$ in $S$ form a cofinal system, by \ref{lemma:atomic_bm} the colimit is equal to $\ca F(t)$.
\end{proof}
 \subsection{Examples of strict logarithmic Jacobians}
 \begin{example}
 Let $S$ be the spectrum of a discrete valuation ring with divisorial log structure, and $X/S$ a log curve. Call $\eta$ the generic point of $S$. The closed fiber of the strict tropical Jacobian is identified with the finite \'etale group scheme of components of the N\'eron model of $\Pic^0_{X_\eta/\eta}$, and $\sLPic^0_{X/S}$ is the N\'eron model itself. This will be shown in greater generality in \ref{coro:NMP_strict_log}. Another explicit description of the N\'eron model is known in this case: after a finite sequence of log blowups of $X$, we obtain a new log curve $X'/S$ whose nodes all have length $1\in\NN=\o M_S(S)$. By \cite[9.5, Theorem 4]{Bosch1990Neron-models}, the N\'eron model of $\Pic^0_{X_\eta/\eta}$ is the quotient of the Picard group of degree $0$ line bundles $\Pic^{tot 0}_{X'/S}$ by the closure of its unit section. The relation between $\sLPic^0_{X/S}$ and quotients of Picard spaces is explored further in \ref{sec:etale_closure}.
 \end{example}
\begin{example}\label{example:1-gon}
Let $S=\Spec k[[u,v]]$, $D \subset S$ be defined by $uv=0$, and $j\colon U=S\setminus D\into S$. Let $E/S$ be the degenerate elliptic curve in $\bb P^2_S$ with equation
$$y^2z=x^3+x^2z+uvz^3$$
We make $E\to S$ into a log smooth morphism by putting the log structures associated to the divisors $D$ and $E\times_SD$ onto $S$ and $E$ respectively. Let $s$ be a geometric point of $S$; there are essentially three possible structures for the tropical curve $\mathfrak X_s$ and for $\Tropic^0(\mathfrak X_s)$:
\begin{itemize}
\item if $s$ lands in $U$, then $\mathfrak X_s$ consists of a single vertex and $\Tropic^0(\mathfrak X_s)=0$;
\item if $s$ lands in $D\setminus \{u=0,v=0\}$ then $\oM_{S,s}\cong \mathbb N$ and $\mathfrak X_s$ consists of a vertex and a loop labelled by $1\in \mathbb N$; then $\Tropic^0(\mathfrak X_s)=0$;
\item if $s$ maps to $(0,0)$ then $\oM_{S,s}\cong \mathbb N^2$ and $\mathfrak X_s$ consists of a vertex and a loop labelled by $(1,1)\in \mathbb N^2$; then $\Tropic^0(\mathfrak X_s)\cong \mathbb Z$.
\end{itemize}
It follows that $\sTPic^0_{X/S}$ is an \'etale group space with fiber $\bb Z$ at the closed point of $S$ and $0$ everywhere else. In particular, $\sLPic^0_{X/S}$ is not quasi-compact. It will follow from \ref{prop:separated_model} that it is not separated either.
\end{example}

\begin{example}\label{example:trojac_depends_on_log_structure}
The tropical Jacobian of a log curve depends on the log structures on $C/S$, and not only on the underlying scheme map $\ul C/\ul S$. Keeping the notations of \ref{example:1-gon}, $E/S$ comes via base-change from a log curve $E_0/S_0$, where $S_0$ is $\ul S$ with log structure given by
\[
(k[[u,v]])^\times \oplus \NN uv \to k[[u,v]].
\]
The tropical Jacobian of $E/S$ at the closed point is a free abelian group of rank $1$, while that of $E_0/S_0$ is trivial. In this example, $E_0 \to S_0$ coincides with the log curve $E^\# \to S^\#$ provided by \ref{proposition:special_log_structure}.
\end{example}

\section{The saturation of $\Pic^0$ in $\sLPic^0$}\label{sec:saturation}
We have seen in \ref{example:1-gon} that the \'etale algebraic space $\sTPic^0_{X/S}$ does not in general have finite fibers, and that consequentially $\sLPic^0_{X/S}$ is not in general quasi-compact. In this section we introduce a new quasi-compact quasi-separated (\emph{qcqs}) smooth algebraic space naturally associated to the log curve $X/S$ and sitting in between $\Pic^0_{X/S}$ and $\sLPic^0_{X/S}$.

 Consider the subsheaf on $\LSchet{S}$ of the tropical Jacobian
$$\Tropic^{tor}_{X/S} \subset \Tropic^0_{X/S}$$ of torsion elements, and its strict version $\sTPic^{tor}_{X/S}:=\frak s_*\Tropic^{tor}_{X/S}$ of torsion elements, 
\begin{definition}
We define the \textit{saturated Jacobian} $\Pic^{sat}_{X/S}$ to be the preimage of $\Tropic^{tor}_{X/S}$ via the map $\Logpic^0_{X/S}\to \Tropic^0_{X/S}$. Similarly, we define the \textit{strict saturated Jacobian} $\sPic^{sat}_{X/S}$ to be the preimage of $\sTPic^{tor}_{X/S}$ via the map $\sLPic^0_{X/S}\to \sTPic^0_{X/S}$.
\end{definition}

The exact sequence \ref{exseq} restricts to an exact sequence
$$0\to \frak f_*\Pic^0_{X/S}\to  \Pic^{sat}_{X/S}\to \Tropic^{tor}_{X/S} \to 0.$$

We apply the exact functor $\frak s_*$ to find an exact sequence in $\shSchet{S}$

$$0\to \Pic^0_{X/S}\to  \sPic^{sat}_{X/S}\to \sTPic^{tor}_{X/S} \to 0.$$

\begin{remark}
The strict saturated Jacobian has the following modular interpretation: it is the sheafification of the presheaf of abelian groups on $\Schet{\ul S}$ whose $T$-sections are the isomorphism classes of log line bundles $L$ of degree zero on $X\times_S sT$ such that some positive power of $L$ is a line bundle. This line of thought is pursued further in \cite{Holmes2022Logarithmic-mod}. 
\end{remark}

\begin{lemma}\label{lemma:sTPic_qf}
The inclusion $\sTPic^{tor}_{X/S}\to \sTPic^0_{X/S}$ is an open immersion; in particular $\sTPic^{tor}_{X/S}$ is representable by a quasi-separated $\ul S$-\'etale group algebraic space. Moreover $\sTPic^{tor}_{X/S}$ is quasi-finite over $\ul S$.
\end{lemma}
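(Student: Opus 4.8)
The plan is to reduce everything to a local and combinatorial question about the torsion in $\Tropic^0_{X/S}(s)$ at geometric points, using the nuclear covers and the identification of $\sTPic^0_{X/S}$ with the sheaf $\c F$ of compatible systems of elements of $\Tropic^0$ established in \ref{lemma:tropjac_atomic} and \ref{lemma:atomic_bm}. Since $\sTPic^0_{X/S}$ is already known to be a quasi-separated \'etale group algebraic space over $\ul S$ (\ref{thm:sPic_representable}(1)), the torsion subsheaf $\sTPic^{tor}_{X/S}$ is an \'etale subgroup; the content of the lemma is that this subgroup is \emph{open} and \emph{quasi-finite}. Openness, for an \'etale algebraic space over a locally Noetherian base, can be checked on geometric points together with a constructibility/generization argument, so the key point will be: (a) at each geometric point $s$, the torsion subgroup of the finitely generated abelian group $\Tropic^0_{X/S}(s)$ is finite, and (b) the generization maps \ref{eqn:generization_map} are compatible enough that a torsion section through $s$ remains torsion on a neighbourhood.

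First I would recall that for $s$ a geometric point of $S$, after choosing an orientation, $\Tropic^0_{X/S}(s) = \Hom(H_1(\mathfrak X_s,\ZZ), \oM^{gp}_{S,s})^{\dagger}/H_1(\mathfrak X_s,\ZZ)$. Here $H_1(\mathfrak X_s,\ZZ)$ is a free $\ZZ$-module of finite rank $b$, and $\oM^{gp}_{S,s}$ is a finitely generated free abelian group since $S$ is fs; hence $\Hom(H_1,\oM^{gp}_{S,s})$ is free of finite rank, and the quotient by the (discrete, finite-index-in-its-saturation is not quite the point — rather, full-rank if the pairing is nondegenerate) lattice $H_1(\mathfrak X_s,\ZZ)$ acting via the monodromy pairing is a finitely generated abelian group. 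Its torsion subgroup is therefore finite — this is the concrete statement behind \ref{remark:ZV_is_relative_Z^V}-style computations and matches the classical fact that the component group of a N\'eron model is finite. I would make this precise by noting that the monodromy pairing is nondegenerate on $H_1(\mathfrak X_s,\ZZ)$ (each basis cycle pairs with itself to its positive length), so $H_1(\mathfrak X_s,\ZZ)$ maps to a \emph{finite-index} subgroup of $\sheafhom(H_1,\mathbb G^{trop}_m)^{\dagger}(s)$ only after tensoring with $\QQ$; the torsion of the quotient is then controlled by the elementary divisors of the pairing matrix, hence finite. This gives quasi-finiteness fibrewise.

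Next, for openness: using \ref{lemma:atomic_bm}, cover $S$ by nuclear $T$ with $s$ in the closed stratum so that $\Tropic^0_{X/S}(T) \xrightarrow{\sim} \Tropic^0_{X/S}(s)$. A section $x$ of $\sTPic^0_{X/S}$ over such a $T$ that is torsion at $s$ is torsion as a section over $T$ (the restriction map is an isomorphism of groups), and then for every other geometric point $t$ of $T$ its image under the generization map \ref{eqn:generization_map} is again torsion, because generization maps are group homomorphisms. Hence the locus where a section is torsion is open, which is exactly the statement that $\sTPic^{tor}_{X/S}\to \sTPic^0_{X/S}$ is an open immersion; representability and quasi-separatedness of $\sTPic^{tor}_{X/S}$ follow since open subspaces of quasi-separated \'etale algebraic spaces are again such. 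Finally, quasi-finiteness of $\sTPic^{tor}_{X/S}$ over $\ul S$ follows from fibrewise finiteness established above, since an \'etale algebraic space with finite fibres over a Noetherian base is quasi-finite; one reduces to the Noetherian case as $X/S$ is of finite presentation.

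The main obstacle I anticipate is part (a): proving cleanly that the torsion of $\Hom(H_1,\oM^{gp})^{\dagger}/H_1$ is finite uniformly, and in particular making sure the bounded-monodromy decoration $\dagger$ does not enlarge the torsion — i.e. that $\sheafhom(H_1,\mathbb G^{trop}_m)^{\dagger}(s)$ still has finite rank equal to that of $\Hom(H_1,\oM^{gp}_{S,s})$, so that the quotient by $H_1$ has finite torsion. This should follow from the fact that $\dagger$ cuts out an \emph{open} (hence full-rank, same-rank) subgroup, as recorded in \ref{section:BM}, but it is the point that requires care. The compatibility of generization maps with the torsion condition, and the reduction of openness of an \'etale subgroup to a fibrewise plus generization check, are the other places where I would be careful, though I expect these to be routine given \ref{corollary:generization_tropic} and \ref{lemma:atomic_bm}.
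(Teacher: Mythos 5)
Your argument for the open-immersion part and for fibrewise finiteness is workable, though you take a longer route to openness than necessary: the paper simply writes the $n$-torsion subsheaf as the pullback of the diagonal $\Delta\colon \sTPic^0_{X/S}\to\sTPic^0_{X/S}\times_{\ul S}\sTPic^0_{X/S}$ along the map $x\mapsto(0,nx)$; since $\sTPic^0_{X/S}$ is \'etale over $\ul S$ the diagonal is an open immersion, and one is done without invoking nuclear neighbourhoods or generization maps at all. Also, the ``main obstacle'' you flag is not one: $\Hom(H_1(\mathfrak X_s),\oM^{gp}_{S,s})^\dagger$ is by definition a subgroup of the finitely generated free abelian group $\Hom(H_1(\mathfrak X_s),\oM^{gp}_{S,s})$, so the quotient by $H_1(\mathfrak X_s)$ is finitely generated and its torsion is automatically finite; no nondegeneracy of the monodromy pairing or rank count is needed, and this two-line observation is exactly what the paper does.

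The genuine gap is in your last step. Quasi-finite means locally quasi-finite \emph{and quasi-compact}, and the principle you invoke --- that an \'etale algebraic space with finite fibres over a Noetherian base is quasi-finite --- is false. For a counterexample, take $S=\AA^2_k$, choose distinct closed points $p_1,p_2,\dots$, set $V_n=\AA^2_k\setminus\{p_n\}$, and glue $V_n$ to $V_{n+1}$ along $\AA^2_k\setminus\{p_n,p_{n+1}\}$ for every $n$; the resulting scheme is \'etale and surjective over $S$ with every fibre of cardinality at most $2$, yet it is the strictly increasing union of the open subschemes $V_1\cup\dots\cup V_N$ and hence is not quasi-compact. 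So fibrewise finiteness gives you only local quasi-finiteness, and quasi-compactness must be proved separately using the actual geometry at hand. This is what the second half of the paper's proof does: working locally, $\ul S$ is affine with a global chart, so the logarithmic stratification has finitely many affine strata; by \ref{corollary:generization_tropic} the sheaf $\sTPic^0_{X/S}$ is locally constant on each stratum, hence its torsion part is a locally constant sheaf of \emph{finite} abelian groups there, i.e.\ a finite \'etale scheme over each stratum, and quasi-compactness follows by summing over the finitely many strata. You need to supply an argument of this kind (or some substitute) to close the proof.
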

\begin{proof}
Denoting by $\sTPic[n]_{X/S}$ the subsheaf of $n$-torsion elements, for the first part of the statement it suffices to show that for every $n\in \ZZ$ the map $j\colon \sTPic[n]_{X/S}\to \sTPic^0_{X/S}$ is an open immersion. We have a pullback square
\begin{equation}
 \begin{tikzcd}
  \sTPic[n]_{X/S} \arrow[r]\arrow[d, "j"] & S \arrow[d, "0"] \\
  \sTPic^0_{X/S} \arrow[r, "n"] & \sTPic^0_{X/S} 
\end{tikzcd}
\end{equation}

Since $\sTPic^0_{X/S}/\ul S$ is \'etale, the zero section $S \to \sTPic^0_{X/S}$ is an open immersion, hence so is $j$. 

Now $\sTPic^{tor}_{X/S}$ is open in $\sTPic^0_{X/S}/\ul S$, and the latter is quasi-separated by \ref{thm:sPic_representable}, hence $\sTPic^{tor}_{X/S}$ is quasi-separated. 

To prove that $\sTPic^{tor}_{X/S} \to \ul S$ is quasi-finite, we will show that it is quasi-compact with finite fibers. For $s\in \ul S$ a geometric point, the restriction of $\sTPic^{0}_{X/S}$ to $s$ has, by \ref{lemma:sTPic_basechange}, as group of $s$-points the quotient $\Hom(H_1(\mathfrak X_s),\oM^{\gp}_{s})^{\dagger}/H_1(\mathfrak X_s)$. This is finitely generated since $\oM^{\gp}_s$ is finitely generated, hence its torsion part is finite.

For quasi-compactness, the question being local on the base, we assume that $\ul S$ is affine and that $\o M_S$ has a global chart from an fs monoid. Then the stratification of $\ul S$ induced by $\oM^{\gp}_S$ has only finitely many strata $Z_i$, and these strata are affine. Therefore, it suffices to show that for each $i$, the preimage of $Z_i$ in $\sTPic^{tor}_{X_{Z_i}/Z_i}$ is quasi-compact. 

On each stratum $Z$, the sheaf $\Hom(\mathcal H_{1,X_Z/Z},\frak s_*\bb G^{trop}_{m,Z})^\dagger$ is locally constant by \ref{corollary:generization_tropic}, so $\sTPic^0_{X_Z/Z}$ is locally constant as well. Its torsion part $\sTPic^{tor}_{X_Z/Z}$ is then a locally constant sheaf of finite abelian groups and in particular a finite \'etale scheme over $Z$. It is therefore quasi-compact.
\end{proof}

\begin{corollary}
The inclusion $\sPic^{sat}_{X/S}\to \sLPic^{0}_{X/S}$ is an open immersion, and $\sPic^{sat}_{X/S}$ is representable by a  $\ul S$-smooth group algebraic space of finite presentation.
\end{corollary}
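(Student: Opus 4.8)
The plan is to obtain every assertion formally from \ref{lemma:sTPic_qf}, \ref{thm:sPic_representable} and the exact sequences already recorded, with essentially no new input. By construction $\sPic^{sat}_{X/S}$ is the preimage of $\sTPic^{tor}_{X/S}$ under the tropicalisation map $\sLPic^0_{X/S}\to\sTPic^0_{X/S}$, so it sits in a cartesian square
\begin{center}
\begin{tikzcd}
\sPic^{sat}_{X/S} \ar[r]\ar[d] & \sLPic^0_{X/S} \ar[d]\\
\sTPic^{tor}_{X/S} \ar[r] & \sTPic^0_{X/S}.
\end{tikzcd}
\end{center}
The bottom arrow is an open immersion by \ref{lemma:sTPic_qf}, and open immersions are stable under arbitrary base change, so the top arrow $\sPic^{sat}_{X/S}\to\sLPic^0_{X/S}$ is an open immersion. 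This already yields representability: by \ref{thm:sPic_representable}(2) the target is a quasi-separated smooth algebraic space over $\ul S$, and an open subspace of such is again a quasi-separated algebraic space, smooth over $\ul S$ since open immersions are smooth. Moreover $\sPic^{sat}_{X/S}$ is a subgroup sheaf of $\sLPic^0_{X/S}$, being the preimage of the subgroup $\sTPic^{tor}_{X/S}\subset\sTPic^0_{X/S}$ along a group homomorphism, so it is a smooth group algebraic space over $\ul S$.

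The only point requiring more than formal manipulation is finite presentation of $\sPic^{sat}_{X/S}\to\ul S$, and I would get it by factoring this map as $\sPic^{sat}_{X/S}\to\sTPic^{tor}_{X/S}\to\ul S$. For the second arrow: $\sTPic^{tor}_{X/S}$ is an open subspace of the étale (hence locally of finite presentation) algebraic space $\sTPic^0_{X/S}$, and it is quasi-compact and quasi-separated by \ref{lemma:sTPic_qf}, so it is of finite presentation over $\ul S$. For the first arrow: applying the exact functor $\frak s_*$ to the restriction of \ref{exseq} to $\Pic^{sat}_{X/S}$ exhibits $\sPic^{sat}_{X/S}$ as a torsor under $\Pic^0_{X/S}$ over $\sTPic^{tor}_{X/S}$; since $\Pic^0_{X/S}$ is semiabelian over $\ul S$, hence of finite presentation, this torsor map is of finite presentation (in particular quasi-compact, being fppf-locally the projection from $\Pic^0_{X/S}$). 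Composing the two gives that $\sPic^{sat}_{X/S}\to\ul S$ is of finite presentation, completing the proof.

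I expect no genuine obstacle here; the one place to stay alert is not to try to deduce quasi-compactness of $\sPic^{sat}_{X/S}$ from that of $\sLPic^0_{X/S}$ — the latter fails, cf.\ \ref{example:1-gon} — but instead to extract it from the torsor structure over the quasi-finite, hence quasi-compact, base $\sTPic^{tor}_{X/S}$.
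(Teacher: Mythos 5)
Your proof is correct and follows essentially the same route as the paper: the open immersion comes from base-changing the open immersion $\sTPic^{tor}_{X/S}\to\sTPic^0_{X/S}$ of \ref{lemma:sTPic_qf}, representability from \ref{thm:sPic_representable}, and quasi-compactness/finite presentation from viewing $\sPic^{sat}_{X/S}$ as a $\Pic^0_{X/S}$-torsor over the quasi-finite, quasi-separated $\sTPic^{tor}_{X/S}$. Your warning about not trying to inherit quasi-compactness from $\sLPic^0_{X/S}$ is exactly the right point of caution.
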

\begin{proof}
The first part of the statement follows from \ref{lemma:sTPic_qf} and base change. The representability then follows from \ref{thm:sPic_representable}.We deduce that $\sPic^{sat}_{X/S}$ is qcqs because it is a torsor over the quasi-finite and quasi-separated $\sTPic^{tor}_{X/S}$, under the quasi-compact separated group scheme $\Pic^0_{X/S}$.
\end{proof}
 
\begin{lemma}\label{lemma:model_universal_jacobian}
Let $S=\overline{\c{M}}_{g,n}$, the moduli stack of genus $g$ stable curves with $n$ marked points. Let $\c{X}_{g,n}$ be the universal curve. Endow both stacks with the divisorial log structure so as to make the universal curve into a log curve. Then $\sPic^{sat}_{\c{X}_{g,n}}=\Pic^0_{\c{X}_{g,n}}$.
\end{lemma}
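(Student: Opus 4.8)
The plan is to reduce the statement to a computation of the tropical Jacobian at each geometric point. Applying the exact functor $\frak s_*$ to the exact sequence of \ref{sec:saturation} yields $0\to\Pic^0_{\c{X}_{g,n}}\to\sPic^{sat}_{\c{X}_{g,n}}\to\sTPic^{tor}_{\c{X}_{g,n}}\to0$, so the assertion $\sPic^{sat}_{\c{X}_{g,n}}=\Pic^0_{\c{X}_{g,n}}$ is equivalent to $\sTPic^{tor}_{\c{X}_{g,n}/\overline{\c{M}}_{g,n}}=0$. Since this is an \'etale group algebraic space over the base by \ref{lemma:sTPic_qf}, it is the trivial group as soon as all of its geometric fibres are trivial; and by \ref{lemma:sTPic_basechange} applied to the open subsheaf $\sTPic^{tor}\subset\sTPic^0$, together with the description of the tropical Jacobian over a log point, the fibre at a geometric point $s$ is the torsion subgroup of $\Hom(H_1(\mathfrak X_s,\ZZ),\oM^{\gp}_{S,s})^{\dagger}/H_1(\mathfrak X_s,\ZZ)$ (after choosing an orientation of the dual graph $\mathfrak X_s$). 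So I would prove the stronger statement that this group is free abelian for every $s$.

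The crucial input is a description of the divisorial log structure of $\overline{\c{M}}_{g,n}$ at $s$. Let $E$ be the set of nodes of the curve over $s$, i.e. the edge set of $\mathfrak X_s$. Using \ref{proposition:special_log_structure} I would check that the divisorial log structure is the minimal log structure $M_S^{\#}$: through $s$ the boundary divisor of $\overline{\c{M}}_{g,n}$ has exactly one local branch per node, each cut out by that node's smoothing parameter, so $\oM_{S,s}\cong\NN^{E}$ $(=\alpha_*\NN$ at $s$, with $\alpha\colon\c{X}_{g,n}^{ns}\to\overline{\c{M}}_{g,n})$ and the edge-length homomorphism carries a node $e$ to the corresponding standard generator $\mathbf 1_e$ of $\ZZ^{E}=\oM^{\gp}_{S,s}$. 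I expect this to be the main obstacle, as it is the only non-formal ingredient; and it is genuinely necessary, since \ref{example:1-gon} shows that over a base where the smoothing parameter of a node is a non-standard element such as $uv$ the analogous quotient acquires torsion.

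Granting this, the remainder is a short lattice computation. Fix an orientation and write $H_1:=H_1(\mathfrak X_s,\ZZ)=\ker(\partial\colon\ZZ^{E}\to\ZZ^{V})\subseteq\ZZ^{E}$, with $\gamma_e$ the $e$-coordinate of $\gamma\in H_1$. One first checks that an element $a\in\ZZ^{E}$ is bounded by $b\in\NN^{E}$ precisely when $\on{supp}(a)\subseteq\on{supp}(b)$; since the length of $\gamma$ is $\sum_e\gamma_e^{2}\,\mathbf 1_e$, with support $\{e:\gamma_e\neq0\}$, a homomorphism $\phi\colon H_1\to\ZZ^{E}$ has bounded monodromy iff each coordinate $\phi_e:=\on{pr}_e\circ\phi$ vanishes on $\ker(\on{pr}_e|_{H_1})$. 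Bridges contribute nothing: if $e$ is a bridge then $\on{pr}_e|_{H_1}=0$, forcing $\phi_e=0$, and moreover $H_1\subseteq\ZZ^{E'}$, where $E'\subseteq E$ is the set of non-bridges. For a non-bridge $e$, a spanning tree of $\mathfrak X_s$ avoiding $e$ provides a cycle through $e$ with coefficient $\pm1$, so $\on{pr}_e|_{H_1}$ is surjective onto $\ZZ$ and $\phi_e=c_e\cdot\on{pr}_e|_{H_1}$ for a unique $c_e\in\ZZ$. Hence $\phi\mapsto(c_e)_{e\in E'}$ identifies $\Hom(H_1,\ZZ^{E})^{\dagger}$ with $\ZZ^{E'}$, under which the monodromy pairing map $\gamma\mapsto\bigl(\gamma'\mapsto\sum_e\gamma_e\gamma'_e\,\mathbf 1_e\bigr)$ becomes the tautological inclusion $H_1\hookrightarrow\ZZ^{E'}$. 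Therefore the tropical Jacobian at $s$ is $\ZZ^{E'}/H_1$, which is free because $H_1=\ker(\partial|_{\ZZ^{E'}})$ is the kernel of a homomorphism of finitely generated free abelian groups and hence a direct summand of $\ZZ^{E'}$. This gives $\sTPic^{tor}_{\c{X}_{g,n}}=0$, and the lemma follows.
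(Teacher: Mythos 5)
Your proof is correct and follows the same route as the paper's: reduce to showing the tropical Jacobian $\Hom(H_1(\mathfrak X_s),\oM^{\gp}_{S,s})^{\dagger}/H_1(\mathfrak X_s)$ is torsion-free at every geometric point, using that the divisorial log structure labels the edges by distinct generators of the free monoid $\NN^{E}$. The paper simply asserts the torsion-freeness in one line; your explicit lattice computation (bounded monodromy as a support condition, bridges contributing nothing, the identification with $\ZZ^{E'}/H_1 \cong \operatorname{im}(\partial|_{\ZZ^{E'}})$) correctly fills in the details it leaves implicit.
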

\begin{proof}
For $s$ a geometric point of $\overline{\c{M}}_{g,n}$, the tropicalization $\mathfrak X$ of the fiber has each edge labelled by a distinct base element of the free monoid $\oM_{S,s}$. Thus $\Tropic^0(\mathfrak X)$ is torsion-free. 
\end{proof}

\section{The N\'eron mapping property of LogPic}\label{section:NMP}
In this section we prove the N\'eron mapping property for the logarithimic Jacobian, under the assumption that the base is log regular. 

\subsection{Classical and log N\'eron models}\label{def:NM}\label{def:NMP}
For comparison, we briefly recall the definitions of classical, non-logarithmic N\'eron models. 

\begin{definition}\label{definition_sch_neron_models}
Let $S$ be a scheme, $U\subset S$ a schematically dense open, and $\c{N}/S$ a category fibered in groupoids over $\cat{Sch}/S$. We say that $\c{N}/S$ has the \emph{N\'eron mapping property} with respect to $U\subset S$ if for every smooth morphism of schemes $T\to S$, the restriction map 
$\c{N}(T)\to \c{N}(T\times_{S} U)$
is an equivalence.

If in addition $\ca N$ is an algebraic stack and is smooth over $S$, we say $\ca N$ is a \emph{N\'eron model} of its restriction $\mathcal N_U/U$. 
\end{definition}

\begin{remark}
The definition of N\'eron models in \cite{Holmes2014Neron-models-an} and the classical one over Dedekind schemes require them to be separated. When the base is a Dedekind scheme and the generic fiber is a group scheme this is automatic by \cite[Theorem 7.1.1]{Bosch1990Neron-models}. However, over higher-dimensional bases this is not true, and non-separated N\'eron models exist in much greater generality than separated ones (the author of \cite{Holmes2014Neron-models-an} did not realise this at the time). From this perspective, \textit{loc. cit.} should be seen as investigating the existence of \emph{separated} N\'eron models. 
\end{remark}

This definition extends naturally into the logarithmic setting: 
\begin{definition}\label{def:log_NMP}
Let $S$ be a log scheme, $U\subset S$ strict schematically dense open, and $\c{N}/S$ a category fibered in groupoids over $\cat{LSch}/S$. We say that $\c{N}/S$ \emph{has the N\'eron mapping property} with respect to $U\subset S$ if for every log smooth morphism $T\to S$, the map
$\c{N}(T)\to \c{N}(T\times_SU)$
is an equivalence. 
\end{definition}

\begin{remark}
In order to qualify as a N\'eron model, the functor $\ca N$ should not only satisfy the N\'eron mapping property, but also be a sheaf for a suitable topology, and be representable in some suitable sense. Requiring representability as an algebraic space or stack with log structure is too restrictive; the log Jacobian does not satisfy these criteria, and in general the log N\'eron model will not exist as an algebraic space or stack with log structure. The most appropriate notions of representability are perhaps:
\begin{enumerate}
\item
for the functor, that it is a sheaf for the strict log \'etale topology, has diagonal representable by log schemes, and admits a log \'etale cover by a log scheme;
\item for the stack, that it is a sheaf for the strict log smooth topology, it has diagonal representable in the sense of (i), and admits a log smooth cover by a log scheme.
\end{enumerate}
The Log Picard space and stack do satisfy these additional conditions. This is shown in \cite{Molcho2018The-logarithmic}, except for verifying that LogPic is a stack for the log smooth topology (\cite{Molcho2018The-logarithmic} only verify it for the log etale topology), but this will be proven in the forthcoming \cite{Molcho2020The-Logarithmic}). 

One consequence of imposing the above assumptions is that N\'eron models are unique\footnote{Either up to unique isomorphism, for the functor, or up to 1-isomorphism which is itself unique up to a unique $2$-isomorphism, in the stack case. }, by analogous descent arguments to those for the classical case. For example, suppose that $\ca N$ and $\ca N'$ are log N\'eron models, and let $V \to \ca N$ be a log \'etale cover by a log scheme; then the map $V_U \to \ca N_U \iso \ca N'_U$ extends uniquely to a map $V \to \ca N'$ by the universal property of the latter, and a further application of uniqueness and the sheaf property shows that this descends to a map $\ca N \to \ca N'$. 
\end{remark}

\begin{remark}
In classical algebraic geometry the distinction between the smooth and \'etale topologies is often not so important since every smooth cover can be refined to an \'etale cover (see \cite[\href{https://stacks.math.columbia.edu/tag/055V}{Tag 055V}]{stacks-project}). In the logarithmic setting this is no longer true; for example the map 
\begin{equation}
\bb G_m \times \bb A^1 \to  \bb A^1 ; (x,t) \mapsto xt^p
\end{equation}
where $\bb G_m, \bb A^1$ have their toric log structure is a log smooth cover, but does not admit a refinement by a  log \'etale cover in characteristic $p$. It does admit a refinement by a Kummer cover, and this is in fact a general phenomenon; one can show using the results of \cite{adiprasito2018semistable} that every log smooth cover can be refined by a cover that is a composite of log \'etale covers and Kummer maps. 
\end{remark}


\begin{remark}\label{remark:NMP_strict}
Suppose $\mathcal N/S$ is a functor on $(\cat{LSch}/S)$ with the log N\'eron mapping property with respect to $U\subset S$. Then $\frak s_*\mathcal N$ has the N\'eron mapping property with respect to $\ul U\subset \ul S$ since, for every smooth morphism of schemes $g\colon T\to \ul S$, the morphism $(T,g^*M_S)\to S$ is log smooth. 
\end{remark}

\subsection{Log regularity}\label{sec:log_regular}
We will show that the logarithmic Jacobian of $X/S$ is a log N\'eron model when $S$ is log regular, a notion which we now recall. 

\begin{definition}[\cite{Kato1994Toric-singulari,Nizio2006Toric-singulari}]\label{subsection:log_regular} 
Let $S$ be a locally Noetherian fs log scheme. For a geometric  point $s$ of $S$, we denote the ideal generated by the image of $M_{S,{s}}\setminus \ca O_{S,s}^* \rightarrow \mathcal{O}_{S,{s}}$ by $I_{S,{s}}$. We say that $S$ is \emph{log regular} at $s$ if $\mathcal{O}_{S,{s}}/I_{S,{s}}$ is regular and $\dim \mathcal{O}_{S,{s}} = \dim \mathcal{O}_{S,{s}}/I_{S,{s}} + \textup{rk} \overline{{M}}_{S,{s}}^\gp$. We say that $S$ is \textit{log regular} if it is log regular at all geometric points.
\end{definition}

\begin{remark}
Any toric or toroidal variety with its natural log structure is log regular. In particular, a regular scheme equipped with the log structure from a normal crossings divisor is log regular. If $S$ is log regular then the locus on which the log structure is trivial is schematically\footnote{Log regular schemes are reduced, so density is equivalent to schematic density. } dense open. 
\end{remark}

\begin{lemma}[\cite{Nizio2006Toric-singulari}, Lemma 5.2]\label{lem:log_regular_characterisation} 
Let $S$ be a log regular scheme. Then the underlying scheme of $S$ is regular if and only if the characteristic sheaf $\oM_S$ is locally free. In this case, the log structure on $S$ is the log structure associated to a normal crossings divisor on $S$.  
\end{lemma}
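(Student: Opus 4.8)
The plan is to reduce the whole statement to Kato's structure theory of log regular local rings \cite{Kato1994Toric-singulari}, which also underlies \cite{Nizio2006Toric-singulari}. First I would pass to stalks: whether $\underline S$ is regular can be tested on the local rings of $S$, and since the characteristic sheaf of a log regular scheme admits, \'etale locally, a chart subordinate to its stalk, local freeness of $\oM_S$ is equivalent to freeness of every stalk $\oM_{S,\bar s}$. So one is reduced to the following local assertion: if $R=\mathcal O_{S,\bar s}$ is a log regular local ring with sharp fs characteristic monoid $\oM:=\oM_{S,\bar s}$, residue field $k$, $I=I_{S,\bar s}$, $d=\dim R/I$ and $r=\textup{rk}\,\oM^{\gp}$ (so $\dim R=d+r$ by log regularity), then $R$ is regular if and only if $\oM\cong\mathbb N^r$.

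The key input is Kato's theorem: $R/I$ is regular, and $\widehat R$ is isomorphic to $k[[\oM\oplus\mathbb N^d]]=k[[\oM]][[t_1,\dots,t_d]]$ in the equicharacteristic case, and to the quotient of such a ring by a single element congruent to $p$ modulo the monoid ideal in the mixed characteristic case; here $k[[\oM]]$ denotes the completion of the monoid algebra $k[\oM]$ at its augmentation ideal. In all cases $\widehat R$ is regular if and only if $k[[\oM]]$ is, hence, as $R$ is regular if and only if $\widehat R$ is, if and only if $k[[\oM]]$ is regular. Now $k[\oM]$ localized at its augmentation ideal $\mathfrak m$ has Krull dimension $r$, while $\dim_k\mathfrak m/\mathfrak m^2$ equals the number of minimal (i.e. irreducible) generators of $\oM$; so regularity forces $\oM$ to be generated by $r$ elements. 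Since $\oM$ is sharp fs, $\oM^{\gp}$ is torsion-free of rank $r$, so those $r$ generators form a basis of $\oM^{\gp}\cong\mathbb Z^r$ and therefore $\oM\cong\mathbb N^r$. The converse is clear, since $k[[\mathbb N^r]]=k[[x_1,\dots,x_r]]$ is regular.

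For the final sentence, suppose $\oM_S$ is locally free and work \'etale locally on $S$ so that a chart $\beta\colon\mathbb N^r\to M_S$, $e_i\mapsto f_i$, induces the identification $\oM_S\cong\underline{\mathbb N}^r$. By the structure theorem the images of $f_1,\dots,f_r$ extend to a regular system of parameters of $\widehat{\mathcal O}_{S,\bar s}$ at every point $\bar s$, so each $V(f_i)$ is a regular prime divisor and the $V(f_i)$ meet transversally; hence $D:=\bigcup_i V(f_i)$ is a normal crossings divisor and $S\setminus D$ is exactly the locus where all the $f_i$ are invertible. The chart realizes $M_S$ as the submonoid of $\mathcal O_S$ generated by $\mathcal O_S^{\times}$ and $f_1,\dots,f_r$, which, because $\sqrt{(f_i)}$ is the ideal of $V(f_i)$, coincides with $\{g\in\mathcal O_S : g|_{S\setminus D}\in\mathcal O_{S\setminus D}^{\times}\}$; this last monoid is by definition the log structure attached to the divisor $D$, so $M_S$ is the normal crossings log structure of $D$.

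I expect the obstacles to be bookkeeping rather than conceptual: pinning down the mixed-characteristic form of Kato's completion isomorphism and reading ``regular $\iff$ $\oM$ free'' off it; checking carefully that stalkwise freeness of $\oM_S$ upgrades to genuine local freeness of the sheaf (using that log regular log structures have good local charts); and, in the last paragraph, matching the chart-generated log structure with the divisorial log structure of $D$ via their universal properties. All the real content sits in Kato's theorem, which I would cite rather than reprove.
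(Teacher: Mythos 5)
The paper offers no proof of this statement: it is imported verbatim as Lemma 5.2 of \cite{Nizio2006Toric-singulari}, so there is no in-paper argument to measure yours against. Your derivation from Kato's structure theorem is correct and is, in substance, the standard proof, so I will only comment on the two points you flag as bookkeeping. First, ``locally free'' here must be read stalkwise ($\oM_{S,\bar s}\cong\NN^{r(\bar s)}$ at every geometric point, with $r$ allowed to jump): the characteristic sheaf of a log regular scheme is never locally constant off the open stratum, so no upgrade to local constancy is available or needed, and your reduction to the stalkwise assertion is all there is to that step. Second, in mixed characteristic the completion is $C(k)[[\oM\oplus\NN^d]]/(\theta)$ with $C(k)$ a Cohen ring rather than $k$, and the equivalence ``$\widehat R$ regular $\iff k[[\oM]]$ regular'' requires knowing $\theta\notin\mathfrak m^2$; this holds because the constant term of $\theta$ is $p$ and the projection of $C(k)[[\oM\oplus\NN^d]]$ onto $C(k)$ (killing $\oM\setminus\{0\}$ and the $t_i$) carries $\mathfrak m^2$ into $(p^2)$. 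With that in place your embedding-dimension count (number of irreducibles of $\oM$ versus $r=\on{rk}\,\oM^{\gp}$) goes through in both cases. Finally, your last paragraph in effect reproves, in the regular case, what the paper's subsequent \ref{lemma:niziol} (Kato's Theorem 11.6) asserts for every log regular scheme, namely that $M_S$ is the divisorial log structure of $S\setminus U$; you could shorten the argument by citing that and keeping only your transversality computation showing $S\setminus U$ is normal crossings.
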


\begin{lemma}[{\cite[theorem 11.6]{Kato1994Toric-singulari}}]\label{lemma:niziol}
Let $S$ be a log regular scheme, and $U\hra S$ the dense open locus where the log structure is trivial. Then $M_S = j_*\c{O}_U^* \times_{j_*\c{O}_U} \c{O}_S$ and $M_S^{\gp} = j_*\c{O}_U^*$, where $j: U \rightarrow X$ is the inclusion.
\end{lemma}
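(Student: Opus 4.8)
The plan is to realise both sides as subsheaves of $\cO_S$ and then compare stalks at geometric points. First I would write down the tautological comparison map $\phi\colon M_S\to j_*\cO_U^{*}\times_{j_*\cO_U}\cO_S$, sending a local section $m$ of $M_S$ to the pair $(\alpha(m),\,m|_U)$ formed by its image under the structure map $\alpha\colon M_S\to\cO_S$ and by its restriction $m|_U\in M_U=\cO_U^{*}$ (the log structure being trivial on $U$); the two data agree in $j_*\cO_U$. Since a log regular scheme is normal, hence reduced, and $U$ is dense, one has $\cO_S\into j_*\cO_U$, so the target of $\phi$ is identified with the subsheaf $\cP:=\{f\in\cO_S:\ f|_U\in\cO_U^{*}\}$ of $\cO_S$. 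It then suffices to prove that for each geometric point $s\to S$ the structure map $\alpha\colon M_{S,s}\to\cO_{S,s}$ is injective with image $\cP_s$.

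Next I would set up the local picture. Write $R=\cO_{S,s}$ and $P=\oM_{S,s}$, a sharp fs monoid. Since $P^{\gp}$ is free the extension $1\to R^{*}\to M_{S,s}^{\gp}\to P^{\gp}\to 0$ splits, and a choice of splitting gives $M_{S,s}=R^{*}\oplus P$, with $\alpha$ given by $(u,p)\mapsto u\cdot\chi^{p}$, where $\chi^{\bullet}\colon P\to R$ is the composite of the chosen section $P\hookrightarrow M_{S,s}$ with $\alpha$. The key input is Kato's structure theory of log regular local rings (\cite{Kato1994Toric-singulari}; compare \ref{lem:log_regular_characterisation} for the case where $P$ is free): $R$ is normal; near $s$ the boundary $S\setminus U$ is a reduced Weil divisor whose components $D_1,\dots,D_k$ through $s$ correspond to the facets of $P$; at the generic point of $D_i$ the local ring is a discrete valuation ring, whose valuation $v_i$ restricts on monomials to a homomorphism $u_i\colon P^{\gp}\to\ZZ$, i.e. $v_i(\chi^{p})=u_i(p)$, and the $u_i$ are the ray generators of the dual cone of $P$; finally $\on{Cl}(R)$ is the cokernel of $P^{\gp}\to\bigoplus_i\ZZ$, $p\mapsto(u_i(p))_i$. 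Injectivity of $\alpha$ is then formal: $P$ being sharp, the $u_i$ span $\on{Hom}(P^{\gp},\ZZ)\otimes\QQ$ and $P^{\gp}$ is torsion free, so $p\mapsto(u_i(p))_i$ is injective; if $u\chi^{p}=u'\chi^{p'}$ with $u,u'\in R^{*}$, then $\chi^{p-p'}$ has trivial divisor, forcing $p=p'$ and hence $u=u'$.

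For the inclusion $\cP_s\subseteq\alpha(M_{S,s})$ I would argue as follows. Given $f\in R$ with $f|_U\in\cO_U^{*}$, normality gives a well-defined effective Weil divisor $\on{div}(f)$ on $\Spec R$; it is supported on $S\setminus U$ since $f$ is a unit on $U$, so $\on{div}(f)=\sum_i n_iD_i$ with $n_i=v_i(f)\ge 0$. As $\on{div}(f)$ is principal, the description of $\on{Cl}(R)$ produces $m\in P^{\gp}$ with $u_i(m)=n_i$ for all $i$; since all $n_i\ge 0$ and $P$ is saturated, i.e. $P=\{m\in P^{\gp}:u_i(m)\ge 0\ \forall i\}$, in fact $m\in P$. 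Then $f\chi^{-m}$ has trivial divisor, hence is a unit of $R$ by normality, so $f=(\text{unit})\cdot\chi^{m}\in\alpha(M_{S,s})$. The reverse inclusion is immediate: any section of $M_S$ restricts to a unit over $U$, so its image under $\alpha$ lies in $\cP$.

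The main obstacle is precisely the appeal to Kato's local structure theorem for log regular rings — normality of $R$, the toric description of the boundary divisor near $s$, and the toric computation of $\on{Cl}(R)$ — which is exactly what lets the general, possibly non-regular, case be handled just as the normal-crossings case of \ref{lem:log_regular_characterisation}. A secondary technical point is that one needs these structural facts already for the strict henselian local ring $\cO_{S,s}$, or else must descend the conclusion from the completion $\widehat{\cO_{S,s}}$; with that in hand, the divisor-theoretic bookkeeping above is routine.
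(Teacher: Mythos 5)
Note first that the paper does not prove this lemma at all: it is quoted directly from Kato (Theorem 11.6 of \emph{Toric singularities}), so there is no in-paper argument to compare yours against. Your formal reductions are all sound: using normality (hence reducedness) of a log regular scheme and density of $U$ to identify $j_*\mathcal{O}_U^*\times_{j_*\mathcal{O}_U}\mathcal{O}_S$ with the subsheaf of $\mathcal{O}_S$ of functions invertible on $U$; passing to stalks; splitting $M_{S,s}\cong R^*\oplus P$ using freeness of $P^{\mathrm{gp}}$; the injectivity argument via the facet valuations $u_i$; and the use of saturation, $P=P^{\mathrm{gp}}\cap\{u_i\geq 0\ \forall i\}$, to promote $m\in P^{\mathrm{gp}}$ to $m\in P$. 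All of this is correct granted the structure theory you quote (normality, the boundary being a divisor whose components correspond to the facets of $P$, the $u_i$ generating the rays of the dual cone).

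The genuine gap is the step you yourself half-flag: ``as $\mathrm{div}(f)$ is principal, the description of $\mathrm{Cl}(R)$ produces $m\in P^{\mathrm{gp}}$ with $u_i(m)=n_i$.'' What you actually need is that every principal divisor supported on the log boundary is the divisor of a monomial, i.e.\ that $\ker\bigl(\bigoplus_i\mathbb{Z}D_i\to\mathrm{Cl}(R)\bigr)$ equals the image of $P^{\mathrm{gp}}$. This is not a formal consequence of Kato's completion theorem $\widehat R\cong C[[P]][[t_1,\dots,t_r]]/(\theta)$: a completed monoid algebra carries no algebraic torus action, so the usual toric argument (a torus-invariant principal divisor is the divisor of a character because $\Gamma(T,\mathcal{O}^*)=k^*\times P^{\mathrm{gp}}$) does not transfer, and divisor class groups are in general sensitive to henselization and completion. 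Worse, the statement is essentially \emph{equivalent} to the lemma itself: given $f$ with $\mathrm{div}(f)$ supported on the boundary, multiplying by a monomial deep in the interior of $P$ makes the divisor effective and reduces the class-group claim to the lemma. So invoking it as ``the toric computation of $\mathrm{Cl}(R)$'' is circular unless you can point to an independent proof of it occurring \emph{before} Theorem 11.6 in Kato's development; I do not believe the standard package (Cohen--Macaulayness, normality, the completion description, the facet--boundary correspondence) contains it. To close the gap you would need either Kato's actual argument for 11.6, or a genuine proof that boundary-supported principal divisors of $\widehat R$ are monomial together with Mori's theorem that $\mathrm{Cl}(R)\to\mathrm{Cl}(\widehat R)$ is injective to descend the conclusion to $R$. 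Everything else in your write-up is routine once that single statement is in hand.
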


\subsection{The log Jacobian is a log N\'eron model}

\begin{theorem}\label{thm:Neron}
Let $X/S$ be a log curve over a log regular scheme, and $U\hra S$ the dense open locus where the log structure is trivial. The stacks $\mathbf{LogPic}_{X/S}$ and $\mathbf{LogPic}^0_{X/S}$ on $(\cat{LSch}/S)_{\et}$ are log smooth over $S$ and have the N\'eron mapping property with respect to $U$ (hence likewise for their sheafifications $\Logpic_{X/S}$, $\Logpic^0_{X/S}$).
\end{theorem}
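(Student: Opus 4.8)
The plan is to reduce the N\'eron mapping property to a purely local computation over a log regular base, using the valuative-style description of logarithmic line bundles as pushforwards, together with the tropical exact sequence \ref{exseq}. First I would record that log smoothness of $\mathbf{LogPic}_{X/S}$ and $\mathbf{LogPic}^0_{X/S}$ is already available from \ref{property:logpic_smooth}, so the content is the mapping property: for every log smooth $T \to S$, the restriction functor $\mathbf{LogPic}_{X/S}(T) \to \mathbf{LogPic}_{X/S}(T\times_S U)$ is an equivalence. Since $U \subseteq S$ is strict dense open (log regularity guarantees density, see \ref{sec:log_regular}) and the construction commutes with strict base change, I would first reduce to checking essential surjectivity and full faithfulness separately, and then observe that full faithfulness (injectivity on isomorphisms and on objects) follows from properness of $\mathbf{LogPic}_{X/S}$, i.e. the valuative criterion \ref{property:LogPic_proper}: any two extensions of a log line bundle on $X_{T_U}$ agree after restriction to all valuation rings dominating points of $T$, hence agree.

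The heart of the argument is essential surjectivity: given $L$ a (log) line bundle on $X_{T_U}$, produce a log line bundle on $X_T$ restricting to it. Because $T/S$ is log smooth and $S$ is log regular, $T$ is itself log regular, so by \ref{lemma:niziol} we have $M_T = j_*\cO_{T_U}^* \times_{j_*\cO_{T_U}} \cO_T$ with $j\colon T_U \hra T$. I would then show that the pushforward $j_{X,*}L$ along $j_X\colon X_{T_U}\hra X_T$ is a log line bundle on $X_T$ — i.e., an $M_{X_T}^\gp$-torsor of bounded monodromy — restricting to $L$. The key points: (i) the pushforward is again a torsor under $M_{X_T}^\gp$ rather than just $\cO_{X_T}^*$, precisely because sections of $\cO^*$ on $X_{T_U}$ that do not extend across the boundary are absorbed into the larger sheaf $M^\gp$ via Kato's description of the log structure; (ii) bounded monodromy can be checked at strict geometric points $s$ of $T$ (by the definition recalled in \ref{section:BM}), where it reduces to the statement that the monodromy of the extended torsor along each vanishing cycle is bounded by the corresponding length in $\oM_{T,s}$ — and this holds because the length is itself the image of the smoothing parameter, so the monodromy, being a regular function's order of vanishing data, lies between two multiples of it. I would carry out (i)–(ii) first at a log regular local ring and then globalize by \'etale descent, invoking \ref{proposition:special_log_structure} to control the relative characteristic monoid $\oM_{X/S}$ in families.

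An alternative, perhaps cleaner, route for essential surjectivity is via the exact sequence \ref{exseq}: it suffices to lift the tropicalization of $L$ (an element of $\Tropic^0_{X/S}(T_U)$, which is trivial since $T_U$ has trivial log structure, hence lifts trivially to all of $T$) and then lift $L$ itself through the semiabelian part $\frak f_*\Pic^0_{\ul X/\ul S}$; but $\Pic^0$ of a semiabelian scheme does \emph{not} have the N\'eron property, so this reduction is illusory — it only shows the obstruction lives entirely in the tropical/monodromy data, which is exactly what the bounded monodromy computation handles. So I would keep the pushforward argument as the backbone and use \ref{exseq} only to organize the bookkeeping of degrees (to get the statement for $\mathbf{LogPic}^0$ from that for $\mathbf{LogPic}$, noting degree is locally constant and hence determined on $T_U$). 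The main obstacle I anticipate is (i)–(ii): verifying that $j_{X,*}L$ genuinely lands in the bounded-monodromy locus uniformly in families over a higher-dimensional log regular base, rather than just fiberwise over a trait — this is where log regularity (via \ref{lemma:niziol} and \ref{lem:log_regular_characterisation}) must be used in full strength, and where the argument genuinely goes beyond the Dedekind case treated by Eriksson--Halle--Nicaise.
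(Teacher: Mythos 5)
There is a genuine gap, concentrated exactly at the point you flag as "the main obstacle." Your backbone for essential surjectivity is to show directly that $j_{X,*}L$ is an $M_{X_T}^{\gp}$-torsor of bounded monodromy, but the paper explicitly sidesteps this: a priori $j_{X,*}L$ is only a \emph{pseudo}-torsor under $j_*\cO^\times_{X_V}=M_{X_T}^{\gp}$, and local non-emptiness (i.e.\ that $L$ extends locally at all) is not automatic because the total space $X_T$ is only log regular, not regular, so the closure of a Cartier divisor representing $L$ need not be Cartier. Your proposed verification of bounded monodromy ("the monodromy, being a regular function's order of vanishing data, lies between two multiples of the length") is a heuristic, not an argument, and it is precisely the content that needs proof. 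The paper's resolution is a different idea you do not have: invoke the semistable reduction theorem of \cite{adiprasito2018semistable} to produce a cover $T'\to T$ (a log blowup followed by a root stack, an isomorphism over $V$) and a log modification $X'\to X_{T'}$ with $X'$ \emph{regular}; there the schematic closure of the divisor is Cartier, giving an honest line bundle extending $L$, hence by \ref{property:logpic_modifications} a bounded-monodromy torsor on $X_{T'}$, which then descends to $X_T$ because $\mathbf{LogPic}$ is a stack for the log \'etale topology (\ref{property:log_pic_is_a_log_stack}). Without this detour your construction does not close.

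Your full faithfulness step is also underpowered. The valuative criterion \ref{property:LogPic_proper} gives unique lifting over valuation rings with valuative log structure; it does not formally yield that $\sheafisom(P,Q)\to j_*\sheafisom(j^*P,j^*Q)$ is an isomorphism over a higher-dimensional log smooth $T$. The paper instead uses Kato's theorem (\ref{lemma:niziol}) to get $M^{\gp}_{X_T}\iso j_*\cO^\times_{X_V}$, whence $P\to j_*j^*P$ is a map of $M^{\gp}_{X_T}$-torsors and therefore an isomorphism, so $j_*$ is a quasi-inverse to $j^*$ on isomorphism sheaves. You cite \ref{lemma:niziol} for essential surjectivity but not where it is actually indispensable, namely here. (Your remarks that log smoothness is \ref{property:logpic_smooth}, that the degree-$0$ and sheaf versions follow formally, and that the reduction via the exact sequence \ref{exseq} is illusory, are all correct and agree with the paper.)
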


\begin{proof}
The log smoothness is \ref{property:logpic_smooth}. We check the N\'eron mapping property. For any log smooth morphism $T \to S$, the restriction map $\Logpic_{X/S}(T)\to \Logpic_{X/S}(T\times_S U)$ is obtained, locally on $T$, from $\mathbf{LogPic}_{X/S}(T)\to \mathbf{LogPic}_{X/S}(T\times_S U)$ by taking isomorphism classes on both sides. Hence, if the stack $\mathbf{LogPic}_{X/S}$ has the mapping property, then so does the sheaf $\Logpic_{X/S}$. Likewise, $\Logpic^0_{X/S}$ has the N\'eron mapping property if $\mathbf{LogPic}^0_{X/S}$ does. Since the constant sheaf $\ZZ$ on $\cat{LSch}/S$ has the N\'eron mapping property, the same property for $\mathbf{LogPic}_{X/S}$ implies it for the kernel $\mathbf{LogPic^0}_{X/S}$ of the degree map. 

It remains to prove the property for $\mathbf{LogPic}_{X/S}$. Let $T \rightarrow S$ be a log smooth map, $V = T \times_S U$. Let $i: V \rightarrow T$, $j: X_V \rightarrow X_T$ denote the inclusions. We want to prove that the map
\begin{equation}\label{eq:restriction_logpic}
\mathbf{LogPic}_{X/S}(T)\to \mathbf{LogPic}_{X_U/U}(V)=\mathbf{Pic}_{X_U/U}(V)
\end{equation}
given by $P\mapsto j^*P$ is an equivalence.
We start with full faithfulness: let $P,Q$ be log line bundles on $X\times_ST$. It suffices to show that the map $\sheafisom(P,Q)\to j_*\sheafisom(j^*P,j^*Q)$ of isomorphism sheaves is itself an isomorphism. The natural map $M^{\gp}_{X_T}\to j_*\c{O}^{\times}_{X_V}$ is an isomorphism by log regularity of $X_T$ and \ref{lemma:niziol}. Hence the natural map $P \to j_*j^*P$ is a map of $M^{\gp}_{X_T}$-torsors, hence is an isomorphism (and similarly for $Q$). The map $\sheafaut(P)\to j_*\sheafaut(j^*P)$ is simply the natural map $M^{\gp}_{X_T}\to j_*\c{O}^{\times}_{X_V}$, hence is also an isomorphism. The map $$\sheafisom(P,Q)\to j_*\sheafisom(j^*P,j^*Q)$$ is then a map of $M^{\gp}_{X_T}$-pseudotorsors, so it suffices to show that $\sheafisom(P,Q)$ is non-empty whenever $j_*\sheafisom(j^*P,j^*Q)$ is. But given an isomorphism $j^*P \iso j^*Q$, taking $j_*$ gives an isomorphism 
\begin{equation*}
P = j_*j^*P \iso j_*j^*Q = Q
\end{equation*}
(the above argument can be summarised by saying that $j_*$ is a quasi-inverse to $j^*$).

We now prove essential surjectivity. Letting $L$ be a, $\ca O_{X_V}^*$-torsor on $X_V$, we will exhibit a $M_{X_T}^{\gp}$-torsor $P$ of bounded monodromy such that $j^*P=L$. The torsor $P$ will simply be $j_*L$; as a priori this is only a pseudo-torsor under $j_*\mathcal{O}_{X_V}^* = M_{X_T}^\gp$, and, if it is a torsor, it is not obvious that it has bounded monodromy, we will give a geometric construction of $P$, using the invariance of $\textbf{LogPic}$ under log blowups and taking root stacks. Let $D$ be a Cartier divisor on $X_V$ representing $L$. By \cite[Theorem 4.5]{adiprasito2018semistable}, there is a cover $u\colon T' \rightarrow T$, which is a composition of a log blowup and a root stack, such that $u$ restricts to an isomorphism over $V$, and a log modification $p\colon X'\to X_{T'}=X \times_T T'$ with $X' \rightarrow T'$ semistable (\cite[Definition 2.2]{adiprasito2018semistable}). In particular, $X' \rightarrow T'$ is a log curve, and the total space of $X'$ is regular. The schematic closure $\overline D$ of $D$ in $X'$ is therefore a Cartier divisor, and $\mathcal O(\overline D)$ is a line bundle whose associated $\ca O_{X'}^*$-torsor extends $L$. By \ref{property:logpic_modifications}, it follows that there is a $M_{X_{T'}}^{\gp}$-torsor $P'$ of bounded monodromy on $X_{T'}$ such that $j^*P'=L$. Then, by the full faithfulness of \ref{eq:restriction_logpic} and the fact $\textbf{LogPic}_{X/S}$ is a stack for the log \'etale topology (\ref{property:log_pic_is_a_log_stack}), this $P'$ descends to a $M_{X_T}^{\gp}$-torsor $P$ with bounded monodromy. 
\end{proof}

\begin{remark}
It will be shown in \cite{Molcho2020The-Logarithmic} that, when $S$ is logarithmically smooth, $\mathbf{LogPic}_{X/S}$ is a stack, not only for the log \'etale topology, but for the topology generated by all logarithmically flat morphisms, from which it follows that $\mathbf{LogPic}_{X/S}$ and $\mathbf{LogPic}^0_{X/S}$ are in fact the N\'eron models of their restrictions to $U$. 
\end{remark}

\begin{corollary}\label{coro:NMP_strict_log}
Let $X/S$ be a log curve over a log regular scheme. The smooth algebraic spaces $\sLPic_{X/S}$ and $\sLPic^0_{X/S}$ are N\'eron models of $\Pic_{X/S}$ and $\Pic^0_{X/S}$ respectively.
\end{corollary}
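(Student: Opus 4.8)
The plan is to read the statement off from Theorem~\ref{thm:Neron}, Remark~\ref{remark:NMP_strict} and Theorem~\ref{thm:sPic_representable}, so essentially no new input is required. Write $U\hra S$ for the (dense open) locus where the log structure is trivial. By definition $\sLPic^0_{X/S}=\frak s_*\Logpic^0_{X/S}$ and $\sLPic_{X/S}=\frak s_*\Logpic_{X/S}$. Since $S$ is log regular, Theorem~\ref{thm:Neron} gives that $\Logpic^0_{X/S}$ and $\Logpic_{X/S}$ satisfy the logarithmic N\'eron mapping property with respect to $U$; by Remark~\ref{remark:NMP_strict} their $\frak s$-pushforwards $\sLPic^0_{X/S}$ and $\sLPic_{X/S}$ then satisfy the ordinary N\'eron mapping property with respect to $\ul U\hra\ul S$, because for a smooth morphism of schemes $g\colon T\to\ul S$ the morphism $(T,g^*M_S)\to S$ is log smooth, so the equivalence furnished by Theorem~\ref{thm:Neron} for $(T,g^*M_S)$ is exactly the required equivalence $\sLPic^0_{X/S}(T)\iso\sLPic^0_{X/S}(T\times_{\ul S}\ul U)$, and likewise without the superscript.

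Next I would identify the restrictions to $\ul U$. By base change (Lemma~\ref{lemma:sTPic_basechange}, whose proof applies equally to $\Logpic$ in place of $\Tropic^0$) we have $\sLPic^0_{X/S}\times_{\ul S}\ul U=\sLPic^0_{X_U/U}$ and $\sLPic_{X/S}\times_{\ul S}\ul U=\sLPic_{X_U/U}$, where now $U$ carries the trivial log structure. Over such a base $X_U/U$ is smooth: a log curve over a trivial-log base has smooth total space, since at a putative node the characteristic monoid $\oM_S\oplus_\NN\NN^2=0\oplus_\NN\NN^2$ would fail to be sharp. Hence $\Tropic^0_{X_U/U}=0$, and the exact sequence of Property~\ref{property:tropicalization_map} degenerates to $\Logpic^0_{X_U/U}=\frak f_*\Pic^0_{X_U/U}$; applying the exact functor $\frak s_*$ together with $\frak s_*\frak f_*=\mathrm{id}$ yields $\sLPic^0_{X_U/U}=\Pic^0_{X_U/U}$, and the same argument degree by degree gives $\sLPic_{X_U/U}=\Pic_{X_U/U}$.

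It remains to check representability and smoothness over $\ul S$. For $\sLPic^0_{X/S}$ this is Theorem~\ref{thm:sPic_representable}(2). For $\sLPic_{X/S}$, applying the exact functor $\frak s_*$ (Remark~\ref{remark:exactness}) to the short exact sequence $0\to\Logpic^0_{X/S}\to\Logpic_{X/S}\to\ZZ\to 0$ of abelian sheaves on $(\cat{LSch}/S)_\et$ — surjectivity of the degree map holds because line bundles of every degree exist \'etale locally on $S$ — produces $0\to\sLPic^0_{X/S}\to\sLPic_{X/S}\to\ZZ\to 0$ on $\Schet{\ul S}$, with $\ZZ$ the constant sheaf; thus $\sLPic_{X/S}$ is a torsor under the smooth algebraic space $\sLPic^0_{X/S}$ over the \'etale algebraic space $\coprod_\ZZ\ul S$, hence itself a quasi-separated smooth algebraic space over $\ul S$. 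Putting the pieces together, $\sLPic^0_{X/S}$ (resp.\ $\sLPic_{X/S}$) is a smooth algebraic space over $S$ whose restriction to $\ul U$ is $\Pic^0_{X_U/U}$ (resp.\ $\Pic_{X_U/U}$) and which satisfies the N\'eron mapping property with respect to $\ul U$, i.e.\ a N\'eron model in the sense of Definition~\ref{definition_sch_neron_models}. The proof is purely an assembly of results already at hand, so there is no substantive obstacle; the only steps worth spelling out are the smoothness of $X_U/U$ over the trivial locus and the representability of the non-identity components of $\sLPic_{X/S}$.
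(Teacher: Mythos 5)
Your proposal is correct and follows exactly the paper's route: the paper's own proof is the one-line deduction from Theorem~\ref{thm:Neron} together with Remark~\ref{remark:NMP_strict}, with representability already supplied by Theorem~\ref{thm:sPic_representable}. The extra details you spell out (identifying the restriction over $U$ with the usual Jacobian, and handling the non-identity-degree components of $\sLPic_{X/S}$) are accurate elaborations of steps the paper leaves implicit.
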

\begin{proof}
This follows from \ref{thm:Neron} together with \ref{remark:NMP_strict}.
\end{proof}

\begin{corollary}\label{corollary:torsion_NMP_sat_logpic^0}
The strict saturated Jacobian $\sPic^{sat}_{X/S}$ satisfies the N\'eron mapping property for torsion sections. 
That is, for every $T\to S$ smooth and $L\colon T_U\to \Pic^0_{X/S}$ of finite order, there exists a unique extension to a map $T\to \sPic^{sat}_{X/S}$. In particular, for every prime $l$
$$T_l\sPic^{sat}_{X/S}=j_*T_l\Pic^0_{X_U/U}$$
as sheaves on the \'etale site over $\ul S$, where $j\colon U\to S$ is the inclusion and $T_l$ indicates the $l$-adic Tate module.
\end{corollary}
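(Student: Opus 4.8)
The plan is to bootstrap everything from \ref{coro:NMP_strict_log}, which says $\sLPic^0_{X/S}$ is a N\'eron model of $\Pic^0_{X_U/U}$, by checking that the unique N\'eron extension of a torsion section is automatically torsion, hence lands in the open subspace $\sPic^{sat}_{X/S}$. First I would take $T\to S$ smooth and $L\colon T_U\to \Pic^0_{X_U/U}$ of order $n$; the N\'eron mapping property of $\sLPic^0_{X/S}$ produces a unique extension $\tilde L\colon T\to \sLPic^0_{X/S}$. To see that $\tilde L$ factors through $\sPic^{sat}_{X/S}$ — which by definition is the preimage of $\sTPic^{tor}_{X/S}$ under $\sLPic^0_{X/S}\to \sTPic^0_{X/S}$ — it suffices to show the composite $\psi\colon T\to \sTPic^0_{X/S}$ is killed by $n$. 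For this I would observe that $n\tilde L$ and the zero section of $\sLPic^0_{X/S}$ both restrict to $0\in\Pic^0_{X_U/U}(T_U)$, so by injectivity of the N\'eron restriction map $\sLPic^0_{X/S}(T)\to \Pic^0_{X_U/U}(T_U)$ one gets $n\tilde L=0$, whence $n\psi=0$; since $\sTPic[n]_{X/S}\hookrightarrow\sTPic^0_{X/S}$ is an open immersion (\ref{lemma:sTPic_qf}) and in particular a monomorphism, $\psi$ factors through $\sTPic^{tor}_{X/S}$, and the universal property of the fibre product $\sPic^{sat}_{X/S}=\sLPic^0_{X/S}\times_{\sTPic^0_{X/S}}\sTPic^{tor}_{X/S}$ yields the desired extension $T\to\sPic^{sat}_{X/S}$. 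Uniqueness is then free, since $\sPic^{sat}_{X/S}\hookrightarrow\sLPic^0_{X/S}$ is a monomorphism and extensions to $\sLPic^0_{X/S}$ are unique.

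For the Tate module identity I would evaluate both sides on a smooth $T\to\ul S$ and reduce, level by level, to the torsion N\'eron mapping property just established: writing $T_l\sPic^{sat}_{X/S}=\varprojlim_m\sPic^{sat}_{X/S}[l^m]$ and using $(j_*T_l\Pic^0_{X_U/U})(T)=T_l\Pic^0_{X_U/U}(T_U)$, the statement identifies the $l^m$-torsion sections of $\sPic^{sat}_{X/S}$ over $T$ with those of $\Pic^0_{X_U/U}$ over $T_U$, compatibly with the multiplication-by-$l$ transition maps; passing to the inverse limit concludes.

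The only non-formal point — and hence the main obstacle, although a mild one — is the step showing $\tilde L$ is torsion, i.e. applying uniqueness in the N\'eron mapping property to $n\tilde L$; the factorization through the open immersion and the passage to the Tate module are bookkeeping. One works throughout under the standing hypothesis that $S$ is log regular, so that \ref{coro:NMP_strict_log} applies.
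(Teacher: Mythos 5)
Your proposal is correct and follows essentially the same route as the paper: extend $L$ uniquely to $\tilde L\in\sLPic^0_{X/S}(T)$ via \ref{coro:NMP_strict_log}, note that $n\tilde L=0$ by uniqueness of N\'eron extensions applied to $nL=0$, and conclude that $\tilde L$ lands in the preimage $\sPic^{sat}_{X/S}$ of the torsion subsheaf $\sTPic^{tor}_{X/S}$. The paper's proof is just a terser version of this (it asserts without elaboration that the image of $\tilde L$ in $\sTPic^0_{X/S}(T)$ is torsion, which is exactly the step you spell out), and the Tate-module identity is likewise left as the formal limit argument you describe.
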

\begin{proof}
By \ref{thm:Neron}, $L$ extends uniquely to $\ca L$ in $\sLPic^0_{X/S}(T)$. As the image of $\ca L$ in $\sTPic^0_{X/S}(T)$ is torsion, $\ca L$ actually lies in $\sPic^{sat}_{X/S}$
\end{proof}

\section{Models of $\on{Pic}^0$}\label{sec:models}
We saw in \ref{section:NMP} that the logarithmic Jacobian and its strict version are N\'eron models.   However, while $\Logpic^0_{X/S}$ is proper (in a suitable sense), its strict version $\sLPic^0_{X/S}$ is in general neither quasi-compact, separated, nor universally closed. 

In this section we undertake the study of smooth \textit{separated} group-models of $\Pic^0_{X_U}$. We establish a precise correspondence between such models and subgroups of the strict tropical Jacobian $\sTPic^0_{X/S}$.

\subsection{A tropical criterion for separatedness}

We start by considering a tropical curve $\mathfrak X=(V,H,r,i,\ell)$ metrized by a sharp monoid $\oM$. For a given a monoid homomorphism $\phi\colon \oM\to \oN$ we denote by $\mathfrak X_{\phi}$ the induced tropical curve metrized by $\oN$. We recall that by assumption all monoids we work with are fine and saturated.

\begin{lemma}\label{lemma:map_tropic}
Let $\mathfrak X$ be a tropical curve metrized by a sharp monoid $\oM$ and $\phi\colon \oM\to \oN$ a map of monoids not contracting any edge of $\mathfrak X$. The induced homomorphism of tropical Picard groups  
$\Tropic^0(\mathfrak X)\to \Tropic^0(\mathfrak X_{\phi})$ has free kernel.
\end{lemma}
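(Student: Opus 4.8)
The plan is to compute the kernel explicitly and realise it as a subgroup of a finitely generated free abelian group, which is then automatically free of finite rank. Write $H = H_1(\mathfrak X,\bb Z)$. Because $\phi$ contracts no edge, $\mathfrak X_\phi$ has the same underlying graph as $\mathfrak X$, with edge lengths the $\phi$-images of those of $\mathfrak X$; in particular $H_1(\mathfrak X_\phi,\bb Z) = H$, and the monodromy pairing of $\mathfrak X_\phi$ is $\phi^\gp$ composed with that of $\mathfrak X$. Under these identifications the map $\Tropic^0(\mathfrak X)\to\Tropic^0(\mathfrak X_\phi)$ is induced on $\Hom(H,-)$ by postcomposition with $\phi^\gp\colon\oM^\gp\to\oN^\gp$; that this descends to the quotients by the period lattices and preserves bounded monodromy is immediate, as $\phi$ is order-preserving and carries monodromy periods to monodromy periods.

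The crux is the following positivity statement: if $\gamma\in H$ is such that the homomorphism $\langle\gamma,-\rangle\colon H\to\oM^\gp$ given by the monodromy pairing takes values in $\ker(\phi^\gp)$, then $\gamma=0$. Indeed, pairing $\gamma$ with itself and fixing an orientation of the edges, $\langle\gamma,\gamma\rangle=\sum_e\gamma_e^2\,\ell(e)$, so $\sum_e\gamma_e^2\,\phi(\ell(e))=0$ in $\oN^\gp$; each summand lies in the sharp monoid $\oN$, and a sum of elements of a sharp monoid vanishes only if each does, so $\gamma_e^2\,\phi(\ell(e))=0$ for every $e$. Since $\phi(\ell(e))\neq0$ (as $\phi$ contracts no edge) and $\oN^\gp$ is torsion-free, this forces $\gamma_e=0$ for all $e$. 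In particular $\langle-,-\rangle\colon H\to\Hom(H,\oM^\gp)$ is injective.

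Granting this, an element of $\ker\bigl(\Tropic^0(\mathfrak X)\to\Tropic^0(\mathfrak X_\phi)\bigr)$ is represented by a bounded-monodromy map $\psi\colon H\to\oM^\gp$ with $\phi^\gp\circ\psi=\langle\gamma,-\rangle_{\oN}=\phi^\gp\circ\langle\gamma,-\rangle_{\oM}$ for some $\gamma\in H$. By the crux $\gamma$ is uniquely determined, so $\psi-\langle\gamma,-\rangle_{\oM}$ is a well-defined element of $\Hom(H,\ker\phi^\gp)$ depending only on the class of $\psi$ (replacing $\psi$ by $\psi+\langle\delta,-\rangle_{\oM}$ replaces $\gamma$ by $\gamma+\delta$), and it vanishes exactly when the class does. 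This embeds the kernel into $\Hom(H,\ker\phi^\gp)$. Finally $\oM$ is a sharp fs monoid, so $\oM^\gp$ is a lattice; hence $\ker\phi^\gp$ and $\Hom(H,\ker\phi^\gp)$ are free of finite rank, and therefore so is the kernel. The only routine points are the bookkeeping of representatives in the last paragraph and the elementary lemma on sharp monoids; the real content is the positivity observation, which is precisely where the hypothesis that $\phi$ contracts no edge is used.
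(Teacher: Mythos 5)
Your argument is correct and is essentially the paper's proof with the snake lemma unwound by hand: both identify the kernel of $\Tropic^0(\mathfrak X)\to\Tropic^0(\mathfrak X_{\phi})$ with the kernel of $\Hom(H_1(\mathfrak X),\oM^{gp})^{\dagger}\to\Hom(H_1(\mathfrak X),\oN^{gp})^{\dagger}$, a subgroup of a finitely generated free abelian group. Your explicit ``positivity'' step (injectivity of $\gamma\mapsto\langle\gamma,-\rangle_{\oN}$, using sharpness of $\oN$ and that no edge is contracted) is precisely the non-degeneracy that the paper's snake-lemma application uses tacitly, so spelling it out is a reasonable, if routine, addition rather than a different route.
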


\begin{proof}
The map $\phi$ induces an isomorphism $H_1(\mathfrak X)\to H_1(\mathfrak X_{\phi})$. By the snake lemma, the kernel of the map of tropical Jacobians is equal to the kernel of the map 
$$\Hom(H_1(\mathfrak X),\oM^{\gp})^{\dagger}\to \Hom(H_1(\mathfrak X),\oN^{\gp})^{\dagger}$$
induced by $\phi^{\gp}\colon\oM^{\gp}\to \oN^{\gp}$. Because $\oM$ is fine and saturated, $\oM^{\gp}$ is free, hence so is $\Hom(H_1(\mathfrak X),\oM^{\gp})$ and any subgroup of it.
\end{proof}

\begin{lemma}\label{coro:finite_tropjac_injective_maps}
Let $\mathfrak X$ be a tropical curve metrized by a sharp monoid $\oM$, let $\Psi$ be an abelian group, and let $\alpha\colon \Psi\to \Tropic^0(\mathfrak X)$ be a homomorphism. The following are equivalent:
\begin{enumerate}
\item $\alpha$ is injective and $\Psi$ is finite;
\item for every monoid $P$ and homomorphism $\phi\colon \oM\to P$ not contracting any edge of $\mathfrak X$, the composition $\Psi \to \Tropic^0(\mathfrak X)\to \Tropic^0(\mathfrak X_{\phi})$ is injective;
\item for every monoid homomorphism $\phi\colon \oM\to \NN$ not contracting any edge of $\mathfrak X$, the composition $\Psi \to \Tropic^0(\mathfrak X)\to \Tropic^0(\mathfrak X_{\phi})$ is injective.
\end{enumerate}
\end{lemma}
\begin{proof}
\leavevmode
\begin{itemize}
\item[] $({\rm i})\Rightarrow ({\rm ii})$: The kernel of $\Tropic^0(\mathfrak X)\to \Tropic^0(\mathfrak X_{\phi})$ is free by \ref{lemma:map_tropic}. Since $\Psi$ is finite, the intersection of the kernel with $\Psi$ is zero.

\item[] $({\rm ii})\Rightarrow ({\rm iii})$ is clear. 

\item[] $({\rm iii})\Rightarrow ({\rm i})$: We first establish the existence of a monoid homomorphism $\phi\colon \oM\to \NN$ not contracting any edge of $\mathfrak X$: since $\overline{M}$ is fine and saturated, it is the intersection of a cone $\sigma \subset P^{\gp}\otimes \bb{R}$ with $P^{\gp}$ for a finitely generated lattice $P^\gp$. Then, any integral element in the interior of the dual cone $\sigma^{\vee}\subset \Hom(P^{\gp},\bb R)$ will not contract any non-zero element of $M$ and thus gives rise to such a map. Injectivity of $\alpha$ thus follows. Now $\Tropic^0(\mathfrak X_{\phi})$ is finite since it is the cokernel of the homomorphism $H_1(\frak X_{\phi})\to \Hom(H_1(\frak X_{\phi}),\bb Z)$ induced by the monodromy pairing, which is injective by \cite[Corollary 3.4.8]{Molcho2018The-logarithmic}. Hence, $\Psi$ is finite as well.\qedhere
\end{itemize}
\end{proof}

We fix a log curve $X/S$ with $\ul S$ locally noetherian. We introduce the category $\cat{Et}$ whose objects are pairs $(\Psi,\alpha)$ of an \'etale group algebraic space $\Psi/S$ and a homomorphism $\alpha\colon \Psi\to \sTPic^0_{X/S}$. Similarly, we let $\cat{Sm}$ be the category of pairs $(\mathcal G,\alpha)$ of a smooth group algebraic space $\mathcal G/S$ and a homomorphism $\alpha\colon \ca G\to \sLPic^0_{X/S}$.

There is an obvious base change functor 
\begin{align}\label{eqn:functor}
F\colon \cat{Et} &\xrightarrow{\hspace*{3cm}} \cat{Sm} \\
\left(\Psi \to \sTPic^0_{X/S}\right) &\xmapsto{\hspace{2cm}} \left(\Psi\underset{\sTPic^0_{X/S}}\times\sLPic^0_{X/S} \to \sLPic^0_{X/S}\right) \nonumber
\end{align}

Recall the functor $\frak f_*$ introduced in \ref{subs:functors}. Notice that for an object $(\Psi,\alpha)$ of $\cat{Et}$, we have a natural map $\frak f_*\Psi\to \frak f_*\sTPic^{0}\to \Tropic^0$ and similarly for an object of $\cat{Sm}$. 

The next proposition is the key statement of the section.

\begin{proposition}\label{prop:separated_model}
Let $X/S$ be a log curve with $\ul S$ locally noetherian. Let $(\Psi,\alpha)\in\cat{Et}$, with $\c{G}\to \sLPic^0_{X/S}$ its image under the functor $F$. Consider the two conditions:

\begin{enumerate}
\item \label{cond:1} $\alpha\colon \Psi\to \sTPic^0_{X/S}$ is an open immersion and $\Psi/S$ is quasi-finite;
\item \label{cond:2} $\c{G}/S$ is separated. 
\end{enumerate}
Then \oref{cond:1} implies \oref{cond:2}. Moreover, if $(S,M_S)$ is log regular and $\Psi$ has trivial restriction to the dense open $U\subset S$  where $\oM_S$ vanishes, then \oref{cond:2} implies \oref{cond:1} as well, and the two conditions are equivalent to 
\begin{enumerate}[resume]
\item 
\label{cond:3} $\c{G}/S$ is separated and quasi-compact.
\end{enumerate}
\end{proposition}

\begin{remark}
The restriction of $\sLPic^0_{X/S}$ to $U$ is naturally identified with $\Pic^0_{X_U/U}$, and therefore $\sTPic^0_{X/S}$ restricts to $\{0\}$. The fact that $\Psi_{U}=\{0\}$ implies that $\mathcal G_{U}= \Pic^0_{X_U/U}$.  
\end{remark}
\begin{proof}
We prove that \oref{cond:1} implies \oref{cond:2}. Since $\ul S$ is locally noetherian, $\Psi/S$ is quasi-separated, hence so is its base change $\ca G/S$. By \cite[\href{https://stacks.math.columbia.edu/tag/0ARI}{Tag 0ARI}]{stacks-project} we may check the valuative criterion for a strictly henselian discrete valuation ring $V$; we write $\eta$ for its generic point and $s$ for its closed point. Fix a map $a\colon \eta\to \mathcal G$ and two lifts to $b,b'\colon V\to \mathcal G$; using the group structure of $\mathcal G$, we may assume $b'=0$ and therefore $a=0$.  

Denote by $d$ the map from $V$ to $S$; we endow $\eta$ with the log structure $M_{\eta}$ pulled back from $S$, and we write $M_V$ for the maximal extension of $M_{\eta}$ to a log structure on $V$. We obtain a commutative diagram

\begin{center}
\begin{tikzcd}
(\eta,M_\eta) \ar[r] \ar[d] & (\eta,M_\eta) \ar[r, "0"] \ar[d] & \frak f_*\mathcal G \ar[r] \ar[d] & \Logpic^0_{X/S} \ar[d] \\ (V,M_V) \ar[r] & (V,d^*M_S) \ar[r] \ar[ru,shift left, "b"]\arrow[ru,shift right, swap, "0"] & S\ar[r] &  S
\end{tikzcd}
\end{center}

Let $\mathfrak{X}_s$ (resp. $\mathfrak X^V$) denote the tropicalization of $X_s$ metrized by $\o{M}_{S,s}$ (resp. $\o{M}_{V,s}$). Notice that the map $\o{M}_{S,s} \rightarrow \o{M}_{V,s}$ of characteristic monoids does not contract any edge of $\mathfrak{X}_s$, since $d^*M_S\to M_V$ is a map of log structures and therefore sends non-units to non-units. As $V$ is strictly henselian, we have by \ref{lemma:atomic_bm} that $\Tropic^0(V,M_V)=\Tropic^0(\mathfrak X^V)$ and $\Tropic^0(V,d^*M_S)=\Tropic^0(\mathfrak X_s)$.
Consider the commutative diagram whose rows are exact sequences:
\begin{align*}
\xymatrix{0 \ar[r] & \Pic^0_{X/S}(V) \ar[r] \ar[d] & \frak f_*\mathcal G(V,d^*M_S) \ar[r] \ar[d] & \frak f_*\Psi(V,d^*M_S) \ar[d]\ar[r]  & 0 \\
0 \ar[r] & \Pic^0_{X/S}(V) \ar[r] \ar[d] & \Logpic^0_{X/S}(V,d^*M_S) \ar[r] \ar[d] & \Tropic^0(\mathfrak{X}_s) \ar[d] \ar[r] & 0 \\ 0 \ar[r] & \Pic^0_{X/S}(V) \ar[r] & \Logpic^0_{X/S}(V,M_V) \ar[r] & \Tropic^0(\mathfrak{X}^V) \ar[r] & 0}
\end{align*} 
The element $b$ lies in $\frak f_*\mathcal G(V,d^*M_S)$. The following three facts imply that $b=0$: 
\begin{itemize}
\item By properness of $\Logpic^0_{X/S}$, the image of $b$ in  $\Logpic^0_{X/S}(V,M_V)$ is the trivial log line bundle, so $b$ maps to $0$ in $\Tropic^0(\mathfrak X^V)$. 
\item
Combining the assumption that \oref{cond:1} holds with \ref{coro:finite_tropjac_injective_maps} yields that the composition $$\frak f_*\Psi(V,d^*M_S) = \Psi(V)\to \sTPic^0(V)=\Tropic^0(V,d^*M_S)\to \Tropic^0(V,M_V)$$ is injective. 
\item If $b\in \frak f_*\c{G}(V,d^*M_S)$ lies in the image of $\Pic^0_{X/S}(V)$ then $b=0$ since $\Pic^0_{X/S}$ is separated. 
\end{itemize}

%
%
%

We move on to the next part of the statement, so from now on we suppose that $S$ is log regular and that $\Psi_U=\{0\}$. This in particular implies that $\mathcal G_U= \Pic^0_{X_U/U}$. It's clear that \oref{cond:3} implies \oref{cond:2}; if we show that \oref{cond:2} implies \oref{cond:1}, then we immediately obtain $\oref{cond:2}\Rightarrow \oref{cond:3}$. Indeed, quasi-finiteness of $\Psi$ together with the fact that $\Pic^0_{X/S}$ is quasi-compact, implies that $\mathcal G/S$ is quasi-compact.

It remains to prove that \oref{cond:2} implies \oref{cond:1}. Write $K$ for the kernel of $\Psi\to \sTPic^0_{X/S}$. It is \'etale, and since $\Psi_U=0$, $K$ also vanishes over the open dense $U\subset S$. Moreover, $K$ is identified with the kernel of $\c{G}\to \sLPic^0_{X/S}$; as $\c{G}/S$ is separated, so is $K$, hence $K$ is trivial. This shows that $\Psi\to \sTPic^0_{X/S}$ is an open immersion.

Now let $t\to S$ be a geometric point, with image $s\in S$. Because $S$ is locally noetherian, by a special case of \cite[7.1.9]{Grothendieck1961EGAII} there exists a morphism $Z\to S$ from the spectrum of a discrete valuation ring such that the closed point is mapped to $s$ and the generic point to $U$. Now consider the composition $\omega\colon Z^{sh}\to Z\to S$ with the strict henselization induced by $t\to s$. $\c{G}$ is a smooth, separated model of $\Pic^0_{X_U}$ and therefore $\omega^*\c{G}$ is a $Z^{sh}$-smooth separated model of $\omega^*\Pic^0_{X_U}$. The restriction of the latter to the generic point of $Z^{sh}$ is an abelian variety, which therefore admits a N\'eron model of finite type $\c{N}/Z^{sh}$ by \cite[Corollary 1.3.2]{Bosch1990Neron-models}. We claim that the natural map $\Phi\colon \omega^*\c{G}\to \c{N}$ is an open immersion. In the case of schemes this would follow immediately from \cite[Prop. 7.4.3]{Bosch1990Neron-models}. The same proof shows for algebraic spaces that the natural map on identity components $\omega^*\c{G}^0\to \c{N}^0$ is an isomorphism. This implies that $\Phi$ is flat, hence is an open immersion by \ref{lem:open_imm_of_flat_sep_lfp}. 


 In particular the map of fibers $\c{G}_t\to \c{N}_t$ is an open immersion.  As $\c{N}_t$ is of finite type (and $S$ is locally noetherian), so is $\c{G}_t$. Then by descent $\Psi_t$ is of finite type as well; as it is moreover \'etale, it is finite over $k(t)$. In particular the map $\Psi_t\to \sTPic^0_t$ factors via $\sTPic^{tor}_t$ (which is quasi-finite by \ref{lemma:sTPic_qf}); it follows that the open immersion $\Psi\to \sTPic^0$ factors via the open immersion $\sTPic^{tor}\to \sTPic^0$. The resulting open immersion $\Psi\to \sTPic^{tor}$ is quasi-compact (indeed $\sTPic^{tor}$ is locally noetherian since  $S$ is). This proves that $\Psi/S$ is quasi-finite.
\end{proof}

\begin{lemma}\label{lem:open_imm_of_flat_sep_lfp}
Let $f\colon X \to Y$ be a flat, separated, locally finitely presented morphism of algebraic spaces. Let $U \subseteq Y$ be open such that $f$ is an open immersion over $U$ and such that $f^{-1}U$ is schematically dense in $X$. Then $f$ is an open immersion. 
\end{lemma}
\begin{proof}We follow \cite[Lemma 2.0]{Lutkebohmert1993On-compactifica}. Replacing $Y$ by the open image of $f$, we may assume $f$ is faithfully flat. Base-changing along $f$, the assumptions are preserved and we may assume $f$ has a section $e\colon Y \to X$. Since $f$ is separated, the section $e$ is a closed immersion. But the open immersion $j\colon f^{-1}U \to X$ factors via $e$, so schematic density of $j$ implies that $e$ is an isomorphism. 
\end{proof}

The image of $\sTPic^{tor}_{X/S}$ under the functor $F$ is $\sPic^{sat}_{X/S}$, yielding:
\begin{corollary}
Let $X/S$ be a log curve with $\ul S$ locally noetherian. Then $\sPic^{sat}_{X/S}$ is separated.
\end{corollary}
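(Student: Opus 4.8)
The plan is to deduce this immediately from Proposition~\ref{prop:separated_model}, applied to the object $(\sTPic^{tor}_{X/S},\iota)$ of $\cat{Et}$, where $\iota$ denotes the natural inclusion $\sTPic^{tor}_{X/S}\hra \sTPic^0_{X/S}$. So the entire argument is a matter of checking that this particular object satisfies the hypotheses of that proposition, and then identifying its image under $F$.

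First I would verify that $(\sTPic^{tor}_{X/S},\iota)$ really is an object of $\cat{Et}$: by \ref{lemma:sTPic_qf}, $\sTPic^{tor}_{X/S}$ is representable by a quasi-separated \'etale group algebraic space over $\ul S$, and $\iota$ is a homomorphism of group algebraic spaces. Next I would check condition (1) of \ref{prop:separated_model} for this object: again by \ref{lemma:sTPic_qf}, the inclusion $\iota$ is an open immersion and $\sTPic^{tor}_{X/S}$ is quasi-finite over $\ul S$. Therefore the implication ``(1) $\Rightarrow$ (2)'' of \ref{prop:separated_model} — whose proof requires only that $\ul S$ be locally noetherian, not that $S$ be log regular — shows that the image $F(\sTPic^{tor}_{X/S})$ is separated over $S$.

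Finally I would identify $F(\sTPic^{tor}_{X/S})$ with $\sPic^{sat}_{X/S}$, which is exactly the remark preceding the corollary: by construction $F$ sends a homomorphism $\Psi\to\sTPic^0_{X/S}$ to the fibred product $\Psi\times_{\sTPic^0_{X/S}}\sLPic^0_{X/S}$, so $F(\sTPic^{tor}_{X/S})$ is the preimage of $\sTPic^{tor}_{X/S}$ under $\sLPic^0_{X/S}\to\sTPic^0_{X/S}$, which is precisely the definition of $\sPic^{sat}_{X/S}$. Combining the previous two steps completes the proof.

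I do not expect any real obstacle here: the corollary is a direct specialization of \ref{prop:separated_model}, and every ingredient needed to feed that proposition — representability, the open-immersion property, and quasi-finiteness of $\sTPic^{tor}_{X/S}$ — has already been assembled in \ref{lemma:sTPic_qf}. The only point deserving a word of care is that one must invoke the half of \ref{prop:separated_model} that holds without the log-regularity hypothesis, since here we make no such assumption on $S$.
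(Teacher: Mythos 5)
Your proof is correct and is exactly the paper's argument: the paper derives this corollary in one line by observing that the image of $\sTPic^{tor}_{X/S}$ under $F$ is $\sPic^{sat}_{X/S}$ and invoking the ``(1) $\Rightarrow$ (2)'' direction of \ref{prop:separated_model}, with the hypotheses supplied by \ref{lemma:sTPic_qf}. You have also correctly isolated the one subtle point, namely that only the implication valid over a locally noetherian (not necessarily log regular) base is needed.
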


\subsection{Equivalence of categories}
From this point until the end of the section we will assume that $S$ is log regular and denote by $U\subset S$ the open dense where $\oM_S$ is trivial. Recall that the \'etale algebraic space $\sTPic^0_{X/S}$ has trivial restriction to $U$ while $\sLPic^0_{X/S}$ is a smooth group space whose restriction to $U$ is naturally identified with $\Pic^0_{X_U/U}$.

We consider the full subcategory of $\cat{Et}$ 
\begin{align*}\cat{Et^{\circ}}:=&\{(\Psi,\alpha) \text{ with } \Psi \to S\text{ an \'etale group-algebraic space} \\& \text{such that } \Psi_U=\{0\}, \; \alpha\colon \Psi\to \sTPic^0_{X/S}  \text{ a homomorphism}\}.
\end{align*} 
and the full subcategory of $\cat{Sm}$
\begin{align*}\cat{Sm^{\circ}}:=&\{(\mathcal G,\beta) \text{ with } \c{G} \to S\text{ a smooth quasi-separated group algebraic space}, \\& \text{ with fiberwise-connected component of identity }\c{G}^0 \text{ separated over }S, \\& \beta\colon \c{G}\to \sLPic^0_{X/S} \text{ a homomorphism such that } \beta_U\colon \c{G}_U\to \Pic^0_{X_U/U} \\& \text{ is an isomorphism}\}.
 \end{align*}
 
The fiberwise connected-component of identity $\c{G}^0$ is an open subgroup space of $\c{G}$ containing the identity section and whose geometric fibers $\c{G}^0_s$ are the connected component of identity of $\c{G}_s$. See \ref{appendix:pi_0} for details on $\c{G}^0$.
\begin{lemma}\label{lemma:G^0Pic^0}Let $(\ca G,\beta)$ be in $\cat{Sm^{\circ}}$. Then $\ca G^0$ is naturally identified with $\Pic^0_{X/S}$. 
\end{lemma}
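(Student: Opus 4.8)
The plan is to produce the asserted identification as the restriction of the structure morphism $\beta$ to $\ca G^0$, and then to prove that this restriction is an isomorphism by checking it on geometric fibres, reducing at the bad points to a strictly henselian discrete valuation ring in the same way as in the proof of \ref{prop:separated_model}.

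The first step is to observe that $\Pic^0_{X/S}$ is an \emph{open} subspace of $\sLPic^0_{X/S}$: by \ref{thm:sPic_representable} the algebraic space $\sTPic^0_{X/S}$ is \'etale over $S$, so its zero section is an open immersion, and $\Pic^0_{X/S}=\sLPic^0_{X/S}\times_{\sTPic^0_{X/S}}S$ by the exact sequence \ref{exact_sequence_strict}. I would then show that $\beta|_{\ca G^0}$ factors through this open subspace: for a geometric point $s$ of $S$ the composite $\ca G^0_s\to\sLPic^0_{X/S,s}\to\sTPic^0_{X/S,s}$ is a homomorphism from a connected group scheme to an \'etale $k(s)$-group scheme, hence the zero map, so $\ca G^0_s$ lands in the open subspace $\Pic^0_{X_s/s}$; since this holds on every fibre, the open subspace $\beta^{-1}(\Pic^0_{X/S})$ of $\ca G^0$ equals $\ca G^0$, and we obtain a homomorphism $\phi\colon\ca G^0\to\Pic^0_{X/S}$ over $S$. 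Over the dense open $U$ one has $\sTPic^0_{X_U/U}=0$, hence $\Pic^0_{X_U/U}=\sLPic^0_{X_U/U}$, and $\ca G_U\cong\Pic^0_{X_U/U}$ is fibrewise connected, so $\ca G^0_U=\ca G_U$ and $\phi_U=\beta_U$ is an isomorphism.

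It then remains to prove $\phi$ is an isomorphism. Since $\ca G^0$ and the semiabelian scheme $\Pic^0_{X/S}$ are both flat and locally of finite presentation over $S$, it is enough, by the fibre-wise criterion for a morphism to be an isomorphism, to show that $\phi_{\bar s}$ is an isomorphism for every geometric point $\bar s\to S$; by the previous step we may assume $\bar s$ lies over a point of $S\setminus U$. For this I would reuse the valuative argument from the proof of \ref{prop:separated_model}: using \cite[7.1.9]{Grothendieck1961EGAII} pick a discrete valuation ring $Z\to S$ whose closed point maps to the image of $\bar s$ and whose generic point maps into $U$, and let $\omega\colon Z^{sh}\to S$ be the strict henselization towards $\bar s$. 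Then $\omega^*\ca G^0$ and $\omega^*\Pic^0_{X/S}$ are smooth, separated, fibrewise connected group algebraic spaces over $Z^{sh}$ with common generic fibre the abelian variety $A:=\Pic^0_{X_\eta/\eta}$ ($\eta$ the generic point of $Z^{sh}$), and $\omega^*\phi$ extends $\mathrm{id}_A$. If $\ca N/Z^{sh}$ denotes the N\'eron model of $A$, then \cite[Prop. 7.4.3]{Bosch1990Neron-models} gives open immersions $\omega^*\ca G^0\hra\ca N$ and $\omega^*\Pic^0_{X/S}\hra\ca N$, both extending $\mathrm{id}_A$; fibrewise connectedness forces both to factor through $\ca N^0$, and on the closed fibre an open (hence closed) subgroup of the connected group $\ca N^0_s$ must be everything, so each is an isomorphism onto $\ca N^0$. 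Composing the two isomorphisms and invoking uniqueness of extensions of $\mathrm{id}_A$ to the N\'eron model $\ca N$ identifies the composite with $\omega^*\phi$; restricting to the closed point then shows $\phi_{\bar s}$ is an isomorphism.

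The hard part is precisely this fibre-wise analysis over $S\setminus U$: a homomorphism of smooth connected group schemes over a field need not be flat, so one cannot deduce directly from the structure of $\ca G$ that $\phi$ is an open immersion, and it is the reduction to a discrete valuation ring together with \cite[Prop. 7.4.3]{Bosch1990Neron-models} that pins $\phi_{\bar s}$ down to an isomorphism rather than merely a monomorphism or an isogeny onto a proper subgroup. The remaining ingredients — the existence of $\phi$, its behaviour over $U$, and the passage from ``isomorphism on all geometric fibres'' to ``isomorphism'' — are formal.
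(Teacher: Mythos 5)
Your proof is correct, but it takes a genuinely different route from the paper's. The paper's argument is three lines: over $U$ one has $\ca G^0_U=\ca G_U=\Pic^0_{X_U}$; at each codimension-$1$ point of the (normal, since log regular) base, both $\ca G^0$ and $\Pic^0_{X/S}$ are identified with the identity component of the N\'eron model of $\Pic^0_{X_\eta}$ via \cite[Prop.~7.4.3]{Bosch1990Neron-models}; and then the isomorphism in codimension $\le 1$ extends over all of $S$ by Raynaud's extension theorem \cite[XI, 1.15]{Raynaud1970Faisceaux-ample}. You instead build the comparison map $\phi\colon\ca G^0\to\Pic^0_{X/S}$ globally first (using that connected fibres map trivially to the \'etale $\sTPic^0_{X/S}$, i.e.\ essentially re-deriving \ref{G^0_for_slpic}), and then check $\phi$ is an isomorphism on \emph{every} geometric fibre, handling points of arbitrary codimension by tracing a DVR through them into $U$ as in the proof of \ref{prop:separated_model}, again via \cite[Prop.~7.4.3]{Bosch1990Neron-models}, and concluding with the fibrewise criterion for isomorphisms. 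Both proofs rest on the same DVR-level input; what Raynaud's theorem buys the paper is that the extension across higher-codimension points (and even the existence of the global map $\phi$) comes for free, whereas your argument is longer but self-contained modulo BLR and \cite[7.1.9]{Grothendieck1961EGAII}. Two small points you should make explicit: $\omega^*\ca G^0$ must be of finite type over $Z^{sh}$ for \cite[Prop.~7.4.3]{Bosch1990Neron-models} to apply, which follows from fibrewise connectedness (a smooth group space with quasi-compact fibres over a DVR is quasi-compact); and the closed fibre of $Z^{sh}$ lives over an extension of $k(s)$, so you conclude that $\phi_{\bar s}$ is an isomorphism by descent of that property along field extensions.
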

\begin{proof}
Over $U$ there is already a natural identification $\ca G_U=\ca G^0_U=\Pic^0_{X_U}$. For every point $s$ of $S$ of codimension $1$, the restriction of $\Pic^0_{X/S}$ to $\ca O_{S,s}$ is the identity component of its own N\'eron model.  By \cite[7.4.3]{Bosch1990Neron-models}, the same holds for $\ca G^0$. Now by \cite[XI, 1.15]{Raynaud1970Faisceaux-ample}, the isomorphism $\ca G^0_U\to \Pic^0_{X_U}$ extends uniquely to an isomorphism $\ca G^0\to \Pic^0_{X/S}$.
\end{proof}
Because of the N\'eron mapping property of $\sLPic^0_{X/S}$ (\ref{coro:NMP_strict_log}) there is a natural equivalence between $\cat{{Sm}^{\circ}}$ and the category with objects
\begin{align*}&\{(\mathcal G,\phi) \text{ with } \c{G} \to S\text{ a smooth quasi-separated group-algebraic space}, \\& \text{ with fiberwise-connected component of identity }\c{G}^0 \text{ separated over }S, \\& \phi\colon \c{G}_U\to \Pic^0_{X_U/U} \text{ an isomorphism}\},
 \end{align*} 
so we will not distinguish between the two.

We obtain by restriction of the functor $F$ of \ref{eqn:functor} a functor
$$F^{\circ}\colon \cat{Et^{\circ}}\to \cat{Sm^{\circ}}.$$ 

We are going to construct a quasi-inverse to $F^{\circ}$. We denote by $\pi_0(\ca G)$ the \'etale algebraic space $\c{G}/\c{G}^0$ of \ref{def:componentgroup}. By the universal property of $\c{G}/\c{G}^0$ for maps towards \'etale spaces (\ref{lemma:univ_property_G/G0}), together with the fact that $\pi_0(\sLPic^0_{X/S})=\sTPic^0_{X/S}$ (\ref{G^0_for_slpic}), we obtain a functor
\begin{align*}
\Pi_0\colon \cat{Sm^{\circ}} &\xrightarrow{\hspace{3cm}} \cat{Et^{\circ}} \\
\left(\c{G},\beta\colon \c{G}\to \sLPic^0_{X/S}\right) &\mapsto \left( \pi_0(\c{G}), \pi_0(\beta)\colon \pi_0(\c{G})\to \sTPic^0_{X/S}\right)
\end{align*}

\begin{lemma}\label{lemma:equivalence_quasi-finite}
The functor $F^{\circ}\colon\cat{Et^{\circ}}\to\cat{Sm^{\circ}}$ is an equivalence with $\Pi_0$ as a quasi-inverse.
\end{lemma}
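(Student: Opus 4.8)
The plan is to show that the natural transformations $\Pi_0 \circ F^\circ \Rightarrow \mathrm{id}_{\cat{Et}^\circ}$ and $F^\circ \circ \Pi_0 \Rightarrow \mathrm{id}_{\cat{Sm}^\circ}$ coming from the adjunctions are isomorphisms, arguing objectwise. First I would fix $(\Psi,\alpha)\in\cat{Et}^\circ$ and compute $F^\circ(\Psi,\alpha) = \c{G}:=\Psi\times_{\sTPic^0_{X/S}}\sLPic^0_{X/S}$. Since $\sLPic^0_{X/S}\to\sTPic^0_{X/S}$ is a $\Pic^0_{X/S}$-torsor with $\Pic^0_{X/S}$ smooth with connected fibres, the base change $\c{G}\to\Psi$ is also such a torsor; in particular $\c{G}^0=\Pic^0_{X/S}$ (via \ref{lemma:G^0Pic^0}, noting $\c{G}_U=\Pic^0_{X_U/U}$ because $\Psi_U=\{0\}$), and the fibres of $\c{G}$ over a geometric point $s$ are copies of $\Pic^0_{X_s}$ indexed by $\Psi_s$. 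Hence $\pi_0(\c{G}) = \pi_0(\Psi\times_{\sTPic^0}\sLPic^0)$; using that $\pi_0$ commutes with the relevant fibre product here (because $\Psi$ and $\sTPic^0_{X/S}$ are already étale, so taking $\pi_0$ of the torsor just recovers the base $\Psi$), one gets $\Pi_0(F^\circ(\Psi,\alpha)) \cong (\Psi,\alpha)$ compatibly with the maps to $\sTPic^0_{X/S}$. This uses \ref{G^0_for_slpic} to identify $\pi_0(\sLPic^0_{X/S})=\sTPic^0_{X/S}$ and the universal property \ref{lemma:univ_property_G/G0}.

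Conversely, I would fix $(\c{G},\beta)\in\cat{Sm}^\circ$ and form $\Pi_0(\c{G},\beta)=(\pi_0(\c{G}),\pi_0(\beta))$, then $F^\circ$ of that is $\pi_0(\c{G})\times_{\sTPic^0_{X/S}}\sLPic^0_{X/S}$. There is a canonical comparison map from $\c{G}$ to this fibre product, induced by $\beta\colon\c{G}\to\sLPic^0_{X/S}$ and the quotient map $\c{G}\to\pi_0(\c{G})$, which are compatible over $\sTPic^0_{X/S}$ by construction of $\pi_0(\beta)$. To check this is an isomorphism of algebraic spaces it suffices to check it on geometric fibres over $S$ (both sides are smooth $S$-spaces and the map is a map of group spaces over $S$, so it is enough to see it is an isomorphism fibrewise, or alternatively to check it is étale-locally an isomorphism). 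On a geometric fibre over $s$, the map is $\c{G}_s \to \pi_0(\c{G}_s)\times_{\sTPic^0_s}\sLPic^0_{X_s}$; since $\c{G}^0_s=\Pic^0_{X_s}=(\sLPic^0_{X_s})^0$ by \ref{lemma:G^0Pic^0} and the exact sequence \ref{exact_sequence_strict}, and $\beta$ restricts to an isomorphism on identity components, each connected component of $\c{G}_s$ maps isomorphically to the corresponding component of the fibre product, and the induced map on $\pi_0$ is the identity on $\pi_0(\c{G}_s)$. Hence the comparison map is an isomorphism.

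The main obstacle I anticipate is the bookkeeping around $\pi_0$ and fibre products: one must check that $\pi_0(-)$ behaves well with respect to the base change $\Psi\times_{\sTPic^0_{X/S}}\sLPic^0_{X/S}\to\Psi$, i.e. that forming the torsor and then collapsing identity components returns $\Psi$ on the nose (as an object over $\sTPic^0_{X/S}$), rather than merely up to some correction. This should follow formally from the universal property \ref{lemma:univ_property_G/G0} of $\c{G}/\c{G}^0$ for maps to étale spaces: both $\pi_0(F^\circ(\Psi,\alpha))$ and $\Psi$ corepresent the same functor of maps from $\c{G}$ to étale algebraic spaces, hence are canonically identified, and one checks this identification is natural in $(\Psi,\alpha)$ and compatible with the structure maps to $\sTPic^0_{X/S}$. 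A secondary point to verify is that $F^\circ$ and $\Pi_0$ genuinely land in the indicated subcategories with the separatedness condition on identity components, but this is already recorded: $\c{G}^0=\Pic^0_{X/S}$ is separated by \ref{lemma:G^0Pic^0}, and $\pi_0$ of a smooth quasi-separated group space with $U$-restriction $\Pic^0_{X_U/U}$ is étale with trivial $U$-restriction. Once these compatibilities are in place, naturality of the two transformations is immediate from the functoriality of base change and of $\pi_0$, completing the proof that $F^\circ$ is an equivalence with quasi-inverse $\Pi_0$.
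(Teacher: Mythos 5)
Your proposal is correct and follows essentially the same route as the paper: both directions hinge on identifying the fibrewise-connected component of identity with $\Pic^0_{X/S}$ via \ref{lemma:G^0Pic^0} (together with \ref{G^0_for_slpic} and the universal property \ref{lemma:univ_property_G/G0}) and then concluding at the level of component groups. The paper packages the conclusion as a snake/five-lemma argument on the two short exact sequences sharing the middle term $\c{G}$, whereas you check the comparison maps on geometric fibres, but this is only a difference in bookkeeping.
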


\begin{proof}
Let $\Psi\to \sTPic^0_{X/S}$ be in $\cat{Et^{\circ}}$, with image $\c{G}\to \sLPic^0_{X/S}$ via $F^{\circ}$. The surjective map $\mathcal G\to \Psi$ factors by the universal property via an \'etale surjective map $\pi_0(\mathcal G)\to \Psi$. It remains to show that its kernel $K$ vanishes. We obtain a commutative diagram of exact sequences
\begin{center}
\begin{tikzcd}
0\ar[r] & \c{G}^0 \ar[r]\ar[d] & \c{G} \ar[r,]\ar[d, "="] & \pi_0(\ca G) \ar[r]\ar[d] & 0\\
0\ar[r] & \Pic^0_{X/S} \ar[r] & \c{G} \ar[r] & \Psi \ar[r] & 0
\end{tikzcd}
\end{center}

By \ref{lemma:G^0Pic^0}, the left vertical map is an isomorphism and we conclude.

Conversely, let $\c{G}\in \cat{Sm^{\circ}}$. Because the functor $F^{\circ}$ is defined as a fiber product there is a natural map 
$f\colon \c{G}\to F^{\circ}(\Pi_0(\c{G}))=\pi_0(\c{G})\times_{\sTPic^0} \sLPic^0$, and we obtain a commutative diagram of exact sequences
\begin{center}
\begin{tikzcd}
0\ar[r] & \ca G^0 \ar[r]\ar[d, "h"] & \mathcal G \ar[r, "p\circ f"]\ar[d, "f"] & \pi_0(\mathcal G) \ar[r]\ar[d] & 0\\
0\ar[r] & \Pic^0_{X/S} \ar[r] & \pi_0(\c{G})\times_{\sTPic^0} \sLPic^0\ar[r, "p"] & \pi_0(\mathcal G) \ar[r] & 0
\end{tikzcd}
\end{center}
where the rightmost vertical map is the identity and the leftmost vertical map $h$ is the induced map $\ker(p\circ f)\to \ker(p)$. Since $f$ restricts over $U$ to the identity of $\Pic^0_{X_U}$, so does $h$. It follows that $h$ is the isomorphism of \ref{lemma:G^0Pic^0}, and that $f$ is an isomorphism as well.
\end{proof}

As a corollary of \ref{prop:separated_model} we refine the equivalence $F^{\circ}$.
\begin{definition} We let $\cat{Et^{qf,mono}}$ to be the full subcategory of $\cat{Et^{\circ}}$ of those $(\Psi,\alpha)$ with $\alpha$ a monomorphism (i.e. an open immersion) and $\Psi/S$ quasi-finite. We let $\cat{Sm^{qc,sep}}$ be the full subcategory of $\cat{Sm^{\circ}}$ of those $(\c{G},\phi)$ with $\c{G}\to S$ separated and quasi-compact. 
\end{definition}

Both $\cat{Et^{qf,mono}}$ and $\cat{Sm^{qc,sep}}$ are equivalent to partially ordered sets. For $\cat{Et^{qf,mono}}$ this is clear, and for $\cat{Sm^{qc,sep}}$ we observe that, for an object $(\c{G},\phi\colon \c{G}\to \sLPic^0_{X/S})$ of $\cat{Sm^{qc,sep}}$, $\phi$ is the base change of $\pi_0(\mathcal G)\to \sTPic^0_{X/S}$, by \ref{lemma:equivalence_quasi-finite}. The latter is an open immersion by \ref{prop:separated_model}, so $\phi$ is an open immersion. 


The following corollary allows us to describe all possible smooth separated group models of $\Pic^0_{X_U}$ in terms of open subgroups of the strict tropical Jacobian.
\begin{corollary}\label{coro:equivalence_sep_qf}
The equivalence $F^{\circ}\colon \cat{Et^{\circ}}\to \cat{Sm^{\circ}}$ restricts to an order-preserving bijection
$$F^{*}\colon \cat{Et^{qf,mono}} \to \cat{Sm^{qc,sep}}. $$
\end{corollary}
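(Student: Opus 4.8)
The plan is to deduce the corollary formally from \ref{lemma:equivalence_quasi-finite} and \ref{prop:separated_model}: no further geometry is needed, only the verification that the quasi-inverse equivalences $F^{\circ}$ and $\Pi_0$ carry the two indicated full subcategories into one another.

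First I would show that $F^{\circ}$ sends $\cat{Et^{qf,mono}}$ into $\cat{Sm^{qc,sep}}$. If $(\Psi,\alpha)\in\cat{Et^{qf,mono}}$ then, by definition, $\alpha$ is an open immersion and $\Psi/S$ is quasi-finite; since $(S,M_S)$ is log regular and $\Psi_U=\{0\}$ (because $(\Psi,\alpha)\in\cat{Et^{\circ}}$), condition (1) of \ref{prop:separated_model} holds for $\c{G}:=F^{\circ}(\Psi,\alpha)$, and the implication $(1)\Rightarrow(3)$ of that proposition shows that $\c{G}/S$ is separated and quasi-compact. Hence $F^{\circ}(\Psi,\alpha)\in\cat{Sm^{qc,sep}}$.

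Conversely I would show that $\Pi_0$ sends $\cat{Sm^{qc,sep}}$ into $\cat{Et^{qf,mono}}$. Let $(\c{G},\beta)\in\cat{Sm^{qc,sep}}$ and put $(\Psi,\alpha):=\Pi_0(\c{G},\beta)=(\pi_0(\c{G}),\pi_0(\beta))$; this lies in $\cat{Et^{\circ}}$, so in particular $\Psi_U=\{0\}$. By \ref{lemma:equivalence_quasi-finite} the natural map $\c{G}\to F^{\circ}(\Psi,\alpha)$ is an isomorphism, so $F^{\circ}(\Psi,\alpha)$ is separated; that is, condition (2) of \ref{prop:separated_model} holds, and the implication $(2)\Rightarrow(1)$ (valid since $S$ is log regular and $\Psi_U=\{0\}$) gives that $\alpha$ is an open immersion and $\Psi/S$ is quasi-finite. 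Thus $\Pi_0(\c{G},\beta)\in\cat{Et^{qf,mono}}$.

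Since $F^{\circ}$ and $\Pi_0$ are quasi-inverse equivalences of $\cat{Et^{\circ}}$ and $\cat{Sm^{\circ}}$, and each carries one of the full subcategories $\cat{Et^{qf,mono}}$, $\cat{Sm^{qc,sep}}$ into the other, their restrictions are again quasi-inverse equivalences; write $F^{*}$ for the restriction of $F^{\circ}$. Both $\cat{Et^{qf,mono}}$ and $\cat{Sm^{qc,sep}}$ are equivalent to partially ordered sets (as noted just before the statement), so $F^{*}$ induces a bijection on isomorphism classes. It remains to check monotonicity in both directions: base change along $\sLPic^0_{X/S}\to\sTPic^0_{X/S}$ sends an inclusion $\Psi\hra\Psi'$ of quasi-finite open subgroups to an inclusion $\c{G}\hra\c{G}'$, while $\Pi_0=\pi_0(-)$ sends an inclusion $\c{G}\hra\c{G}'$ of open subgroups to $\pi_0(\c{G})\hra\pi_0(\c{G}')$ (both having fibrewise identity component $\Pic^0_{X/S}$ by \ref{lemma:G^0Pic^0}). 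Hence $F^{*}$ is an isomorphism of partially ordered sets. The only point requiring care is matching the hypotheses of \ref{prop:separated_model} to membership in the relevant subcategory; the substance of the statement is entirely contained in \ref{prop:separated_model} and \ref{lemma:equivalence_quasi-finite}, so I expect no genuine obstacle.
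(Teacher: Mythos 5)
Your proposal is correct and follows exactly the route the paper intends: the paper's proof is the single line ``This is immediate by \ref{prop:separated_model}'', and your argument is precisely the careful unwinding of that, applying $(1)\Rightarrow(3)$ in one direction and $(2)\Rightarrow(1)$ (via \ref{lemma:equivalence_quasi-finite}) in the other, then checking monotonicity. No difference in substance.
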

\begin{proof}
This is immediate by \ref{prop:separated_model}.
\end{proof}



The partially ordered set $\cat{Et^{qf,mono}}$ has a maximal element, namely the quasi-finite \'etale group space $\sTPic^{tor}_{X/S}$ representing the torsion part of the sheaf $\sTPic^0_{X/S}$. From \ref{coro:equivalence_sep_qf} we deduce: 


\begin{theorem}\label{coro:maximal_model}
Let $X/S$ be a log curve over a log regular base $S$, and $U\subset S$ the open where the log structure is trivial.
The partially ordered set of smooth separated group-$S$-models of finite type of $\Pic^0_{X_U}$ has $F^{*}(\sTPic^{tor}_{X/S})=\sPic^{sat}_{X/S}$ as maximum element. Namely, any other such model has a unique open immersion to $\sPic^{sat}_{X/S}$. 
\end{theorem}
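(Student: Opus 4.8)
The plan is to read off the statement from the order-preserving bijection $F^{*}\colon \cat{Et^{qf,mono}} \to \cat{Sm^{qc,sep}}$ of \ref{coro:equivalence_sep_qf}, so the first thing I would do is check that the source and target of $F^{*}$ are really the posets we want. By the identification following \ref{lemma:G^0Pic^0} (which rests on the N\'eron mapping property \ref{coro:NMP_strict_log}), an object of $\cat{Sm^{qc,sep}}$ is the same thing as a smooth, separated, quasi-compact group-$S$-space $\c G$ together with an isomorphism $\c G_U \cong \Pic^0_{X_U/U}$; since $S$ is log regular, hence locally Noetherian, a smooth quasi-compact $S$-space is exactly a smooth finite-type $S$-space, so these objects are precisely the smooth separated group-$S$-models of finite type of $\Pic^0_{X_U}$. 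Moreover, as observed just before the statement, every morphism in $\cat{Sm^{qc,sep}}$ is an open immersion: a morphism $(\c G,\beta)\to(\c G',\beta')$ is a morphism over $\sLPic^0_{X/S}$, and $\beta$, $\beta'$ are themselves open immersions (being base changes of the open immersions $\pi_0(\c G)\to\sTPic^0_{X/S}$ and $\pi_0(\c G')\to\sTPic^0_{X/S}$ respectively, via \ref{lemma:equivalence_quasi-finite} and \ref{prop:separated_model}). Thus it suffices to produce a maximum of $\cat{Et^{qf,mono}}$ and compute its image under $F^{*}$.

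Next I would verify that the pair consisting of $\sTPic^{tor}_{X/S}$ and its inclusion into $\sTPic^0_{X/S}$ is the maximum of $\cat{Et^{qf,mono}}$. It is an object of this poset: the inclusion $\sTPic^{tor}_{X/S}\hookrightarrow\sTPic^0_{X/S}$ is an open immersion and $\sTPic^{tor}_{X/S}/S$ is quasi-finite by \ref{lemma:sTPic_qf}, and its restriction to $U$ vanishes since $\sTPic^0_{X/S}$ restricts to $0$ over $U$. For maximality, fix $(\Psi,\alpha)\in\cat{Et^{qf,mono}}$. For each geometric point $t\to S$, the fibre $\Psi_t$ is finite over $k(t)$ (it is quasi-compact, quasi-separated and \'etale over a field), and $\alpha_t$ identifies it with an open subgroup of the fibre of $\sTPic^0_{X/S}$ at $t$, which by \ref{lemma:sTPic_basechange} (cf.\ the proof of \ref{lemma:sTPic_qf}) is the finitely generated abelian group $\Tropic^0(\mathfrak X_t)$. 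A finite subgroup of a finitely generated abelian group lies in the torsion part, so on points the image of $\alpha$ is contained in that of $\sTPic^{tor}_{X/S}$. As $\alpha$ and $\sTPic^{tor}_{X/S}\hookrightarrow\sTPic^0_{X/S}$ are both open immersions, $\alpha$ factors uniquely as $\Psi\to\sTPic^{tor}_{X/S}\hookrightarrow\sTPic^0_{X/S}$ with $\Psi\to\sTPic^{tor}_{X/S}$ again an open immersion (an open subspace of an open subspace of $\sTPic^0_{X/S}$). Hence $(\Psi,\alpha)\leq(\sTPic^{tor}_{X/S},\mathrm{incl})$ in $\cat{Et^{qf,mono}}$.

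Finally, as recorded right after \ref{prop:separated_model}, the image of $\sTPic^{tor}_{X/S}$ under $F$ (hence under $F^{*}$) is $\sPic^{sat}_{X/S}$; since $F^{*}$ is an order-preserving bijection onto $\cat{Sm^{qc,sep}}$, the object $\sPic^{sat}_{X/S}$ is the maximum of the poset of smooth separated finite-type group-$S$-models of $\Pic^0_{X_U}$, and because every morphism in $\cat{Sm^{qc,sep}}$ is an open immersion, each such model admits a unique open immersion into $\sPic^{sat}_{X/S}$. The only step with real content is the passage, in the middle paragraph, from the fibrewise factorization of $\alpha$ through $\sTPic^{tor}_{X/S}$ to a global one; but this is of exactly the same nature as the quasi-finiteness argument closing the proof of \ref{prop:separated_model}, so I do not expect any genuine obstacle here, and the rest is bookkeeping with the equivalences already in place.
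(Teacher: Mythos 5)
Your proposal is correct and follows exactly the paper's route: the paper likewise deduces the theorem from the bijection $F^{*}$ of \ref{coro:equivalence_sep_qf}, observing that $\sTPic^{tor}_{X/S}$ is the maximum of $\cat{Et^{qf,mono}}$ (the fibrewise torsion argument and the factorization of a quasi-finite open subgroup through $\sTPic^{tor}_{X/S}$ being exactly the argument already used at the end of the proof of \ref{prop:separated_model}). Your extra bookkeeping identifying $\cat{Sm^{qc,sep}}$ with the poset of smooth separated finite-type group models is implicit in the paper but correct.
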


\subsection{Possible extensions to the case of log abelian varieties}

It is natural to ask which of the results of this paper remain valid when the Jacobian of a curve is replaced by an arbitrary abelian variety. Suppose that we have a log regular log scheme $S$, and a log abelian variety $A^{\textup{log}}/S$ (which is necessarily an abelian variety $A_U$ over the open locus $U \subseteq S$ on which the log structure is trivial). Now $A^{\textup{log}}$ is a sheaf on $(\textbf{LSch}/S)_{\textup{\'et}}$ which has a tropicalization $A^{\textup{trop}}$ over $S$. We can restrict $A^\textup{log}$ and $A^{\textup{trop}}$ to sheaves on the strict \'etale site $(\textbf{Sch}/\underline{S})_{\textup{\'et}}$ to obtain algebraic spaces
$$
sA^{\textup{log}}, sA^{\textup{trop}}. 
$$
The tropicalization $sA^{\textup{trop}}$ can equivalently be defined as the quotient 
$$
sA^{\textup{trop}} = sA^{\textup{log}}/sA^0
$$
of $sA^{\textup{log}}$ by the (semi-abelian) fiberwise connected component of the identity $sA^0$, and, as described in detail in \cite[4.1.2]{Kajiwara2008Logarithmic-abe} also has an explicit combinatorial description \'etale locally in $S$ as
$$
\textup{Hom}(X,\overline{M}_S^{\gp})_{(Y)}/Y
$$
for lattices $X$ and $Y$. Here the subscript $(Y)$ indicates a subgroup of $\textup{Hom}(X,\overline{M}_S^{\gp})$ \cite[3.1]{Kajiwara2008Logarithmic-abe}, analogous to the bounded monodromy subgroup of the Jacobian. 

\begin{conjecture}\label{conj:logNMPforlogAV}
The log abelian variety $A^{\textup{log}}$ has the log N\'eron mapping property (\ref{def:log_NMP}) with respect to $U$.
\end{conjecture}

\Cref{conj:logNMPforlogAV} would in particular imply that $sA^{\textup{log}}$ is always a N\'eron model for $A_U$, but is rarely separated: it is separated if and only if $sA^{\textup{trop}}$ is finite. At the moment, we do not have a proof of \ref{conj:logNMPforlogAV}. Our proof for the Jacobian uses the geometry of the curve to produce the extension. On the other hand, the proof that abelian varieties are their own N\'eron models goes by extending line bundles on the dual abelian variety. This argument would extend to the case of logarithmic abelian varieties if we had a theory of log Picard functors for higher dimensional logarithmic schemes which satisfies analogues of \ref{property: line_bundle_representative},  \ref{property:logpic_modifications}, \ref{property:log_pic_is_a_log_stack}, and the usual duality axioms. 

Conditional on \ref{conj:logNMPforlogAV}, our proof of \ref{thm:intro_bijection} goes through verbatim to show that there is a bijection between quasi-finite open subgroups of $sA^{\textup{trop}}$ and smooth, separated, quasi-compact $S$-group models of $A_U$.

\section{Alignment and separatedness of strict log Pic}\label{sec:alignment}

For $X/S$ a log curve over a log regular base $S$ with $U\subset S$ the largest open where the log structure is trivial, we have shown in \ref{coro:NMP_strict_log} that $\sLPic^0_{X/S}$ is the N\'eron model of $\Pic^0_{X_U/U}$. 
It is worth stressing the fact that classically, the term N\'eron model is reserved for separated, quasi-compact models satisfying the N\'eron mapping property. The strict logarithmic Jacobian fails in general to satisfy these properties, as observed in \ref{example:1-gon}.

In the papers \cite{Holmes2014Neron-models-an}, \cite{Orecchia2018A-criterion-for}, \cite{Poiret}, several criteria were introduced for the Jacobian of a prestable curve $X/S$ (or for an abelian variety in \cite{Orecchia2019A-monodromy-cri}) to admit a separated, quasi-compact N\'eron model. They are all closely related to the general notion of \emph{log alignment} that we introduce here:



\begin{definition}
Let $\o{M}$ be a sharp fs monoid. We call the $1$-dimensional faces of $\o{M}\otimes_{\bb Z}\bb R_{\geq 0}$ the \emph{extreme rays} of $\oM$. 
\end{definition}

\begin{definition}
A \emph{cycle} in a graph is a path that begins and ends at the same vertex, and which otherwise repeats no vertices. A subset $S$ of the edges of a graph is called \emph{cycle-connected} if for every pair $e$, $e' \in S$ of distinct edges there exists a cycle in $S$ containing $e$ and $e'$. It is shown in \cite[lemma 7.2]{Holmes2014A-Neron-model-o} that the maximal cycle-connected subsets (which are there called circuit-connected) form a partition of the edges of the graph. 
\end{definition}

\begin{definition}
We say that a tropical curve $\frak X$ metrized by a sharp fs monoid $\oM$ is \emph{log aligned} when for every cycle $\gamma$ in $\frak X$, all lengths of edges of $\gamma$ lie on the same extreme ray of $\oM$. Let $X \rightarrow S$ be a log curve. We say that $X/S$ is \emph{log aligned} at a geometric point $\bar s$ of $\ul S$ when the tropicalization of $X$ at $\bar s$ is log aligned. We say that $X/S$ is \emph{log aligned} if it is log aligned at every geometric point of $\ul S$.
\end{definition}

\begin{lemma}\label{lem:log_alignment_preserved_under_subdivisions}
Let $\oM$ be a sharp fs monoid, $\frak X$ a tropical curve metrized by $\oM$ and $\frak Y$ a subdivision of $\frak X$. Then $\frak Y$ is log aligned if and only if $\frak X$ is.
\end{lemma}

\begin{proof}
It suffices to treat the case where $\frak Y$ is a basic subdivision of $\frak X$. Suppose it is, and call $e$ the subdivided edge: it is replaced in $\frak Y$ by a chain of two edges $e_1, e_2$ of the same total length. There is a canonical bijection between cycles of $\frak X$ and of $\frak Y$. Let $\gamma$ be a cycle of $\frak X$. It suffices to show that the lengths of edges of $\gamma$ in $\mathfrak X$ lie on the same extreme ray of $\oM$ if and only if the lengths of edges of $\gamma$ in $\mathfrak Y$ do. If $\gamma$ does not contain $e$, this is clear. Otherwise, it follows from observing that the length of $e$ is in an extreme ray $R$ of $\oM$ if and only if the lengths of $e_1$ and $e_2$ are both in $R$.
\end{proof}

\begin{theorem}\label{thm:alignment_conditions}
Let $X/S$ be a log curve. Consider the following conditions:
\begin{enumerate}
\item $X/S$ is log aligned;
\item $\strTroPic^0_{X/S}$ is quasi-finite over $S$;
\item $\strLogPic^0_{X/S}$ is separated over $S$.
\end{enumerate}
Then, we have $({\rm i}) \iff ({\rm ii}) \implies ({\rm iii})$. If $S$ is log regular and $U\subset S$ is the largest open where the log structure is trivial, we additionally have $(iii) \implies (ii)$, and the conditions above are equivalent to the following two:
\begin{enumerate}[resume]
\item $\sLPic^0_{X/S}$ is a separated N\'eron model of finite type for $\Pic^0_{X_U}$.
\item $\Pic^0_{X_U/U}$ admits a separated N\'eron model of finite type over $S$.
\end{enumerate}
 \end{theorem}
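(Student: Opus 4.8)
The statement combines purely combinatorial content (the equivalence $(1)\iff(2)$, which holds for any log curve) with the transfer of existence/separatedness results to the log-regular setting. The plan is to prove the implications in the cyclic order $(2)\implies(1)$, $(1)\implies(2)$, $(2)\implies(3)$, and then, under log regularity, $(3)\implies(2)$ and the equivalences with $(4)$ and $(5)$.

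\textbf{The combinatorial equivalence $(1)\iff(2)$.} This is a fibrewise statement: by \ref{lemma:sTPic_basechange} the fibre of $\sTPic^0_{X/S}$ at a geometric point $s$ is $\Tropic^0(\mathfrak X_s)=\Hom(H_1(\mathfrak X_s),\oM_{S,s}^{\gp})^{\dagger}/H_1(\mathfrak X_s)$, and quasi-finiteness of $\sTPic^0$ over $S$ amounts to all these fibres being finite (quasi-compactness over each stratum follows as in \ref{lemma:sTPic_qf}, using that $\sTPic^0$ is locally constant on strata by \ref{corollary:generization_tropic}). So the task is: for a tropical curve $\mathfrak X$ metrized by a sharp fs monoid $\oM$, the group $\Hom(H_1(\mathfrak X),\oM^{\gp})^{\dagger}/H_1(\mathfrak X)$ is finite if and only if for every cycle $\gamma$, all edge-lengths occurring in $\gamma$ lie on a common extreme ray of $\oM$. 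If $X/S$ is log aligned, then after choosing, for each cycle, a generator of the relevant ray, the bounded-monodromy condition forces every homomorphism $\phi$ to land (up to the lattice $H_1$) in a finite set, because boundedness of $\phi(\gamma)$ by $\operatorname{length}(\gamma)$ pins $\phi(\gamma)$ down to finitely many multiples of that ray generator; an induction on the number of independent cycles, splitting cycles with disjoint support via \ref{rem:length_additive_if_disjoint_supports}, gives finiteness. Conversely, if alignment fails, one produces a cycle $\gamma$ two of whose edges have lengths $a,b$ on distinct rays; composing with a suitable $\phi\colon\oM\to\NN^2$ (killing all other edges, injective on $\langle a,b\rangle$) reduces to the case $\oM=\NN^2$ with a single cycle built from these two edges, where one checks directly that the bounded-monodromy Hom-group modulo $H_1$ is infinite (this is essentially the computation behind \ref{example:1-gon}). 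This explicit rank computation for $\NN^2$ is the one genuinely hands-on step, but it is elementary.

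\textbf{$(2)\implies(3)$ and the log-regular converse $(3)\implies(2)$.} Both directions are already essentially contained in \ref{prop:separated_model} applied with $\Psi=\sTPic^0_{X/S}$ and $\mathcal G=\sLPic^0_{X/S}=F(\mathrm{id})$: condition (1) of that proposition for $\mathrm{id}\colon\sTPic^0\to\sTPic^0$ is precisely ``$\sTPic^0$ quasi-finite'', and its conclusion (2) is ``$\sLPic^0$ separated'', giving $(2)\implies(3)$ here without any regularity hypothesis. For the converse one invokes the second half of \ref{prop:separated_model}: when $S$ is log regular and $\Psi_U=\{0\}$ (which holds since $\sTPic^0_{X/S}$ restricts to $0$ over $U$), separatedness of $\mathcal G$ forces $\Psi/S$ to be quasi-finite, i.e. $(3)\implies(2)$. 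One small point to address is that $\sLPic^0$ separated literally gives, via the proposition, that $\sTPic^0\to\sTPic^0$ is an open immersion (automatic) and $\sTPic^0$ quasi-finite — so nothing extra is needed.

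\textbf{Equivalence with $(4)$ and $(5)$ under log regularity.} By \ref{coro:NMP_strict_log}, $\sLPic^0_{X/S}$ is a N\'eron model of $\Pic^0_{X_U/U}$; combined with the proposition's equivalence of (2) and (3) (separated $\iff$ separated and quasi-compact), condition (2) is equivalent to ``$\sLPic^0_{X/S}$ is a separated quasi-compact N\'eron model'', which is exactly $(4)$ (finite type $=$ quasi-compact $+$ quasi-separated, the latter being automatic by \ref{thm:sPic_representable}). Trivially $(4)\implies(5)$. For $(5)\implies(2)$: a separated N\'eron model $N$ of finite type would, by the N\'eron mapping property and uniqueness of N\'eron models (both $N$ and $\sLPic^0$ satisfy the N\'eron mapping property for smooth morphisms, and $\sLPic^0$ is a smooth algebraic space), be identified with $\sLPic^0_{X/S}$; hence $\sLPic^0$ is of finite type, in particular quasi-compact, so its quotient $\sTPic^0$ is quasi-compact, and being \'etale with finite fibres (finiteness of fibres is automatic as in \ref{lemma:sTPic_qf}) it is quasi-finite, giving $(2)$. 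The only subtlety is making the uniqueness argument in $(5)\implies(2)$ airtight: one should phrase it as ``any two smooth algebraic-space models with the N\'eron mapping property are canonically isomorphic'', which follows from applying the mapping property of each to the other (both are smooth over $S$), so $N\cong\sLPic^0$ as $S$-spaces and finiteness of type transfers.

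\textbf{Main obstacle.} The heart of the proof is the combinatorial equivalence $(1)\iff(2)$, and within it the ``$(2)\implies(1)$'' direction: constructing, from a misaligned cycle, an explicit monoid quotient to $\NN^2$ not contracting the relevant edges and then exhibiting infinitely many bounded-monodromy classes. Everything else is bookkeeping on top of \ref{prop:separated_model}, \ref{coro:NMP_strict_log}, \ref{lemma:sTPic_basechange}, and \ref{lemma:sTPic_qf}.
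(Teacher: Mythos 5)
Your treatment of the implications $(2)\Rightarrow(3)$, $(3)\Rightarrow(2)$ and of the equivalences with $(4)$ and $(5)$ matches the paper: everything there is \ref{prop:separated_model} applied to the identity of $\sTPic^0_{X/S}$, together with \ref{thm:Neron} and the uniqueness of N\'eron models. The genuine content is $(1)\iff(2)$, and there your argument has a gap in the direction ``not log aligned $\Rightarrow$ $\Tropic^0(\mathfrak X/\oM)$ infinite''. You propose to compose with a monoid homomorphism $\phi\colon\oM\to\NN^2$ killing all edges outside a misaligned cycle and then to compute over $\NN^2$. Two problems. First, such a $\phi$ need not exist: killing an element of $\oM$ kills the whole face it generates, and the lengths you want to keep may lie in faces generated by lengths you want to kill. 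Second, and more seriously, even granting $\phi$, the induced map $\Tropic^0(\mathfrak X/\oM)\to\Tropic^0(\mathfrak X_\phi/\NN^2)$ is a generization map for a homomorphism that contracts edges; it is not surjective in general (a bounded-monodromy map into $\ZZ^2$ need not lift to a bounded-monodromy map into $\oM^{gp}$ along a group-theoretic splitting, since splittings do not respect the partial order), so infinitude of the target says nothing about the source. The paper avoids this entirely: after replacing $\oM$ by a finite-index extension and subdividing $\mathfrak X$ so that every edge length lies on an extreme ray (both operations preserve the rank of $\Tropic^0$ and the alignment status), it exhibits $r+1$ independent bounded-monodromy homomorphisms $H_1(\mathfrak X)\to\oM^{gp}$ directly on the curve, namely the intersection pairings with $e,e_1,\dots,e_r$, where $e_1,\dots,e_r$ are the edges outside a spanning tree and $e$ is an edge of a misaligned cycle lying on a different ray from $e_1$; hence $\on{rank}\bigl(\Hom(H_1(\mathfrak X),\oM^{gp})^\dagger\bigr)>\on{rank}\bigl(H_1(\mathfrak X)\bigr)$ and the quotient is infinite.

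Your justification of the converse direction is also off: bounded monodromy by $\on{length}(\gamma)=k\rho_i$ confines $\phi(\gamma)$ to the infinite group $\ZZ\rho_i$, not to ``finitely many multiples'' of the ray generator; finiteness of the quotient comes from the fact that $\Hom(H_1(\mathfrak X_i),\ZZ\rho_i)$ has the same rank as $H_1(\mathfrak X_i)$, which the monodromy pairing embeds with finite index. Moreover, independent cycles inside one cycle-connected component do not have disjoint support, so your induction via \ref{rem:length_additive_if_disjoint_supports} only yields the splitting over the maximal cycle-connected components $\mathfrak X_1,\dots,\mathfrak X_n$, exactly as in the paper; the rank count within each component still has to be carried out.
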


\begin{proof}
First, \ref{prop:separated_model} for $(\Psi,\alpha)=(\sTPic^0_{X/S},\on{Id})$ gives $({\rm ii}) \implies ({\rm iii})$ and if $S$ is log regular also $({\rm iii})\implies ({\rm ii})$. If $\sLPic^0_{X/S}$ is separated then by \ref{prop:separated_model} it is also quasi-compact, hence of finite type. The equivalence of (iii), (iv) and (v) in the log regular case then follows from \ref{thm:Neron} and the uniqueness of N\'eron models.

It remains to prove $({\rm i}) \iff ({\rm ii})$. By \ref{coro:maximal_model}, $\sTPic^{tor}_{X/S}$ is the maximum open quasi-finite subgroup of $\sTPic^0_{X/S}$. Condition (ii) is then equivalent to $\sTPic^{tor}_{X/S}=\sTPic^0_{X/S}$, which in turn is equivalent to $\sTPic^0_{X/S}$ having finite fibers.
We immediately reduce to the case where $S$ is a geometric log point, and we write $M \coloneqq M_S(S)$. Denote by $\mathfrak{X}$ the tropicalization of $X/S$, and by $\mathfrak{X}_1,...,\mathfrak{X}_n$ the maximal cycle-connected components of $\mathfrak{X}$. We have a canonical isomorphism 
\begin{align*}
H_1(\mathfrak{X}) = \bigoplus_{i=1}^n H_1(\mathfrak{X_i}).
\end{align*}
Suppose first that $\mathfrak{X}/\o{M}$ is log aligned, so that for every $0\leq i\leq n$ there is an irreducible element $\rho_i$ of an extremal ray of $\o M$ such that all edges of $\mathfrak{X}_i$ have length in $\NN\rho_i$. Thus $\mathfrak{X}_i$ can be seen as a tropical curve metrized by $\NN \rho_i$, and any bounded monodromy map $H_1(\mathfrak{X}_i) \to \o M^{\gp}$ factors uniquely through the inclusion $\ZZ\rho_i \to \o M^{\gp}$. We get isomorphisms
\begin{align*}
\Hom(H_1(\mathfrak{X}),\o{M}^{\gp})^\dagger = \bigoplus_{i=1}^n \Hom(H_1(\mathfrak{X_i}),\o{M}^{\gp})^\dagger = \bigoplus_{i=1}^n \Hom(H_1(\mathfrak{X_i}),\ZZ \rho_i)^\dagger,
\end{align*}
where the first equality holds since bounded monodromy can be checked separately on each $\mathfrak{X}_i$ by \ref{rem:length_additive_if_disjoint_supports}. Quotienting by $H_1(\mathfrak{X})$, we obtain
\begin{align*}
\Tropic^0(\mathfrak{X}/\o{M}) = \bigoplus_{i=1}^n \Tropic^0(\mathfrak{X}_i/\o M) = \bigoplus_{i=1}^n \Tropic^0(\mathfrak{X}_i/\NN \rho_i).
\end{align*}
The right hand side is finite, as the rank of $\Hom(H_1(\mathfrak{X}_i),\ZZ \rho_i)$ is equal to the rank of $H_1(\mathfrak{X}_i)$. 

For the reverse implication, suppose $X/S$ is not log aligned; we will show $$\Tropic^0(\mathfrak{X}/\o{M}) = \Hom(H_1(\mathfrak{X}),\o{M}^\gp)^\dagger/H_1(\mathfrak{X})$$ is not finite, by showing its rank is at least $1$. Note that if $\o{M} \rightarrow \o{M}'$ is a finite index homomorphism, a homomorphism $\phi: H_1(\mathfrak{X}) \rightarrow \o{M}^\gp$ has bounded monodromy if and only if its composition with $\o{M}^\gp \rightarrow \o{M}'^\gp$ has bounded monodromy. Thus, the rank of $\Tropic^0(\mathfrak{X}/\o{M})$ is equal to the rank of $\Tropic^0(\mathfrak{X},\o{M}')$ for any finite index inclusion $\o{M} \rightarrow \o{M}'$. Let $\ell(e) \in \o{M}$ denote the length of the edge $e$ in $\mathfrak{X}$. As the extreme rays of $\o{M}$ span $\o{M}$ over $\QQ$, and we are free to replace $\o{M}$ by finite index extensions, we can assume that each length $\ell(e)$ can be written as a sum of elements in $\o{M}$ that lie on the extreme rays of $\o{M}$. We may then subdivide $\mathfrak{X}$ so that each edge in the subdivision has length along the extreme rays of $\o{M}$. Using \ref{lem:log_alignment_preserved_under_subdivisions} and the invariance of the tropical Jacobian under subdivisions of $\mathfrak{X}$, we may then assume that each edge of $\mathfrak{X}$ has length which lies along an extreme ray of $\o{M}$.  Pick a spanning tree $T$ of $\mathfrak{X}$. The edges $e_1,...,e_r$ not in $T$ correspond to cycles $\gamma_1,...,\gamma_r$ forming a basis of $H_1(\mathfrak{X})$. By hypothesis, $\mathfrak{X}$ is not log aligned, so one of the $\gamma_i$, for example $\gamma_1$, has length not belonging to an extremal ray of $\o M^\gp$. Therefore, there exists an edge $e$ in $\gamma_1$ of length along an extreme ray of $\o{M}$ different than the ray containing the length of $e_1$. 

We claim that the intersection pairings of the family $(e,e_1,...,e_r)$ are independent bounded monodromy maps $H_1(\mathfrak{X}) \to \o M^\gp$. The fact that intersection pairing with an edge has bounded monodromy is general: for any edge $e$, and any cycle $\gamma \in H_1(\mathfrak{X})$, the intersection pairing $e.\gamma$ evidently has length bounded by the length of $\gamma$. To see that the pairings are independent, notice that $e_i.\gamma_j$ is $\delta_{ij} \ell(e_i)$ where $\delta_{ij}$ is the Kronecker delta. Consider a linear combination $b=ae+\sum a_ie_i$ with coefficients in $\ZZ$, and suppose the intersection pairing of $b$ is trivial. Then $a\ell(e) + a_1\ell(e_1) = b.\gamma_1=0$, combined with the fact $e$ and $e_1$ have independent lengths, yields $a=a_1=0$. Hence for $j>1$ we have $a_j\ell(e_j)=b.\gamma_j=0$, from which we deduce $a_j=0$.

From this, we obtain
\begin{align*}
\on{rank}(\Hom(H_1(\mathfrak{X}),\o M^{\gp})^\dagger) \geq r+1 > r = \on{rank}(H_1(\mathfrak{X})).
\end{align*}
Thus, $\Tropic^0(\mathfrak{X}/\o{M}) = \Hom(H_1(\mathfrak{X}),\o{M}^\gp)^\dagger/H_1(\mathfrak{X})$ is not finite.
\end{proof}

\section{The strict logarithmic Jacobian and the Picard space}\label{sec:etale_closure}

Over  Dedekind base $S$, Raynaud constructed the N\'eron model of the Jacobian of a curve $X/S$ as the quotient of the relative Picard space by the closure $\bar e$ of the unit section (see \cite[Section 9.5]{Bosch1990Neron-models}). When $\dim S >1$ the closure $\bar e$ is in general neither $S$-flat nor a subgroup, and so this quotient is not representable. In \cite{Holmes2014Neron-models-an} and \cite{Orecchia2018A-criterion-for}, necessary and sufficient conditions for the flatness of $\bar e$ are given, proving the existence of separated N\'eron models when these conditions hold. In this section we show that Raynaud's approach can be extended over higher-dimensional bases even when $\bar e$ is not $S$-flat, simply by replacing $\bar e$ by its largest open subspace $\bar e^\et$ which is \'etale over $S$. This allows us to describe the N\'eron model (constructed above as the algebraic space $\sLPic^0$) as the quotient of the Picard space by $\bar e^\et$, under somewhat more restrictive assumptions on $X/S$. The construction of the N\'eron model in \cite{Poiret} is done by gluing local models of this form.


Letting $\pi\colon X\to S$ be a log curve, we write $\Pic^{tot 0}$ for the kernel of the composition $\Pic_{X/S}\to \ZZ[\Irr_{X/S}]\xrightarrow{\Sigma} \ZZ$. There is a canonical map
\[
\Pic^{tot 0}_{X/S} \to \sLPic^0_{X/S}
\]
taking a line bundle to the associated log line bundle. 

Write $\bar e$ for the schematic closure of the unit section in $\Pic^{tot 0}_{X/S}$. If $U$, $V \hra \bar e$ are open immersions \'etale over $S$, then the same is true of their union. Hence $\bar e$ has a largest open subscheme which is \'etale over $S$, which we denote by $\bar e^{\et}$, a locally closed subscheme of $\Pic^{tot 0}_{X/S}$. 

\begin{theorem}\label{thm:sLPic_as_quotient}
Suppose that $S$ is log regular. Then
\begin{enumerate}
\item The map 
$f \colon \Pic^{tot 0}_{X/S} \to \strLogPic^0_{X/S}$ has kernel $\bar e^{\et}$; 
\item If in addition  $\ul X$ is regular, then $f$ is surjective.
\end{enumerate}
\end{theorem}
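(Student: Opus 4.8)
The plan is to analyze the map $f\colon \Pic^{tot 0}_{X/S}\to \sLPic^0_{X/S}$ via the tropicalization exact sequences. Recall that $\sLPic^0_{X/S}$ sits in the exact sequence $0\to \Pic^0_{X/S}\to \sLPic^0_{X/S}\to \sTPic^0_{X/S}\to 0$ of \ref{exact_sequence_strict}, and that $\Pic^{tot 0}_{X/S}$ fits into $0\to \Pic^0_{X/S}\to \Pic^{tot 0}_{X/S}\to \c H_{1,X/S}\to 0$ coming from the multidegree map (the kernel of $\ZV\xrightarrow{\Sigma}\ZZ$ restricted to total-degree-zero multidegrees is exactly the image of the boundary map, and the multidegree-zero part is $\Pic^0$). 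Under these identifications, $f$ is the identity on the common subobject $\Pic^0_{X/S}$, so by the snake lemma $\ker f$ and $\coker f$ are computed entirely on the tropical level: $\ker f = \ker(\c H_{1,X/S}\to \sTPic^0_{X/S})$ and $\coker f = \coker(\c H_{1,X/S}\to \sTPic^0_{X/S})$, where the map $\c H_{1,X/S}\to \sTPic^0_{X/S}$ is the composition of $\c H_{1,X/S}\to \sheafhom(\c H_{1,X/S},\mathbb G^{trop}_{m})^\dagger$ (via the monodromy pairing, pushed down by $\frak s_*$) with the quotient map to $\sTPic^0_{X/S}$.

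For part (1), I would first check that the kernel of $\c H_{1,X/S}\to\sTPic^0_{X/S}$ is exactly the preimage, under the monodromy pairing map $\c H_{1,X/S}\to\sheafhom(\c H_{1,X/S},\mathbb G^{trop}_m)^\dagger$, of the image of $\frak f_*\c H_{1,X/S}$ — but since the monodromy pairing itself factors $\frak f_*\c H_{1,X/S}$ into $\sheafhom(\frak f_*\c H_{1,X/S},\mathbb G^{trop}_m)^\dagger$, on a geometric point $s$ this kernel is $\{\gamma\in H_1(\mathfrak X_s) : \langle\gamma,-\rangle \in H_1(\mathfrak X_s)\}$, i.e. the elements whose monodromy pairing is integral against the lattice; this is a finite-index subgroup-or-full-lattice question, but importantly over $S$ it is an \'etale sheaf. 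The content is then to identify $\ker f$, as a subscheme of $\Pic^{tot 0}_{X/S}$, with $\bar e^\et$. The unit section $e$ is a line bundle of multidegree zero, hence tropically trivial, so $e$ factors through $\Pic^0_{X/S}\subset\Pic^{tot 0}_{X/S}$ and in particular $e\subset\ker f$; since $\ker f$ is an \'etale (even quasi-finite \'etale, by \ref{lemma:pi_*representable} applied to a subsheaf of $\c H_{1,X/S}$) group subscheme, it is an open subscheme of $\bar e$ that is \'etale over $S$, giving $\ker f\subseteq\bar e^\et$. Conversely, $\bar e^\et$ is a flat — hence (over $S$) dominant from $U$, where it equals the unit section — subgroup of $\Pic^{tot 0}_{X/S}$ that is \'etale over $S$; its image in $\c H_{1,X/S}$ is an \'etale subsheaf that is trivial over the dense open $U$, and I claim any such subsheaf lands in $\ker(\c H_{1,X/S}\to\sTPic^0_{X/S})$. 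The cleanest route here is: $\bar e^\et$ maps to $\sLPic^0_{X/S}$, hits the trivial section over $U$, and since $\sLPic^0_{X/S}$ is separated over the locus in question... but it is not separated in general — so instead I would use the N\'eron mapping property of \ref{coro:NMP_strict_log}: $\bar e^\et\to S$ is \'etale hence smooth, so the composite $\bar e^\et\to\sLPic^0_{X/S}$ is determined by its restriction to $U$, which is the zero map; hence $\bar e^\et\subseteq\ker f$. Combining the two inclusions gives $\ker f=\bar e^\et$.

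For part (2), with $\ul X$ regular, surjectivity of $f$ is equivalent (by the snake lemma discussion) to surjectivity of $\c H_{1,X/S}\to\sTPic^0_{X/S}$, which may be checked on geometric points $s$; so I must show every class in $\Hom(H_1(\mathfrak X_s),\oM^{gp}_{S,s})^\dagger/H_1(\mathfrak X_s)$ comes from the monodromy pairing with some $\gamma\in H_1(\mathfrak X_s)$ — i.e., that the monodromy pairing $H_1(\mathfrak X_s)\to\Hom(H_1(\mathfrak X_s),\oM^{gp})^\dagger$ is \emph{surjective}. This is the analogue over general $\oM$ of the classical statement that for a regular total space the component group of the N\'eron model is generated by the classes of line bundles on $X$; concretely, given $\phi\colon H_1(\mathfrak X_s)\to\oM^{gp}$ of bounded monodromy, one uses regularity of $\ul X$ to realize $\phi$ by an honest line bundle: a bounded-monodromy torsor becomes a line bundle after a log modification $Y\to X$ with $Y/S$ semistable and $\ul Y$ regular (\ref{property:logpic_modifications} and the construction in the proof of \ref{thm:Neron}), and when $\ul X$ is already regular one can take $Y=X$, so the torsor is represented by a line bundle on $X$, which has some multidegree summing to zero (as it lies in $\sLPic^0$), hence defines a class in $\Pic^{tot 0}_{X/S}$ mapping to $\phi$ modulo $H_1(\mathfrak X_s)$. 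I expect \textbf{this surjectivity statement in part (2) to be the main obstacle}: the subtlety is precisely the bounded monodromy condition — one must argue that for $\ul X$ regular \emph{every} bounded-monodromy homomorphism, not merely those in the image of the pairing, is represented by a genuine line bundle on $X$ (equivalently that the "limit line bundle" $j_*L$ is already a line bundle, with no log modification needed), and then that its multidegree lies in the total-degree-zero sublattice; this is where regularity of $\ul X$ is used in an essential way, via the fact that every effective Cartier divisor on a regular scheme is locally principal so that Weil and Cartier divisors agree and the schematic closure of a divisor on $X_U$ is automatically Cartier on $X$.
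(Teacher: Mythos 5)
Your overall architecture for part (1) --- snake lemma to reduce to the component groups, show the kernel is \'etale and trivial over $U$ so that it sits inside $\bar e^{\et}$, and use the N\'eron mapping property for the reverse inclusion --- matches the paper's, and your reverse inclusion $\bar e^{\et}\subseteq\ker f$ is argued exactly as in the paper. But there is a substantive error at the start that propagates. The quotient $\Pic^{tot 0}_{X/S}/\Pic^0_{X/S}$ is \emph{not} $\ca H_{1,X/S}$: it is the sheaf $\Psi=\ker(\ZV\xrightarrow{\Sigma}\ZZ)$ of total-degree-zero multidegrees, which on a geometric fibre is the \emph{image} of the boundary map $\delta$, isomorphic to $\ZZ^{E}/H_1(\mathfrak X_s)$, whereas $\ca H_{1,X/S}=\ker\delta$; the two have different ranks ($\#V-1$ versus $\#E-\#V+1$). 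Consequently your fibrewise description of $\ker f$ does not make sense (as written, the condition $\langle\gamma,-\rangle\in\langle H_1,-\rangle$ is satisfied by \emph{every} $\gamma\in H_1$), and --- more seriously --- your reformulation of part (2) as surjectivity of the monodromy pairing $H_1(\mathfrak X_s)\to\Hom(H_1(\mathfrak X_s),\oM^{gp})^\dagger$ is false even for regular $\ul X$: for a $2$-gon whose two edges have lengths the two generators $e_1,e_2$ of $\oM=\NN^2$ (which does arise from a regular total space), the bounded-monodromy group is all of $\ZZ^2$ while the image of the pairing is $\ZZ(e_1+e_2)$. The correct statement produced by the snake lemma is surjectivity of $\ZZ^{E}\to\Hom(H_1,\oM^{gp})^\dagger$, a different (and true) assertion.

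There are two further gaps. In part (1) you assert that $\ker f$, being \'etale and containing $e$, ``is an open subscheme of $\bar e$''; this requires an argument: the paper first shows $\ker f$ factors through $\bar e$ (because it is \'etale over $S$ and trivial over the dense open $U$), then passes to a strict henselization and invokes \ref{lem:open_image} to see that the section through any geometric point of $\ker f$ is open in $\bar e$. In part (2), checking surjectivity on geometric points and then representing a bounded-monodromy torsor by a line bundle ``because $\ul X$ is regular'' does not work fibrewise: over a geometric point of the boundary there is no dense open with trivial log structure from which to take the schematic closure of a Cartier divisor. The paper instead argues globally: it base-changes to $N=\sLPic^0_{X/S}$ (so that $X_N$ is regular, $N$ being smooth over $S$), takes the closure $\overline D$ in $X_N$ of a Cartier divisor representing the tautological bundle on $X_{N_U}$, notes $\overline D$ is Cartier by regularity, and uses $\ca O(\overline D)$ together with the N\'eron mapping property to produce a section of $f$ over $N$. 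Your instinct about where regularity enters (closures of Cartier divisors staying Cartier) is correct, but it must be deployed over the total space $X_N$, not fibre by fibre.
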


\begin{remark}
A log modification $X'\to X$ induces an isomorphism $\sLPic^0_{X/S}\to \sLPic^0_{X'/S}$by \ref{property:logpic_modifications}. On the other hand $\Pic^{tot 0}_{X/S}\to \Pic^{tot 0}_{X'/S}$ is an open immersion but in general not an isomorphism. When $\ul S$ is log regular and regular, we can always find, \'etale locally on $S$, a log modification of $X/S$ with regular total space. To see this, note that since $S$ is locally Noetherian, it has an \'etale cover by nuclear schemes by \ref{lemma:cover_by_nuclear}, so we can assume that $S$ is nuclear. Let $\mathfrak{X}$ denote the tropicalization of $X$ over the closed stratum. As $S$ is log regular and regular, every edge of $\mathfrak X$ is marked by an element of a free monoid $\NN^r$. Put $\mathfrak X'=\mathfrak X$. As long as $\mathfrak X'$ has an edge whose length is not one of the generators of $\NN^r$, replace $\mathfrak X'$ by any basic subdivision at that edge. The process terminates, and provides a maximal subdivision of $\mathfrak X$. This subdivision lifts to a  log modification $X' \rightarrow X$ whose total space is regular: $X'$ is evidently log regular, and the log structure around a node of length a generator of $\NN^r$ is isomorphic to $\NN^r \oplus_\NN \NN^2 \cong \NN^{r+1}$. Combining this observation with \ref{thm:sLPic_as_quotient}, we obtain a local description of $\strLogPic^0_{X/S}$ as a quotient of a Picard space.
\end{remark}
\begin{corollary}Let $X/S$ be a prestable curve over a toroidal variety, smooth exactly over the complement of the toroidal boundary. Assume $X$ is regular. Then the quotient $\Pic^{tot 0}_{X/S}/\bar e^\et$ is the N\'eron model of the Jacobian of $X$. 
\end{corollary}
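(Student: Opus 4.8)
The plan is to read the statement off from Theorems \ref{thm:sLPic_as_quotient} and \ref{coro:NMP_strict_log}. First I would fix the log-geometric setting. A toroidal variety $S$, equipped with its divisorial (toroidal) log structure $M_S$, is log regular and regular, and the locus where $M_S$ is trivial is the dense open complement $U$ of the toroidal boundary. The prestable curve $\ul X/\ul S$, being smooth exactly over $U$, underlies a log curve $X/S$ over $(\ul S, M_S)$ in the sense of this paper, and $U$ is then precisely the locus over which $X/S$ is smooth; in particular $\Pic^0_{X_U/U}$ is the Jacobian of $X$ (i.e. of its smooth proper part $X_U/U$). So the corollary falls within the scope of the two theorems cited.

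Next I would apply \ref{thm:sLPic_as_quotient}. Since $S$ is log regular, part (1) gives that the canonical map $f\colon \Pic^{tot 0}_{X/S} \to \sLPic^0_{X/S}$ has kernel the $S$-\'etale subgroup algebraic space $\bar e^{\et}$ of $\Pic^{tot 0}_{X/S}$. Since $\ul X$ is moreover regular, part (2) gives that $f$ is surjective, hence an epimorphism of sheaves on $\Schet{\ul S}$. Therefore $f$ identifies $\sLPic^0_{X/S}$ with the \'etale-sheaf quotient $\Pic^{tot 0}_{X/S}/\bar e^{\et}$; as $\sLPic^0_{X/S}$ is an algebraic space by \ref{thm:sPic_representable}, this quotient is representable, and $\Pic^{tot 0}_{X/S}/\bar e^{\et} \cong \sLPic^0_{X/S}$ as $S$-group algebraic spaces.

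Finally I would invoke \ref{coro:NMP_strict_log}: for a log curve over a log regular scheme, $\sLPic^0_{X/S}$ is the N\'eron model of $\Pic^0_{X_U/U}$. Combining this with the isomorphism of the previous paragraph and the identification of $\Pic^0_{X_U/U}$ with the Jacobian of $X$ yields the claim. The only point requiring care — rather than a genuine obstacle — is the passage from ``$f$ is surjective with kernel $\bar e^{\et}$'' to ``$\Pic^{tot 0}_{X/S}/\bar e^{\et}$ is the N\'eron model'': this is formal once one notes that surjectivity in \ref{thm:sLPic_as_quotient} is meant in the \'etale topology and that $\bar e^{\et}$ is a flat (indeed \'etale) subgroup, so that the \'etale-sheaf quotient coincides with $\sLPic^0_{X/S}$ and is thereby automatically an algebraic space and a N\'eron model.
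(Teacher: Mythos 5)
Your proposal is correct and follows exactly the route the paper intends: the corollary is stated without a separate proof precisely because it is the immediate combination of \ref{thm:sLPic_as_quotient} (which identifies $\sLPic^0_{X/S}$ with $\Pic^{tot 0}_{X/S}/\bar e^{\et}$ once $S$ is log regular and $\ul X$ is regular) with \ref{coro:NMP_strict_log} (which says $\sLPic^0_{X/S}$ is the N\'eron model). Your additional remarks — that the toroidal/divisorial log structure makes $S$ log regular with $U$ the trivial-log locus, and that surjectivity of $f$ (in fact split, via the section constructed in Part 2 of the theorem's proof) identifies the \'etale-sheaf quotient with $\sLPic^0_{X/S}$ — correctly fill in the details the paper leaves implicit.
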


\begin{proof}[Proof of \ref{thm:sLPic_as_quotient}]\leavevmode \\%
\textbf{Proof of (i). }
 We write $\Psi$ for the kernel of the summation map $\ZV\to \ZZ$; this is \'etale as it is the kernel of a map of \'etale group spaces. The multidegree map $\Pic^{tot 0}_{X/S}\to \Psi$ is the cokernel of the open immersion $\Pic^0_{X/S}\to \Pic^{tot 0}_{X/S}$.
We obtain a commutative diagram with exact rows
\[
\begin{tikzcd}
0 \ar[r] & \Pic^0_{X/S} \ar[equal]{d}\ar[r] & \Pic^{tot 0}_{X/S} \ar[r]\ar[d, "f"] & \Psi \ar[d, "g"] \ar[r] & 0\\
0 \ar[r] & \Pic^0_{X/S} \ar[r] & \strLogPic^0_{X/S} \ar[r] & \strTroPic^0_{X/S} \ar[r] & 0
\end{tikzcd}
\]
where $K:=\ker f$ is equal to $\ker g$ by the snake lemma. The map $g$ is a map of \'etale group spaces (\ref{thm:sPic_representable}), hence its kernel is \'etale, i.e. $K$ is \'etale. 

Note that the locus $U\hra S$ over which the morphism $X \rightarrow S$ is smooth is schematically dense by our log regularity assumption. One checks immediately that $K$ is trivial over $U$, hence the map $K \to \Pic^{tot0}_{X/S}$ factors via the closure $\bar e$ of the unit section (since $K \to S$ is \'etale, so the pullback of $U \sub S$ is schematically dense in $K$). The map $K \to \bar e$ is a quasi-compact immersion. To show that it is open, we may pick a geometric point $p\in K$ and restrict to the strict henselization of $S$ at $p$. There is a unique section $S\to K$ through $p$, which by \ref{lem:open_image} is open in $\bar e$. Thus $K \to \bar e$ is open, and factors through an open immersion
$
K \sub \bar e^{\et}$.

The reverse inclusion is easier: the map $\bar e{^\et}\to \sLPic^0_{X/S}$ is zero when restricted to $U$, so by the N\'eron mapping property of $\sLPic^0_{X/S}$ (\ref{thm:Neron}), the map $\bar e^{\et}\to \sLPic^0_{X/S}$ is zero and therefore $\bar e^{\et}\subset K$.

\noindent \textbf{Proof of (ii). }
Note that since $S$ is log regular, so is $X$; thus, if in addition $\ul{X}$ is regular, its log structure is locally free. Since the log structure of $S$ is isomorphic to the log structure of $X$ away from the singular points of the fibers, the log structure on $S$ must therefore also be locally free, and thus $\ul{S}$ must also be regular. As the N\'eron model $N=\sLPic^0_{X/S}$ of $\Pic^0_{X_U/U}$ is smooth over $S$, it follows that $X_N$ is regular as well. The canonical isomorphism $N_U=\Pic^0_{X_U/U}$ corresponds to a line bundle $L$ on $X_{N_U}$, represented by a Cartier divisor $D$. The scheme-theoretical closure $\o D$ of $D$ in $X_N$ is Cartier by regularity of the latter. Thus the line bundle $\ca O(\o D)$ provides a lift of $L$ under the natural morphism $\Pic^{tot 0}_{X/S} \to N$, which is therefore surjective.
\end{proof}

\begin{lemma}\label{lem:open_image}
Let $S$ be the spectrum of a local ring, $U \hra S$ open, and $X \to S$ a morphism such that $X_U \to U$ is an isomorphism, and $X_U$ is schematically dense in $X$. Let $\sigma\colon S \to X$ be a section. Then $\sigma$ is open. 
\end{lemma}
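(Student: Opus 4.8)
The plan is to reduce to the affine situation and then show that the section corresponds to an isomorphism of rings, so that $\sigma$ is in fact an open immersion (in particular open).

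First I would exploit that $S$ is local. Writing $s$ for its unique closed point, the point $\sigma(s)$ lies in some affine open $V=\Spec B\subset X$, and $\sigma^{-1}(V)$ is an open of $S$ containing $s$, hence equals $S$; thus $\sigma$ factors through $V$. Next, $\sigma|_U\colon U\to X$ is a section of $X\to S$ over $U$, so it factors through $X_U=X\times_SU$; since $X_U\to U$ is an isomorphism its only section is the inverse isomorphism, so the induced map $U\to X_U$ is an isomorphism and $\sigma(U)=X_U$ as subsets of $X$. In particular $X_U\subset V$. Let $A=\Gamma(S,\mathcal O_S)$, let $\phi\colon A\to B$ correspond to the structure map $V\to S$ and $r\colon B\to A$ to the section $\sigma\colon S\to V$; then $r\circ\phi=\id_A$, so $r$ is surjective.

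Now I would bring in schematic density, which means precisely that $\mathcal O_X\to j_*\mathcal O_{X_U}$ is injective, where $j\colon X_U\hookrightarrow X$. Taking sections over $V$ and using $X_U\subset V$, the restriction map $B=\Gamma(V,\mathcal O_X)\to\Gamma(X_U,\mathcal O_{X_U})$ is injective. The key identification is that composing the open immersion $X_U\hookrightarrow X$ with the inverse of the isomorphism $U\iso X_U$ recovers $\sigma|_U=\sigma\circ(U\hookrightarrow S)$ (because the canonical section of an isomorphism is its inverse, so this factorization of $\sigma|_U$ agrees with the structural isomorphism); hence, transporting along $\Gamma(X_U,\mathcal O_{X_U})\cong\Gamma(U,\mathcal O_U)$, the restriction map becomes the composite $B\xrightarrow{\,r\,}A\to\Gamma(U,\mathcal O_U)$. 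Its injectivity forces $r$ to be injective; being also surjective, $r$ is an isomorphism, so $\sigma\colon S\to V$ is an isomorphism, and therefore $\sigma\colon S\to X$ is an isomorphism onto the open subscheme $V$, i.e.\ an open immersion.

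I do not expect a serious obstacle here: both ingredients — the affine-local reduction (using only that $S$ is local) and the translation of schematic density into injectivity of $r$ (a short diagram chase on global sections) — are routine. The one point deserving care is exactly that identification of the restriction map $\Gamma(X,\mathcal O_X)\to\Gamma(X_U,\mathcal O_{X_U})$ with $r$ followed by restriction to $U$; one should also make sure "schematically dense" is read in the sheaf-theoretic sense (vanishing of the kernel ideal sheaf of $\mathcal O_X\to j_*\mathcal O_{X_U}$), which is the relevant meaning in the intended applications.
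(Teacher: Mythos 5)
Your proof is correct and follows essentially the same route as the paper: factor $\sigma$ through an affine open neighbourhood $V$ of $\sigma(s)$, observe that the section is then a closed immersion (your surjectivity of $r$), and use schematic density of $X_U\subset V$ to upgrade it to an isomorphism (your injectivity of $r$). You have merely unpacked the paper's appeal to ``a closed immersion containing a schematically dense open is an isomorphism'' into the explicit ring computation, which checks out.
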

\begin{proof}
Writing $s$ for the closed point of $S$, let $\sigma(s) \in V \hra X$ be an affine open neighbourhood; then $\sigma$ factors via $V$ (the preimage of $V$ via $\sigma$ is open and contains $s$). Write $\sigma'\colon S \to V$ for the factored map. 

Since $V \to S$ is separated, the map $\sigma'\colon S \to V$ is a closed immersion. On the other hand, its image contains the schematically dense $V_U \hra V$, hence $\sigma'$ is an isomorphism. 
\end{proof}

\appendix

\section{The functor of connected components of a smooth quasi-separated group algebraic space}\label{appendix:pi_0}

Throughout this appendix, $S$ denotes a scheme and $G/S$ a group algebraic space with unit section $e \in G(S)$. We extend some results of Romagny
\cite{Romagny2011Composantes-con} to the case where $G/S$ is smooth and quasi-separated, avoiding the quasi-compactness assumptions of \cite{Romagny2011Composantes-con}. 


\begin{lemma}\label{lemma:fiberwise-connected_cpts_are_open}
Suppose that $G/S$ is quasi-separated, flat, locally of finite presentation, and has reduced geometric fibers. Then there is a unique open subspace $G_0$ of $G$ such that each fiber $G_{s,0}$ of $G_0/S$ is the connected component of $G_s$ containing $e(s)$. Moreover, $G_0$ is a subgroup of $G$. 
\end{lemma}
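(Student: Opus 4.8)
The statement is essentially a relative (and non-quasi-compact) version of the existence of the fibrewise-connected-component of identity, and the natural strategy is to reduce to the quasi-compact case treated by Romagny \cite{Romagny2011Composantes-con}. First I would observe that the formation of such a $G_0$, if it exists, is local on $S$ for the Zariski (even \'etale) topology, so that uniqueness is immediate: two open subspaces of $G$ with the prescribed fibres agree on every fibre, hence are equal since a subspace of $G$ is determined by the set of its points together with the induced reduced structure — more precisely, an open subspace is determined by its underlying open set, and an open set meeting each fibre $G_s$ in a prescribed open subset is unique. So the content is existence, and by uniqueness it suffices to construct $G_0$ locally on $S$ and glue.

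For existence I would argue as follows. The map $G \to S$ is smooth and quasi-separated, hence locally of finite presentation; pick a point $g$ of $G$ lying over $s \in S$ with $g$ in the identity component of $G_s$. I want an open neighbourhood $U$ of $s$ in $S$ and an open quasi-compact subgroup scheme (or space) $H \subseteq G_U$ with $e(U) \subseteq H$ and $H$ of finite presentation over $U$: indeed, since $G/S$ is locally of finite presentation there is an open neighbourhood $W$ of $g$ in $G$ which is quasi-compact, and after translating we may assume $e(s) \in W$; then $e^{-1}(W)$ is an open neighbourhood of $s$, shrink $S$ to it, and replace $W$ by $W \cup e(S)$ if necessary. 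Now a standard group-scheme argument produces from $W$ an open quasi-compact subgroup $H$ of $G$ containing $e(S)$ and containing $W$: take the subgroup generated by $W \cup W^{-1}$, i.e.\ the union of the images of $W^{(n)} := W \cdot W \cdots W$ ($n$ factors) under multiplication; since each $G_s$-component is quasi-compact and $W$ meets it in a quasi-compact open, finitely many translates suffice fibrewise, and after shrinking $S$ the union stabilises at some finite $n$, giving a quasi-compact open subgroup $H$ flat and locally of finite presentation over $S$ with reduced geometric fibres, such that $H_s \supseteq$ (a neighbourhood of $e(s)$ in the identity component of $G_s$). The delicate point here — which is the main obstacle — is ensuring that this union stabilises \emph{uniformly in a neighbourhood of $s$}, not just at the single fibre $s$; this uses constructibility of the locus where $W^{(n)} = W^{(n+1)}$ in the fibre, together with quasi-compactness of $W$ and of $H$, and is where the hypotheses (quasi-separated, locally finitely presented, reduced fibres) are genuinely needed.

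Having reduced to $H/S$ quasi-compact, smooth (since $H$ is open in $G$, which is smooth), quasi-separated, with reduced geometric fibres, I would apply Romagny \cite[??]{Romagny2011Composantes-con}: such an $H$ admits an open subgroup $H_0$ whose fibres are the identity components $H_{s,0} = (H_s)^\circ$. But by construction $H_s$ contains an open neighbourhood of $e(s)$ in $(G_s)^\circ$, and an open subgroup of a connected group is the whole group, so $(H_s)^\circ = (G_s)^\circ$ — more carefully, $H_s^\circ$ is open and closed in $(G_s)^\circ$ hence equals it by connectedness. Therefore $H_0 \subseteq G$ is an open subspace whose fibre over each $s$ is $(G_s)^\circ$. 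Finally I would check that the construction is independent of the choices (again by the uniqueness observation, since the fibres are prescribed), so the local pieces glue to a global open subspace $G_0 \subseteq G$ with the required fibres; it is a subgroup because multiplication $G_0 \times_S G_0 \to G$ lands fibrewise in $(G_s)^\circ$ hence in $G_0$ (the preimage of $G_0$ is open and contains the fibrewise-dense $\ldots$ — rather: $m^{-1}(G \setminus G_0)$ is open and misses every fibre of $G_0 \times_S G_0$, so is empty), and similarly for inversion and the unit section.

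\textbf{Summary of steps:} (1) reduce to a local statement on $S$ by the uniqueness of open subspaces with prescribed fibres; (2) produce, near a given point, a quasi-compact open subgroup $H \subseteq G$ containing the unit section and enough of the identity component — the step requiring care, via a uniform-stabilisation/constructibility argument; (3) apply Romagny's result to $H$ to get $H_0$ with fibres $(H_s)^\circ$; (4) identify $(H_s)^\circ$ with $(G_s)^\circ$ using that an open subgroup of a connected group is everything; (5) glue and verify the subgroup property fibrewise. The one genuine difficulty is step (2): extracting a \emph{quasi-compact} subgroup, with control uniform over an open of the base, without assuming $G/S$ quasi-compact.
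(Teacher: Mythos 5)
Your step (2) -- the extraction of a quasi-compact open subgroup $H\subseteq G$ containing $e$ over a neighbourhood of $s$ -- is where the argument breaks, and you have correctly identified it as the crux without actually resolving it. The claim that $\bigcup_n W^{(n)}$ stabilises is false in general: the subgroup of $G_t$ generated by a quasi-compact open $W_t\ni e(t)$ contains $(G_t)^{\circ}$ together with the subgroup of $\pi_0(G_t)$ generated by the (finitely many) components that $W_t$ meets, and when $\pi_0(G_t)$ is infinite this subgroup can be infinite, so the generated open subgroup is not quasi-compact and the chain $W^{(n)}_t$ strictly increases forever. Fibres with infinite component groups are precisely the situation this paper is concerned with (see \ref{example:1-gon}). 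One can shrink $W$ so that $W_s\subseteq (G_s)^{\circ}$ at the single fibre over $s$, but arranging $W_t\subseteq (G_t)^{\circ}$ for all $t$ in a \emph{neighbourhood} of $s$ presupposes knowing that the fibrewise identity component is open near $e(S)$ -- which is essentially the statement being proved. Finally, even granting fibrewise stabilisation, constructibility of the locus $\{t: W_t^{(n)}=W_t^{(n+1)}\}$ would only give density in $\overline{\{s\}}$, not an open neighbourhood of $s$ in $S$. So as written the reduction to the quasi-compact case does not go through.

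The paper's proof avoids this entirely by exploiting the fact that Romagny's Proposition 2.2.1 requires no group structure: it applies to any quasi-compact, quasi-separated, flat, finitely presented morphism with reduced geometric fibres equipped with a section. Given $x\in G_{s,0}$, one takes a quasi-compact open $W\subseteq G$ containing $x$ and $e(s)$ (the open image of an affine \'etale chart), shrinks $S$ so that $e$ factors through $W$, and applies Romagny to $W$ itself to get an open $W_0\subseteq W$ whose fibres are the connected components of the $W_t$ through $e(t)$. The only additional input is that $G_{s,0}$ is \emph{irreducible} (a connected group scheme over a field is irreducible), so $G_{s,0}\cap W_s$ is a non-empty open of an irreducible space, hence connected, hence equals $W_{0,s}$; this gives $|W_0|=|W|\cap G_0$ and openness of $G_0$ at $x$. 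If you want to keep your strategy, you would need to supply a genuine proof of the local existence of quasi-compact open subgroups (in the spirit of SGA3 VI$_\mathrm{B}$), which is a substantially harder statement than the lemma itself; I would recommend switching to the section-and-irreducibility argument instead.
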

\begin{definition}\label{def:componentgroup}
We call $G_0$ the \emph{fiberwise-connected component of identity in $G$}. The sheaf quotient $G/G_0$ is a group algebraic space by \cite[Proposition 8.3.9]{Bosch1990Neron-models}, which we call the \emph{group of connected components} of $G$.
\end{definition}
\begin{proof}[Proof of \ref{lemma:fiberwise-connected_cpts_are_open}]
The assertion that $G_0$ is a subgroup of $G$ is immediate from the continuity of the multiplication and inversion operations. The first part of the statement is also immediate, from \cite[Proposition 2.2.1]{Romagny2011Composantes-con}, if in addition $G/S$ is quasi-compact. 

We will prove the general case by reduction to the quasi-compact case. We define a sub\emph{set} $\abs{G_0}$ of the underlying topological space $\abs{G}$ of $G$  to be the union of the $G_{s,0}$ as $s$ runs over $\abs{S}$. It suffices to show that $\abs{G_0}$ is open in $\abs{G}$. Given $s\in S $ and $x \in G_{0,s}$, let $U \to G$ be an \'etale map from an affine scheme with $x$ and $e(s)$ in its topological image $\ca W\subset |G|$; the latter is open and we let $W$ be the corresponding open subspace of $G$. Base-changing to an open neighbourhood of $s$ in $S$, we can assume $e$ factors through $W$. Then by \cite[Proposition 2.2.1]{Romagny2011Composantes-con} we obtain an open subspace $W_0 \to W$ through which $e$ factors and whose fibers over $S$ are connected components of the fibers of $W$.

Since any connected group scheme over a field is irreducible, for any $s\in S$ the connected component of identity $G_{s,0}$ is irreducible. Then the intersection $G_{s,0}\cap W_s$ is connected and therefore coincides with $W_{0,s}$. This shows that $|W_0|=|W|\cap \abs{G_0}$. In particular, we have $x\in |W_0| \subset \abs{G_0}$ with $|W_0|$ open in $|G|$. 
\end{proof}


\begin{lemma}Suppose that $G/S$ is smooth and quasi-separated. Then the structure morphism $G/G_0 \to S$ is \'etale. 
\end{lemma}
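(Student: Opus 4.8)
The plan is to reduce the \'etaleness of $G/G_0 \to S$ to the previously established facts about $G_0$, namely that $G_0$ is an open subgroup of $G$ whose fibres are the identity components (Lemma~\ref{lemma:fiberwise-connected_cpts_are_open}), together with the quotient $G/G_0$ being a group algebraic space via \cite[Proposition 8.3.9]{Bosch1990Neron-models}. Since \'etaleness is flat, locally of finite presentation, and unramified, and all three can be checked after faithfully flat base change, I would first use that $G \to G/G_0$ is an fppf cover (it is the quotient map of $G$ by the flat subgroup $G_0$, and $G_0 \to S$ smooth implies $G \to G/G_0$ is smooth, in particular fppf) to transfer questions between $G/G_0$ and $G$.

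First I would establish that $G/G_0 \to S$ is locally of finite presentation: $G/S$ is smooth hence locally of finite presentation, $G \to G/G_0$ is smooth surjective, and $G/G_0 \to S$ inherits locally-of-finite-presentation by descent along the fppf cover $G \to G/G_0$ (using that $G \to S$ is). Next, flatness of $G/G_0 \to S$: the composite $G \to G/G_0 \to S$ is flat (as $G/S$ is smooth), and $G \to G/G_0$ is faithfully flat, so $G/G_0 \to S$ is flat by descent of flatness along faithfully flat maps. The remaining and only substantive point is that $G/G_0 \to S$ is unramified, equivalently that its fibres are discrete (\'etale over the residue field). For this I would pass to a geometric point $\bar s$ of $S$: formation of $G_0$ commutes with base change to fibres by construction (its fibre over $s$ is the identity component $G_{s,0}$), so $(G/G_0)_{\bar s} = G_{\bar s}/G_{\bar s,0}$, which is the component group of the smooth group scheme $G_{\bar s}$ over the algebraically closed field $k(\bar s)$; a smooth group scheme over a field has open identity component, so the quotient is a discrete (hence \'etale) group scheme. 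Therefore every geometric fibre of $G/G_0 \to S$ is \'etale over its base field.

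Combining: $G/G_0 \to S$ is flat, locally of finite presentation, and has \'etale (in particular geometrically reduced and discrete) fibres, hence is \'etale by the fibrewise criterion for \'etaleness (\cite[\href{https://stacks.math.columbia.edu/tag/02GU}{Tag 02GU}]{stacks-project}, or its extension to algebraic spaces, \cite[\href{https://stacks.math.columbia.edu/tag/0366}{Tag 0366}]{stacks-project}). The main obstacle I anticipate is purely bookkeeping: making sure the descent statements (locally of finite presentation, flatness) and the base-change compatibility of $G_0$ are cleanly available in the algebraic-space setting rather than for schemes, and being careful that $G \to G/G_0$ really is an fppf (indeed smooth) cover — this uses smoothness of $G_0/S$, which holds since $G_0$ is open in the smooth space $G$. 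No genuinely hard geometric input is needed beyond what is already in the appendix; the one substantive mathematical fact, that a smooth group scheme over a field has open identity component, is standard.
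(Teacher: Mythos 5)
Your proof is correct, but it follows a different route from the paper. You verify the three ingredients of \'etaleness separately: flatness and local finite presentation of $G/G_0 \to S$ descend along the smooth surjection $G \to G/G_0$ (which is a $G_0$-torsor, hence fppf since $G_0$ is open in the smooth space $G$), and the fibres are discrete because formation of $G_0$ commutes with passage to fibres and the component group of a smooth group scheme over an algebraically closed field is \'etale; you then conclude by the fibrewise criterion. The paper instead argues by homogeneity: since $G/G_0 \to S$ is smooth it admits \'etale-local sections through any point, translation by such a section reduces the claim to \'etaleness in a neighbourhood of the unit section, and the unit section is itself open (being the image of the open subspace $G_0 \hra G$), where the structure morphism is an isomorphism. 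The paper's argument is shorter and exploits the group structure more directly, though it silently uses smoothness of $G/G_0 \to S$, which your descent steps make explicit; your argument requires more bookkeeping (base-change compatibility of $G_0$, fppf-locality of flatness and finite presentation in the algebraic-space setting) but avoids having to produce local sections and translate. Both are complete proofs.
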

\begin{proof}
We prove this locally at $x \in G/G_0$. Since $G/G_0 \to S$ is smooth we can choose $S' \to S$ \'etale and a section $S' \to G/G_0$ through $x$. Translating by this section, we may assume $x$ lies in the image of the unit section $u_0\colon S \to G/G_0$. It then suffices to show that $G/G_0 \to S$ is \'etale in an open neighbourhood of the unit section, but the unit section is itself open (as the image $G_0 / G_0$ of the open $G_0 \hra G$). 
\end{proof}

\begin{lemma}\label{lemma:univ_property_G/G0}
Suppose that $G/S$ is smooth and quasi-separated, and let $T \to S$ be an \'etale algebraic space. Then any $S$-morphism $G \to T$ factors uniquely via $G \to G/G_0$. 
\end{lemma}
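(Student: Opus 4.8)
The plan is to reduce the statement to the fact that a morphism from a connected scheme over a field to an étale algebraic space is locally constant, i.e.\ factors through a point. First I would observe that the question is étale-local on $T$, hence we may assume $T$ is a scheme, and then (since $T\to S$ is étale) that $T$ is a separated scheme over $S$. Given an $S$-morphism $f\colon G\to T$, I want to show it is constant on each fibre $G_s$; more precisely, that $f$ is constant on $G_{s,0}$ for every $s\in S$, so that it factors set-theoretically through $G/G_0$, and then check the factoring morphism is actually a morphism of algebraic spaces.

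The key step is the following: for a geometric point $\bar s$ of $S$, the restriction $f_{\bar s}\colon G_{\bar s}\to T_{\bar s}$ is a morphism from a smooth (hence reduced), and in particular geometrically connected on each component, algebraic space over a field to an étale $\kappa(\bar s)$-scheme; since $T_{\bar s}$ is a disjoint union of spectra of finite separable field extensions of $\kappa(\bar s)$ (a geometrically reduced zero-dimensional scheme), any connected reduced algebraic space over $\kappa(\bar s)$ maps into a single such component, and any morphism from a connected reduced $\kappa(\bar s)$-scheme to $\Spec$ of a field is the structure map composed with a point, hence $f_{\bar s}$ is constant on $G_{\bar s,0}$. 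Therefore $f$ contracts the open subgroup $G_0$ fibrewise, so $f$ is invariant under the $G_0$-action on $G$ by translation, and since $G/G_0$ is the sheaf (fppf, equivalently étale) quotient, $f$ descends uniquely to $\bar f\colon G/G_0\to T$. Uniqueness is immediate from surjectivity of $G\to G/G_0$.

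To make the descent rigorous without appealing to fibrewise statements alone, I would instead argue directly: consider the two maps $G\times_S G_0\rightrightarrows G$ (projection and multiplication $m$); I claim $f\circ \mathrm{pr}_1 = f\circ m$. This can be checked after base change to each geometric point $\bar s$, where it follows from the previous paragraph (both maps $G_{\bar s}\times G_{\bar s,0}\to T_{\bar s}$ agree because $f_{\bar s}$ is constant along $G_{\bar s,0}$-cosets). Since $T\to S$ is étale, in particular unramified, the locus where two $S$-morphisms into $T$ agree is open and closed in the source (the equalizer is an open immersion, being the pullback of the diagonal of $T/S$ which is an open immersion); as it contains every geometric fibre it is all of $G\times_S G_0$. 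Hence $f$ is $G_0$-invariant and factors through $G/G_0$ as required.

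The main obstacle I expect is purely bookkeeping: ensuring the various reduction steps (étale-locality on $T$, replacing $T$ by a separated or even affine scheme, and the identification of $G/G_0$ as the étale sheaf quotient so that descent of morphisms applies) are compatible with the quasi-separated, non-quasi-compact setting in which we are working — in particular that $G/G_0$ is an algebraic space (which is \ref{def:componentgroup}, via \cite[Proposition 8.3.9]{Bosch1990Neron-models}) and that $G\to G/G_0$ is an fppf, indeed étale-locally-split, epimorphism of sheaves so that "morphisms out of $G$ that are $G_0$-invariant" biject with "morphisms out of $G/G_0$". Once those formalities are in place, the geometric input — a connected reduced scheme over a field has no non-constant maps to an étale scheme — finishes the proof with no further difficulty.
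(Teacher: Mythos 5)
Your proposal is correct and follows essentially the same route as the paper: check constancy along $G_{0}$ on each geometric fibre (where the \'etale target is a disjoint union of points and the fibrewise identity component is connected), then globalize using the fact that the equalizer of two maps into an \'etale target is open, being the pullback of the open-immersion diagonal, and contains all points. Your formulation via the coequalizer diagram $G\times_S G_0\rightrightarrows G$ is a slightly more explicit spelling-out of the paper's appeal to the universal property of the sheaf quotient (the paper only verifies that $f|_{G_0}$ is constant), but the substance is identical; note only that the ``and closed'' for the equalizer is superfluous (and would need separatedness of $T/S$), since openness together with containing every point already suffices.
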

\begin{proof}
Fix an $S$-morphism $f\colon G_0 \to T$, and write $e\colon S \to T$ for the map induced by the unit section of $G_0$. By the universal property of the quotient $G/G_0$, it suffices to show that the following diagram commutes.
\begin{equation}
 \begin{tikzcd}
  G_0 \arrow[r, "f"] \arrow[d]& T \\
  S \arrow[ur,"e"] &
\end{tikzcd}
\end{equation}

If $S$ is the spectrum of a separably closed field then $T$ is a scheme (e.g. by \ref{cor:loc_constructible_implies_representable_big}), $e$ is an open and closed immersion, and $G_0$ is connected, so the result is clear. In the general case it follows that the diagram on $k$-points commutes for any separably closed field $k$. By descent, we may replace $T$ by an \'etale cover and assume it is a scheme. Since the diagonal of $T$ is an open immersion, the equalizer of $f$ and $G_0 \to S \to T$ is open in $G$. Since this open subspace contains all geometric points, it is $G_0$.
\end{proof}

\begin{lemma}\label{G^0_for_slpic}
Let $X/S$ be a log curve and $\c{G}=\sLPic^0_{X/S}$. Then $\c{G}_0=\Pic^0_{X/S}$.
\end{lemma}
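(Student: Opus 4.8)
The plan is to combine the short exact sequence \ref{exact_sequence_strict} with the exactness properties of $\frak s_*$ and the characterization of the fibrewise-connected component of identity. We have an exact sequence
\[
0 \to \Pic^0_{X/S} \to \sLPic^0_{X/S} \to \sTPic^0_{X/S} \to 0
\]
of sheaves on $\Schet{\ul S}$ (from \ref{exact_sequence_strict}), in which $\sTPic^0_{X/S}$ is \'etale over $\ul S$ by \ref{thm:sPic_representable}(1), and $\Pic^0_{X/S}$ is a semiabelian scheme over $\ul S$ by \ref{property:tropicalization_map}, hence in particular smooth with connected geometric fibres. First I would observe that $\Pic^0_{X/S}$, being an open and closed subgroup of $\sLPic^0_{X/S}$ (the kernel of a homomorphism to an \'etale space, hence open; and it is closed since $\sLPic^0_{X/S}$ is a $\Pic^0_{X/S}$-torsor over $\sTPic^0_{X/S}$ which is separated over the pieces where it matters — more simply, its complement is a union of $\Pic^0$-torsors), is an open subgroup of $\c G = \sLPic^0_{X/S}$ whose geometric fibres are connected.

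The key step is then to identify this open subgroup with the subspace $\c G_0$ of \ref{lemma:fiberwise-connected_cpts_are_open}. That lemma produces a unique open subspace $\c G_0 \subset \c G$ whose fibre over each $s$ is the connected component of $\c G_s$ through the identity; uniqueness means it suffices to check that $\Pic^0_{X/S}$ has this fibrewise property. For a geometric point $s$ of $\ul S$, the fibre $(\Pic^0_{X/S})_s$ is the identity component of a semiabelian variety, hence connected and containing the identity; and it is open in $\c G_s$ since $\Pic^0_{X/S} \to \c G$ is an open immersion. Because any connected group scheme over a field is irreducible, $(\Pic^0_{X/S})_s$ being open, connected and containing $e(s)$ forces it to be exactly the connected component of $\c G_s$ through $e(s)$: indeed the connected component of identity is irreducible, it meets the open irreducible subscheme $(\Pic^0_{X/S})_s$, and two irreducible locally closed subschemes of a scheme that share a common dense open must coincide when one is open in the other. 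By the uniqueness clause of \ref{lemma:fiberwise-connected_cpts_are_open}, $\c G_0 = \Pic^0_{X/S}$.

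I expect the only mild subtlety to be verifying the hypotheses of \ref{lemma:fiberwise-connected_cpts_are_open}, namely that $\c G = \sLPic^0_{X/S}$ is quasi-separated, flat, locally of finite presentation, and has reduced geometric fibres over $\ul S$; but all of these are contained in the statement of \ref{thm:sPic_representable}(2), which asserts that $\sLPic^0_{X/S}$ is a quasi-separated smooth algebraic space over $\ul S$ (smoothness gives flat, locally of finite presentation, and geometrically reduced — indeed geometrically regular — fibres). The rest is the formal bookkeeping above. The main obstacle, such as it is, is simply making precise the elementary scheme-theoretic fact that an open connected subgroup of a group scheme over a field through the identity equals the identity component, which follows from irreducibility of connected group schemes over a field; everything else is an immediate consequence of results already established.
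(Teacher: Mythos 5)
Your proof is correct in substance but takes a genuinely different route from the paper's. The paper invokes the universal property of $\c{G}/\c{G}_0$ for maps to \'etale spaces (\ref{lemma:univ_property_G/G0}) to produce a morphism $\c{G}/\c{G}_0\to\sTPic^0_{X/S}$, compares the two resulting short exact sequences, and applies the snake lemma: the cokernel of $\c{G}_0\hookrightarrow\Pic^0_{X/S}$ is \'etale, hence killed by the connectedness of the geometric fibres of $\Pic^0_{X/S}$. You instead verify directly that $\Pic^0_{X/S}$ is an open subgroup of $\c{G}$ (as the kernel of a homomorphism to an \'etale space, the unit section of which is an open immersion) whose geometric fibres are the identity components, and then appeal to the uniqueness clause of \ref{lemma:fiberwise-connected_cpts_are_open}. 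Your route avoids both \ref{lemma:univ_property_G/G0} and the snake lemma, at the cost of having to argue fibrewise; both arguments ultimately rest on the same two inputs (\'etaleness of $\sTPic^0_{X/S}$ and connectedness of the fibres of $\Pic^0_{X/S}$), so neither buys much over the other.

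One step of your write-up needs repair. The topological principle you state --- that two irreducible locally closed subschemes sharing a common dense open must coincide when one is open in the other --- is false ($\bb A^1\setminus\{0\}$ is open and dense in $\bb A^1$ without being equal to it), so it cannot be what forces $(\Pic^0_{X/S})_s$ to exhaust the identity component of $\c{G}_s$. The correct elementary fact is that an open subgroup scheme of a group scheme over a field is automatically \emph{closed} (its complement is a union of open cosets); hence $(\Pic^0_{X/S})_s$ is a nonempty clopen connected subset of $\c{G}_s$ containing $e(s)$, and any such subset is the connected component of $e(s)$. With that substitution the argument is complete; the verification of the hypotheses of \ref{lemma:fiberwise-connected_cpts_are_open} via \ref{thm:sPic_representable} is fine as you have it.
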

\begin{proof}
The natural map $\c{G}/\c{G}_0\to \sTPic^0_{X/S}$ coming from the universal property (\ref{lemma:univ_property_G/G0}) induces a  commutative diagram of exact sequences
\begin{center}
\begin{tikzcd}
0\ar[r] & \c{G}_0 \ar[r]\ar[d] & \c{G} \ar[r,]\ar[d, equals] & \c{G}/\c{G}_0 \ar[r]\ar[d] & 0\\
0\ar[r] & \Pic^0_{X/S} \ar[r] & \c{G} \ar[r] & \sTPic^0_{X/S} \ar[r] & 0
\end{tikzcd}
\end{center}
By the snake lemma, the left vertical map is injective, and its cokernel is identified with the kernel $K$ of the right vertical map. As the latter is \'etale, so is $K$. However, $\Pic^0_{X/S}$ has connected geometric fibers, hence the map $\Pic^0_{X/S}\to K$  is constantly zero. 
\end{proof}
\bibliography{prebib}

\begin{thebibliography}{10}
\providecommand{\url}[1]{\texttt{#1}}
\providecommand{\urlprefix}{URL }
\expandafter\ifx\csname urlstyle\endcsname\relax
  \providecommand{\doi}[1]{doi:\discretionary{}{}{}#1}\else
  \providecommand{\doi}{doi:\discretionary{}{}{}\begingroup
  \urlstyle{rm}\Url}\fi

\bibitem{adiprasito2018semistable}
\textit{K.~Adiprasito}, \textit{G.~Liu} and \textit{M.~Temkin}, Semistable
  reduction in characteristic 0, 2018.

\bibitem{Artin1969Algebraization-}
\textit{M.~Artin}, Algebraization of formal moduli. {I}, Global analysis
  (papers in honor of K. Kodaira)  (1969), 21--71.

\bibitem{Artin1973theoremes}
\textit{M.~Artin}, \textit{A.~Lascu} and \textit{J.~Boutot}, Th{\'e}or{\`e}mes
  de repr{\'e}sentabilit{\'e} pour les espaces alg{\'e}briques: Notes du cours
  ... {\`a} la 9. session ... tenue l'{\'e}t{\'e} 1970, S{\'e}minaire de
  Math{\'e}matiques Sup{\'e}rieures, Presses de l\&Univ., 1973.

\bibitem{Bellardini2015On-the-Log-Pica}
\textit{A.~Bellardini}, On the {Log}-{Picard} functor for aligned degenerations
  of curves, \url{http://arxiv.org/abs/1507.00506}  (2015).

\bibitem{Bosch1990Neron-models}
\textit{S.~Bosch}, \textit{W.~L{\"u}tkebohmert} and \textit{M.~Raynaud},
  {N{\'e}ron models}, Springer, 1990.

\bibitem{Caporaso2008Neron-models-an}
\textit{L.~Caporaso}, N{\'e}ron models and compactified {P}icard schemes over
  the moduli stack of stable curves, Amer. J. Math. \textbf{130} (2008), no.~1,
  1--47, \doi{10.1353/ajm.2008.0000},
  \urlprefix\url{http://dx.doi.org/10.1353/ajm.2008.0000}.

\bibitem{cavalieri2017moduli}
\textit{R.~Cavalieri}, \textit{M.~Chan}, \textit{M.~Ulirsch} and
  \textit{J.~Wise}, A moduli stack of tropical curves, 2017.

\bibitem{Chiodo2015Neron-models-of}
\textit{A.~Chiodo}, {N{\'e}ron models of Pic$^0$ via Pic$^0$},
  \url{http://arxiv.org/abs/1509.06483}  (2015).

\bibitem{Eriksson2015Logarithmic-interpretation}
\textit{D.~Eriksson}, \textit{L.~H. Halle} and \textit{J.~Nicaise}, A
  logarithmic interpretation of {Edixhoven}'s jumps for {Jacobians}, Advances
  in Mathematics \textbf{279} (2015), 532--574,
  \doi{https://doi.org/10.1016/j.aim.2015.04.007}.

\bibitem{Esteves2001Compactifying-t}
\textit{E.~Esteves}, Compactifying the relative {J}acobian over families of
  reduced curves, Trans. Amer. Math. Soc. \textbf{353} (2001), no.~8,
  3045--3095 (electronic), \doi{10.1090/S0002-9947-01-02746-5},
  \urlprefix\url{http://dx.doi.org/10.1090/S0002-9947-01-02746-5}.

\bibitem{Grothendieck1961EGAII}
\textit{A.~Grothendieck} and \textit{J.~Dieudonn{\'e}}, \'{E}l{\'e}ments de
  g{\'e}om{\'e}trie alg{\'e}brique. {II}. \'{E}tude globale {\'e}l{\'e}mentaire
  de quelques classes de morphismes, Inst. Hautes {\'E}tudes Sci. Publ. Math.
  (1961), no.~8, 222.

\bibitem{Holmes2014A-Neron-model-o}
\textit{D.~Holmes}, {A N{\'e}ron model of the universal jacobian},
  \url{http://arxiv.org/abs/1412.2243}  (2014).

\bibitem{Holmes2014Neron-models-an}
\textit{D.~Holmes}, {N{\'e}ron models of jacobians over base schemes of
  dimension greater than 1}, \url{http://arxiv.org/abs/1402.0647}, J. Reine
  Angew. Math. \textbf{747} (2019), 109--145,
  \urlprefix\url{http://arxiv.org/abs/1402.0647}.

\bibitem{Holmes2022Logarithmic-mod}
\textit{D.~Holmes} and \textit{G.~Orecchia}, Logarithmic moduli of roots of
  line bundles on curves, \url{https://arxiv.org/pdf/2201.06869.pdf}  (2022).

\bibitem{Illusie1994Logarithmic-spa}
\textit{L.~Illusie}, Logarithmic spaces (according to {K}. {K}ato), in:
  Barsotti {S}ymposium in {A}lgebraic {G}eometry ({A}bano {T}erme, 1991),
  Academic Press, San Diego, CA, 1994, \textit{Perspect. Math.}, volume~15,
  183--203,
  \urlprefix\url{https://mathscinet.ams.org/mathscinet-getitem?mr=1307397}.

\bibitem{Kajiwara1993Logarithmic-c}
\textit{T.~Kajiwara}, Logarithmic compactifications of the generalized jacobian
  variety, J. Fac. Sci. Univ. Tokyo Sect. IA Math. \textbf{40} (1993), no.~2,
  473--502.

\bibitem{IIKajiwara2008Logarithmic-a}
\textit{T.~Kajiwara}, \textit{K.~Kato} and \textit{C.~Nakayama}, Logarithmic
  abelian varieties, Nagoya Math. J. \textbf{189} (2008), 63--138,
  \doi{10.1017/S002776300000951X},
  \urlprefix\url{https://mathscinet.ams.org/mathscinet-getitem?mr=2396584}.

\bibitem{Kajiwara2008Logarithmic-abe}
\textit{T.~Kajiwara}, \textit{K.~Kato} and \textit{C.~Nakayama}, Logarithmic
  abelian varieties. {I}. {C}omplex analytic theory, J. Math. Sci. Univ. Tokyo
  \textbf{15} (2008), no.~1, 69--193,
  \urlprefix\url{https://mathscinet.ams.org/mathscinet-getitem?mr=2422590}.

\bibitem{Kato2000Log-smooth-defo}
\textit{F.~Kato}, Log smooth deformation and moduli of log smooth curves,
  Internat. J. Math. \textbf{11} (2000), no.~2, 215--232,
  \doi{10.1142/S0129167X0000012X},
  \urlprefix\url{http://dx.doi.org/10.1142/S0129167X0000012X}.

\bibitem{Kato1989Logarithmic-deg}
\textit{K.~Kato}, Logarithmic degeneration and dieudonn{\'e} theory, preprint
  (1989).

\bibitem{Kato1989Logarithmic-str}
\textit{K.~Kato}, Logarithmic structures of {F}ontaine-{I}llusie, in: Algebraic
  analysis, geometry, and number theory ({B}altimore, {MD}, 1988), Johns
  Hopkins Univ. Press, Baltimore, MD, 1989, 191--224,
  \urlprefix\url{https://mathscinet.ams.org/mathscinet-getitem?mr=1463703}.

\bibitem{Kato1994Toric-singulari}
\textit{K.~Kato}, Toric singularities, American Journal of Mathematics
  \textbf{116} (1994), no.~5, 1073--1099.

\bibitem{Lutkebohmert1993On-compactifica}
\textit{W.~L{\"u}tkebohmert}, On compactification of schemes, Manuscripta
  Mathematica \textbf{80} (1993), no.~1, 95--111.

\bibitem{Milne1980Etale-cohomolog}
\textit{J.~Milne}, {Etale cohomology}, Princeton University Press, 1980.

\bibitem{Molcho2020The-Logarithmic}
\textit{S.~Molcho}, \textit{M.~Ulirsch} and \textit{J.~Wise}, The logarithmic
  deligne pairing, In preparation  (2020).

\bibitem{Molcho2018The-logarithmic}
\textit{S.~Molcho} and \textit{J.~Wise}, The logarithmic picard group and its
  tropicalization, \url{https://arxiv.org/abs/1807.11364}  (2018).

\bibitem{Neron1964Modeles-minimau}
\textit{A.~N{\'e}ron}, Mod{\`e}les minimaux des vari{\'e}t{\'e}s ab{\'e}liennes
  sur les corps locaux et globaux, Inst. Hautes {\'E}tudes Sci. Publ.Math. No.
  \textbf{21} (1964), 128.

\bibitem{Nizio2006Toric-singulari}
\textit{W.~Nizio{\l}}, Toric singularities: log-blow-ups and global
  resolutions, Journal of algebraic geometry \textbf{15} (2006), no.~1, 1--29.

\bibitem{Olsson2003Universal}
\textit{M.~C. Olsson}, Universal log structures on semi-stable varieties,
  Tohoku Math. J. (2) \textbf{55} (2003), no.~3, 397--438,
  \doi{10.2748/tmj/1113247481},
  \urlprefix\url{https://doi.org/10.2748/tmj/1113247481}.

\bibitem{Olsson2004}
\textit{M.~C. Olsson}, Semistable degenerations and period spaces for polarized
  k 3 surfaces, Duke Math. J. \textbf{125} (2004), no.~1, 121--203,
  \doi{10.1215/S0012-7094-04-12515-1},
  \urlprefix\url{https://doi.org/10.1215/S0012-7094-04-12515-1}.

\bibitem{Orecchia2018A-criterion-for}
\textit{G.~Orecchia}, A criterion for existence of n{\'e}ron models of
  jacobians, \url{https://arxiv.org/abs/1806.05552}  (2018).

\bibitem{Orecchia2019A-monodromy-cri}
\textit{G.~Orecchia}, A monodromy criterion for existence of neron models of
  abelian schemes in characteristic zero,
  \url{https://arxiv.org/abs/1904.03886}  (2019).

\bibitem{Poiret}
\textit{T.~Poiret}, N\'eron models of {Jacobians} over bases of arbitrary
  dimension, \url{https://arxiv.org/abs/2103.06917} .

\bibitem{Raynaud1966Modeles-de-Nero}
\textit{M.~Raynaud}, Mod{\`e}les de n{\'e}ron, Comptes rendus hebdomadaires des
  s{\'e}ances de l'Acad{\'e}mie des sciences s{\'e}rie A \textbf{262} (1966),
  no.~6, 345.

\bibitem{Raynaud1970Faisceaux-ample}
\textit{M.~Raynaud}, {Faisceaux amples sur les sch{\'e}mas en groupes et les
  espaces homog{\`e}nes}, volume 119, Springer, 1970.

\bibitem{Romagny2011Composantes-con}
\textit{M.~Romagny}, Composantes connexes et irr{\'e}ductibles en familles,
  manuscripta mathematica \textbf{136} (2011), no. 1-2, 1--32.

\bibitem{stacks-project}
\textit{T.~{Stacks Project Authors}}, Stacks project,
  \url{http://stacks.math.columbia.edu}, 2013.

\end{thebibliography}

\acknowl{This paper has benefitted from the generous input of many people. In particular, we would like to thank: Alberto Bellardini and Arne Smeets for discussions of log N\'eron models and log abelian varieties; Luc Illusie for pointing out a problem with a previous version of our proof of representability; Chris Lazda and Arne Smeets for co-organising the intercity seminar on \cite{Molcho2018The-logarithmic} which initiated this collaboration; Laurent Moret-Bailly for giving us a reference for \ref{cor:loc_constructible_implies_representable_big}; Jonathan Wise for many useful conversations on log Jacobians; the DIAMANT cluster for funding a research visit of Sam Molcho to the Netherlands.}

\end{document}